\tikzset{snake it/.style={decorate, decoration=snake}}
\definecolor{gray}{rgb}{0.25, 0.25, 0.25}
\newtheorem{theorem}{Theorem}[section]
\newtheorem{lemma}[theorem]{Lemma}
\newtheorem{cor}[theorem]{Corollary}
\theoremstyle{definition}
\theoremstyle{plain}
\newtheorem{claim}[theorem]{Claim}
\newtheorem{prop}[theorem]{Proposition}
\theoremstyle{definition}
\theoremstyle{definition}
\theoremstyle{definition}
\theoremstyle{definition}
\newtheorem{defn}[theorem]{Definition}
\theoremstyle{definition}
\theoremstyle{definition}
\newenvironment{poc}{\begin{proof}[Proof of claim]}{\end{proof}}
\newcommand{\ep}{\varepsilon}
\newcommand{\eps}{\varepsilon}
\newcommand{\al}{\alpha}
\newcommand{\de}{\delta}
\newcommand{\De}{\Delta}
\newcommand{\cH}{\mathcal{H}}
\newcommand{\cS}{\mathcal{S}}
\newcommand{\cC}{\mathcal{C}}
\newcommand{\cA}{\mathcal{A}}
\newcommand{\cP}{\mathcal{P}}
\newcommand{\N}{\mathbb{N}}
\newcommand{\tk}{\mathsf{TK}}
\newcommand{\bN}{\ensuremath{\mathbb{N}}}
\newcommand{\emref}[1]{\emph{\ref{#1}}}
\newcounter{propcounter}
\newcommand{\makenote}[2]
{

\smallskip

\noindent

\fbox{
\begin{minipage}{0.95\textwidth}

\def\temp{#1}
\ifx\temp\empty
\def\forlabel{$\bullet$}
\else
\def\forlabel{{\bfseries #1:}}
\fi

\begin{itemize}[label = \forlabel]
  #2
\end{itemize}
\end{minipage}
}

\smallskip
}
\title{A solution to Erd\H{o}s and Hajnal's odd cycle problem}
 \author{
Hong Liu
\thanks{Extremal Combinatorics and Probability Group (ECOPRO), Institute for Basic Science (IBS), Daejeon, South Korea. Email: {\tt hongliu@ibs.re.kr}. Supported by the Institute for Basic Science (IBS-R029-C4) and the UK Research and Innovation Future Leaders Fellowship MR/S016325/1.}
 \quad\quad
 Richard Montgomery
 \thanks{Mathematics Institute, University of Warwick, Coventry, CV4 7AL, UK. Email:
{\tt richard.montgomery@warwick.ac.uk}. Supported by the European Research Council (ERC) under the European Union Horizon 2020 research and innovation programme (grant agreement No. 947978) and the Leverhulme trust.}
 }
\begin{document}
\maketitle

\begin{abstract} In 1981, Erd\H{o}s and Hajnal asked whether the sum of the reciprocals of the odd cycle lengths in a graph with infinite chromatic number is necessarily infinite. Let $\mathcal{C}(G)$ be the set of cycle lengths in a graph $G$ and let $\mathcal{C}_\text{odd}(G)$ be the set of odd numbers in $\mathcal{C}(G)$. We prove that, if $G$ has chromatic number $k$, then $\sum_{\ell\in \mathcal{C}_\text{odd}(G)}1/\ell\geq (1/2-o_k(1))\log k$. This solves Erd\H{o}s and Hajnal's odd cycle problem, and, furthermore, this bound is asymptotically optimal.

In 1984, Erd\H{o}s asked whether there is some $d$ such that each graph with chromatic number at least $d$ (or perhaps even only average degree at least $d$) has a cycle whose length is a power of 2. We show that an average degree condition is sufficient for this problem, solving it with methods that apply to a wide range of sequences in addition to the powers of 2.

Finally, we use our methods to show that, for every $k$, there is some $d$ so that every graph with average degree at least $d$ has a subdivision of the complete graph $K_k$ in which each edge is subdivided the same number of times. This confirms a conjecture of Thomassen from 1984.
\end{abstract}

\section{Introduction}\label{sec:intro}
Does the chromatic number or the average degree imply anything about the cycle lengths of a graph? For any fixed $k$, we can never infer the presence of a cycle with length $k$, as a graph may have arbitrarily high chromatic number yet no such cycle (as Erd\H{o}s famously showed in 1959~\cite{probmethod}). Can we, however, say something about the density of cycle lengths or infer the existence of a cycle with length in some given infinite set of integers?

We will consider these two questions for both the even cycles and the odd cycles of a graph. From an average degree condition, we can only say something about the even cycles of a graph, as, of course, a graph with high average degree may have no odd cycles. The natural corresponding condition to impose for odd cycles is one on the chromatic number (see, for example, the survey by Verstra\"ete~\cite{SurVer}).

In this paper, we give the first constructions for even cycles with precise lengths using only an average degree condition. We then develop our methods further to find many odd cycle lengths in graphs with a chromatic number condition. This development, while itself novel, relies on the strength of our results on even cycles and cannot be done using the previous results guaranteeing different even cycle lengths. %Our results can be applied to many different sequences of even and odd integers, but in particular they answer, to the positive, all of the questions mentioned above, with asymptotically tight lower bounds where relevant.

We now discuss further, and state our results on, even cycles in graphs with an average degree condition (in Section~\ref{sec:even}) and odd cycles in graphs with a chromatic number condition (in Section~\ref{sec:odd}). We then give a further application of our work which confirms a conjecture of Thomassen~\cite{Tho84} on subdivisions (in Section~\ref{sec:sub}). For questions on cycle lengths in graphs under additional conditions, and the background on them, we refer the reader to the comprehensive survey by Verstra\"ete~\cite{SurVer}.

\subsection{Average degree and even cycle lengths}\label{sec:even}
In 1966, Erd\H{o}s and Hajnal~\cite{EH66} suggested the harmonic sum of the cycle lengths in a graph as a measure of the density of its cycle lengths. In particular, letting  $\mathcal{C}(G)$ be the set of cycle lengths in a graph $G$, Erd\H{o}s and Hajnal~\cite{EH66} asked whether
\begin{equation}\label{eq:harmonic}
\sum_{\ell\in \mathcal{C}(G)}\frac{1}{\ell}\to \infty \quad \text{ as }\quad \chi(G)\to \infty,
\end{equation}
where $\chi(G)$ is the chromatic number of $G$. As Erd\H{o}s later wrote~\cite{erdos1975-42}, they felt that \eqref{eq:harmonic} should even hold under the weaker condition $d(G)\to \infty$, where $d(G)$ is the average degree of the graph~$G$. In 1984, Gy\'arf\'as, Koml\'os and Szemer\'edi~\cite{gyarfas1984distribution} confirmed this stronger conjecture by proving that any graph $G$ with average degree $d$ has $\sum_{\ell\in \cC(G)}1/\ell=\Omega_d(\log d)$.
If $G$ is a complete balanced bipartite graph with average degree $d$, then $\sum_{\ell\in \cC(G)}1/\ell=(1/2+o_d(1))\log d$, so this lower bound is tight up to the implicit constant. Here and throughout the paper, we write $\log$ for the natural logarithm. Erd\H{o}s~\cite{erdos1975-42} had previously stated, in 1975, that it was likely that $(1/2+o_d(1))\log d$ is the correct asymptotic lower bound, over all graphs $G$ with $d(G)\geq d$, and this remained an open problem.

We say an increasing sequence $\sigma_1,\sigma_2,\sigma_3,\ldots$ of integers is \emph{unavoidable with high average degree} if there is some $d$ such that any graph with average degree at least $d$ has a cycle with length in $\sigma_1,\sigma_2,\sigma_3,\ldots$. In 1977, Bollob\'as~\cite{bollobas1977cycles} confirmed a conjecture of Burr and Erd\H{o}s by showing that, if $\sigma_i$ forms an arithmetic progression containing even numbers, then $\sigma_i$ is unavoidable with high average degree.
Solving a problem of Erd\H{o}s, in 2005 Verstra\"ete~\cite{Ver05} showed that some unavoidable sequence with density 0 must exist, without giving an explicit sequence.

In 2008, Sudakov and Verstra\"ete~\cite{SV08} showed that many increasing sequences of integers are unavoidable in all but (potentially) exceptionally sparse graphs. In particular, they showed that, for any exponentially bounded increasing sequence of even integers $\sigma_i$ (that is, where $\sigma_{i+1}\le C\sigma_{i}$ for each $i\in \N$ and some fixed $C>1$), any $n$-vertex graph $G$ with $\sigma_i\notin \mathcal{C}(G)$ for each $i\in \N$ must have average degree at most $e^{O(\log^*n)}$, where $\log^\ast n$ is the iterated logarithm function.
In 1984, Erd\H{o}s~\cite{erdos1984-21} had asked whether the powers of 2 are unavoidable with high average degree, but, despite the results quoted here, there remained no explicit sequence with density 0 which was known to be unavoidable with high average degree (or even unavoidable with high chromatic number, as defined analogously in Section~\ref{sec:odd}).

In this paper, we introduce new techniques for constructing even cycles while controlling their length. This allows us to find, in any graph $G$, a long interval of consecutive even numbers in $\mathcal{C}(G)$, as follows.

\begin{theorem}\label{thm:mindeg} There is $d_0>0$ such that the following holds. If $G$ is a graph with average degree $d\ge d_0$,  then, there is some $\ell\geq d/(10\log^{12}d)$ such that $\mathcal{C}(G)$ contains every even integer in $[\log^{8} \ell,\ell]$.
\end{theorem}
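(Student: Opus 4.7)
The plan is to work entirely inside a sublinear expander subgraph of $G$, where, by combining a large pool of internally disjoint paths of various lengths with a small ``length-adjusting'' gadget, we can realize every even integer as a cycle length across a long interval.

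First, I would pass to a subgraph $H \subseteq G$ with $\delta(H) = \Omega(d)$ and good sublinear expansion, in the sense that $|N_H(U) \setminus U| \geq |U|/\log^{C} |U|$ holds for every vertex set $U$ whose size lies in a suitable range. The existence of such $H$ (a Koml\'os--Szemer\'edi-type robust sublinear expander) is a now-standard reduction: in particular, such an $H$ has small-diameter BFS balls, so that any two vertices can be connected by a path of length $O(\log^{C'} |H|)$. This is the source of the $\log^{8}\ell$ lower bound on the interval, since cycles of length below this threshold cannot in general be guaranteed once we pay for expansion.

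Next, I would construct a pair of vertices $x, y \in V(H)$ together with a family of $\Omega(d)$ internally vertex-disjoint $x$--$y$ paths in $H$ whose lengths span an interval of length $\ell \ge d/\log^{O(1)} d$. The idea is to grow two ``balanced BFS units'' from different starting points, carefully pruning to kill short cycles and keep expansion, then merge them. Each unit contributes a large pool of paths of every length in a long interval up to some root, and concatenating pairs of such paths yields many $x$--$y$ paths whose lengths sweep an interval of size $\Omega(d/\log^{O(1)}d)$; this is where the $\log^{12}$ factor in the bound on $\ell$ is paid for.

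Third, I would build an \emph{adjuster}: a small subgraph, disjoint from (almost all of) the path family, containing two vertices $a, b$ connected by $a$--$b$ paths of two lengths differing by one. By chaining several adjusters in series (each flipping the parity by one) and attaching them via short connecting paths to the endpoints $x, y$ of the path family, one can produce $x$--$y$ ``routes'' whose lengths realize every residue class mod $2$ in a controlled sub-interval. Combining each such route with one of the $\Omega(d)$ internally disjoint $x$--$y$ paths of the appropriate length then realizes every even integer in $[\log^{8}\ell, \ell]$ as a cycle length in $G$.

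The main obstacle, as is typical for this kind of precise length-control result, is the gadget construction and its embedding. Earlier expander-based arguments such as those of Sudakov--Verstra\"ete give cycles of length in intervals or modulo some integer, but to hit \emph{every} even integer in a long interval one needs adjusters of the exact right size, and the paths, adjusters, and connecting paths must all be routed on pairwise-disjoint vertex sets inside the same sublinear expander without destroying its expansion. Making these components fit together, and tuning their sizes so the final interval runs all the way up to $\ell \approx d/\log^{12} d$ with only a polylogarithmic gap at the bottom, is the delicate part of the argument.
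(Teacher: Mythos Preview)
Your plan has the right ingredients (pass to a Koml\'os--Szemer\'edi sublinear expander, build length-adjusting gadgets, route long paths), but the architecture is off in two concrete ways.

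First, your adjuster --- two $a,b$-paths whose lengths differ by \emph{one} --- is the wrong gadget. The paper works inside a \emph{bipartite} expander, where any two $a,b$-paths have the same parity, so no such object exists; the correct simple adjuster is built from a short even cycle and gives two $a,b$-paths whose lengths differ by \emph{two}. More importantly, the paper does not use adjusters merely to ``flip parity''. It chains $\Theta(\log^3 n)$ simple adjusters in series (Lemma~\ref{lem-adj-to-adj-path}), so that the total length can be shifted by any even amount up to $\Theta(\log^3 n)$: the adjusters do \emph{all} of the fine length-tuning, not just parity correction.

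Second, and more seriously, your Step~2 is unsubstantiated and is essentially equivalent to the theorem you are trying to prove. A ``balanced BFS unit'' gives many root-to-leaf paths \emph{of the same length} (namely the depth); there is no standard construction producing $\Omega(d)$ internally disjoint $x,y$-paths whose lengths cover an interval of length $\Omega(d/\mathrm{polylog}\,d)$. The paper avoids this entirely: for each target length $\ell$ it builds a \emph{single} $x,y$-path of length within $O(\log^3 n)$ of $\ell$ (by repeatedly attaching small-diameter sets to lengthen the path, Lemma~\ref{lem-conexp}/Corollary~\ref{longconnect4}), and then uses the chain of adjusters to hit $\ell$ exactly (Lemma~\ref{lem-finalconnect}). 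The heart of the argument is not the long path but the \emph{robust} construction of a simple adjuster avoiding an arbitrary polylog-sized set (Lemma~\ref{lem-robust-adj}); this is where the paper also exploits a dichotomy you did not mention, namely that one may assume the expander is $\mathsf{TK}^{(2)}_{d/2}$-free (otherwise the desired cycle lengths are immediate).
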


From the density of the even numbers in the interval $[\log^{8} \ell,\ell]$ as $\ell$ increases, we get immediately the following improvement of the result of Gy\'arf\'as, Koml\'os and Szemer\'edi~\cite{gyarfas1984distribution}, which confirms the asymptotically correct lower bound on the harmonic sum of $\cC(G)$, as conjectured by Erd\H{o}s~\cite{erdos1975-42}.

\begin{cor} If a graph $G$ has average degree $d$, then
\[
\sum_{\ell\in \cC(G)}\frac{1}{\ell}\geq \left(\frac{1}{2}-o_d(1)\right)\log d.
\]
\end{cor}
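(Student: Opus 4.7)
The corollary is essentially a direct calculation from Theorem~\ref{thm:mindeg}, so my plan is to simply estimate the harmonic sum restricted to the guaranteed interval of even cycle lengths. First, I would assume $d \geq d_0$ (otherwise the statement is trivial, since we can absorb the bounded-$d$ case into the $o_d(1)$ term), and apply Theorem~\ref{thm:mindeg} to obtain some $\ell \geq d/(10 \log^{12} d)$ such that every even integer in $[\log^{8}\ell,\ell]$ lies in $\cC(G)$.

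Next, since $\cC(G)$ contains all even integers in $[\log^{8}\ell,\ell]$, I would bound
\[
\sum_{m \in \cC(G)} \frac{1}{m} \;\geq\; \sum_{\substack{\log^{8}\ell \leq 2j \leq \ell}} \frac{1}{2j} \;=\; \frac{1}{2}\bigl(\log \ell - 8\log\log \ell\bigr) - O(1),
\]
using the standard estimate $\sum_{j=a}^{b} 1/j = \log(b/a) + O(1/a)$ on the range $j \in [\log^{8}\ell / 2,\ \ell/2]$.

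Finally, since $\ell \geq d/(10 \log^{12}d)$, we have $\log \ell \geq \log d - 12 \log\log d - \log 10$ and $\log\log \ell = (1+o_d(1))\log\log d$. Substituting these into the previous display gives
\[
\sum_{m \in \cC(G)} \frac{1}{m} \;\geq\; \frac{1}{2}\log d - O(\log\log d) \;=\; \left(\frac{1}{2} - o_d(1)\right)\log d,
\]
as required. There is no real obstacle here: the work is entirely in Theorem~\ref{thm:mindeg}, and the corollary is a one-line computation once the interval of cycle lengths is in hand. The only thing to check carefully is that the $\log^{12} d$ and $\log^{8}\ell$ factors truly get swallowed by the $o_d(1)$ term, which they do because both contribute only $O(\log\log d)$ after taking logarithms.
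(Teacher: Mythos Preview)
Your proposal is correct and matches the paper's approach: the paper states the corollary follows immediately from Theorem~\ref{thm:mindeg} via the density of even integers in $[\log^{8}\ell,\ell]$, and your computation is exactly that density calculation spelled out.
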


By Theorem~\ref{thm:mindeg}, any avoidable sequence of cycle lengths must continue to avoid some intervals $[\log^{8} \ell,\ell]$ as $\ell\to\infty$.
Thus, many increasing sequences of even integers are unavoidable with high average degree, as follows.

\begin{cor}\label{conj:unavoidable} There is some $d_0>0$ such that the following holds. Given any infinite sequence $\sigma_i$, $i\in \N$, of increasing even integers with $\sigma_{i+1}\leq \exp(\sigma_i^{1/10})$ for each $i\in \N$, any graph $G$ with average degree at least $\max\{d_0,\sigma^{2}_1\}$
has some $i\in\N$ with $\sigma_i\in \mathcal{C}(G)$.
\end{cor}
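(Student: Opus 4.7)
The plan is to derive the corollary directly from Theorem~\ref{thm:mindeg} by verifying that the hypothesised growth bound on the sequence $\sigma_i$ is slow enough to force some term to fall inside the interval $[\log^8\ell,\ell]$ of consecutive even cycle lengths produced by that theorem.

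In detail, I would let $G$ be a graph with average degree $d\ge\max\{d_0,\sigma_1^2\}$, choosing $d_0$ at least as large as the constant in Theorem~\ref{thm:mindeg} and also large enough to absorb the lower-order factors appearing below. Applying Theorem~\ref{thm:mindeg} yields some $\ell\ge d/(10\log^{12}d)$ such that every even integer in $[\log^8\ell,\ell]$ belongs to $\cC(G)$. Since each $\sigma_i$ is even, it suffices to find an index $i$ with $\sigma_i\in[\log^8\ell,\ell]$. If $\sigma_1\ge\log^8\ell$, then, using $\sigma_1\le\sqrt d$ together with $\ell\ge d/(10\log^{12}d)\ge\sqrt d$ (which holds once $d_0$ is chosen large), the term $\sigma_1$ itself lies in the required interval. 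Otherwise $\sigma_1<\log^8\ell$, and because the strictly increasing integer sequence $\sigma_i$ tends to infinity there is a largest index $i^{\ast}$ with $\sigma_{i^{\ast}}<\log^8\ell$. By maximality, $\sigma_{i^{\ast}+1}\ge\log^8\ell$, while the growth hypothesis gives
\[
\sigma_{i^{\ast}+1}\le\exp\bigl(\sigma_{i^{\ast}}^{1/10}\bigr)\le\exp\bigl(\log^{4/5}\ell\bigr)\le\ell,
\]
the final inequality holding because $\log^{4/5}\ell<\log\ell$ for $\ell$ at least a moderate constant. Hence $\sigma_{i^{\ast}+1}\in[\log^8\ell,\ell]\cap\cC(G)$.

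I do not expect any genuine obstacle here: the exponent $1/10$ in the hypothesis $\sigma_{i+1}\le\exp(\sigma_i^{1/10})$ is precisely calibrated to the exponent $8$ appearing in the lower endpoint $\log^8\ell$ of the interval from Theorem~\ref{thm:mindeg}, so the deduction amounts to checking that the sequence cannot leap over this interval. The only care required is to take $d_0$ large enough that $\ell$ exceeds a suitable absolute constant and that the estimates $\sqrt d\le d/(10\log^{12}d)$ and $\log^{4/5}\ell<\log\ell$ both hold; all of these are straightforward.
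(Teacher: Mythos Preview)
Your proposal is correct and is precisely the intended derivation: the paper does not give a formal proof of this corollary, treating it as an immediate consequence of Theorem~\ref{thm:mindeg} (``any avoidable sequence of cycle lengths must continue to avoid some intervals $[\log^{8} \ell,\ell]$ as $\ell\to\infty$''), and your argument is exactly the natural unpacking of that remark. The only thing worth noting is that your case analysis and the check $\exp(\log^{4/5}\ell)\le\ell$ are the standard details one would write down, and there are no hidden subtleties beyond choosing $d_0$ large.
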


In particular, this answers the question of Erd\H{o}s~\cite{erdos1984-21} mentioned above by showing that the powers of 2 are unavoidable with high average degree, as, furthermore, is any exponentially bounded sequence of increasing even numbers. This latter implication answers a question of Sudakov and Verstra\"ete~\cite{SV08}; for further questions on specific sequences answered by Corollary~\ref{conj:unavoidable},
we refer the readers to~\cite{Erd95}.

On the other hand, the sequence defined by $\sigma_1=1$ and $\sigma_{i+1}=2(i+1)^{\sigma_{i}}$ for each $i\in \N$ is known to be avoidable (see~\cite{SV08}). Corollary~\ref{conj:unavoidable} is thus optimal up to the fraction $1/10$ in the exponent, though it may hold with $1/10$ replaced by $1-o_i(1)$. We have not optimised our methods to maximise the fraction $1/10$, but note that doing so could not increase it beyond $1/3$ (see Section~\ref{sec:limits}). %On the other hand, it is plausible that some small constant $c>0$ exists so that all increasing even sequences $\sigma_i$ satisfying  $\sigma_{i+1}\leq \exp(c\sigma_i)$ may be avoidable.

%%%%%%%%%%%%%%%%%%%%%%%%%%%%%%%%%%%%%%%%%%%%%%%%%%%%%%%%%%%%%%%%%%%%%%%%%%%%%%%%%%%%%%%%%%%%%%%%%%%%%

\subsection{Chromatic number and odd cycle lengths}\label{sec:odd}
Consider the set of odd cycle lengths in $G$, by letting $\mathcal{C}_\text{odd}(G)=\{\ell\in \mathcal{C}(G):\ell\text{ is odd}\}$. As noted above, a natural condition to impose in search of odd cycles is one on the chromatic number rather than the average degree, and, for example, Gy\'arf\'as~\cite{gyarfas} proved in 1992 that a graph with chromatic number at least $2k+1$ must have $|\cC_\text{odd}(G)|\geq k$. In 1981, Erd\H{o}s and Hajnal~\cite{erdos1981-20} asked whether
\begin{equation}\label{eq:harmonic-odd}
\sum_{\ell\in \mathcal{C}_\text{odd}(G)}\frac{1}{\ell}\rightarrow \infty\quad \text{as} \quad \chi(G)\rightarrow\infty.
\end{equation}
As Erd\H{o}s noted~\cite{erdos1984-21}, this gives a `much deeper question' than the corresponding question for all cycle lengths and chromatic number that we considered in Section~\ref{sec:even}. Indeed, the only relevant result towards this has been by Sudakov and Verstra\"ete~\cite{SV11}, who showed that $\sum_{\ell\in \mathcal{C}_\text{odd}(G)}1/\ell\rightarrow \infty$ if the independence ratio of $G$ is not extremely small compared to its number of vertices. Here, the independence ratio of $G$ is $\sup_{X\subseteq V(G)}\frac{|X|}{\alpha(X)}$, where $\alpha(X)$ is the independence number of the subgraph of $G$ induced on $X$ and the supremum is taken over all non-empty vertex subsets.

Building on our methods for even cycles, we use the precision of these techniques to construct a large interval of cycle lengths using a high chromatic number, as follows.

\begin{theorem}\label{thm:chrom} For each $\eps>0$, there is some $k_0\in \N$ such that the following holds for each $k\geq k_0$. If $G$ is a graph with chromatic number $k$, then, for some $\ell\in \mathbb{N}$, $\mathcal{C}(G)$ contains every odd integer in $[\ell,\ell\cdot k^{1-\eps}]$.
\end{theorem}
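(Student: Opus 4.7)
The broad plan is to reduce the problem of finding many consecutive odd cycle lengths under a chromatic-number hypothesis to a two-endpoint, prescribed-parity enhancement of the even-cycle machinery behind Theorem~\ref{thm:mindeg}. Specifically, the aim is to locate, inside $G$, an edge $uw$ together with, for every even integer $m$ in some interval $[\ell,\ell\cdot k^{1-\eps}]$, an $(u,w)$-path of length $m$ in $G-uw$. Concatenating each such path with the edge $uw$ then yields an odd cycle of every odd length in $[\ell+1,\ell k^{1-\eps}+1]$, which gives the theorem.

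To produce the required ``odd edge'' $uw$, first pass to a colour-critical subgraph $H\subseteq G$ with $\chi(H)=k$, so that $\delta(H)\geq k-1$. Then fix a root $r$ and run BFS in $H$ to obtain layers $L_0,L_1,L_2,\ldots$. If no edge of $H$ lay within any $L_i$, then $H$ would be bipartite; since $H$ is non-bipartite, some edge $uw$ lies within a layer $L_i$, and the BFS tree paths from $u$ and $w$ up to a common ancestor give an $(u,w)$-path of even length in $H-uw$ as well. A more quantitative iteration of this argument, using that $\chi(H)\leq 2\max_i\chi(H[L_i])$, forces such an edge to appear at a layer index $i$ bounded by a polylogarithmic function of $k$; this control on $i$ will be essential when setting up the eventual interval of cycle lengths.

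The core of the proof is then to show that every even integer $m$ in an interval $[\ell,\ell k^{1-\eps}]$ is realised as the length of an $(u,w)$-path in $H-uw$. This is a strengthening of Theorem~\ref{thm:mindeg} in two directions: from closed cycles to open paths with specified endpoints, and from an interval of length of order $\ell$ to one of length $\ell k^{1-\eps}$ (the latter reflecting a minimum degree of order $k$ coming from criticality). The intended strategy is to arrange the colour-critical and BFS reductions so that $u$ and $w$ end up inside a substructure of $H$ with large minimum degree and strong expansion, and then to run the adjuster and path-combining arguments underlying Theorem~\ref{thm:mindeg}, but with $u$ and $w$ as designated endpoints, producing $(u,w)$-paths of each even length in the target interval.

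The principal obstacle is exactly this two-endpoint enhancement of the even-cycle machinery: the vertices $u$ and $w$ are forced on us by the chromatic-number argument and cannot be chosen freely, yet one must still embed them inside a host rich enough to support the full adjuster construction. Ensuring that the chromatic-number reduction producing $(u,w)$ is compatible with the expander-like structure required by the even-cycle methods, while preserving the full interval of even path lengths, is where the bulk of the new work should lie. This is the concrete content of the earlier remark that the odd-cycle result ``relies on the strength of our results on even cycles and cannot be done using the previous results guaranteeing different even cycle lengths'': one needs not merely cycles of many even lengths, but paths of many even lengths between a pair of specified vertices.
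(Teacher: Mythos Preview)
Your proposal identifies the right obstacle but does not overcome it, and the paper's proof takes a genuinely different route precisely because the single-edge approach does not seem to go through.

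The gap is this. Theorem~\ref{mainthm} (the path version of Theorem~\ref{thm:mindeg}) does give $(u,w)$-paths of every length of the right parity in a long interval, but it applies inside a bipartite $(\eps_1,\eps_2 d)$-expander $H$, and the Koml\'os--Szemer\'edi expander $H$ one extracts from $G$ may be a tiny subgraph of $G$ over which you have no control. Your BFS argument produces an edge $uw$ inside a layer of some colour-critical subgraph, but there is no reason for $u$ and $w$ to land in $V(H)$ at all, let alone in the same vertex class of $H$ (which is what you need for even-length paths, since $uw$ is an edge). The ``quantitative iteration'' you sketch, bounding the layer index via $\chi(G)\le 2\max_i\chi(G[L_i])$, controls how deep the odd edge sits but says nothing about compatibility with the expander. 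You correctly flag this as the principal obstacle and then leave it unresolved; as stated, the proposal is a description of the difficulty rather than a proof.

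The paper avoids this by abandoning the single-odd-edge picture. It takes a \emph{maximal} family of edge-disjoint bipartite subgraphs $H_1,\ldots,H_s$, each satisfying the path-of-every-length property (Corollary~\ref{mainforchrom}); maximality forces $\chi(\bigcup_i H_i)\ge 3$, so the union is non-bipartite. One then finds a minimal ``cycle of subgraphs'' $b_1F_1b_2\cdots F_rb_1$ with $F_i\in\{H_j\}$ and $\sum_i\pi(b_i,b_{i+1},F_i)$ odd, and uses minimality to prove that non-consecutive $F_i$'s are vertex disjoint (Claims~\ref{oddtwo} and~\ref{norep}). The oddness is carried by the whole cycle rather than by a single edge, and the path lengths are varied inside several vertex-disjoint $F_i$'s simultaneously. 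This sidesteps entirely the problem of forcing two specified vertices into one expander with a prescribed parity.
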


As the harmonic sum of the odd integers in $[\ell,\ell\cdot k^{1-\eps}]$ diverges as $k\to\infty$ (for all values of $\ell$), this solves Erd\H{o}s and Hajnal's odd cycle problem by confirming  \eqref{eq:harmonic-odd}. Furthermore, in combination with Theorem~\ref{thm:mindeg}, we get the following immediate lower bound for the harmonic sum of the cycle lengths of any specific residue, which answers another question of Erd\H{o}s~\cite{erdos1984-21}.

%\begin{cor}\label{thm:reciprocal} If $G$ is a graph with chromatic number $k$, then $$\sum_{\ell\in \mathcal{C}_\text{odd}(G)}1/\ell\geq (1/2-o_k(1))\log k.$$
%\end{cor}

\begin{cor}\label{thm:reciprocal}
Let $a,b\in \N$, and let $\mathcal{C}_{a,b}(G)=\{\ell\in \mathcal{C}(G):\ell\equiv a\!\!\mod b\}$. If $G$ has chromatic number $k$, then
\[
\sum_{\ell\in \mathcal{C}_{a,b}(G)}\frac{1}{\ell}\geq \left(\frac{1}{b}-o_k(1)\right)\log k.
\]
\end{cor}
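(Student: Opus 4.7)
The plan is to derive this corollary directly from Theorem~\ref{thm:mindeg} and Theorem~\ref{thm:chrom} by a density argument. Since $\chi(G) = k$, greedily deleting vertices of degree less than $k - 1$ gives a subgraph of minimum degree at least $k - 1$, so $G$ has a subgraph of average degree at least $k - 1$ to which both theorems apply. Fixing a small $\eps > 0$ and taking $k$ large in terms of $\eps$, Theorem~\ref{thm:mindeg} produces some $\ell_{\mathrm{e}} \geq k/(10 \log^{12} k)$ such that every even integer in $I_{\mathrm{e}} := [\log^{8} \ell_{\mathrm{e}}, \ell_{\mathrm{e}}]$ lies in $\mathcal{C}(G)$, while Theorem~\ref{thm:chrom} produces some $\ell_{\mathrm{o}}$ such that every odd integer in $I_{\mathrm{o}} := [\ell_{\mathrm{o}}, \ell_{\mathrm{o}} \cdot k^{1-\eps}]$ lies in $\mathcal{C}(G)$.

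Next I would compute how much each of $I_{\mathrm{e}}$ and $I_{\mathrm{o}}$ contributes to $\sum_{\ell \in \mathcal{C}_{a,b}(G)} 1/\ell$, using the standard estimate that, for any long interval $[A, B]$ and any $a$, one has $\sum_{n \in [A,B],\, n \equiv a\!\!\mod b} 1/n = (1/b + o(1)) \log(B/A)$. If $b$ is even, every integer in residue class $a \bmod b$ has the parity of $a$, so exactly one of $I_{\mathrm{e}}, I_{\mathrm{o}}$ contains residue-$a$ integers; that interval contributes $(1/b - o_k(1)) \log k$ directly, which already gives the bound. If $b$ is odd, the integers in residue class $a \bmod b$ alternate in parity, so among the cycle lengths supplied by $I_{\mathrm{e}}$ (respectively $I_{\mathrm{o}}$) the residue class appears with density $1/(2b)$, contributing $(1/(2b) - o_k(1)) \log k$ from each. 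Since these two contributions involve cycle lengths of opposite parity, they sum without double-counting to $(1/b - o_k(1)) \log k$.

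I expect no real obstacle here: Theorems~\ref{thm:mindeg} and~\ref{thm:chrom} do all the substantive work, and what remains is pure bookkeeping of the $o_k(1)$ term, which absorbs the $\eps$ in Theorem~\ref{thm:chrom} (taken arbitrarily small as $k \to \infty$) and the negligible $\log \log$ factor lost at the lower end of $I_{\mathrm{e}}$. The only point warranting attention is that, when $b$ is odd, both intervals must be combined to obtain the full $1/b$ constant: Theorem~\ref{thm:chrom} on its own yields only $(1/(2b) - o_k(1)) \log k$, and Theorem~\ref{thm:mindeg} on its own is likewise insufficient whenever $a$ is odd.
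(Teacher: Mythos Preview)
Your proposal is correct and matches the paper's approach: the paper states the corollary as an immediate consequence of combining Theorem~\ref{thm:mindeg} (for even lengths) with Theorem~\ref{thm:chrom} (for odd lengths), and your case split on the parity of $b$ is exactly the bookkeeping needed. One tiny wording fix: Theorem~\ref{thm:chrom} is applied to $G$ itself via its chromatic number, not to the high-degree subgraph, so only Theorem~\ref{thm:mindeg} uses the passage to a subgraph of minimum degree $k-1$.
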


We say an increasing sequence $\sigma_1,\sigma_2,\sigma_3,\ldots$ of integers is \emph{unavoidable with high chromatic number} if there is some $k$ such that any graph with chromatic number at least $k$ has a cycle with length in $\sigma_1,\sigma_2,\sigma_3,\ldots$. There have previously been no non-trivial sequences of increasing odd integers which were known to be unavoidable with high chromatic number, but, similarly to in Section~\ref{sec:even}, Theorem~\ref{thm:chrom} immediately implies that many such sequences are unavoidable with high chromatic number, as follows.

\begin{cor}\label{thm:unmissable} Given $C\in \N$, there exists $k_0\in\bN$ such that the following holds. Let $\sigma_1,\sigma_2,\ldots$ be an infinite increasing sequence of odd integers such that $\sigma_{i+1}\leq C\sigma_i$ for each $i\in \N$. Then, every graph $G$ with chromatic number at least $\max\{k_0,\sigma_1^2\}$ has some $i\in \N$ with $\sigma_i\in \mathcal{C}(G)$.
\end{cor}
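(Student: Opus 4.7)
The plan is to deduce Corollary~\ref{thm:unmissable} directly from Theorem~\ref{thm:chrom}, choosing $\eps$ small enough that the multiplicative growth ratio $C$ of the sequence $(\sigma_i)$ fits inside the interval of consecutive odd cycle lengths produced by that theorem. Concretely, I would fix $\eps = 1/2$ and let $k^\ast$ be the value of $k_0$ supplied by Theorem~\ref{thm:chrom} for this $\eps$. Set $k_0 := \max\{k^\ast, C^2\}$. Given any graph $G$ with $k := \chi(G) \ge \max\{k_0, \sigma_1^2\}$, Theorem~\ref{thm:chrom} yields some $\ell \in \N$ such that every odd integer in $[\ell, \ell \cdot k^{1/2}]$ belongs to $\mathcal{C}(G)$, so it suffices to exhibit some $\sigma_i$ in this interval.

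To do so, I would split into cases according to whether $\ell \le \sigma_1$ or $\ell > \sigma_1$. In the first case, the assumption $\sigma_1^2 \le k$ gives $\sigma_1 \le k^{1/2} \le \ell \cdot k^{1/2}$, so $\sigma_1 \in [\ell, \ell \cdot k^{1/2}]$; since $\sigma_1$ is odd by hypothesis, this places $\sigma_1$ in $\mathcal{C}(G)$. In the second case, let $i \ge 2$ be the least index with $\sigma_i \ge \ell$; then $\sigma_{i-1} < \ell$, so $\sigma_i \le C\sigma_{i-1} < C\ell \le k^{1/2}\ell$ since $k \ge C^2$, placing the odd integer $\sigma_i$ in $[\ell, \ell \cdot k^{1/2}]$ and hence in $\mathcal{C}(G)$. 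Since Theorem~\ref{thm:chrom} does all the substantive work, there is no real obstacle to this deduction; the only live choice is that of $\eps$, and any $\eps \in (0,1)$ would serve provided $k_0$ is enlarged so that $C \le k_0^{1-\eps}$.
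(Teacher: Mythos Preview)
Your proposal is correct and is exactly the kind of immediate deduction from Theorem~\ref{thm:chrom} that the paper has in mind; the paper does not spell out a proof, treating the corollary as an obvious consequence, and your two-case argument (locating either $\sigma_1$ or the first $\sigma_i\geq \ell$ inside the interval $[\ell,\ell\cdot k^{1-\eps}]$) is the natural way to make this explicit.
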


We remark that our proof in fact shows that Theorem~\ref{thm:chrom} (and hence also Corollaries~\ref{thm:reciprocal} and~\ref{thm:unmissable}) holds with the weaker hypothesis that the graph $G$ is `not too close' to being bipartite. That is, as seen in Section~\ref{sec:min}, we use only the condition that, after removing the edge-set of any bipartite graph from $G$, there is still a subgraph with average degree $\Omega(k)$.

\subsection{Balanced subdivisions}\label{sec:sub}
A \emph{subdivision} of a graph $G$ is obtained by replacing each edge of $G$ by a path, such that the new paths are internally vertex disjoint. This notion has played a central role in topological graph theory since the seminal result of Kuratowski in 1930 that a graph is planar if and only if it does not contain a subdivision of the complete graph with five vertices or a subdivision of the complete bipartite graph with three vertices on each side~\cite{Kur30}.

In 1967, Mader~\cite{Mad67} proved that, for each $k\in \N$, there is some $d=d(k)$ such that every graph with average degree at least $d$ contains a subdivision of the complete graph $K_k$. After improved bounds on $d(k)$ by Mader~\cite{Mad72}, and Koml\'os and Szemer\'edi~\cite{K-Sz-1}, Bollob\'as and Thomason~\cite{BT98} proved that, optimally, we may take $d(k)=O(k^2)$. Koml\'os and Szemer\'edi~\cite{K-Sz-2} later improved their own methods to give an independent proof of this, and the graph expansion methods they introduced (see Section~\ref{sec:expand}) form the basis for many constructions in sparse graphs both here and elsewhere (see, for example,~\cite{KLSS17,liu2017proof}).

Our constructive approach to controlling the length of cycles also allows us to control the length of paths, and thus construct subdivisions in which each edge is replaced by a path of the same length. For integers $\ell,k\in\bN$, denote by $\tk ^{(\ell)}_k$ a subdivision of a complete graph $K_{k}$ in which each edge is replaced by a path with length $\ell$. We say that $\tk^{(\ell)}_k$ is a \emph{balanced subdivision} of $K_{k}$.

In 1984, Thomassen~\cite{Tho84} (see also~\cite{Tho85},~\cite{thomassen}) conjectured that, for each $k\in\bN$, high average degree in a graph is sufficient to guarantee a balanced subdivision of $K_k$. This was open even for $k=4$. We confirm Thomassen's conjecture, as follows.

\begin{theorem}\label{thm:balancedsub}
For each $k\in \N$, there exists $d$ such that every graph with average degree at least $d$ contains a $\tk^{(\ell)}_{k}$ for some $\ell\in \N$.
\end{theorem}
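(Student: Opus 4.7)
The plan is to combine the sublinear expansion framework of Koml\'os and Szemer\'edi with the length-controlled path-construction methods that underpin Theorem~\ref{thm:mindeg}. First, since the average degree condition is monotone under passing to a suitable subgraph, I would reduce to the case that $G$ itself is a robust sublinear expander with minimum degree $\delta(G) = \Omega(d)$, using the standard Koml\'os--Szemer\'edi reduction. Taking $d$ sufficiently large compared to $k$, the resulting expander has very strong connectivity and short-diameter properties that I will exploit below.

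Second, I would extract from the proof of Theorem~\ref{thm:mindeg} a \emph{path-length lemma} of the following shape: in such an expander $H$, for suitably chosen pairs of vertices $u,v$, and any prescribed length $\ell$ in a long interval $[L_-, L_+]$ of the correct parity, there exists a $u$-$v$ path in $H$ of length exactly $\ell$; moreover the path can be constructed while avoiding any pre-specified vertex set $W$ of size $o(|V(H)|)$. This is the natural ``path version'' of the even-cycle statement in Theorem~\ref{thm:mindeg}: the construction there already finds a cycle of prescribed even length by stitching together two internally disjoint paths between two vertices, so the underlying engine produces paths of controlled length through an expander skeleton.

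Third, I would pick $k$ branch vertices $v_1,\dots,v_k$ inside a well-expanding portion of $H$, and fix a common target length $\ell$ in the guaranteed interval $[L_-, L_+]$. I then route the $\binom{k}{2}$ paths one pair at a time. When it is the turn of the pair $(v_i, v_j)$, the set $W$ of ``forbidden'' vertices consists of the internal vertices of the at most $\binom{k}{2}$ paths previously built, together with the remaining branch vertices; this set has size at most $\binom{k}{2}\ell + k$, which is $o(|V(H)|)$ once $d$ (and hence $|V(H)|$) is chosen sufficiently large in terms of $k$ and $\ell$. Applying the path-length lemma gives a $v_i$-$v_j$ path of length exactly $\ell$ that is internally disjoint from everything constructed so far, and after $\binom{k}{2}$ steps we have produced a $\tk^{(\ell)}_k$.

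The main obstacle, and the bulk of the technical work, lies in the second step: turning the cycle-oriented conclusion of Theorem~\ref{thm:mindeg} into the avoidance-compatible, endpoint-specified path statement above. Concretely, one has to check that the path-concatenation machinery underlying Theorem~\ref{thm:mindeg} can (i) have its two endpoints specified in advance at vertices of high expansion rather than arising from the construction, (ii) produce paths of \emph{every} length in the relevant interval rather than merely enough lengths to form cycles of each even length, and (iii) remain robust against deleting a small vertex set, so that sequential routing does not exhaust the expander. Ensuring that these three modifications can be made simultaneously, without losing the expansion hypotheses on which Theorem~\ref{thm:mindeg} rests, is the crux; once the path-length lemma is in place, the balanced subdivision follows by the greedy routing above.
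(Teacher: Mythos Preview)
Your proposal is correct and follows essentially the same route as the paper: pass to a bipartite expander, invoke the path-length lemma (Lemma~\ref{lem-finalconnect}, which indeed gives paths of any prescribed length in a long interval between specified endpoints while avoiding a small set), and route the $\binom{k}{2}$ paths greedily with a common target length $\ell$. The one refinement the paper adds to your step~3 is to pre-build, via Lemma~\ref{lem-expansion}, a family of pairwise disjoint vertex expansions around each branch vertex---one per pair it participates in, with geometrically decreasing sizes---and to include the not-yet-used expansions in the avoidance set at each step; this bookkeeping is precisely what guarantees that the endpoints retain enough expansion for Lemma~\ref{lem-finalconnect} throughout the greedy process.
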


We note that it is conceivable that there is some $\eps>0$ such that any graph with average degree at least $d$ in fact contains a $\tk^{(\ell)}_{\eps\sqrt{d}}$, which would be optimal up to the value of $\eps$. Furthermore, this may be provable using appropriate extensions of our methods (in particular along the lines of the techniques in \cite{liu2017proof}). However, though the balanced subdivision problem was the original focus of our work, we do not push these techniques further at the expense of a clear presentation of the new cycle construction techniques.

%%%%%%%%%%%%%%%%%%%%%%%%
%%%%%%%%%%%%%%%%%%%%%%%%
%%%%%%%%%%%%%%%%%%%%%%%%
%%%%%%%%%%%%%%%%%%%%%%%%
%%%%%%%%%%%%%%%%%%%%%%%%%%%%%

\section{Structure and proof sketches}\label{sec-prelim}

After describing our notation in Section~\ref{sec:not}, in Section~\ref{sec:expand} we recall the graph expansion concepts introduced by Koml\'os and Szemer\'edi in~\cite{K-Sz-1,K-Sz-2}. This allows us to give our main theorem, Theorem~\ref{mainthm}, in Section~\ref{sec:mainthm} and derive Theorem~\ref{thm:mindeg}, while noting how we use similar methods to prove Theorem~\ref{thm:balancedsub}. We then sketch the proof of Theorem~\ref{mainthm} in Section~\ref{sec:mainthmsketch}. The proof of Theorem~\ref{thm:chrom} is sketched in Section~\ref{sec:chromsketch}. Finally, in Section~\ref{sec:techhigh}, we highlight one particular innovation for our constructions in expanders, which may prove useful elsewhere.

\subsection{Notation}\label{sec:not}
%%%%%%%%%%%%%%%%%%%%%%%%
%%%%%%%%%%%%%%%%%%%%%%%%
%%%%%%%%%%%%%%%%%%%%%%%%%%%%%

Let $G$ be a graph, let $v\in V(G)$ be a vertex and let $W\subseteq V(G)$ be a set of vertices. We write $|G|=|V(G)|$ for the order of the graph.
Let $\de(G), d(G)$ and $\De(G)$ be the minimum, average and maximum degree of $G$ respectively, and let $N_G(v)$ be the set of neighbours of $v$ in $G$.
Denote by $N_G(v,W)$ the set of neighbours of $v$ in $W$, and denote by $d_G(v,W)=|N_G(v,W)|$ the degree of $v$ into $W$ in $G$. Denote the (external) neighbourhood of $W$ by $N_G(W)=(\cup_{v\in W}N(v))\setminus W$.
Let $G[W]\subseteq G$ be the induced subgraph of $G$ with vertex set $W$.
Denote by $G-W$ the induced subgraph $G[V(G)\setminus W]$.
We write $N_G^0(W)=W$, and, for each integer $k\ge 1$, let
$N_G^{k}(W)=N_G(N_G^{k-1}(W))$ be the set of vertices a graph distance $k$ from $W$, and let $B_G^k(W)=\cup_{0\le j\le k}N_G^{j}(W)$ be the ball of radius $k$ around $W$ in $G$. We let $B(W)=B^1(W)$.

Given graphs $G$ and $H$, the graph $G\cup H$ has vertex set $V(G)\cup V(H)$ and edge set $E(G)\cup E(H)$. Denote by $G\setminus H$ the graph with vertex set $V(G)$ and edge set $E(G)\setminus E(H)$. For a collection $\cP$ of graphs, denote by $|\cP|$ the number of graphs in $\cP$ and write $V(\cP)=\cup_{G\in \cP}V(G)$.

For a path $P$, let its length be $\ell(P)$. Where we say $P$ is a path from a vertex set $A$ to a disjoint vertex set $B$, we mean that $P$ has one endvertex in each of $A$ and $B$, and no internal vertices in $A\cup B$. For each $\ell\in \N$ and $k>0$, $\tk ^{(\ell)}_k$ is a subdivision of a complete graph $K_{\lfloor k\rfloor}$ in which each edge is replaced by a path with length $\ell$.

Many of our results that build to the theorems stated in Section~\ref{sec:intro} state that for each $\eps_1,\eps_2>0$ (and perhaps each $k\in \N$), there is some $d_0(\eps_1,\eps_2)$  (or $d_0(\eps_1,\eps_2,k)$) such that some property holds for $n\geq d\geq d_0$. For brevity, we do not calculate the function $d_0(\eps_1,\eps_2)$  (or $d_0(\eps_1,\eps_2,k)$) and assume implicitly in our proofs that $n$ and $d$ are as large as needed, depending on $\eps_1$ and $\eps_2$ (and perhaps $k$) --- where it may help the reader we recall this at various points in the proofs. The results from now on in this paper are stated and proved in order, so that these functions could be chosen sequentially through the paper.

We omit the subscript $G$ when the underlying graph $G$ is clear.
When it is not essential, we omit the floors and ceilings. All logarithms are natural.

%%%%%%%%%%%%%%%%%%%%%%%%
%%%%%%%%%%%%%%%%%%%%%%%%
%%%%%%%%%%%%%%%%%%%%%%%%%%%%%
\subsection{Koml\'os-Szemer\'edi graph  expansion}\label{sec:expand}
%%%%%%%%%%%%%%%%%%%%%%%%
%%%%%%%%%%%%%%%%%%%%%%%%
%%%%%%%%%%%%%%%%%%%%%%%%%%%%%
An expansion property in a graph $G$ is typically one in which every set $X\subseteq V(G)$ satisfies $|N_G(X)|\geq \eps|X|$ for some function $\eps$ depending on the size of $X$.
Expansion is key to all of our constructions. Our expansion must therefore exist in some subgraph of any graph.
Following Koml\'os and Szemer\'edi~\cite{K-Sz-1,K-Sz-2}, effectively we use the strongest expansion that can be found in some subgraph of any graph, based on its average degree.

\begin{defn}
For each $\ep_1>0$ and $k>0$, a graph $G$ is an \emph{$(\eps_1,k)$-expander} if
$$|N(X)|\geq \eps(|X|,\eps_1,k)\cdot |X|$$
for all $X\subseteq V(G)$ with $k/2\leq |X|\leq |G|/2$, where
\begin{eqnarray}\label{epsilon}
\ep(x,\ep_1,k):=\left\{\begin{tabular}{ l l }
$0$ & $\mbox{ if } x<k/5$, \\
$\ep_1/\log^2(15x/k)$ & $\mbox{ if } x\ge k/5$. \\
\end{tabular}
\right.
\end{eqnarray}
Whenever the choices of $\eps_1, k$ are clear, we omit them and write $\eps(x)$ for $\eps(x,\eps_1,k)$.
\end{defn}

If an $n$-vertex graph $G$ is an $(\eps_1,k)$-expander and $X\subseteq V(G)$ has size at least $k/2$, then $B^i_G(X)$ increases as $i$ increases, until the set contains at least $n/2$ vertices. The rate of expansion, $|N_G(B^i_G(X))|/|B^i_G(X)|\geq \eps(|B^i_G(X)|,\eps_1,k)$ guaranteed by the expansion condition decreases as $i$ increases (see, for example, \cite{K-Sz-2}). That is, $\ep(x,\eps_1,k)$ decreases as $x\geq k/2$ increases. However, $\ep(x,\eps_1,k)\cdot x$ increases as $x$ does, so the lower bound from expansion we have for $|N_G(B_G^i(X))|$ increases as $i$ increases.

As Koml\'os and Szemer\'edi~\cite{K-Sz-2} showed, every graph $G$ contains an expander with comparable average degree to $G$, as follows.

\begin{theorem}[\cite{K-Sz-2}]\label{thm-expander}
There exists some $\eps_1>0$ such that the following holds for every $k>0$. Every graph $G$ has an $(\eps_1,k)$-expander subgraph $H$ with $d(H)\geq d(G)/2$ and $\delta(H)\geq d(H)/2$.
\end{theorem}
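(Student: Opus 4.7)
The plan is to run the classical Komlós--Szemerédi extraction argument: find a subgraph of $G$ that maximizes a weighted average-degree functional, then exploit maximality to force the expansion property. I would first reduce to the case of high minimum degree via the standard iterative pruning, obtaining a subgraph $G_0 \subseteq G$ with $d(G_0) \geq d(G)$ and $\delta(G_0) \geq d(G_0)/2$ (repeatedly deleting a vertex of degree below half the current average degree never decreases the average degree). The minimum-degree condition $\delta(H) \geq d(H)/2$ in the conclusion will then be preserved by restricting the entire search to subgraphs already satisfying this.

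Define a weight function $g : \mathbb{N} \to [1,2]$ by $\log g(n) = C \sum_{j \geq 0 :\, n \cdot 2^{-j} \geq k/5} \eps(n\cdot 2^{-j}, \eps_1, k)$ for an absolute constant $C$. The $\log^{-2}$ factor built into $\eps$ makes the dyadic sum convergent, so for $\eps_1$ small enough we have $g(n) \leq 2$ uniformly in $n$. Let $H \subseteq G_0$ be a subgraph with $|H| \geq k/2$ maximizing the weighted density $\rho(H) := d(H)/g(|H|)$ among subgraphs with $\delta(H) \geq d(H)/2$. Then $d(H) = \rho(H) g(|H|) \geq \rho(G_0) \geq d(G_0)/2 \geq d(G)/2$, and $\delta(H) \geq d(H)/2$ holds by restriction, so only the expansion property needs to be verified.

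Suppose for contradiction that some $X \subseteq V(H)$ with $k/2 \leq |X| \leq |H|/2$ satisfies $|N_H(X)| \leq \eps(|X|,\eps_1,k)\,|X|$. Let $A = X \cup N_H(X)$ and $B = V(H) \setminus X$, so every edge of $H$ lies in $H[A] \cup H[B]$, and hence $|H|\,d(H) \leq |A|\,d(H[A]) + |B|\,d(H[B])$. Apply the minimum-degree trick to each of $H[A]$ and $H[B]$ to obtain $H_A, H_B$ with $d(H_A) \geq d(H[A])$, $d(H_B) \geq d(H[B])$, and both satisfying $\delta \geq d/2$. Provided $|H_A|, |H_B| \geq k/2$ (an easily-checked condition since $|A|, |B| \geq k/2$ and the pruning can be tuned not to shrink too fast), both are admissible competitors, so maximality forces $|H|\,g(|H|) \leq |A|\,g(|H_A|) + |B|\,g(|H_B|)$. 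A calculation using that $g$ is calibrated so that $g(|H|)/g(m)$ absorbs the excess $|A| + |B| - |H| \leq \eps(|X|)|X|/2$ then produces the desired contradiction.

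The main obstacle is this concluding computation, which must handle all the possible sizes of $|X|$ in $[k/2,|H|/2]$ uniformly. The key point is that $g$ must grow at exactly the rate of $\eps$ at each dyadic scale, so that the differences $g(|H|) - g(|A|)$ and $g(|H|) - g(|B|)$ precisely compensate for the oversize $|A| + |B| - |H|$. The $\log^{-2}$ factor in $\eps$ serves a dual purpose here: it keeps the total growth of $g$ bounded (a prerequisite for $d(H) \geq d(G)/2$) while providing enough local growth at each scale for the maximality argument to close. A secondary technical issue is the size bookkeeping needed to ensure $H_A$ and $H_B$ remain in the competition after pruning, which can be managed either by a buffer in the size threshold or a direct small-case analysis near $|X| = k/2$.
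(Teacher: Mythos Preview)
The paper does not give its own proof of this theorem; it is quoted from Koml\'os and Szemer\'edi~\cite{K-Sz-2}. Your sketch follows their argument in outline, and the weight $g$ is calibrated correctly, but restricting the competition to subgraphs with $\delta \geq d/2$ introduces a genuine gap.

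The problem is the step where you prune $H[A]$ and $H[B]$ to $H_A, H_B$ before applying maximality. You claim $|H_A|, |H_B| \geq k/2$ is ``easily checked'' because ``pruning can be tuned not to shrink too fast''; this is false. Iterated deletion of vertices of degree below half the current average can collapse a graph to an arbitrarily small core---take $H[A]$ to be a small clique with many pendant vertices attached. So $H_A$ may fail to be an admissible competitor, and the inequality $|H|\,g(|H|) \leq |A|\,g(|H_A|) + |B|\,g(|H_B|)$ is not justified.

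The fix is to drop the restriction entirely: maximize $\rho(H)=d(H)/g(|H|)$ over \emph{all} subgraphs of $G$. Then $H[A]$ and $H[B]$ are competitors as they stand, no pruning needed, and maximality gives $|H|\,g(|H|) \leq |A|\,g(|A|) + |B|\,g(|B|)$ directly; the calibration of $g$ then yields the contradiction as you describe. The minimum-degree condition comes for free: if the maximizer $H$ contained a vertex $v$ with $d_H(v)<d(H)/2$, then $d(H-v)>d(H)$ while $g(|H|-1)\leq g(|H|)$, so $\rho(H-v)>\rho(H)$, contradicting maximality. Thus neither the initial pruning to $G_0$, nor the restricted competition, nor the pruning of $H[A], H[B]$ is needed.
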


Note that, in Theorem~\ref{thm-expander},  the expander subgraph $H$ can be much smaller than the original graph $G$ in size. Indeed, $G$ could be the disjoint union of many copies of such a graph $H$.

We use expansion to expand and connect vertex sets, creating paths to construct cycles of varying lengths. A typical use is the following result of Koml\'os and Szemer\'edi, though we use the comparable Lemma~\ref{new-connect}.

\begin{lemma}[\cite{K-Sz-2}]\label{lem-diameter} Let $\eps_1,k>0$.
	If $G$ is an $n$-vertex $(\ep_1,k)$-expander, then any two vertex sets, each of size at least
	$x\ge k$, are of distance at most $\frac{2}{\ep_1}\log^3(15n/k)$ apart. This remains true even after deleting $x\cdot \eps(x)/4$ arbitrary vertices from $G$.
\end{lemma}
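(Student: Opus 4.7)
The plan is to run a breadth-first search from each of the two vertex sets simultaneously inside the graph $G' := G - W$, where $W$ is the deleted set of at most $x\eps(x)/4$ vertices, and show the two searches meet within the claimed distance. Given $A,B\subseteq V(G)$ each of size at least $x\ge k$, if the ball of radius $t_1$ in $G'$ around $A$ has size more than $n/2$, and similarly the ball of radius $t_2$ around $B$, then the two balls lie in $V(G')$ and together cover more than $n$ vertices, so they share a vertex, producing a path in $G'$ of length at most $t_1+t_2$ from $A$ to $B$. Thus the task reduces to bounding $t_1$ (and symmetrically $t_2$) by roughly $(1/\eps_1)\log^3(15n/k)$.

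The first key step is to absorb the loss from deleting $W$ into a constant factor. I would verify that $y\mapsto \eps(y,\eps_1,k)\cdot y = \eps_1 y/\log^2(15y/k)$ is increasing in $y$ on $[k,n/2]$, since $\log(15y/k)\ge \log 15 > 2$ on this range makes the derivative positive. Consequently, whenever $X\subseteq V(G')$ satisfies $x\le |X|\le n/2$, applying the $(\eps_1,k)$-expansion of $G$ gives $|N_{G'}(X)| \ge |N_G(X)| - |W| \ge \eps(|X|)|X| - \eps(x)x/4 \ge (3/4)\eps(|X|)|X|$, so expansion continues at essentially $3/4$ of its original rate after the deletion. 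Writing $s_i := |B^i_{G'}(A)|$, this iterates to $s_{i+1}\ge (1+(3/4)\eps(s_i))s_i$, and hence $\log s_{i+1} - \log s_i \ge 3\eps_1/(8\log^2(15s_i/k))$ so long as $s_i < n/2$.

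The final step is to count BFS layers by comparing the resulting sum to an integral. Rearranging, the number of layers needed until $s_i$ exceeds $n/2$ satisfies
$$t_1 \le \frac{8}{3\eps_1}\sum_i (\log s_{i+1} - \log s_i)\log^2(15 s_i/k) \le \frac{8}{3\eps_1}\int_{\log(15x/k)}^{\log(15n/k)} u^2\, du \le \frac{8}{9\eps_1}\log^3(15n/k),$$
using the substitution $u = \log(15s/k)$ and the fact that the left Riemann sum of the increasing function $u^2$ is bounded by the integral. Applying the same bound to $t_2$ and summing gives $t_1 + t_2 \le (16/(9\eps_1))\log^3(15n/k)$, comfortably within $(2/\eps_1)\log^3(15n/k)$. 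The one mildly delicate point is the monotonicity of $\eps(y)y$ on the relevant range: this is precisely what keeps the deletion term $x\eps(x)/4$ small compared to the expansion of every intermediate BFS layer encountered, so that the removal costs only a constant factor. Everything else is the standard Koml\'os--Szemer\'edi sum-to-integral estimate.
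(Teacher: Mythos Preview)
The paper does not give its own proof of this lemma; it is quoted directly from Koml\'os--Szemer\'edi~\cite{K-Sz-2}. Your argument is precisely the standard proof from that reference: expand each set in $G-W$, use the monotonicity of $y\mapsto y\,\eps(y)$ to absorb the deleted set $W$ into a constant-factor loss, and bound the number of BFS layers by the sum-to-integral comparison. One small technical point you glossed over: at the very first step $A$ may intersect $W$, so $s_0=|A\setminus W|$ could fall slightly below $x$, and your inequality $|N_{G'}(X)|\ge(3/4)\eps(|X|)|X|$ is only justified once $|X|\ge x$; but since $s_0\ge x-|W|\ge x(1-\eps(x)/4)\ge k/2$, one application of expansion in $G$ already gives $s_1\ge s_0+\eps(s_0)s_0-|W|\ge x$, after which your iteration proceeds as written. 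This costs at most one extra layer and does not affect the final bound.
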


It is convenient to work in a bipartite graph; to do this we use the following simple and well known result.

\begin{prop}\label{bipsub} Within any graph $G$ there is a bipartite subgraph $H$ with $d(H)\geq d(G)/2$.
\end{prop}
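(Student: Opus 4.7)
The plan is to use a standard probabilistic argument (or equivalently a local-search argument) to find a bipartition of $V(G)$ whose crossing edges form the desired subgraph $H$.

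First I would partition $V(G)$ randomly into two parts $A$ and $B$, placing each vertex independently into $A$ or $B$ with probability $1/2$. Let $H$ be the spanning bipartite subgraph of $G$ consisting of those edges with one endpoint in $A$ and the other in $B$. For any fixed edge $uv \in E(G)$, the probability that $u$ and $v$ land on opposite sides is $1/2$, so by linearity of expectation,
\[
\mathbb{E}[e(H)] = \tfrac{1}{2}\, e(G).
\]
Hence there exists a specific choice of $(A,B)$ for which $e(H) \geq e(G)/2$.

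Next, since $H$ shares its vertex set with $G$, its average degree is
\[
d(H) = \frac{2\,e(H)}{|V(G)|} \geq \frac{e(G)}{|V(G)|} = \frac{d(G)}{2},
\]
as required. Alternatively, the same conclusion follows deterministically: starting from any bipartition and repeatedly moving a vertex to the other side whenever this strictly increases the number of crossing edges, one reaches (in finitely many steps, since $e(H) \leq e(G)$) a local maximum in which each vertex has at least half its $G$-neighbours on the opposite side, so the crossing edges account for at least $e(G)/2$.

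There is no real obstacle here — the only very minor point to be careful about is the convention for average degree when isolated vertices are present, which is handled automatically by taking $H$ to be spanning (so $|V(H)| = |V(G)|$) rather than induced on the non-isolated vertices.
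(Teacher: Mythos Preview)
Your argument is correct and is exactly the standard proof of this well-known fact. The paper itself does not give a proof of this proposition at all --- it simply states it as ``simple and well known'' and moves on --- so there is nothing to compare against; your probabilistic (or local-search) argument is precisely what is intended.
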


Combining this with Theorem~\ref{thm-expander},
we get the following immediate corollary.

\begin{cor}\label{cor-expander}
There exists some $\eps_1>0$ such that the following holds for every $\eps_2>0$ and $d\in \N$. Every graph $G$ with $d(G)\geq 8d$ has a bipartite $(\eps_1,\eps_2d)$-expander subgraph $H$ with $\delta(H)\geq d$.\hfill\qed
\end{cor}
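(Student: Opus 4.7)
The plan is to combine Proposition~\ref{bipsub} and Theorem~\ref{thm-expander} in the correct order: first pass to a bipartite subgraph, then extract an expander inside it, so that the final subgraph inherits bipartiteness automatically from the containing graph.

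More concretely, given $G$ with $d(G)\geq 8d$, I would first apply Proposition~\ref{bipsub} to obtain a bipartite subgraph $G'\subseteq G$ with $d(G')\geq d(G)/2\geq 4d$. I would then apply Theorem~\ref{thm-expander} to $G'$ with the parameter $k$ instantiated as $\eps_2 d$, obtaining an $(\eps_1,\eps_2 d)$-expander subgraph $H\subseteq G'$ satisfying $d(H)\geq d(G')/2\geq 2d$ and $\delta(H)\geq d(H)/2\geq d$. Since $H$ is a subgraph of the bipartite graph $G'$, $H$ is itself bipartite, yielding the desired object.

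The constant $\eps_1$ is precisely the one produced by Theorem~\ref{thm-expander}; it depends on nothing else, matching the quantification in the statement. The only subtle point is that the order of operations matters: if instead one applied the expander theorem first and then passed to a bipartite subgraph, the expansion guarantee of the resulting subgraph would no longer be automatic. So the minor obstacle is simply to remember to bipartite-ise first; after that, the corollary follows formally by chaining the two results together.
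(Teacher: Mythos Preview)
Your proposal is correct and matches the paper's approach exactly: the corollary is stated as an immediate consequence of combining Proposition~\ref{bipsub} (bipartite subgraph) followed by Theorem~\ref{thm-expander} (expander extraction), in precisely the order you describe. Your observation about the order of operations is the only point worth noting, and you have it right.
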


%%%%%%%%%%%%%%%%%%%%%%%%%%%%%%%%%%%%%%%%%%%
%%%%%%%%%%%%%%%%%%%%%%%%%%%%%%%%%%%%%%%%%%%

\subsection{A stronger version of Theorem~\ref{thm:mindeg}}\label{sec:mainthm}
%%%%%%%%%%%%%%%%%%%%%%%%%%%%%%%%%%%%%%%%%%%%%%%%

We will prove Theorem~\ref{thm:mindeg} in the slightly stronger form of Theorem~\ref{mainthm} below, which applies to an expander. Combining this with Corollary~\ref{cor-expander} easily gives Theorem~\ref{thm:mindeg}, as shown below. We use Theorem~\ref{mainthm} to prove Theorem~\ref{thm:chrom}, as outlined in Section~\ref{sec:chromsketch}. Theorem~\ref{thm:balancedsub} is proved using very similar methods to Theorem~\ref{mainthm}, and we comment on this below.

In order to state Theorem~\ref{mainthm}, we use the following definition, which records whether there will be even or odd length paths between two vertices $u$ and $v$ in a connected bipartite graph $H$.

\begin{defn} For any connected bipartite graph $H$ and $u,v\in V(H)$, let\label{pidefn}
$$
\pi(u,v,H)=\left\{\begin{array}{ll}
0 & \text{ if }u=v, \\
1 & \text{ if $u$ and $v$ are in different vertex classes in the (unique) bipartition of $H$},\\
2 & \text{ if $u$ and $v$ are in the same vertex class and $u\neq v$}.
\end{array}
\right.
$$
\end{defn}

\noindent Note that, for example by Lemma~\ref{lem-diameter}, any $(\eps_1,k)$-expander subgraph with minimum degree at least $k$ is connected, and this will allow us to use this definition for the bipartite expanders we use.

In common with many of our results in the rest of this paper, Theorem~\ref{mainthm} applies only to $\tk_{\ell}^{(2)}$-free graphs for some $\ell$ (often $\ell=d/2$). A subdivision of the complete graph on $\ell$ vertices, with each edge subdivided into a path of length 2, has many different even cycle lengths, and many different path lengths between pairs of vertices. As our aim in applying Theorem~\ref{mainthm} is to find only \emph{some} subgraph with this property (see, for example, the proof of Corollary~\ref{mainforchrom}), we need look no further than such a subdivision. Why our constructions require the graph to be $\tk_{\ell}^{(2)}$-free is commented on in Section~\ref{skewbi}.

\begin{theorem}\label{mainthm} There exists $\eps_1>0$, such that, for each $0<\eps_2<1/5$, there exists $d_0=d_0(\eps_1,\eps_2)$ such that the following holds for each $n\geq d\geq d_0$. Suppose that $H$ is a $\tk_{d/2}^{(2)}$-free bipartite $n$-vertex $(\eps_1,\eps_2 d)$-expander with $\de(H)\ge d$.
	Let $x,y\in V(H)$ be distinct,  and let
	$$\ell\in [\log^{7}n,n/\log^{12}n]$$
	satisfy $\pi(x,y,H)= \ell\mod 2$.

  Then, $H$ contains an $x,y$-path with length $\ell$.
\end{theorem}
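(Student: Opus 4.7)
The strategy is to build an $x,y$-path of length exactly $\ell$ by stringing together many small \emph{adjusters}, so that we can fine-tune the total length in steps of $2$ (the parity-preserving unit imposed by bipartiteness via $\pi(x,y,H)\equiv \ell\pmod 2$). An adjuster at a pair $(u,v)$ is a pair of internally disjoint $u,v$-paths whose lengths differ by exactly $2$. If I can chain $m$ such adjusters together, linked by short connectors and with the two free endpoints hooked to $x$ and $y$, then the choice in each adjuster of which of the two paths to traverse realises every even value in an interval of width $2m$. So the task reduces to finding enough adjusters disjointly, placing them into a path of roughly the right shape, and tuning the baseline length to align the achievable interval with $\ell$.

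First I would produce a family $A_1,\ldots,A_m$ of $m=\Theta(\ell/\mathrm{polylog}\,n)$ vertex-disjoint short adjusters. To construct one adjuster, I would use the minimum-degree condition and expansion to find two short paths to a common target via disjoint neighbourhoods, and then use Lemma~\ref{lem-diameter} to attach a length-$2$ detour to one of them, producing two internally disjoint $u,v$-paths whose lengths differ by exactly $2$. The $\tk_{d/2}^{(2)}$-free hypothesis is essential here: without it, too many short paths between too many fixed pairs could be collected into an embedded $\tk_{d/2}^{(2)}$, and ruling this out gives enough structural sparsity for the $m$ adjusters to be packed disjointly; this, as indicated in Section~\ref{skewbi}, is the delicate core of the argument.

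Next I would link the adjusters $A_i$, with endpoints $(u_i,v_i)$, by short connectors $C_0$ from $x$ to $u_1$, $C_i$ from $v_i$ to $u_{i+1}$ for $1\le i<m$, and $C_m$ from $v_m$ to $y$, all vertex-disjoint from each other and from the adjusters. Each connector can be chosen of length $O(\log^{3}n)$ by Lemma~\ref{lem-diameter}, and the robustness clause of that lemma allows the construction to proceed iteratively, since the total number of vertices used so far is at most $O(m\cdot \mathrm{polylog}\,n)=o(n)$ and in particular far below the threshold at which $(\eps_1,\eps_2 d)$-expansion breaks down. Writing $L_0$ for the total length of the path when every adjuster uses its shorter option, the set of achievable $x,y$-path lengths is exactly the set of integers of the correct parity in $[L_0, L_0 + 2m]$.

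It remains to arrange $\ell\in [L_0, L_0 + 2m]$. Since $m$ is linear in $\ell/\mathrm{polylog}\,n$, there is ample slack, and I would tune $L_0$ by allowing one of the connectors (say $C_0$) to have variable length in a short range --- this is again supplied by expansion, producing paths of many possible short lengths between the two ends. Combining a coarse alignment of $L_0$ via connector-length choice with the $2$-steps tuning from the adjuster chain then pinpoints $\ell$ exactly. The hardest step, and the main obstacle, is the first one: constructing the adjusters so that they are short, vertex-disjoint, and compatible with the subsequent connector-building steps, while using the $\tk_{d/2}^{(2)}$-free hypothesis in an essential way to avoid proliferating substructures. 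Once that is in place, the remainder is a careful but relatively standard iterative application of expander connectivity.
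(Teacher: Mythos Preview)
Your high-level strategy --- chaining adjusters and connectors, then selecting in each adjuster to hit $\ell$ exactly --- matches the paper's. But there is a real gap in the parameters. You propose $m = \Theta(\ell/\mathrm{polylog}\,n)$ adjusters, so that the baseline length $L_0$ is already of order $\ell$. For $\ell$ near the top of the range $n/\log^{12}n$, this means $m$ is of order $n/\mathrm{polylog}\,n$: you would need to find that many vertex-disjoint adjusters. The robust adjuster-finding step (the paper's Lemma~\ref{lem-robust-adj}, which is the technical heart of the argument) only produces an adjuster while avoiding a set $U$ of \emph{polylogarithmic} size; it does not, and cannot easily be made to, avoid a set of size $n/\mathrm{polylog}\,n$. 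So your iterative packing would stall long before you had enough adjusters.

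The paper avoids this by decoupling the two scales. It uses only $O(\log^3 n)$ adjusters, chained into a single $(D,m,\Theta(\log^3 n))$-adjuster (Lemma~\ref{lem-adj-to-adj-path}); this provides fine-tuning over a window of width $\Theta(\log^3 n)$, not $\Theta(\ell)$. The bulk of the length comes from a separate construction (Lemma~\ref{lem-conexp} and Corollary~\ref{longconnect4}) that, given large vertex expansions $F_x,F_y$ of $x,y$, produces an $x,y$-path of any approximate length up to $n/\log^{12}n$, accurate to within $O(\log^3 n)$. This step does not use adjusters at all; it repeatedly grows a path by attaching large low-diameter blobs found via Lemma~\ref{cl-egg}. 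Then Lemma~\ref{lem-finalconnect} glues the approximate long path to the adjuster chain and tunes. Your ``variable-length connector $C_0$'' is gesturing at this, but a connector with a ``short range'' of lengths is not enough: you need a mechanism that can hit any target length in $[\log^7 n, n/\log^{12}n]$ up to an additive $O(\log^3 n)$ error, and that mechanism (the iterated blob-extension of Section~\ref{sec:lengthen}) is a genuinely separate ingredient you are missing.

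A smaller point: your account of why $\tk_{d/2}^{(2)}$-freeness matters is off. It is not used to pack many adjusters disjointly; rather (via Proposition~\ref{prop-1subdivision}) it bounds the number of vertices with many neighbours in a small forbidden set $U$, which is exactly what Lemma~\ref{lem-newbit} needs to push the robust-adjuster argument through. This is also why the robustness only works against polylog-sized $U$.
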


 Theorem~\ref{thm:mindeg} follows from Theorem~\ref{mainthm} and Corollary~\ref{cor-expander}, as follows.

\begin{proof}[Proof of Theorem~\ref{thm:mindeg}] Let $\eps_1>0$ be such that the condition in Corollary~\ref{cor-expander} applies, and let $\eps_2=1/100$. Let $d_0$ be large (see Section~\ref{sec:not}), and in particular large enough that the property in Theorem~\ref{mainthm} holds for $\eps_1$ and $\eps_2'=8\eps_2$, for each $d\geq d_0/8$.

  Let $G$ be a graph with average degree $d\ge d_0$ and let $\bar{d}=d/8$. By the property from Corollary~\ref{cor-expander}, $G$ contains a bipartite $(\eps_1,\eps_2 d)$-expander subgraph $H$ with $\delta(H)\geq \bar{d}$. As $\bar{d}=d/8$, $H$ is an $(\eps_1,8\eps_2\bar{d})$-expander. If $H$ contains a $\tk_{\bar{d}/2}^{(2)}$, then $H$ contains every even cycle length between 6 and $\bar{d}$. As $\bar{d}=d/8$ is large, the property in the theorem holds with $\ell=\bar{d}$ as $\log^{8}\bar{d}\geq 6$.

Assume then that $H$ is $\tk_{\bar{d}/2}^{(2)}$-free.  As $\delta(H)\geq \bar{d}>0$, we can pick distinct vertices $x,y\in V(H)$ such that $xy\in E(H)$. Note that $\pi(x,y,H)=1$. Let $n=|H|$ and $\ell=n/\log^{12}n\geq \bar{d}/\log^{12}\bar{d}\geq d/(10\log^{12}d)$, as $\bar{d}=d/8$ is large.

 For every even $\ell'\in [\log^{8}\ell,\ell]$, $(\ell'-1)$ is an odd number in $[\log^{7}n,n/\log^{12}n]$.
  Then, by the property from Theorem~\ref{mainthm} applied with $x$ and $y$, there is an $x,y$-path with length $\ell'-1$ in $H$, and therefore a cycle with length $\ell'$ in $H$.
\end{proof}

For Theorem~\ref{thm:balancedsub}, essentially, we find a copy of $\tk_k^{(\ell)}$, for some $\ell\in \N$, by first taking an expander subgraph and $k$ distinct vertices within it to be the \emph{core vertices}. Core vertices in a $K_k$-subdivision are the vertices which are not interior vertices of a path which replaces an edge. Using the same construction as for Theorem~\ref{mainthm} multiple times, we then find internally vertex disjoint paths with the same length between each pair of core vertices. This is done in Section~\ref{sec:balancedsub}.
%%%%%%%%%%%%%%%%%%%%%%%%%%%%%%%%%%%%%%%%%%%
%%%%%%%%%%%%%%%%%%%%%%%%%%%%%%%%%%%%%%%%%%%

\subsection{Proof sketch for Theorem~\ref{mainthm}}\label{sec:mainthmsketch}
To discuss the proof of Theorem~\ref{mainthm}, let $H$ be a $\tk_{d/2}^{(2)}$-free bipartite $n$-vertex $(\eps_1,\eps_2d)$-expander, with $0<\eps_1,\eps_2<1$, such that $\de(H)\geq d$, and let
$x,y\in V(H)$ be distinct.

Our aim is to find a sequence of $x,y$-paths in $H$ whose lengths increase by 2 each time. In Section~\ref{sec:onecycle} we describe how one cycle can be used to find two paths with the same endvertices and lengths differing by 2, and how a connected sequence of cycles can create a longer sequence of paths with lengths increasing by 2 each time. In Section~\ref{sec:manycycles}, we discuss how a sequence of cycles can be connected, and introduce the concept of an \emph{adjuster}. In Section~\ref{sec:manypaths}, we describe how a polylogarithmic number of adjusters can be used to find long paths with the lengths required by Theorem~\ref{mainthm}. In Section~\ref{sec:limits}, we discuss the natural limitations of these methods.

\subsubsection{Creating a little adjustment with small cycles}\label{sec:onecycle}
Our proof of Theorem~\ref{mainthm} is based on the following simple idea. Suppose we can find in $H$ a short cycle $C$, with length $2\ell$ say, which does not contain $x$ or $y$. Take two vertices $v_1$ and $v_2$ a distance $\ell-1$ apart on $C$. Then, $C$ contains one $v_1,v_2$-path with length $\ell-1$ and another with length $\ell+1$. If we can find, using new internal vertices, disjoint paths from $x$ to $v_1$ and from $y$ to $v_2$, then we have two $x,y$-paths whose lengths differ by 2 (see Figure~\ref{fig1}(a)).

If we can chain together many cycles between $x$ and $y$ in this fashion, then, by choosing the length of the path we take around each cycle, we can vary the length of the path between $x$ and $y$ in increments of 2 (see Figure~\ref{fig1}(c)).

%%%%%%%%%%%%%%%%%%%%%%%%%%%%%%%%%%%%%%%%%%%%%%%%%%%%%%%%%%%%%%%%%%%%%%%%%%%%%%%%%%%%%%%%%%%%%%%%%%%%%%%%%%%%%%%%%%%%%%%%
%%%%%%%%%%%%%%%%%%%%%%%%%%%%%%%%%%%%%%%%%%%%%%%%%%%%%%%%%%%%%%%%%%%%%%%%%%%%%%%%%%%%%%%%%%%%%%%%%%%%%%%%%%%%%%%%%%%%%%%%
%%%%%%%%%%%%%%%%%%%%%%%%%%%%%%%%%%%%%%%%%%%%%%%%%%%%%%%%%%%%%%%%%%%%%%%%%%%%%%%%%%%%%%%%%%%%%%%%%%%%%%%%%%%%%%%%%%%%%%%%
%%%%%%%%%%%%%%%%%%%%%%%%%%%%%%%%%%%%%%%%%%%%%%%%%%%%%%%%%%%%%%%%%%%%%%%%%%%%%%%%%%%%%%%%%%%%%%%%%%%%%%%%%%%%%%%%%%%%%%%%
%\inpput{pictures}

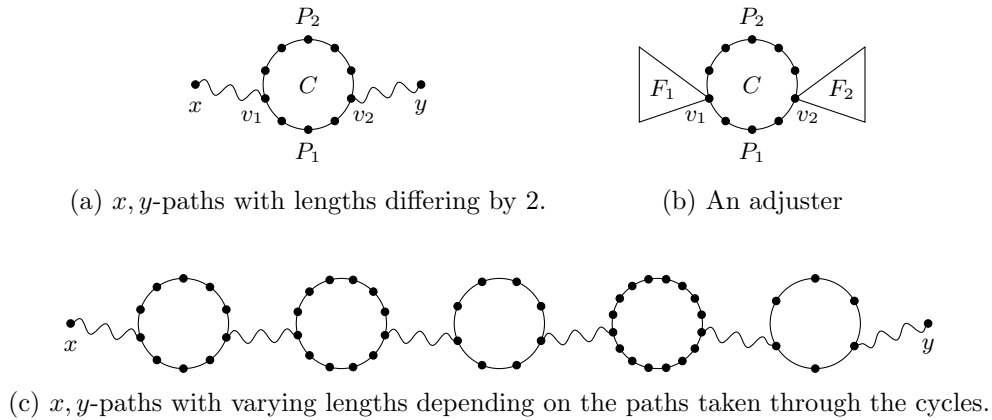
\begin{figure}[h]
  \centering
  \begin{subfigure}{0.4\textwidth}
    \centering
\begin{tikzpicture}

\def\radi{0.6}
\def\xyradi{1.5}
\def\adjj{0.3}

\draw (0,0) circle [radius=\radi];

\foreach \x in {1,...,10}
{
\coordinate (A\x) at ({36*\x+18}:\radi);
\draw [fill] (A\x) circle [radius=0.05cm];
}

\coordinate (X) at (180:\xyradi);
\coordinate (Y) at (0:\xyradi);
\draw [fill] (X) circle [radius=0.05cm];
\draw [fill] (Y) circle [radius=0.05cm];

\draw [snake it] (X) -- (A5);
\draw [snake it] (Y) -- (A9);

\draw (0,0) node {\footnotesize $C$};
\draw ($(A2)+(0,\adjj)$) node {\footnotesize $P_2$};
\draw ($(A7)-(0,\adjj)$) node {\footnotesize $P_1$};
\draw ($(A5)+\adjj*(235:1)$) node {\footnotesize $v_1$};
\draw ($(A9)+\adjj*(-55:1)$) node {\footnotesize $v_2$};
\draw ($(X)-(0,\adjj)$) node {\footnotesize $x$};
\draw ($(Y)-(0,\adjj)$) node {\footnotesize $y$};

\end{tikzpicture}
\caption{$x,y$-paths with lengths differing by 2.}
\end{subfigure}
  \begin{subfigure}{0.3\textwidth}
    \centering
\begin{tikzpicture}

\def\radi{0.6}
\def\xyradi{1.5}
\def\adjj{0.3}
\def\upp{0.5}

\draw (0,0) circle [radius=\radi];

\foreach \x in {1,...,10}
{
\coordinate (A\x) at ({36*\x+18}:\radi);
\draw [fill] (A\x) circle [radius=0.05cm];
}

\coordinate (X1) at ($(180:\xyradi)+(0,\upp)$);
\coordinate (X2) at ($(180:\xyradi)-(0,\upp)$);
\coordinate (Y1) at ($(0:\xyradi)+(0,\upp)$);
\coordinate (Y2) at ($(0:\xyradi)-(0,\upp)$);

\draw (A5) -- (X1) -- (X2) -- (A5);
\draw (A9) -- (Y1) -- (Y2) -- (A9);

\draw ($0.333*(A5)+0.333*(X1)+0.333*(X2)$) node {\footnotesize $F_1$};
\draw ($0.333*(A9)+0.333*(Y1)+0.333*(Y2)$) node {\footnotesize $F_2$};

\draw (0,0) node {\footnotesize $C$};
\draw ($(A2)+(0,\adjj)$) node {\footnotesize $P_2$};
\draw ($(A7)-(0,\adjj)$) node {\footnotesize $P_1$};
\draw ($(A5)+\adjj*(235:1)$) node {\footnotesize $v_1$};
\draw ($(A9)+\adjj*(-55:1)$) node {\footnotesize $v_2$};

\end{tikzpicture}
\caption{An adjuster}
\end{subfigure}

\vspace{0.7cm}

  \begin{subfigure}{\textwidth}
    \centering
\begin{tikzpicture}

\def\radi{0.6}
\def\xyradi{1.5}
\def\adjj{0.3}
\def\num{5}
\def\trans{2.1}

\foreach\y/\numm in {1/10,2/12,3/8,4/16,5/6}
{
\foreach \x in {1,...,\numm}
{
\coordinate (A\y-\x) at ($({360*\x/\numm+180/\numm}:\radi)+(\trans*\y-\trans,0)$);
\draw [fill] (A\y-\x) circle [radius=0.05cm];
}
\draw ($(\trans*\y-\trans,0)$) circle [radius=\radi];
}

\coordinate (X) at (180:\xyradi);
\coordinate (Y) at ($(\num*\trans-\trans,0)+(0:\xyradi)$);
\draw [fill] (X) circle [radius=0.05cm];
\draw [fill] (Y) circle [radius=0.05cm];

\draw [snake it] (X) -- (A1-5);
\draw [snake it] (A1-9) -- (A2-6);
\draw [snake it] (A2-11) -- (A3-4);
\draw [snake it] (A3-7) -- (A4-8);
\draw [snake it] (A4-15) -- (A5-3);
\draw [snake it] (Y) -- (A5-5);

\draw ($(X)-(0,\adjj)$) node {\footnotesize $x$};
\draw ($(Y)-(0,\adjj)$) node {\footnotesize $y$};

\end{tikzpicture}
\caption{$x,y$-paths with varying lengths depending on the paths taken through the cycles.}
\end{subfigure}
\caption{Creating $x,y$-paths of different lengths using cycles.}\label{fig1}
\end{figure}

%%%%%%%%%%%%%%%%%%%%%%%%%%%%%%%%%%%%%%%%%%%%%%%%%%%%%%%%%%%%%%%%%%%%%%%%%%%%%%%%%%%%%%%%%%%%%%%%%%%%%%%%%%%%%%%%%%%%%%%%
%%%%%%%%%%%%%%%%%%%%%%%%%%%%%%%%%%%%%%%%%%%%%%%%%%%%%%%%%%%%%%%%%%%%%%%%%%%%%%%%%%%%%%%%%%%%%%%%%%%%%%%%%%%%%%%%%%%%%%%%
%%%%%%%%%%%%%%%%%%%%%%%%%%%%%%%%%%%%%%%%%%%%%%%%%%%%%%%%%%%%%%%%%%%%%%%%%%%%%%%%%%%%%%%%%%%%%%%%%%%%%%%%%%%%%%%%%%%%%%%%
%%%%%%%%%%%%%%%%%%%%%%%%%%%%%%%%%%%%%%%%%%%%%%%%%%%%%%%%%%%%%%%%%%%%%%%%%%%%%%%%%%%%%%%%%%%%%%%%%%%%%%%%%%%%%%%%%%%%%%%%

\subsubsection{Connecting the cycles}\label{sec:manycycles}
In the situation above, if $C$ is a shortest cycle in $H$, then it is not too difficult to connect $x$ to $v_1$ and $y$ to $v_2$ (relabelling $v_1$ and $v_2$ if necessary). Indeed, for each $i\geq 0$ and $v\in V(H)$, any neighbours of the ball $B^{i}_{G-V(C)+v}(v)$ in $V(C)$ must be within distance $2i+2$ of each other on the cycle $C$, for otherwise $H$ contains a shorter cycle than $C$. Given our expansion conditions, this will mean that $B^{i}_{G-V(C)+v_1}(v_1)$ and $B^{i}_{G-V(C)}(x)$ will both expand as $i$ increases (see Section~\ref{sec:expandavoid}). Expanding these sets until they intersect allows us to find a short path from $v_1$ to $x$ while avoiding $V(C)\setminus\{v_1\}$. Roughly speaking, setting aside a shortest path $P$ from $\{v_1\}$ to $\{x,y\}$ which avoids $V(C)\setminus \{v_1\}$, we then expand around $v_2$ and the vertex left in $\{x,y\}\setminus V(P)$ while avoiding $(V(P)\cup V(C))\setminus \{v_2\}$ to find the second path described above.

Connecting multiple cycles is more difficult, simply as there are more vertices to avoid. Due to this, instead of cycles, we will find structures we call \emph{simple adjusters}. These correspond to the cycle $C$, vertices $v_1$ and $v_2$, as well as vertex disjoint subgraphs $F_{1},F_{2}\subseteq H$ such that $V(F_{1})\cap V(C)=\{v_1\}$ and $V(F_{2})\cap V(C)=\{v_2\}$ (see Figure~\ref{fig1}(b) and Definition~\ref{defn-adj}). The graph $F_{1}$ (and analogously $F_{2}$) has the property that every vertex is a distance $O_{\eps_1}(\log^3n)$ away from $v_1$ in $F_1$, and therefore a path leading into $F_{1}$ can be extended with a few additional vertices from $V(F_{1})$
to one ending with $v_1$. Furthermore, $|F_{1}|$ will be comfortably larger than $C$ so that $F_{1}$ can be expanded and connected while avoiding $V(C)\setminus \{v_1\}$. The distance $O_{\eps_1}(\log^3n)$ here comes from an application of Lemma~\ref{new-connect}.

The heart of our paper is the robust construction of these simple adjusters -- that is, having some set of vertices $U$ which is not too large, we find an adjuster in $H-U$. More discussion of this construction we defer till Section~\ref{sec:mainthmpf}, but we will highlight the key innovation that allows this in Section~\ref{sec:techhigh}.

\subsubsection{Increasing the size of the interval of path lengths}\label{sec:manypaths}
In an $n$-vertex $(\eps_1,\eps_2d)$-expander $H$, any two vertices $x,y$ are a distance at most $m=\Theta_{\eps_1}(\log^3n)$ apart (see Lemma~\ref{new-connect}). Using fairly straightforward methods (see those in Section~\ref{sec:lengthen}), we can use the expansion conditions to find, for any $20m\leq \ell\leq n/\log^{10} n$, a path with length between $\ell-20m$ and $\ell$ between $x$ and $y$ (see Corollary~\ref{longconnect4}). If we can find such a path which contains $10m$ simple adjusters, then, as long as $\ell=\pi(x,y,H)\mod 2$, we can use these adjusters to increase the path by length 2 until it has length exactly $\ell$ (see Section~\ref{sec:adjpath}).

\subsubsection{The limitations of our methods}\label{sec:limits}
Our methods are limited by the length of the paths we find between vertex pairs using the Koml\'os-Szemer\'edi graph expansion. Here, we can only guarantee a path with length at most $m=\Theta_{\eps_1}(\log^3n)$. Therefore the lower bound of the interval of path/cycle lengths in Theorem~\ref{mainthm} and Theorem~\ref{thm:mindeg} cannot be reduced below $\Omega(\log^3n)$ while using connection methods with this graph expansion. Correspondingly, optimising our methods cannot increase the fraction $1/10$ in Corollary~\ref{conj:unavoidable} beyond $1/3$.

For simplicity, we connect together around $m=\Theta_{\eps_1}(\log^3n)$ cycles which can each be used to adjust the length of the paths we find by 2. We note that, by using different cycles to adjust the length by different amounts, we could use perhaps $O(\log m)=O_{\eps_1}(\log\log n)$ cycles instead of around $m$ cycles. However, this saving does not reach a plausible optimal bound as we cannot guarantee our connecting paths are any shorter.

%%%%%%%%%%%%%%%%%%%%%%%%%%%%%%%%%%%%%%%%%%%%%%%%
%%%%%%%%%%%%%%%%%%%%%%%%%%%%%%%%%%%%%%%%%%%%%%%%

\subsection{Outline of the proof of Theorem~\ref{thm:chrom}}\label{sec:chromsketch}

Note that in a graph $H$ which is a copy of $\tk_{d}^{(2)}$, if $d\geq 3$, then every edge between two vertices $u,v$ can be replaced with a path with any odd length in $[5,2d-1]$. If $d$ is large, then this includes paths with any odd length in $[\log^{8}d,d]$. Using Corollary~\ref{cor-expander} and Theorem~\ref{mainthm}, then, in any graph with average degree at least $16d$, we can find a bipartite subgraph $H$ satisfying the following property.

\begin{enumerate}[label = \textbf{P}]
  \item There is some $\ell_H\geq d/\log^{12}d$ such that each edge $uv$ in $H$ can be replaced with a path with any odd length in $[\log^{8}\ell_H,\ell_H]$.\label{niceprop}
\end{enumerate}

Now, suppose $G$ has chromatic number at least $300d$ and find in $G$ a maximal collection $\cH$ of edge disjoint bipartite subgraphs $H$ satisfying \ref{niceprop} with some integer $\ell_H$. We can show that the chromatic number of the union of these graphs $\cup_{H\in \cH}H$ is at least 3, for otherwise $G\setminus (\cup_{H\in \cH}H)$ has high enough degree that some other bipartite subgraph satisfying \ref{niceprop} must exist, contradicting the maximality of $\cH$. Therefore, $\cup_{H\in \cH}H$ contains some odd cycle, $C$ say. Our aim is to use \ref{niceprop} to take many edges $e$ of $C$, and, where $H_e\in \cH$ is such that $e\in E(H_e)$, replace $e$ with odd paths in $H_e$ with many different lengths. This would allow us to transform $C$ into odd cycles of different lengths. The challenge is to do this so that all of the paths replacing these edges are vertex disjoint. This is possible by taking a certain minimal odd cycle $C$ and replacing edges $e$ corresponding to vertex disjoint graphs $H_e\in \cH$. This is discussed in more detail at the start of Section~\ref{sec:chrompf}, before Theorem~\ref{thm:chrom} is then proved.

%%%%%%%%%%%%%%%%%%%%%%%%%%%%%%%%%%%%%%%%%%%%%%%%
%%%%%%%%%%%%%%%%%%%%%%%%%%%%%%%%%%%%%%%%%%%%%%%%

\subsection{A new construction method: Robust construction of gadgets.}\label{sec:techhigh}
Here, we will highlight an innovation for constructions using Koml\'os-Szemer\'edi expansion. Roughly speaking, this technique overcomes the sublinear property of the expansion we work with. For more details, see Section~\ref{sec:techinno}. Recall from \eqref{epsilon} that in an expander $H$, any vertex set $A\subseteq V(G)$ of suitable size has neighbourhood with size $\Omega(|A|/\log^2|A|)$, expanding with at least a factor of $\eps(|A|)=\Theta(1/\log^2|A|)$.%We use this technique to deal with the sublinear expansion.

Our aim is to find some special subgraph -- let us call it here a \emph{gadget} --  in an expander $H$ while avoiding a vertex set $W$. Roughly speaking, it is natural to pick a vertex $v$ in $H-W$ and try to construct a gadget locally in $H$ around $v$ using the expansion property. If this fails there will then be some vertex set $A_v$ containing $v$ which does not expand in $H-W$, not even by a reduced factor of $\eps(|A_v|)/10$.
% expands in $H$ but not in $H-W$.
If we fail repeatedly, then we find many disjoint sets $A_v$, say for the vertices $v\in V$, which do not expand in $H-W$ by a factor of  $\eps(|A_v|)/10$.
Eventually, $\cup_{v\in V}A_v$ will be much larger than $W$, so it  expands in $H-W$ by a factor of at least $\eps(|\cup_{v\in V}A_v|)/2$. However, as the expansion function in Section~\ref{sec:expand} is sublinear in $x$, it is not a contradiction that $\cup_{v\in V}A_v$ expands, yet no set $A_v$ does even if the expansion factor required is reduced by a factor of $5$.

Instead, we reach a contradiction by exploiting our particular circumstance. We will have that $W$ is polylogarithmic size (in $n$), that $|V|\geq n^{1/2}$, and the $\tk^{(2)}_{d/2}$-free condition will allow us to have that $|N_H(A_v,W)|\leq |A_v|^2$ for each $v\in V$. Note that, as, for each $v\in V$, $A_v$ does not expand in $H-W$ by a factor of $\eps(|A_v|)/10$, the set $A_v$ must have size at most $|W|\log^3n$.

 Now, firstly, if many of the sets $A_v$, $v\in V$, have size at least $(\log n)^{1/3}$, then we find a set $V'\subseteq V$ indexing some of these vertices so that $|W|\log^3n\leq |\cup_{v\in V'}A_v|\leq 2|W|\log^3n$. Then, $A:=\cup_{v\in V'}A_v$ is large enough to expand past $W$, and hence expand in $H-W$ with a factor of least $\eps(|A|)/2$.
 As, for each $v\in V'$, $\log |A_v|\approx \log\log n\approx \log|A|$, the expansion function $\eps$ for these sets is essentially the same -- near enough that we can get a contradiction from each set $A_v$, $v\in V'$, not expanding in $H-W$ by a factor of at least $\eps(|A_v|)/10$ yet $A=\cup_{v\in V'}A_v$ expanding in $H-W$ by a factor of at least $\eps(|A|)/2$.

 Thus, we can assume that most of the sets $A_v$, $v\in V$, have size at most $(\log n)^{1/3}$.
This gives us, for such $v\in V$, that $|N_H(A_v,W)|\leq |A_v|^2\leq (\log n)^{2/3}$.
As $W$ is polylogarithmic in size, we have at most $|W|^{(\log n)^{2/3}}= n^{o(1)}$ subsets of $W$ with size at most $(\log n)^{2/3}$. Thus, for some $r\leq (\log n)^{1/3}$, there must be at least $r^2$ vertices $v\in V$ -- say those in $V''$ -- such that $|A_v|=r$ for each $v\in V''$ and, over $v\in V''$, $N_H(A_v,W)$ is the same set, $W'\subseteq W$ say.  Let $A=\cup_{v\in V''}A_v$, so that $W'=\cup_{v\in V''}N_H(A_v,W)=N_H(A,W)$, and, for each $v\in V''$, $|A_v|=r$, and $|W'|\leq |A_v|^2=r^2$. As $|A|=r|V''|=r^3$, and $|N_H(A,W)|=|W'|\leq r^2$, $A$ does not expand well into $W$ in $H$, and hence must expand in $H-W$ by a factor of at least $\eps(|A|)/2$. Again, we have that $A_v$ does not expand in $H-W$ by a factor of at least $\eps(|A_v|)/10$ and $\log |A_v|\approx \log |A|$, for each $v\in V''$.
As before, this gives a contradiction.

%%%%%%%%%%%%%%%%%%%%%%%%%%%%%%%%%%%%%%%%%%%%%%%%
%%%%%%%%%%%%%%%%%%%%%%%%%%%%%%%%%%%%%%%%%%%%%%%%
%%%%%%%%%%%%%%%%%%%%%%%%%%%%%%%%%%%%%%%%%%%%%%%%
%%%%%%%%%%%%%%%%%%%%%%%%%%%%%%%%%%%%%%%%%%%%%%%%
%%%%%%%%%%%%%%%%%%%%%%%%%%%%%%%%%%%%%%%%%%%%%%%%
%%%%%%%%%%%%%%%%%%%%%%%%%%%%%%%%%%%%%%%%%%%%%%%%
%%%%%%%%%%%%%%%%%%%%%%%%%%%%%%%%%%%%%%%%%%%%%%%%
%%%%%%%%%%%%%%%%%%%%%%%%%%%%%%%%%%%%%%%%%%%%%%%%
%%%%%%%%%%%%%%%%%%%%%%%%%%%%%%%%%%%%%%%%%%%%%%%%
%%%%%%%%%%%%%%%%%%%%%%%%%%%%%%%%%%%%%%%%%%%%%%%%
%%%%%%%%%%%%%%%%%%%%%%%%%%%%%%%%%%%%%%%%%%%%%%%%
%%%%%%%%%%%%%%%%%%%%%%%%%%%%%%%%%%%%%%%%%%%%%%%%

\section{Preliminary expansion results}\label{sec:prelim}
In this section we cover the preliminary results on expansion that we use in Section~\ref{sec:mainthmpf}. The main results of this section are Lemmas~\ref{limit-contact},~\ref{new-connect},~\ref{lem-newbit},~\ref{lem-expansion},~\ref{cl-egg} and~\ref{lem-conexp} which we prove in Sections~\ref{sec:expandavoid},~\ref{sec:newpath},~\ref{sec:techinno},~\ref{sec:vxexpansion},~\ref{sec:largeexpansions} and~\ref{sec:lengthen} respectively.

\subsection{Expanding while avoiding sets}\label{sec:expandavoid}
In an expander $H$, we often want to expand a vertex set $A$, while avoiding another set $X$. We can do this if the set satisfies one of three conditions (matching the conditions \emph{\ref{sm1}}--\emph{\ref{sm3}} in Lemma~\ref{limit-contact}).
\begin{enumerate}
	\item Firstly, if $X$ is much smaller than $A$.\label{y1}

	\item Secondly, if $X$ is far enough (in graph distance) from $A$ in $H$ that $A$ expands to become much larger than $X$ before it encounters $X$.\label{y2}

	\item Finally, if $X$ does not intersect too much with each sphere around $A$ during the expansion. That is, if $N_H(B^{i}_{H-X}(A))$ does not intersect too much with $X$ for each $i\geq 0$.\label{y3}
\end{enumerate}

  For the last condition above, if $B^i_{H-X}(A)$ grows with $i$, then we can permit the intersection of $N_H(B^i_{H-X}(A))$ with $X$ to increase as $i$ increases. This condition is formally captured in the following notion, before we give our main lemma expanding a vertex set $A$ while avoiding some other vertex sets.%such vertex sets $X$, $Y$ and $Z$.

\begin{defn}\label{defn-limit-contact}
	A vertex set $A$ has \emph{$k$-limited contact} with a vertex set $X$ in a graph $H$ if, for each $i\in \N$,
	$$
	|N_H(B_{H-X}^{i-1}(A))\cap X|\leq k i.
	$$
\end{defn}

\begin{lemma} \label{limit-contact}
	Let $0<\ep_1, \ep_2<1$ and $k\in \N$. There is some $d_0=d_0(\eps_1,\eps_2,k)$ for which the following holds for each $n\geq d\geq d_0$.

  Suppose $H$ is an $n$-vertex $(\ep_1,\ep_2 d)$-expander.
	Let $m=\frac{16}{\eps_1}\log^3n$ and $\ell_0=(\log\log n)^5$. Let $A\subseteq V(H)$ with $|A|\ge \ep_2d/2$ and let $X,Y,Z\subseteq V(H)\setminus A$ be such that the following hold.
\stepcounter{propcounter}
\begin{enumerate}[label = {\bfseries \emph{\Alph{propcounter}\arabic{enumi}}}]
    \item $|X|\leq |A|\eps(|A|)/4$.\label{sm1}
    \item $B^{\ell_0}_{H-X-Z}(A)\cap Y=\varnothing$ and $|Y|\leq m^{300k}$.\label{sm2}
    \item $A$ has $k$-limited contact with $Z$ in $H$.\label{sm3}
\end{enumerate}
Then,
\begin{enumerate}[label = \emph{(\roman{enumi})}]
\item  $|B^{\ell_0}_{H-X-Y-Z}(A)|>m^{400k }$, and\label{exp1}
\item  $|B^{m}_{H-X-Y-Z}(A)|>n/2$.\label{exp2}
\end{enumerate}
\end{lemma}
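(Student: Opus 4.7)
Define $A_i := B^i_{H-X-Y-Z}(A)$. The strategy is to prove the multiplicative growth $|A_{i+1}| \ge (1 + \eps(|A_i|)/2)|A_i|$ while $|A_i| \le n/2$, and then to count steps. At each step,
$$|A_{i+1}| - |A_i| \ge |N_H(A_i)| - |N_H(A_i) \cap X| - |N_H(A_i) \cap Y| - |N_H(A_i) \cap Z|,$$
so the problem reduces to bounding each of the three loss terms against the expansion $|N_H(A_i)| \ge \eps(|A_i|)|A_i|$.

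For the $X$-loss, \emref{sm1} combined with the monotonicity of $x \mapsto x\eps(x)$ (valid for $x$ at least a constant multiple of $\eps_2 d$) gives $|N_H(A_i) \cap X| \le |X| \le |A|\eps(|A|)/4 \le |A_i|\eps(|A_i|)/4$. For the $Z$-loss, the key monotonicity is $A_i \subseteq B^i_{H-Z}(A)$ (because $H - X - Y - Z$ is a subgraph of $H - Z$, so any short path through the former is also a path through the latter), and hence \emref{sm3} transfers to give $|N_H(A_i) \cap Z| \le k(i+1)$. For the $Y$-loss we split into two regimes: while $i \le \ell_0$, \emref{sm2} together with $A_i \subseteq B^{\ell_0}_{H-X-Z}(A)$ forces $N_H(A_i) \cap Y = \varnothing$; beyond step $\ell_0$ we will already have $|A_i| \ge m^{400k}$ from \emref{exp1}, which makes $|Y| \le m^{300k}$ negligible compared to $\eps(|A_i|)|A_i|$.

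Combining the three bounds, the multiplicative growth $|A_{i+1}| \ge (1 + \eps(|A_i|)/2)|A_i|$ holds whenever the total loss is at most $\eps(|A_i|)|A_i|/2$. In the first $\ell_0$ steps this reduces to $k\ell_0 \le \eps(|A|)|A|/4$; since $|A| \ge \eps_2 d/2$ places us in the regime where $\eps(|A|)$ is a constant depending only on $\eps_1$, we have $\eps(|A|)|A| = \Omega_{\eps_1,\eps_2}(d)$, which exceeds $k\ell_0 = k(\log\log n)^5$ once $d$ is sufficiently large in terms of $\eps_1, \eps_2, k$ (as allowed by the standing convention on $d_0$). Iterating the multiplicative growth, the standard Koml\'os--Szemer\'edi estimate shows that growing from size $\Theta_{\eps_2}(d)$ up to size $m^{400k}$ takes at most $O_{\eps_1}(\log^3(m^{400k})) = O_{\eps_1}(k^3(\log\log n)^3)$ steps, which is much less than $\ell_0 = (\log\log n)^5$; this yields \emref{exp1}.

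For \emref{exp2} we continue the iteration up to step $m$. By \emref{exp1}, $|A_i| \ge m^{400k}$ from step $\ell_0$ onwards, so both $|Y| \le m^{300k}$ and $k(i+1) \le km$ are dwarfed by $\eps(|A_i|)|A_i| \ge \eps_1 m^{400k}/\log^2 n$, and the multiplicative growth persists; a standard Koml\'os--Szemer\'edi diameter computation (along the lines of Lemma~\ref{lem-diameter}) then gives $|A_m| > n/2$ at step $m = (16/\eps_1)\log^3 n$. The most delicate point of the plan is balancing the $Z$-loss $k(i+1)$ against the expansion in the earliest steps, where $|A_i|$ is smallest; this is where our lower bound on $d$ is forced and where the transfer of $k$-limited contact from $B^i_{H-Z}(A)$ down to the smaller $A_i$ becomes essential.
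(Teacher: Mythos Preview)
Your overall architecture matches the paper's, but there is a real gap in how you bound the $Z$-loss during the first $\ell_0$ steps. You assert that the required inequality reduces to $k\ell_0 = k(\log\log n)^5 \le \eps(|A|)|A|/4 = \Omega_{\eps_1,\eps_2}(d)$, and that this holds once $d$ is large in terms of $\eps_1,\eps_2,k$. But the hypothesis is only $n\ge d\ge d_0(\eps_1,\eps_2,k)$: there is no upper bound on $n$ in terms of $d$, so $(\log\log n)^5$ can be arbitrarily large compared to $d$. Your worst-case pairing ``$i+1\le \ell_0$ on the left, $|A_i|\ge |A|$ on the right'' therefore does not close.

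The paper fixes this by a bootstrapping induction rather than a uniform bound. Working in $F=H-X-Z$ (which equals $H-X-Y-Z$ for the first $\ell_0$ steps by \emref{sm2}), one shows inductively that $|N_F(B^r_F(A))|\ge \tfrac14|B^r_F(A)|\eps(|B^r_F(A)|)$. The inductive hypothesis itself controls the step count: if growth held for all $r'<r$, then $|B^r_F(A)|\ge |A|(1+\eps(|B^r_F(A)|)/4)^r$, which forces $r\le \tfrac{8}{\eps_1}\log^3\alpha$ where $|B^r_F(A)|=\alpha\eps_2 d/15$. A short computation then gives $r+1\le C_{\eps_1,\eps_2}\cdot |B^r_F(A)|\eps(|B^r_F(A)|)/d$, so $k(r+1)\le \tfrac14|B^r_F(A)|\eps(|B^r_F(A)|)$ once $d$ is large depending only on $\eps_1,\eps_2,k$. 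In other words, the $Z$-loss $k(r+1)$ is compared not to $|A|\eps(|A|)$ but to the \emph{current} ball's expansion, using that a ball of a given size can only have been reached after a bounded number of steps. This is exactly the ``delicate point'' you flagged, and it needs this self-referential bound on $r$ rather than the crude $r\le\ell_0$. Your treatment of $X$, of $Y$, and of part~\emph{(ii)} is fine and agrees with the paper.
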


\begin{proof}  We first prove \emph{\ref{exp1}}. Note that, by \emph{\ref{sm2}}, we have $B^{\ell_0}_{H-X-Y-Z}(A)=B^{\ell_0}_{H-X-Z}(A)$. Let $F=H-X-Z$, and suppose that $|B^{\ell_0}_{F}(A)|\le m^{400k }$, for otherwise \emph{\ref{exp1}} holds. We will show the following claim.

  \begin{claim} For each $0\le r\le \ell_0-1$,\label{sizeclaim}
	\begin{equation}\label{equa1}
	|N_{F}(B^r_{F}(A))|\ge \frac{1}{4}|B_F^r(A)|\cdot \ep(|B_F^r(A)|).
	\end{equation}
  \end{claim}

	Given Claim~\ref{sizeclaim}, we reach a contradiction as follows, and thus \emph{\ref{exp1}} must hold. For each $0\leq r< \ell_0$, we have $|B^r_F(A)|\leq |B^{\ell_0}_F(A)|\leq m^{400k }$, and hence, as $n\geq d\geq d_0(\ep_1,\eps_2,k)$ is large,
  \[
  \ep(|B_F^r(A)|)\geq \eps(m^{400k})=\frac{\eps_1}{\log^2(15m^{400k}/\eps_2d)}\geq\frac{4}{(\log\log n)^3}.
 \]
  Therefore, for each $0\leq r<\ell_0$, by Claim~\ref{sizeclaim},
  \[
  |B^{r+1}_{F}(A)|=|B^r_{F}(A)|+|N_{F}(B^r_{F}(A))|\ge \left(1+\frac{\ep(|B_F^r(A)|)}{4}\right)|B^r_{F}(A)|\ge \left(1+\frac{1}{(\log\log n)^3}\right)|B_F^r(A)|.
  \]
Therefore,
	\begin{equation}\label{eq-growgrowgrow}
		|B^{\ell_0}_{F}(A)|\ge \left(1+\frac{1}{(\log\log n)^3}\right)^{\ell_0}|A|\geq \exp\left(\frac{\ell_0}{2(\log\log n)^3}\right)=\exp\left(\frac{(\log\log n)^2}{2}\right)=\omega(m^{400k}),
	\end{equation}
    which contradicts our assumption that \emph{\ref{exp1}} does not hold as $n\geq d_0(\eps_1,\eps_2,k)$ is large. For \emph{\ref{exp1}}, it is left then to prove Claim~\ref{sizeclaim}.

\begin{poc} We prove this by induction on $r$.
    For the base case $r=0$, as $x\cdot \eps(x)$ increases in $x\geq \eps_2d/2$, we have
\[
|A|\eps (|A|)\geq \frac{\eps_2d}{2}\cdot \eps(\eps_2d/2)= \frac{\eps_1\eps_2d}{2\log^2(15/2)}\geq 4k,
\]
where we have used that $d\geq d_0(\eps_1,\eps_2,k)$ is large.
In combination with \emph{\ref{sm3}}, we have $|N_H(A)\cap Z|\le k \le |A|\ep(|A|)/4$. Thus, from the expansion of $H$ and \emph{\ref{sm1}}, we have
    $$|N_F(A)|\ge |N_H(A)|-|N_H(A)\cap X|-|N_H(A)\cap Z|\ge \left(1-\frac{1}{4}-\frac{1}{4}\right)|A|\ep(|A|)
    \ge\frac{1}{4}|A|\ep(|A|).$$

    Suppose then that $r\ge 1$ and that \eqref{equa1} holds with $r$ replaced by $r'$ for each $0\leq r'<r$. Then, similarly to~\eqref{eq-growgrowgrow},
    \begin{equation}|B^{r}_{F}(A)|\ge |A|\left(1+\frac{\ep(|B^{r}_{F}(A)|)}{4}\right)^{r}.\label{parteq}\end{equation}
    Now, let $\alpha$ be defined by $|B^r_{F}(A)|=\al \ep_2 d/15$. As $|B^r_F(A)|\ge|A|\ge\ep_2d/2$, $\alpha\geq 15/2$, and thus $\ep(|B^r_{F}(A)|)=\ep_1/\log^2\al\leq 1/2$.
    Therefore, from \eqref{parteq}, we have
      $$\al\ge \frac{|B_F^r(A)|}{|A|} \ge \left(1+\frac{\ep_1}{4\log^2\al}\right)^{r}\geq \exp\left(\frac{\ep_1 r}{8\log^2\al}\right),$$
which gives $r\le 8\log^3\al/\eps_1$. As $\alpha \geq 15/2$, we have $\log^5\al/\al\leq 100$, so that
\begin{equation}\label{lastmin}
r+1\leq 2r\leq \frac{1600\alpha}{\eps_1\log^2\alpha}=\frac{1600\alpha\cdot  \eps(|B^r_F(A)|)}{\eps_1^2}=\frac{1600\cdot 15}{\ep_2d\cdot \eps_1^2}\cdot|B^r_F(A)|  \eps(|B^r_F(A)|).
\end{equation}
As $d\geq d_0(\ep_1,\eps_2,k)$ is large, by \emph{\ref{sm3}}, we have
 \begin{equation}\label{newneweq}
 |N_H(B^r_{H-Z}(A))\cap Z)|\leq k(r+1)\overset{\eqref{lastmin}}{\leq} k \cdot \frac{1600\cdot 15}{\ep_2d\cdot \eps_1^2}\cdot|B^r_F(A)|  \eps(|B^r_F(A)|)\leq \frac{1}{4}|B_F^r(A)|\ep(|B_F^r(A)|).
 \end{equation}
As $x\cdot \eps(x)$ increases with $x\geq \ep_2d/2$, by \emph{\ref{sm1}} we have $|X|\le |A|\ep(|A|)/4\leq |B_F^r(A)|\ep(|B_F^r(A)|)/4$. Therefore, using the expansion of $H$, we have
    \begin{eqnarray*}
    	|N_{F}(B^r_{F}(A))|&\ge & |N_H(B^r_F(A))|-|N_H(B^r_F(A))\cap X)|-|N_H(B^r_{H-Z}(A))\cap Z)|\\
    	&\geq & |N_H(B^r_F(A))|-|X|-|N_H(B^r_{H-Z}(A))\cap Z)|\overset{\eqref{newneweq}}{\ge} \frac{1}{4}|B_F^r(A)|\cdot \ep(|B_F^r(A)|),
    \end{eqnarray*}
and hence~\eqref{equa1} holds for $r$.
\end{poc}

    We now prove \emph{\ref{exp2}}. Suppose, for contradiction, that $|N_{H-X-Y-Z}^m(A)|\leq n/2$. Let $F'=H-X-Y-Z$, and for each $\ell_0\leq r\leq  m-1$, let $A_r=B^r_{F'}(A)$, so that, by \emph{\ref{exp1}}, we have $|A_r|\geq m^{400k}$.
Now, as $n\geq d_0(\ep_1,\eps_2,k)$ is large,
\begin{equation}\label{epseqn}
\eps(|A_r|)\geq \eps(n)\geq \eps_1/\log^2 n\geq 1/m,
\end{equation}
and thus $|A_r|\eps(|A_r|)/4\geq m^{400k-1}/4\geq k m^{300k}$. Therefore, by \emph{\ref{sm2}}, $|A_r|\eps(|A_r|)/4\geq |Y|$ and,
by \emph{\ref{sm3}},
\begin{equation*}
|N_H(A_r)\cap Z|\leq |N_H(B_{H-Z}^r(A))\cap Z|\leq k (r+1)\leq k m\leq |A_r|\eps(|A_r|)/4.\label{Zeq}
\end{equation*}
As $|A_r|\geq |A|$ and $x\cdot \eps(x)$ increases with $x\geq \eps_2d/2$, we have, by \emph{\ref{sm1}}, that $ |A_r|\eps(|A_r|)/4\geq |X|$. In total, then,
    \begin{equation}
 |X|+|Y|+|N_H(A_r)\cap Z|\leq \frac{3}{4}|A_r|\eps(|A_r|).\label{XYeq}
\end{equation}
Thus, using the expansion of $H$, we have
    \begin{eqnarray*}
      |N_{F'}(A_r)|\ge |N_H(A_r)|-|X\cup Y|-|N_H(A_r)\cap Z)| \overset{\eqref{XYeq}}{\ge}\frac{1}{4}|A_r|\cdot \ep(|A_r|)\overset{\eqref{epseqn}}{\geq} \frac{\eps_1|A_r|}{4\log^2 n}.
    \end{eqnarray*}
As this holds for all $r$ with $\ell_0\leq r\leq m-1$, we have
\begin{align*}
|B_{F'}^m(A)|&
\geq \left(1+\frac{\eps_1}{4\log^2 n}\right)^{m-\ell_0}|A_{\ell_0}|\geq \left(1+\frac{\eps_1}{4\log^2 n}\right)^{m/2}
\\
&\ge \exp\left(\frac{\eps_1m}{16\log^2n}\right)=\exp(\log n)>n/2,
\end{align*}
a contradiction.
\end{proof}

%%%%%%%%%%%%%%%%%%%%%%%%%%%%%%%%%%%%%%%
%%%%%%%%%%%%%%%%%%%%%%%%%%%%%%%%%%%%%%%
%%%%%%%%%%%%%%%%%%%%%%%%%%%%%%%%%%%%%%%
%%%%%%%%%%%%%%%%%%%%%%%%%%%%%%%%%%%%%%%
%%%%%%%%%%%%%%%%%%%%%%%%%%%%%%%%%%%%%%%

\subsection{Connecting sets with paths}\label{sec:newpath}
Lemma~\ref{limit-contact} allows us to find paths between sets $A$ and $B$ in an expander, as follows.

\begin{lemma}\label{new-connect} For each $0<\eps_1,\eps_2<1$, there exists $d_0=d_0(\eps_1,\eps_2)$ such that the following holds for each $n\geq d\geq d_0$ and $x\geq 1$. Let $G$ be an $n$-vertex $(\ep_1,\ep_2d)$-expander with $\delta(G)\geq d-1$.

Let $A,B\subseteq V(G)$ with $|A|,|B|\geq x$, and  let $W\subseteq V(G)\setminus(A\cup B)$ satisfy $|W|\log^3n\leq 10x$. Then, there is a path from $A$ to $B$ in $G-W$ with length at most $\frac{40}{\eps_1}\log^3n$.
\end{lemma}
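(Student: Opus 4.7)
The plan is to expand a ball around each of $A$ and $B$ in $G - W$ via Lemma~\ref{limit-contact}\emph{\ref{exp2}} until each ball exceeds $n/2$ vertices, at which point they must intersect at some vertex $v$. Setting $m = \frac{16}{\eps_1}\log^3 n$, taking a shortest $A$-to-$v$ path and a shortest $B$-to-$v$ path in $G - W$ and combining them (cutting at the earliest common vertex) will then produce an $A,B$-path in $G - W$ of length at most $2m$, which is comfortably below $\frac{40}{\eps_1}\log^3 n$.

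To invoke Lemma~\ref{limit-contact} with $X = W$ and $Y = Z = \varnothing$, the only nontrivial hypothesis is condition~\emph{\ref{sm1}}, namely $|W| \le |A|\eps(|A|)/4$, together with the prerequisite $|A| \ge \eps_2 d/2$. When $|A| \ge \eps_2 d/2$, condition~\emph{\ref{sm1}} follows from the hypothesis $|W| \le 10|A|/\log^3 n$ together with $\eps(|A|) \ge \eps(n) \ge \eps_1/(4\log^2 n)$, once $\log n \ge 160/\eps_1$, which holds for $d \ge d_0(\eps_1,\eps_2)$ large enough. When $|A| < \eps_2 d/2$, I first replace $A$ by $A' := \{a\} \cup (N_G(a) \setminus W)$ for some $a \in A$; since $\delta(G) \ge d - 1$ and $|W| \le 10x/\log^3 n \ll d$ for $n$ large, $|A'| \ge d/2 \ge \eps_2 d/2$, and any path leaving $A'$ in $G - W$ can be extended by at most one edge to reach $a \in A$. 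The same replacement produces $B'$ from $B$. (If $A' \cap B' \ne \varnothing$ a path of length at most $2$ is immediate, so assume they are disjoint.)

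Applying Lemma~\ref{limit-contact}\emph{\ref{exp2}} to $A'$ and to $B'$, both with $X = W$ and $Y = Z = \varnothing$, then gives that $|B^m_{G-W}(A')|$ and $|B^m_{G-W}(B')|$ each exceed $n/2$, so they must share a vertex $v$. Combining the two resulting paths of length at most $m$ through $v$, and trimming at the earliest common vertex, produces a path from $A'$ to $B'$ in $G - W$ of length at most $2m$; extending by at most one edge at each end (if needed) to reach the original sets $A$ and $B$ yields the desired path in $G - W$ of length at most $2m + 2 \le \frac{40}{\eps_1}\log^3 n$. The only real technical point in the argument is the verification of condition~\emph{\ref{sm1}}, which is why the hypothesis is stated in the slightly unusual form $|W|\log^3 n \le 10x$; the rest is a standard ball-intersection pigeonhole.
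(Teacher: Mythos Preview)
Your proof is correct and follows essentially the same approach as the paper: verify condition~\emph{\ref{sm1}} of Lemma~\ref{limit-contact} with $X=W$ and $Y=Z=\varnothing$, expand both sets until each ball exceeds $n/2$, and take a path through a common vertex; when a set is smaller than $\eps_2 d/2$, first pad it using the minimum-degree condition (the paper uses $A\cup N_{G-W}(A)$ rather than a single neighbourhood $\{a\}\cup(N_G(a)\setminus W)$, but this is immaterial). The only cosmetic point is that your case split is on $|A|$ rather than on $x$, so you should make explicit that the padding is applied independently to whichever of $A,B$ falls below the threshold.
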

\begin{proof}
If $x\geq \eps_2d/2$, then, as $n\geq d\geq d_0(\eps_1,\eps_2)$ is large, we have
\begin{equation}\label{neq1}
x\cdot \frac{\eps(x)}{4}= \frac{\eps_1x}{4\log^2(15x/\eps_2d)}\geq \frac{\eps_1x}{4\log^2(15n)}\geq \frac{\eps_1x}{8\log^2n}\geq \frac{10x}{\log^3n}\geq |W|.
\end{equation}
Therefore, letting $m=\frac{16}{\eps_1}\log^3n$, by Lemma~\ref{limit-contact} applied with $X=W$ and $Y=Z=\varnothing$, we have $|B^m_{G-W}(A)|,|B^m_{G-W}(B)|>n/2$. Thus, there is a path from $A$ to $B$ in $G-W$ with length at most $2m\leq \frac{40}{\eps_1}\log^3n$.

Suppose then that $x<\eps_2d/2\leq d/2$. Let $x'=\min\{|A\cup N_{G-W}(A)|,|B\cup N_{G-W}(B)|\}$. As $x< d/2$ and $A,B\neq \varnothing$, we have $x'\geq \delta(G)-|W|\geq d-1-10d/\log^3n\geq d/2\geq \eps_2d/2$, as $n\geq d\geq d_0(\eps_1,\eps_2)$ is large.
Therefore, as above, there is a path from $A\cup N_{G-W}(A)$ to $B\cup N_{G-W}(B)$ in $G-W$ with length at most $2m$, and hence a path with length at most $2m+2\leq \frac{40}{\eps_1}\log^3n$ between $A$ and $B$ in $G-W$, as required.
\end{proof}

%%%%%%%%%%%%%%%%%%%%%%%%%%%%%%%%%%%%%%%
%%%%%%%%%%%%%%%%%%%%%%%%%%%%%%%%%%%%%%%
%%%%%%%%%%%%%%%%%%%%%%%%%%%%%%%%%%%%%%%
%%%%%%%%%%%%%%%%%%%%%%%%%%%%%%%%%%%%%%%
%%%%%%%%%%%%%%%%%%%%%%%%%%%%%%%%%%%%%%%

\subsection{Expansion of sets of lower degree vertices}\label{sec:techinno}
The following lemma is the key new technicality that allows our constructions, as discussed in Section~\ref{sec:techhigh}. We then develop it for convenience of use to get Lemma~\ref{lem-newbit-other}.

\begin{lemma}\label{lem-newbit}
  For any $0<\eps_1,\eps_2<1$, there exists $d_0=d_0(\eps_1,\eps_2)$ such that the following holds for each $n\geq d\geq d_0$. Suppose that $G$ is an  $n$-vertex bipartite  $(\eps_1,\eps_2 d)$-expander with $\de(G)\ge d$.

  Let $U\subseteq V(G)$ satisfy $|U|\leq \exp((\log\log n)^2)$, and let $K=G-U$. Let $I$ be any set and $V_i\subseteq V(K)$, $i\in I$, be pairwise disjoint sets such that, for each $i\in I$,
  \stepcounter{propcounter}
  \begin{enumerate}[label = \emph{\bfseries \Alph{propcounter}\arabic{enumi}}]
  	\item $\eps_2 d\le |V_i|\le \exp((\log\log n)^2)$,\label{z1}
  	  	\item $|N_{K}(V_i)|\leq \frac{5|V_i|}{\log^{10} |V_i|}$, and\label{z2}
  	  	  	\item $d_G(v,U)\leq d/2$ for each $v\in V_i$.\label{z3}
  \end{enumerate}

Then, $|\cup_{i\in I}V_i|<n^{1/8}$.
\end{lemma}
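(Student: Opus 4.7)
I would argue by contradiction: assume $|\cup_{i\in I}V_i|\ge n^{1/8}$. A dyadic pigeonhole on the sizes $|V_i|\in[\eps_2 d,\exp((\log\log n)^2)]$ (at most $O((\log\log n)^2)$ dyadic classes) produces a sub-collection $\mathcal I'\subseteq I$ in which every $V_i$ has $|V_i|\in[M,2M]$ for some value $M$, while $\sum_{i\in\mathcal I'}|V_i|\ge n^{1/8}/(\log\log n)^3$. The argument then splits at the cut-off $M^{\ast}:=\exp(C_0(\log\log n)^{2/5})$, where $C_0=C_0(\eps_1)$ is a sufficiently large constant.

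\emph{Case 1 ($M\ge M^{\ast}$).} Greedily add sets $V_i$ with $i\in\mathcal I'$ to form $V$ until $|V|$ first exceeds $8|U|\log^2 n/\eps_1$; since the total available mass is $n^{1/8-o(1)}$, this is always possible. The resulting $V$ satisfies $8|U|\log^2 n/\eps_1<|V|\le 8|U|\log^2 n/\eps_1+2M=n^{o(1)}$. By condition~\emph{\ref{z2}} and disjointness, $|N_K(V)|\le\sum_{i\in\mathcal I^{\ast}} 5|V_i|/\log^{10}|V_i|\le 5|V|/\log^{10}M$, while $|N_G(V)\cap U|\le|U|$. The Koml\'os--Szemer\'edi expansion of $G$ gives $|V|\eps(|V|)\le|U|+5|V|/\log^{10}M$. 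The lower bound $|V|>8|U|\log^2 n/\eps_1$ combined with $\eps(|V|)\ge\eps_1/(4\log^2 n)$ (using $|V|\le n$) forces $|V|\eps(|V|)>2|U|$, and the choice $M\ge M^{\ast}$ forces $\eps(|V|)\ge\eps_1/(16(\log\log n)^4)\ge 10/\log^{10}M$, so that $5|V|/\log^{10}M\le|V|\eps(|V|)/2$. Together these give $|V|\eps(|V|)/2>|U|+5|V|/\log^{10}M$, a contradiction.

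\emph{Case 2 ($M<M^{\ast}$).} Here the sets $V_i$ are too small for a direct application of Case~1, so I refine the pigeonhole. Among the $|\mathcal I'|\ge n^{1/8-o(1)}/M$ indices, $|V_i|$ takes at most $M+1$ distinct values, so some fixed $r\in[M,2M]$ satisfies $|V_i|=r$ for at least $n^{1/8-o(1)}/M^2=n^{1/8-o(1)}$ indices $i$. For each such $V_i$ set $W_i:=N_G(V_i)\cap U$; by condition~\emph{\ref{z3}}, $|W_i|\le\sum_{v\in V_i}d_G(v,U)\le|V_i|\cdot d/2$, and using $|V_i|=r\ge\eps_2 d$ this gives $|W_i|\le r^2/(2\eps_2)$. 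Since $|U|\le\exp((\log\log n)^2)$ and $r\le 2M^{\ast}$, the number of subsets of $U$ of cardinality at most $r^2/(2\eps_2)$ is at most $|U|^{r^2/(2\eps_2)}=\exp\bigl(r^2\log|U|/(2\eps_2)\bigr)=n^{o(1)}$. By pigeonhole some $W^{\ast}\subseteq U$ is shared by at least $n^{1/8-o(1)}\ge r^2$ of these indices; pick any $r^2$ of them as $\mathcal I^{\ast}$ and set $A:=\cup_{i\in\mathcal I^{\ast}}V_i$. Then $|A|=r^3$, $N_G(A)\cap U=W^{\ast}$ has size at most $r^2/(2\eps_2)$, and $|N_K(A)|\le 5r^3/\log^{10}r$. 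Applying the expansion of $G$ to $A$ and using $r\ge\eps_2 d$ to bound $\log(15r^3/(\eps_2 d))\le 3\log r+O(1)$ yields $|A|\eps(|A|)\ge\eps_1 r^3/(10\log^2 r)$, which for $r$ at least some constant $r_0(\eps_1,\eps_2)$ strictly exceeds $r^2/(2\eps_2)+5r^3/\log^{10}r\ge|N_G(A)|$. The bound $r\ge\eps_2 d\ge\eps_2 d_0$ ensures $r\ge r_0$ by taking $d_0$ large enough, and this is the required contradiction.

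The principal difficulty is Case~2: everything hinges on the bound $|W_i|\le r^2/(2\eps_2)$ extracted from condition~\emph{\ref{z3}} together with $|V_i|\ge\eps_2 d$, which is what keeps the pigeonhole over neighbourhoods in $U$ viable. The cut-off $M^{\ast}=\exp(C_0(\log\log n)^{2/5})$ is tuned to make both arguments close simultaneously: large enough that in Case~1 the expansion rate $\eps(|V|)\sim 1/(\log\log n)^4$ dominates $5/\log^{10}M$, yet small enough that in Case~2 one has $r^2\log|U|=o(\log n)$, so the number of admissible subsets of $U$ remains $n^{o(1)}$.
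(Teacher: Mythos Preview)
Your proof is correct and follows essentially the same approach as the paper: a two-case split by the size of the $V_i$, with the large case handled by unioning enough sets to overwhelm $|U|$ and contradict expansion, and the small case handled via the crucial bound $|N_G(V_i)\cap U|\le r^2/(2\eps_2)$ from \emph{\ref{z3}}, a pigeonhole over subsets of $U$, and the assembly of $r^2$ sets with common $U$-neighbourhood into a set of size $r^3$. The only differences are cosmetic---the paper uses the simpler threshold $(\log n)^{1/10}$ rather than your dyadic pigeonhole plus $M^\ast=\exp(C_0(\log\log n)^{2/5})$, and in Case~1 takes a fixed number $D^2$ of sets rather than greedily accumulating mass---but the mechanism is identical.
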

\begin{proof}
Suppose to the contrary that $|\cup_{i\in I}V_i|\geq n^{1/8}$ and let $D=\exp((\log\log n)^2)$. Let $I_1=\{i\in I:|V_i|\geq (\log n)^{1/10}\}$ and $I_2=I\setminus I_1$.

First, suppose that $|\cup_{i\in I_1}V_i|\geq n^{1/8}/2$. Then, as $|V_i|\le D$ for each $i\in I$, $|I_1|\ge n^{1/9}\geq D^2$. Let $I_0\subseteq I_1$ be a subset of $I_1$ of size $D^2$, and let $W=\cup_{i\in I_0}V_i$. Now, for each $i\in I_0$, $\log |V_i|\geq (\log\log n)/10$, while $|W|\le |I_0|D\le D^3$, so that $\log|W|\le 3(\log\log n)^2$. Thus, we have
\begin{align*}
|N_G(W)|&\leq |U| +|N_K(\cup_{i\in I_0}V_i)|\leq D+\sum_{i\in I_0}|N_K(V_i)|\overset{\emph{\ref{z2}}}{\leq} D +\sum_{i\in I_0}\frac{5|V_i|}{\log^{10}|V_i|} \\
&\leq D +\sum_{i\in I_0}\frac{5(10)^{10}|V_i|}{(\log\log n)^{10}}= D +\frac{5(10)^{10}|W|}{(\log\log n)^{10}}< \frac{\eps_1|W|}{\log^3|W|}\leq \eps(|W|)|W|,
\end{align*}
as $|W|\geq D^2$, $\log |W|\le 3(\log\log n)^2$ and $n\geq d_0(\eps_1,\eps_2)$ is large. As $|W|\leq D^3$, this contradicts the fact that $G$ is an $(\eps_1,\eps_2 d)$-expander. Thus, we have $|\cup_{i\in I_1}V_i|\leq n^{1/8}/2$.

Therefore, we have $|\cup_{i\in I_2}V_i|\geq n^{1/8}/2$. Thus, $I_2\neq \varnothing$, and hence by definition, taking any $i\in I_2$, we have $\eps_2 d\leq |V_i|\leq (\log n)^{1/10}$ so that $d\leq (\log n)^{1/10}/\eps_2$. Furthermore, by the pigeonhole principle there must be some $r\in\bN$ with $\ep_2d\leq r\leq (\log n)^{1/10}$ for which there are at least $| I_2|/(\log n)^{1/10}\ge |\cup_{i\in I_2}V_i|/((\log n)^{1/10})^2\geq n^{1/9}$ indices $i\in I_2$ with $|V_i|=r$. Taking such an $r$, let $I_3=\{i\in I_2:|V_i|=r\}$, so that $|I_3|\ge n^{1/9}$.

Now, for each $i\in I_3$, as $d_G(v,U)\leq d\leq r/\ep_2$ for each $v\in V_i$, we have $|N_G(V_i)\cap U|\leq r^2/\eps_2\leq (\log n)^{1/4}$, as $n\geq d_0(\ep_1,\eps_2)$ is large. As $|U|\leq D$, the number of sets of size at most $(\log n)^{1/4}$ in $U$ is at most
$$
\sum_{i=0}^{(\log n)^{1/4}}\binom{D}{i}\leq (\log n)^{1/4}D^{(\log n)^{1/4}}\le \exp((\log n)^{1/3}).
$$
Therefore, there must be at least $n^{1/9}/\exp((\log n)^{1/3})\ge n^{1/10}$ indices $i\in I_3$ for which $N_{G}(V_{i})\cap U$ is the same set, $Z$ say. Taking any $i\in I_3$, note that, by \emph{\ref{z3}}, $|Z|=|N_G(V_i)\cap U|\leq dr\leq r^2/\eps_2$. Let $I_4$ be a set of $r^2\leq (\log n)^{1/5}$ indices $i\in I_3$ for which $N_G(V_i)\cap U=Z$.% Let $I_5\subseteq I_4$
%be a subset of $I_4$ with size $r^2\leq (\log n)^{1/50}$.

Let $Y=\cup_{i\in I_4}V_i$, so that $N_G(Y)\cap U=Z$ and $|Y|=r|I_4|=r^3$. Then, as $d\geq d_0(\eps_1,\eps_2)$ is large and $r\geq \eps_2d$, we have
$$
|N_G(Y)|\leq |Z|+\sum_{i\in I_4}|N_K(V_i)|\overset{\emph{\ref{z2}}}{\leq} \frac{r^2}{\ep_2}+\frac{5r^3}{\log^{10}r}\le \frac{\eps_1r^3}{\log^3(r^3)}=\frac{\eps_1|Y|}{\log^3|Y|}< \eps(|Y|)|Y|,
$$
contradicting that $G$ is an $(\eps_1,\eps_2d)$-expander.
\end{proof}

Lemma~\ref{lem-newbit} is used three times, each in a similar situation, so for its application, we prove Lemma~\ref{lem-newbit-other} below. In this lemma, with $r=n^{1/8}$, we have sets $A_i$, $i\in [r]$, and wish to find some set $A_j$ which expands while avoiding some set $B_j\cup C_j$ which depends on $j$, as well as avoiding some large common set $U$. To find such an $A_j$, we assume for contradiction that no such set $A_j$ exists, before recording as $V_i$ the first ball around $A_i$ which does not expand nicely. Applying Lemma~\ref{lem-newbit} to the sets $V_i$, $i\in [r]$, will then reach a contradiction.

To prove the lemma, we will also use the following very simple proposition.
\begin{prop}\label{propeasy}
For each $i,\ell\geq 1$, we have $\ell^{2^{-i}}-(\ell-1)^{2^{-i}}\leq \ell^{-1+2^{-i}}$.
\end{prop}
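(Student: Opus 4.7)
The plan is to reduce Proposition~\ref{propeasy} to a one-line inequality about the power function on $[0,1]$. Write $\alpha = 2^{-i}$, so that $\alpha \in (0, 1/2]$ since $i \geq 1$. Since $\ell \geq 1$, we have $\ell^\alpha > 0$, so the inequality $\ell^\alpha - (\ell-1)^\alpha \leq \ell^{-1+\alpha}$ is equivalent, after dividing through by $\ell^\alpha$, to
\[
1 - \left(1 - \tfrac{1}{\ell}\right)^{\alpha} \leq \tfrac{1}{\ell}, \qquad \text{i.e.,} \qquad \left(1 - \tfrac{1}{\ell}\right)^{\alpha} \geq 1 - \tfrac{1}{\ell}.
\]
Setting $y = 1 - 1/\ell \in [0,1]$, it therefore suffices to prove $y^{\alpha} \geq y$ for all $y \in [0,1]$ and all $\alpha \in (0,1]$.

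This last inequality is trivial: if $y = 0$ both sides are $0$, and if $y \in (0,1]$ then $\log y \leq 0$, so $\alpha \log y \geq \log y$ (because $\alpha \leq 1$), and exponentiating gives $y^{\alpha} \geq y$. Equivalently, the function $\alpha \mapsto y^{\alpha}$ is non-increasing on $(0,\infty)$ for each fixed $y \in [0,1]$, and so $y^{\alpha} \geq y^{1} = y$.

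There are no real obstacles here; the only thing to check is that the reduction step is valid at the boundary $\ell = 1$, which it is because then $y = 0$ and both sides of the final inequality are $0$ (matching the original statement $1 - 0 \leq 1$). The estimate is in fact tight only when $\ell = 1$ or $\alpha = 1$, which does not occur under the hypothesis $i \geq 1$, $\ell \geq 2$, so the bound is never sharp in the cases that matter for the subsequent applications.
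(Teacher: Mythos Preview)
Your proof is correct, and takes a genuinely different route from the paper's. The paper argues by induction on $i$: the base case $i=0$ gives equality, and the induction step uses the difference-of-squares factorisation
\[
\ell^{2^{-(i-1)}}-(\ell-1)^{2^{-(i-1)}}=\bigl(\ell^{2^{-i}}+(\ell-1)^{2^{-i}}\bigr)\bigl(\ell^{2^{-i}}-(\ell-1)^{2^{-i}}\bigr)\geq \ell^{2^{-i}}\bigl(\ell^{2^{-i}}-(\ell-1)^{2^{-i}}\bigr),
\]
then divides through by $\ell^{2^{-i}}$. This exploits the dyadic structure of the exponent. Your argument instead normalises by $\ell^{\alpha}$ and reduces to the elementary monotonicity $y^{\alpha}\geq y$ for $y\in[0,1]$, $\alpha\in(0,1]$; it is shorter, avoids induction, and in fact proves the inequality for every exponent $\alpha\in(0,1]$, not just the dyadic ones. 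The paper's proof, by contrast, does not immediately yield this generalisation, though it has the minor advantage of being entirely self-contained without appeal to properties of the logarithm. For the application (Lemma~\ref{lem-newbit-other}, where $\alpha=1/16$) either version suffices.
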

\begin{proof} Not that this is true for any $\ell$ with equality if $i=0$. Assume then for induction that $i>0$ and it is true for $\ell$ with $i$ replaced with $i-1$. We have
\[
\ell^{-1+2^{-(i-1)}}\geq \ell^{2^{-(i-1)}}-(\ell-1)^{2^{-(i-1)}}=
(\ell^{2^{-i}}+(\ell-1)^{2^{-i}})(\ell^{2^{-i}}-(\ell-1)^{2^{-i}})\geq \ell^{2^{-i}}
(\ell^{2^{-i}}-(\ell-1)^{2^{-i}}),
\]
and therefore $\ell^{2^{-i}}-(\ell-1)^{2^{-i}}\leq \ell^{-1+2^{-(i-1)}-2^{-i}}=\ell^{-1+2^{-i}}$, as required.
\end{proof}

\begin{lemma}\label{lem-newbit-other}
  For each $0<\eps_1<1$, $0<\eps_2<1/5$ and $k\in \N$, there exists $d_0=d_0(\eps_1,\eps_2,k)$ such that the following holds for each $n\geq d\geq d_0$.
   Suppose that $G$ is an $n$-vertex bipartite  $(\eps_1,\eps_2 d)$-expander with $\de(G)\ge d$.
  Let $U\subseteq V(G)$ satisfy $|U|\leq \exp((\log\log n)^2)$. Let $r=n^{1/8}$ and $\ell_0=(\log\log n)^{20}$. Suppose $(A_i,B_i,C_i)$, $i\in [r]$, are such that the following hold for each $i\in [r]$.
\stepcounter{propcounter}
\begin{enumerate}[label = \emph{\bfseries \Alph{propcounter}\arabic{enumi}}]
\item $|A_i|\geq d_0$.\label{mouse0}
\item $B_i\cup C_i$ and $A_i$ are disjoint sets in $V(G)\setminus U$, with $|B_i|\leq |A_i|/\log^{10}|A_i|$.\label{mouse1}
\item $A_i$ has $4$-limited contact with $C_i$ in $G-U-B_i$.\label{mouse2}
%$P_i$ is a shortest path in $G-U-A_i$ originating at $A_i$ with length at most $\ell_0$.
\item Each vertex in $B_{G-U-B_i-C_i}^{\ell_0}(A_i)$ has at most $d/2$ neighbours in $U$.\label{mouse3}
\item For each $j\in [r]\setminus\{i\}$, $A_i$ and $A_j$ are at least a distance $2\ell_0$ apart in $G-U-B_i-C_i-B_j-C_j$.\label{mouse4}
\end{enumerate}

Then, for some $i\in [r]$, $|B^{\ell_0}_{G-U-B_i-C_i}(A_i)|\geq \log^kn$.
\end{lemma}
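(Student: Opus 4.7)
I would argue by contradiction. Suppose that, for every $i \in [r]$, $|B^{\ell_0}_{G-U-B_i-C_i}(A_i)| < \log^k n$. The plan is, for each $i$, to produce a set $V_i \subseteq B^{\ell_0}_{G-U-B_i-C_i}(A_i)$ so that the family $(V_i)_{i \in [r]}$ is pairwise disjoint and satisfies the three hypotheses \emph{\ref{z1}}--\emph{\ref{z3}} of Lemma~\ref{lem-newbit} with respect to $K := G - U$. Applying Lemma~\ref{lem-newbit} then gives $|\bigcup_i V_i| < n^{1/8} = r$, contradicting the fact that the $r$ pairwise-disjoint sets $V_i$ (each with at least $\eps_2 d \geq 1$ elements) force $|\bigcup_i V_i| \geq r$.

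To construct $V_i$, set $F_i := G - U - B_i - C_i$ and consider the balls $V_i^{(j)} := B^j_{F_i}(A_i)$. I would declare $V_i := V_i^{(j_i)}$ where $j_i \geq 1$ is the least index satisfying $|N_K(V_i^{(j_i)})| \leq 5|V_i^{(j_i)}| / \log^{10} |V_i^{(j_i)}|$, so that \emph{\ref{z2}} holds. Condition \emph{\ref{z3}} is immediate from \emph{\ref{mouse3}} since $V_i \subseteq B^{\ell_0}_{F_i}(A_i)$, and the lower half of \emph{\ref{z1}} is already forced after a single step: for any $v \in A_i$ (nonempty by \emph{\ref{mouse0}}), $v$ has $\geq d$ neighbours in $G$, at most $d/2$ in $U$ by \emph{\ref{mouse3}}, at most $|B_i| \leq |A_i| / \log^{10} |A_i|$ in $B_i$ by \emph{\ref{mouse1}}, and at most $4$ in $C_i$ by \emph{\ref{mouse2}}, so $|N_{F_i}(v)| \geq d/3 \geq \eps_2 d$ for $d$ large and $\eps_2 < 1/5$.

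To see that $j_i$ exists with $j_i \leq \ell_0 - 1$ (giving the upper half of \emph{\ref{z1}} via $|V_i| \leq |B^{\ell_0}_{F_i}(A_i)| < \log^k n \leq \exp((\log\log n)^2)$), suppose otherwise: for every $j \in \{1, \ldots, \ell_0 - 1\}$, $|N_K(V_i^{(j)})| > 5|V_i^{(j)}| / \log^{10} |V_i^{(j)}|$. Using \emph{\ref{mouse1}}, the $4$-limited contact bound $|N_G(V_i^{(j)}) \cap C_i| \leq 4(j+1) = O(\ell_0)$ from \emph{\ref{mouse2}}, and the fact that $|V_i^{(j)}| \geq \eps_2 d$ is large enough to dominate the $O(\ell_0)$ term, I would get
\[
|N_{F_i}(V_i^{(j)})| \;\geq\; |N_K(V_i^{(j)})| - |B_i| - O(\ell_0) \;\geq\; \frac{3|V_i^{(j)}|}{\log^{10} |V_i^{(j)}|}.
\]
Iterating this growth factor $1 + 3/\log^{10}|V_i^{(j)}|$ for $\ell_0 - 1 \approx (\log\log n)^{20}$ steps, starting from $|V_i^{(1)}| \geq \eps_2 d$, yields $|V_i^{(\ell_0)}| \geq \eps_2 d \cdot \exp(\Omega((\log\log n)^{10}/k^{10})) \gg \log^k n$, contradicting the contradiction-assumption.

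The main obstacle is pairwise disjointness of the $V_i$'s, which the above construction does not directly supply. I would address this by performing the expansion instead inside the smaller auxiliary graph $\tilde F_i := F_i - \bigcup_{j \neq i}(A_j \cup B_j \cup C_j)$, so that any $A_i$-to-$v$ path realising $v \in V_i$ lies in $G - U - B_i - C_i - B_{j'} - C_{j'}$ for every $j' \neq i$. Then, if $v \in V_i \cap V_{j'}$, concatenating such paths in $\tilde F_i$ and $\tilde F_{j'}$ (each of length at most $\ell_0 - 1$) gives a walk of length strictly less than $2\ell_0$ between $A_i$ and $A_{j'}$ inside $G - U - B_i - C_i - B_{j'} - C_{j'}$, contradicting \emph{\ref{mouse4}}. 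The cost is that the growth argument must be re-verified inside $\tilde F_i$: this requires bounding the extra subtracted contact $|N_G(V_i^{(j)}) \cap \bigcup_{j' \neq i}(A_{j'} \cup B_{j'} \cup C_{j'})|$, for which no single hypothesis suffices. I expect to control it by running the argument simultaneously over all $i$, so that an excessive aggregated collateral loss would yield many poorly-expanding sets whose union violates the $(\eps_1, \eps_2 d)$-expansion of $G$, in the spirit of the proof of Lemma~\ref{lem-newbit} itself.
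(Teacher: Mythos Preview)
Your overall strategy matches the paper's: for each $i$ produce a ball $V_i \subseteq B^{\ell_0}_{F_i}(A_i)$ (with $F_i = G-U-B_i-C_i$) whose neighbourhood in $K = G-U$ is at most $5|V_i|/\log^{10}|V_i|$, verify \emph{\ref{z1}}--\emph{\ref{z3}}, and apply Lemma~\ref{lem-newbit}. The paper's stopping rule is cleaner than yours, however. Instead of iterating until $|N_K(V_i^{(j)})|$ first drops below threshold, it sets $\alpha = 1/16$ and takes $\ell_i$ to be the least $\ell$ with $|B^{\ell}_{F_i}(A_i)| \leq \exp(\ell^{\alpha})$ (such $\ell \leq \ell_0$ exists since $\exp(\ell_0^{\alpha}) = \exp((\log\log n)^{1.25}) > \log^k n$), and puts $V_i = B^{\ell_i-1}_{F_i}(A_i)$. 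Then $|V_i| \geq \exp((\ell_i-1)^{\alpha})$ while $|B_{F_i}(V_i)| \leq \exp(\ell_i^{\alpha})$, and the elementary bound $\ell_i^{\alpha} - (\ell_i-1)^{\alpha} \leq (\ell_i^{\alpha})^{-10}$ (Proposition~\ref{propeasy}) yields $|N_{F_i}(V_i)| \leq 2|V_i|/\log^{10}|V_i|$ in one stroke, with no iterated growth to track. Your growth argument can be completed, but the line ``$|V_i^{(j)}| \geq \eps_2 d$ is large enough to dominate the $O(\ell_0)$ term'' is not right as written: $\eps_2 d$ may be the constant $\eps_2 d_0$ while $\ell_0 = (\log\log n)^{20}$ grows, so you would need a more careful induction showing $4(j+1)$ is dominated at step $j$ because $|V_i^{(j)}|$ has already grown.

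The genuine gap is your last paragraph. Redefining the expansion to take place inside $\tilde F_i = F_i \setminus \bigcup_{j' \neq i}(A_{j'} \cup B_{j'} \cup C_{j'})$ destroys the growth estimate: you are now deleting a set of size at least $r = n^{1/8}$, which swamps the $|V_i^{(j)}|/\log^{10}|V_i^{(j)}|$ scale you are working at, and your closing sentence (``I expect to control it by running the argument simultaneously over all $i$'') is a hope rather than an argument. The paper does nothing of the sort: it simply invokes \emph{\ref{mouse4}} in one line to conclude that the $V_i$ are pairwise disjoint. If you find that deduction too quick --- concatenating a path in $G-U-B_i-C_i$ with one in $G-U-B_j-C_j$ does not obviously land in $G-U-B_i-C_i-B_j-C_j$ --- the point is that in each of the three applications of this lemma in Section~\ref{sec:robadj} what is actually verified beforehand is the stronger fact that the full balls $B^{\ell_0}_{F_i}(A_i)$ are pairwise disjoint, from which $V_i \cap V_j = \emptyset$ is immediate. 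The remedy is therefore not to change the construction but to read \emph{\ref{mouse4}} in that slightly stronger form.
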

\begin{proof} Suppose, for contradiction, that $|B^{\ell_0}_{G-U-B_i-C_i}(A_i)|< \log^kn$ for each $i\in [r]$.
Let $\alpha=1/16$, and note that $\exp(\ell_0^\alpha)= \exp((\log\log n)^{1.25})>\log^kn$, as $n\geq d_0(\ep_1,\ep_2,k)$ is large, and hence $|B^{\ell_0}_{G-U-B_i-C_i}(A_i)|< \exp(\ell_0^\alpha)$ for each $i\in [r]$. Therefore, for each $i\in [r]$, we can let $\ell_i$ be the smallest $\ell\in [\ell_0]$ such that
\begin{equation}\label{eq-VHP-star}
|B^{\ell}_{G-U-B_i-C_i}(A_i)|\leq \exp(\ell^{\alpha}).
\end{equation}
For each $i\in [r]$, let $V_i=B^{\ell_i-1}_{G-U-B_i-C_i}(A_i)$.
By the definition of $\ell_i$, we have that
\begin{equation}\label{eq-VHP-heart}
	|V_i|\geq \exp((\ell_i-1)^{\alpha}) \quad \mbox{ and } \quad |B_{G-U-B_i-C_i}(V_i)|\leq \exp(\ell_{i}^{\alpha}).
\end{equation}

\begin{claim}\label{cl-crackletot} For each $i\in [r]$, we have
	$|N_{G-U}(V_i)|\leq \frac{5|V_i|}{\log^{10}|V_{i}|}$.
\end{claim}
\begin{poc}
	 Fix $i\in [r]$.  By Proposition~\ref{propeasy}, we have $\ell_i^\alpha-(\ell_i-1)^\alpha\leq \ell_i^{-1+\alpha}\leq (\ell_i^\alpha)^{-10}$. Note that, as $d\geq d_0(\ep_1,\eps_2,k)$ is large, we have that $\ell_i\geq \log d_0$ is large by
   \emph{\ref{mouse0}} and~\eqref{eq-VHP-star}.  As $\exp(1/x)-1\le 2/x$ for large $x>0$, we thus have
   \begin{equation}\label{newnneq}
\exp(\ell_i^\alpha-(\ell_i-1)^{\alpha})-1\leq \exp((\ell_i^\alpha)^{-10})-1\leq {2}/(\ell_i^\alpha)^{10}.
   \end{equation}
Then,
   	\begin{align}
   	|N_{G-U-B_i-C_i}(V_i)|&\le |B_{G-U-B_i-C_i}(V_i)|-|V_i|
     =\left(\frac{|B_{G-U-B_i-C_i}(V_i)|}{|V_i|}-1\right)|V_i|\nonumber\\
   	 &\stackrel{\eqref{eq-VHP-heart}}{\le} \left(\frac{\exp(\ell_i^\alpha)}{\exp((\ell_i-1)^{\alpha})}-1\right)|V_i|\stackrel{\eqref{newnneq}}{\leq}\frac{2|V_i|}{(\ell_i^{\alpha})^{10}}\stackrel{\eqref{eq-VHP-heart}}{\le} \frac{2|V_i|}{\log^{10}|V_i|}.\label{crackle}
   	\end{align}

	Now, due to~\emref{mouse2}, we have
	\begin{eqnarray}\label{crackleon}
	|N_{G-U-B_i}(V_i)\cap C_i|\leq 4\ell_i\stackrel{\eqref{eq-VHP-heart}}{\le} 4(\log^{16}|V_i|+1)\le \frac{|V_i|}{\log^{10}|V_i|},
	\end{eqnarray}
  as, by \emref{mouse0}, $|V_i|\geq |A_i|\geq d_0(\eps_1,\eps_2,k)$ is large.
Therefore, as
  $$|N_{G-U}(V_i)|\leq |N_{G-U-B_i-C_i}(V_i)|+|B_i|+|N_{G-U-B_i}(V_i)\cap C_i|,$$
  the claim follows from \eqref{crackle},  \eqref{crackleon}, $|A_i|\leq |V_i|$ and \emref{mouse1}.
\end{poc}

We now check the conditions to appy Lemma~\ref{lem-newbit} to the sets $V_i$, $i\in [r]$. By \emref{mouse4}, the sets $V_i$, $i\in [r]$, are pairwise disjoint. Note that $|V_i|\ge |B_{G-U-B_i-C_i}(A_i)|\ge\eps_2 d$. Indeed, if $|A_i|\ge \eps_2d$, this holds clearly; if $|A_i|<\eps_2d$, then by \emref{mouse1}, \emref{mouse2} and \emref{mouse3}, $|B_{G-U-B_i-C_i}(A_i)|\ge \delta(G)-|B_i|-4-d/2\ge\eps_2d$. Thus, for each $i\in [r]$, $\eps_2d\leq |V_i|<\log^kn\leq \exp((\log\log n)^2)$. By Claim~\ref{cl-crackletot}, if $K=G-U$, then $|N_K(V_i)|\leq 5|V_i|/\log^{10}|V_i|$ for each $i\in [r]$.
By \emref{mouse3}, for each $i\in [r]$ and $v\in V_i$, we have $d_G(v,U)\leq d/2$.
Therefore, by Lemma~\ref{lem-newbit}, we have $r\leq |\cup_{i\in [r]}V_i|<n^{1/8}$, a contradiction.
\end{proof}

\subsection{Disjoint vertex expansions}\label{sec:vxexpansion}
In order to connect structures together in an expander, we typically find the structures we want with an extra subgraph attached to the vertex, $v$ say, we wish to make connections from. This extra subgraph, $F$ say, should have enough vertices that either Lemma~\ref{limit-contact} or Lemma~\ref{new-connect} can be used to connect $V(F)$ to another vertex set while avoiding some structures that we have already found. The graph $F$ should also have a short path from $v$ to any other vertex in $F$, so that a path to $V(F)$ can be extended to one to $v$ without many additional vertices. This motivates the following definition.

\begin{defn}
Given a vertex $v$ in a graph $F$, $F$ is a \emph{$(D,m)$-expansion of $v$} if $|F|=D$ and $v$ is a distance at most $m$ in $F$ from any other vertex of $F$.
\end{defn}

Before we find vertex expansions, we first prove the following simple proposition which finds a smaller expansion within any expansion.

\begin{prop}\label{prop-trimming} Let $D,m\in \N$ and $1\leq D'\leq D$. Then, any graph $F$ which is a $(D,m)$-expansion of $v$ contains a subgraph which is a $(D',m)$-expansion of $v$.
\end{prop}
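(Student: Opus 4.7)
The plan is to extract the desired expansion from a BFS tree of $F$ rooted at $v$. Since $F$ is a $(D,m)$-expansion of $v$, there is a breadth-first search tree $T \subseteq F$ rooted at $v$ with $V(T) = V(F)$ and with depth at most $m$ (i.e.\ $d_T(v,u) \le m$ for all $u \in V(T)$).

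The key step is then to prune $T$ down to exactly $D'$ vertices while keeping it a subtree containing $v$. Since $T$ has $D \ge D'$ vertices and any tree on at least two vertices has a leaf, we can iteratively remove a leaf of the current tree other than $v$; after $D - D'$ such removals we obtain a subtree $T' \subseteq T$ with $v \in V(T')$ and $|V(T')| = D'$. Because leaf-deletion does not change distances between the remaining vertices, we still have $d_{T'}(v,u) \le d_T(v,u) \le m$ for every $u \in V(T')$.

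Finally, take $F'$ to be the subgraph of $F$ with vertex set $V(T')$ and edge set $E(T')$ (equivalently, one could take the induced subgraph $F[V(T')]$; the argument is the same). Then $|F'| = D'$, and for any $u \in V(F')$,
\[
d_{F'}(v,u) \le d_{T'}(v,u) \le m,
\]
so $F'$ is a $(D',m)$-expansion of $v$, as required.

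There is essentially no obstacle here: the only thing to be careful about is that distances could \emph{increase} when passing to a subgraph, which is exactly why we use a BFS tree and prune only leaves, so that the $v$-distances of the surviving vertices are preserved throughout.
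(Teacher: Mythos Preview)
Your proof is correct and takes essentially the same approach as the paper: both arguments iteratively delete one vertex at a time while preserving the property that every remaining vertex is within distance $m$ of $v$. The paper does this directly in $F$ by removing, at each step, a vertex at maximum distance from $v$ (any shortest $v$-path to a surviving vertex necessarily avoids it), whereas you first pass to a BFS tree and remove leaves; the two mechanisms are equivalent in spirit, with the paper's version being marginally more direct since it avoids the detour through a spanning tree.
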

\begin{proof}
We prove this for each $1\leq D'\leq D$ by induction on $D'$ for $D'=D,D-1,\ldots,1$. Note that $F$ demonstrates this is true for $D'=D$. Suppose then it is true for $D'\geq 2$, and let $F'$ with $|F'|=D'$ be a $(D',m)$-expansion of $v$. Let $w\in V(F')$ maximise the graph distance from $v$ to $w$ in $F'$. As $D'\geq 2$, $v\neq w$. Noting that $F'-w$ is a $(D'-1,m)$-expansion of $v$ completes the proof of the inductive step, and hence the proposition.
\end{proof}

We now give our lemma which finds vertex expansions. Its proof is different according to whether there are many vertices in the graph of high degree (Case I) or not (Case II). We will construct structures using short cycles later, so for the application of this lemma we need to find vertex expansions while avoiding a short cycle as much as possible.

\begin{lemma}\label{lem-expansion}
	For each $k\in \N$ and any $0<\eps_1, \eps_2<1$, there exists $d_0=d_0(\ep_1,\ep_2,k)$ such that the following holds for each $n\geq d\geq d_0$.

  Suppose that $G$ is an $n$-vertex bipartite $(\eps_1,\eps_2 d)$-expander with $\delta(G)\geq d-1$.
	Let $m=\frac{40}{\ep_1}\log^3 n$. Let $C$ be a shortest cycle in $G$, and let $x_1,\ldots,x_k$ be distinct vertices in $G$. For each $i,j\in [k]$, let $D_{i,j}\in [1,\log^{5k}n]$.

    Then, there are graphs $F_{i,j}\subseteq G$, $i,j\in [k]$, such that the following hold.
  \begin{itemize}
    \item For each $i,j\in [k]$, $F_{i,j}$ is a $(D_{i,j},5m)$-expansion around $x_i$ which contains no vertices other than $x_i$ in $V(C)\cup \{x_1,\ldots,x_k\}$.
    \item The sets  $V(F_{i,j})\setminus\{x_i\}$, $i,j\in [k]$, are pairwise disjoint.
\end{itemize}
\end{lemma}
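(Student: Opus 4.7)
The plan is to construct the $k^2$ expansions $F_{i,j}$ sequentially, in any fixed order of the pairs $(i,j)$, maintaining an avoidance set $W$ initialized to $\{x_1,\ldots,x_k\}$ and extended by $V(F_{i',j'})\setminus\{x_{i'}\}$ after each construction. When processing $(i,j)$, the task reduces to finding \emph{some} $(D',5m)$-expansion of $x_i$ in $G-(W\setminus\{x_i\})-(V(C)\setminus\{x_i\})$ with $D'\geq D_{i,j}$ and then trimming it via Proposition~\ref{prop-trimming}. Throughout, $|W|\leq k^2\log^{5k}n$. My first observation is that $V(C)$ has $4$-limited contact with any set $A\subseteq V(G)\setminus V(C)$ in $G$: if $B^{i-1}_{G-V(C)}(A)$ had more than $4i$ neighbours on $C$, two of them would lie on an arc of $C$ shorter than both of their paths back to $A$, and together with those paths they would form a cycle strictly shorter than $|C|$, a contradiction.

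I would split the construction into two cases according to whether $G$ contains many very-high-degree vertices. In Case I, suppose $G$ has at least $10k^2\log^{5k}n$ vertices of degree at least $\log^{5k+3}n$. For each $(i,j)$, I first use Lemma~\ref{limit-contact} (taking $X=W$ and $Z=V(C)$) to grow $\{x_i\}$ to a seed of size at least $|W|\log^3n$ inside the radius-$m$ ball, then apply Lemma~\ref{new-connect} to link this seed to an unused high-degree vertex $v$ by a path $P$ of length at most $m$, and finally take $F_{i,j}$ to consist of $P$ together with $D_{i,j}-|V(P)|$ further neighbours of $v$ outside $W\cup V(C)$. The shortest-cycle property bounds $v$'s neighbours on $C$ by at most $2$, and $|W|+D_{i,j}+2\ll\log^{5k+3}n$, so enough free neighbours exist. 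Every vertex of $F_{i,j}$ sits within distance $|V(P)|+1\leq 2m\leq 5m$ of $x_i$.

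In Case II, fewer than $10k^2\log^{5k}n$ vertices of $G$ have degree at least $\log^{5k+3}n$; call that set $U^{\ast}$. Then every vertex of $G-U^{\ast}$ has modest degree. For each $(i,j)$, my plan is to produce a ball around $x_i$ of size at least $D_{i,j}$ in $G-W-V(C)$ within radius $5m$. A direct attempt via Lemma~\ref{limit-contact} starting from $\{x_i\}$ fails when $|W\cup U^{\ast}|$ exceeds $d$, so the hypothesis \emref{sm1} is unavailable. Instead, I would apply Lemma~\ref{lem-newbit-other} with $U=W\cup U^{\ast}$ to a collection of $n^{1/8}$ candidate seed triples $(A_i,B_i,C_i)$ extracted from the large ball $B^{5m}_{G-V(C)}(x_i)$ (itself guaranteed by Lemma~\ref{limit-contact}\emref{exp2} combined with the limited-contact observation). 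Hypothesis~\emref{mouse3} holds because the candidate seeds lie in $G-U^{\ast}$, so their degree into $U$ is bounded by the modest degrees there. The conclusion yields a candidate $A$ whose $\ell_0$-ball in $G-U-B-C$ has size at least $\log^{5k+1}n$; I then connect $A$ back to $x_i$ via a short path in $G-W-V(C)$ using Lemma~\ref{new-connect}, producing the required expansion of $x_i$, which I trim to exact size $D_{i,j}$ by Proposition~\ref{prop-trimming}.

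The hard part will be Case II: the accumulating avoidance set $W$ may dwarf $d$, so Lemma~\ref{limit-contact} cannot be launched directly from $\{x_i\}$. This is precisely the regime Lemma~\ref{lem-newbit-other} was built for, using the sparsity of $G-U^{\ast}$ to obtain expansion at a size scale set by the avoidance set rather than by $d$. Once that hurdle is cleared, the remaining bookkeeping (ensuring $F_{i,j}$ is a $(D_{i,j},5m)$-expansion of $x_i$ and maintaining disjointness across pairs) is routine given Proposition~\ref{prop-trimming} and the fact that $|W|$ remains much smaller than the $\ell_0$-balls our applications produce.
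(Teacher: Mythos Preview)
Your sequential approach has a genuine gap that affects both cases, not just Case~II. After a few rounds the avoidance set $W$ has size $\Theta(k^2\log^{5k}n)$, and since $d$ may be as small as a constant $d_0(\eps_1,\eps_2,k)$ while $n\to\infty$, the ratio $|W|/d$ is uncontrolled. In your Case~I you invoke Lemma~\ref{limit-contact} with $X=W$, but hypothesis \emph{\ref{sm1}} asks for $|W|\le |A|\eps(|A|)/4$ where $|A|\approx d$; this is $\Theta(d)$, far smaller than $|W|$. In your Case~II you try to fix this via Lemma~\ref{lem-newbit-other} with $U=W\cup U^\ast$, but hypothesis \emph{\ref{mouse3}} requires every vertex in the relevant $\ell_0$-balls to have at most $d/2$ neighbours in $U$; knowing only that vertices outside $U^\ast$ have degree at most $\log^{5k+3}n$ gives no such bound when $d\ll\log n$. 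The proof of Lemma~\ref{lem-newbit} genuinely needs $|N_G(V_i)\cap U|\le |V_i|^2/\eps_2$, which your degree bound does not deliver.

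The paper sidesteps the accumulating-$W$ problem by \emph{not} building the expansions sequentially. Instead it first plants $k^2$ large ``target'' expansions (neighbourhoods of high-degree vertices in Case~I, or far-apart $(\Delta,2\ell_0)$-expansions found in the bounded-degree graph $G-L$ in Case~II, where crucially $|L\setminus V(C)|<2k^2$ is a \emph{constant}). It then takes a maximal collection $\cP$ of short paths from $\{x_1,\ldots,x_k\}$ to these targets, subject to $|\cP|$ maximal and $\sum_{P\in\cP}\ell(P)$ minimal. The minimality forces each $P\in\cP$ to contribute at most $\ell+1$ vertices to $N_G(B^{\ell-1}_{G-U}(x))$, so $\{x\}$ has $10k^2$-limited contact with $V(\cP)\cup V(C)\cup X\cup V$ (or with $U$ in Case~II, where the targets are handled as the far-away set $Y$). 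This lets Lemma~\ref{limit-contact} run entirely through condition \emph{\ref{sm3}}, with \emph{\ref{sm1}} applied only to a constant-size set. That minimality-gives-limited-contact trick is the missing idea in your proposal.
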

\begin{proof} Note first that, as $\de(G)\ge d-1$, $|C|\le 2\log n/\log (d-1)$.  Let $D= \log^{5k}n$. By Proposition~\ref{prop-trimming}, we can assume that $D_{i,j}=D$ for all $i,j\in [k]$. Let $r=k^2$ and let $L$ be the set of vertices in $G$ with degree at least $\Delta=D^2$. We will split into two cases depending on whether $|L\setminus V(C)|\geq 2r$ or not.

\medskip

\noindent\textbf{Case I:}  First suppose that there are $2r$ vertices $v_1,\ldots,v_{2r}\in L\setminus V(C)$. Assume, by relabelling if necessary, that $V:=\{v_1,\ldots,v_r\}$ is disjoint from $X:=\{x_1,\ldots,x_k\}$.

Let $\cP$ be a maximal collection of paths in $G$ from $X$ to $V$ such that
\begin{itemize}
	\item each path in $\cP$ has length  at most $3m$ and internal vertices in $V(G)\setminus (V(C)\cup X\cup V)$,
	\item the paths in $\cP$ are vertex disjoint outside of $X$, and
	\item there are at most $k$ paths containing each vertex in $X$.
\end{itemize}
Subject to $|\cP|$ being maximal, suppose that $\sum_{P\in \cP}\ell(P)$ is minimised.
Note that, as there are at most $k$ paths containing each vertex in $X$, $|\cP|\le k^2$.

Now, suppose for contradiction that there is some $x\in X$ which is in fewer than $k$ paths in $\cP$, and so $|\cP|< k^2$. Let $U=(V\cup X\cup V(\cP)\cup V(C))\setminus \{x\}$.  For each path $P\in \cP$ and $\ell\in \N$, at most $\ell+1$ vertices in $N_G(B^{\ell-1}_{G-U}(x))$ can lie on $P$, otherwise we can find a shorter path than $P$ from $x$ to $V(P)\cap V$ in $G-(U\setminus V(P))$. Swapping $P$ for this shorter path contradicts the minimality of $\sum_{P'\in \cP}\ell(P')$.
Therefore, for each $\ell\in \N$,
\begin{equation}\label{buzz}
|N_G(B^{\ell-1}_{G-U}(x))\cap V(\cP)|\leq \sum_{P\in \cP}|N_G(B^{\ell-1}_{G-U}(x))\cap V(P)|\leq (\ell+1)|\cP|
\leq (\ell+1)k^2.
\end{equation}
Furthermore, for any vertex $v$ and any integer $\ell\in \N$, at most $2\ell+1$ vertices in $B^{\ell}_G(v)$ can lie on $C$, as $C$ is a shortest cycle in $G$.
Therefore,
\begin{align*}
|N_G(B^{\ell-1}_{G-U}(x))\cap U|&\leq |V\cup X|+|N_G(B^{\ell-1}_{G-U}(x))\cap V(\cP)|+|N_G(B^{\ell-1}_{G-V(C)+x}(x))\cap V(C)|\\
&\overset{\eqref{buzz}}{\leq} 2k^2+(\ell+1)k^2+|B^{\ell}_G(x)\cap V(C)|\leq (\ell+3)k^2+(2\ell+1)\leq 10\ell k^2.
\end{align*}
Thus, $\{x\}$ has $10k^2$-limited contact with $U$ in $G$, so that $B_{G-U}(x)$ has $20k^2$-limited contact with $U$ in $G$.
We also have $|B_{G-U}(x)|\ge \de(G)-10k^2\ge d/2\ge \ep_2d/2$. Therefore, applying Lemma~\ref{limit-contact} with $(A,X,Y,Z,k)_{\ref{limit-contact}}=(B_{G-U}(x),\varnothing,\varnothing, U,20k^2)$, we get that $|B^{m+1}_{G-U}(x)|=|B^{m}_{G-U}(B_{G-U}(x))|>n/2$.

Note that, by the choice of $\cP$, each $P\in \cP$ has a distinct vertex in $V$. As $|\cP|<k^2$, we can choose a vertex $v\in V\setminus V(\cP)$. Note that $|U|\le 2k^2+k^2\cdot (3m+1)+2\log n/\log (d-1)\le \log^4n$.
As $d_G(v)\geq \Delta=\log^{10k}n\ge |U|\log^3n/10$ and $|B^{m+1}_{G-U}(x)|>n/2\ge |U|\log^3n/10$, using Lemma~\ref{new-connect}, we can connect $B^{m+1}_{G-U}(x)$ and $N_G(v)$ with a path of length at most $m$ in $G-U$, which extends in $B^{m+1}_{G-U}(x)\cup \{v\}$ to an $x,v$-path in $G-U$ with length at most $3m$, contradicting the maximality of~$\cP$.

Therefore, each vertex in $X$ is in exactly $k$ paths in $\cP$.
Label the paths in $\cP$ as $P_{i,j}$, $i,j\in [k]$, so that $x_i$ is an endvertex of $P_{i,j}$, and let $v_{i,j}$ be the endvertex of $P_{i,j}$ in $V$. Greedily, using that $|N_{G}(v_{i,j})\setminus(V\cup X\cup V(C)\cup V(\cP))|\geq \Delta-\log^4n\geq k^2D$ for each $i,j\in [k]$, pick disjoint sets $A_{i,j}\subseteq N_{G}(v_{i,j})\setminus(V\cup X\cup V(C)\cup V(\cP))$, $i,j\in [k]$, with size $D-|P_{i,j}|$. Then $F_{i,j}=G[A_{i,j}]\cup P_{i,j}$, $i,j\in[k]$, are easily seen to be the $(D,5m)$-expansions we require.

\medskip

\noindent\textbf{Case II:} Suppose then that $|L\setminus V(C)|< 2r$. Relabelling if necessary, let $0\le k'\leq k$ be such that $\{x_1,\ldots,x_k\}\setminus L=\{x_1,\ldots,x_{k'}\}$. Let $G'=G-L$,  $X=\{x_1,\ldots,x_{k'}\}$, $r'=k'k$ and $\ell_0=2(\log\log n)^5$.

Let $s\leq r'$ be the largest integer for which there are vertices $w_1,\ldots,w_s\in V(G')$ such that
\begin{itemize}
\item the sets $B^{5\ell_0}_{G'}(w_i)$, $i\in [s]$, $X$ and $V(C)\setminus L$ are all pairwise disjoint.
\end{itemize}

Suppose $s<r'$. Then, we must have $V(G')=B^{10\ell_0}_{G'}((\{w_1,\ldots,w_s\}\cup X\cup V(C))\setminus L)$. However, as $\De(G')\le \De=D^2$ and $|C|\leq 2\log n/\log(d-1)\leq 2\log n$,
$$|G'|=|B_{G'}^{10\ell_0}((\{w_1,\ldots,w_{s}\}\cup X\cup V(C))\setminus L)|\le 2\cdot (r'+k'+2\log n)\cdot \De^{10\ell_0}\le\exp((\log\log n)^7)<n/2,$$
contradicting $|G'|\geq n-|L\setminus V(C)|-|C|\geq n-2r-2\log n\geq n/2$. Therefore, $s=r'$.

Now, fixing an arbitrary $i\in [r']$,  similarly to before, for each $\ell\in \N$ at most $2\ell+1$ vertices in $B^{\ell}_G(w_i)$ can lie on $C$, otherwise  there is a shorter cycle in $G$ than $C$. Thus,
\begin{equation}\label{litttle}
|B_{G'- V(C)}(w_i)|\ge \de(G)-|L\setminus V(C)|-3\ge \de(G)-2r-3\ge d/2,
\end{equation}
and, for each $\ell\in \N$,
$$|N_G(B_{G-V(C)}^{\ell-1}(B_{G'- V(C)}(w_i)))\cap V(C)|\le |B^{\ell+1}_G(w_i)\cap V(C)|\le 2\ell+3\le 5\ell.$$
Therefore, $B_{G'-V(C)}(w_i)$ has $5$-limited contact with $V(C)$ in $G$.
Let $z=|B_{G'- V(C)}(w_i)|$, and note that, as $|L\setminus V(C)|\leq 2r=2k^2$ and, by \eqref{litttle}, $z\geq d/2\geq d_0(\ep_1,\eps_2,k)/2$ is large, we have that  $|L\setminus V(C)|\leq \ep(z)z/4$. Thus, by  Lemma~\ref{limit-contact} with $(A,X,Y,Z,k)_{\ref{limit-contact}}=(B_{G'-V(C)}(w_i), L\setminus V(C),\varnothing, V(C), k+5)$ we get that $|B^{\ell_0+1}_{G'-V(C)}(w_i)|\geq D^2=\De$. Hence, by Proposition~\ref{prop-trimming} we can pick a subgraph $F_i\subseteq G'$ induced on a subset of $B^{\ell_0}_{G'-V(C)}(w_i)$ which is a $(\De,2\ell_0)$-expansion of $w_i$.

Now, let $\cP$ be a maximal collection of paths in $G'$ from $X$ to $V:=\cup_{i\in [r']}V(F_i)$ such that
\begin{itemize}
	\item each path in $\cP$ has length  at most $3m$ and internal vertices in $V(G')\setminus (V(C)\cup X\cup V)$,
	\item the paths in $\cP$ are vertex disjoint outside of $X$,
  \item at most one path in $\cP$ has a vertex in $V(F_i)$, for each $i\in [r']$, and
	\item there are at most $k$ paths containing each vertex in $X$.
\end{itemize}
Subject to $|\cP|$ being maximal, suppose that $\sum_{P\in \cP}\ell(P)$ is minimised.

Suppose again there is some $x\in X$ in fewer than $k$ paths in $\cP$ and let $U=(L\cup X\cup V(\cP)\cup V(C))\setminus \{x\}$. As in Case I, by the minimality of $\sum_{P'\in\cP}\ell(P')$, for each path $P\in \cP$ and $\ell\in \N$, at most $\ell+1$ vertices in $N_G(B^{\ell-1}_{G-U}(x))$ can lie on $P$.
Therefore, for each $\ell\in \N$,
\begin{equation*}%\label{buzz}
|N_G(B^{\ell-1}_{G-U}(x))\cap V(\cP)|\leq \sum_{P\in \cP}|N_G(B^{\ell-1}_{G-U}(x))\cap V(P)|\leq (\ell+1)|\cP|
\leq (\ell+1)k^2.
\end{equation*}
Again, for any integer $\ell\in \N$, at most $2\ell+1$ vertices in $B^{\ell}_G(x)$ can lie on $C$, as $C$ is a shortest cycle in $G$. Thus,
\begin{align*}
|N_G(B^{\ell-1}_{G-U}(x))\cap U|&\leq |X\cup (L\setminus V(C))|+|N_G(B^{\ell-1}_{G-U}(x))\cap V(\cP)|+|B^{\ell}_G(x)\cap V(C)|\\
&\leq k+2r+(\ell+1)k^2+(2\ell+1)\leq 10\ell k^2.
\end{align*}
Therefore, $\{x\}$ has  $10k^2$-limited contact with $U$ in $G$, and so $B_{G-U}(x)$ has $20k^2$-limited contact with $U$ in $G$. Furthermore, by the choice of the $w_i$, $B^{\ell_0}_{G-U}(x)\cap V=\varnothing$, thus
$$|B_{G-U-V}(x)|=|B_{G-U}(x)|\ge d-10k^2\geq d/2.$$
 We also have that $|V|\leq k^2\Delta= k^2\log^{10k}n$.
Therefore,  by Lemma~\ref{limit-contact} with $(A,X,Y,Z,k)_{\ref{limit-contact}}=(B_{G-U}(x), \varnothing, V,U,20k^2)$, $|B^{m+1}_{G-U-V}(x)|>n/2$.

By the choice of $\cP$, each $P\in \cP$ has exactly one vertex in $V(F_i)$ for some $i\in[r']$. As $|\cP|<k'k=r'$, there is some $j\in[r']$ such that $V(\cP)$ has no vertex in $V(F_j)$. Now, the vertices $w_i$, $i\in[r']$, are pairwise at least $10\ell_0$-far in $G'$, and $F_i$ is a $(\De,\ell_0)$-expansion around $w_i$. Therefore, the subgraphs $F_i$, $i\in[r']$, are pairwise at least $8\ell_0$-far from each other in $G'$, so that $F_j$ is at least $8\ell_0$-far from $V\setminus V(F_j)$.

Now, as $|U|\leq |X|+|L\setminus V(C)|+|C|+|V(\cP)|\leq k+2r+2\log n+(3m+1)r\leq \log^4n$, we have $|F_j|=\De\geq m|U|$. As $L\neq V(G)$, we have $\De> \delta(G)\ge \ep_2d$, so that $|F_j|\geq \eps_2d$. Therefore, by Lemma~\ref{limit-contact} with $(A,X,Y,Z,k)_{\ref{limit-contact}}=(V(F_j),U, V\setminus V(F_j),\varnothing,k)$, we have
$$|B^{m}_{G-U-(V\setminus V(F_j))}(V(F_j))|>n/2.$$
Thus, as $|B^{m+1}_{G-U-V}(x)|>n/2$, there is a path from $\{x\}$ to $V(F_j)$ in $G'$ with length at most $3m$ which is internally disjoint from $V(C)\cup X\cup V$. This contradicts the maximality of $\cP$.

Therefore, each vertex in $X$ is in exactly $k$ paths in $\cP$.
Label the paths in $\cP$ with $P_{i,j}$, $i\in[k']$ and $j\in[k]$, and graphs $F_{i'}$, $i'\in [r']$, as $F'_{i,j}$, $i\in[k']$ and $j\in[k]$, so that, for each $i\in[k']$ and $j\in[k]$, $P_{i,j}$ is a path from  $x_i$ to $F'_{i,j}$. Recall that, for a path $P$, we write $\ell(P)$ for its length. Note that $P_{i,j}\cup F'_{i,j}$ is a $(|P_{i,j}\cup F'_{i,j}|,\ell(P_{i,j})+2\ell_0)$-expansion of $x_i$, for each $i\in [k']$ and $j\in [k]$.
For each $i\in[k']$ and $j\in[k]$, apply Proposition~\ref{prop-trimming} to obtain a $(D,5m)$-expansion $F_{i,j}\subseteq P_{i, j}\cup F'_{i,j}$ around $x_i$.

Lastly, for each $k'+1\le i\le k$ and $j\in[k]$, as $x_i\in L$ we can greedily pick pairwise disjoint $(D,1)$-expansions $F_{i,j}$ induced on a subset of $N_{G}(x_{i})\setminus(V(C)\cup (\cup _{i'\in[k'], j'\in[k]}V(F_{i',j'})))$.
\end{proof}

\subsection{Enlarging vertex expansions}\label{sec:largeexpansions}
In this section, we take up to 4 disjoint vertex expansions, and expand them disjointly to get larger vertex expansions around the same vertices (see Lemma~\ref{lem-large-exp}). This enlargement allows us to later connect vertex expansions with very long paths, as our path lengths need to be smaller than the expansions (see Section~\ref{sec:lengthen}).

We first show for Lemma~\ref{cl-egg} that we can always find a linear size vertex set with polylogarithmic diameter in $G$ while avoiding an arbitrary set of up to $\Theta(n/
\log^2n)$ vertices. This is used for Lemma~\ref{lem-large-exp} and Lemma~\ref{lem-conexp}, as well as later in Section~\ref{sec:robadj}

\begin{lemma}\label{cl-egg} For any $0<\eps_1, \eps_2<1$, there exists $d_0=d_0(\eps_1,\eps_2)$ such that the following holds for each $n\geq d\geq d_0$. Suppose that $G$ is an $n$-vertex bipartite $(\eps_1,\eps_2 d)$-expander with $\de(G)\ge d$ and let $m=\frac{50}{\eps_1}\log^3n$.

For any set $W\subseteq V(G)$ with $|W|\leq \eps_1 n/100\log^{2}n$, there is a set $B\subseteq G-W$ with size at least $n/25$ and diameter at most $2m$, and such that $G[B]$ is a $(D,m)$-expansion around some vertex $v\in B$ for $D=|B|$.
\end{lemma}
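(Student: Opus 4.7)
I plan to exhibit a vertex $v \in V(G) \setminus W$ with $|B^m_{G-W}(v)| \geq n/25$, and then set $B = B^m_{G-W}(v)$. This choice of $B$ automatically meets all the required properties: $|B| \geq n/25$, $B \subseteq V(G)\setminus W$, and since every shortest $v,u$-path in $G - W$ for $u \in B$ stays inside $B^m_{G-W}(v) = B$, we have $\dist_{G[B]}(v, u) \leq m$, so $G[B]$ is a $(|B|, m)$-expansion around $v$ with diameter at most $2m$.

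The first step is to invoke Lemma~\ref{lem-diameter} with $x = n/25$. Using $\eps(x) \geq \eps_1/\log^2 n$ for $n$ large, one has $|W| \leq \eps_1 n/(100 \log^2 n) \leq x \eps(x)/4$, so the lemma applies and yields that in $G - W$ any two sets of size at least $n/25$ are at distance at most $(2/\eps_1) \log^3(15 n/\eps_2 d) \leq m/25$. Equivalently, for every $X \subseteq V(G) \setminus W$ with $|X| \geq n/25$,
\[
|B^{m/25}_{G-W}(X)| \geq n - |W| - n/25 + 1 \geq 23n/25.
\]
Thus it suffices to find some $v \in V(G)\setminus W$ with $|B^{24m/25}_{G-W}(v)| \geq n/25$, since applying the above display with $X = B^{24m/25}_{G-W}(v)$ then gives $|B^m_{G-W}(v)| \geq 23n/25 \geq n/25$.

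To produce such a $v$, I would appeal to Lemma~\ref{limit-contact}. Choose $v_0 \in V(G)\setminus W$ for which the short BFS ball $A := B^{\ell_0}_{G-W}(v_0)$ with $\ell_0 = (\log \log n)^5$ has $|A| \geq \eps_2 d/2$; this is arranged using $\delta(G) \geq d$ together with the expansion of $G$. Partition $W$ as $X' \cup Y' \cup Z'$: take $X'$ to be the part of $W$ near $A$ (small enough to satisfy \emref{sm1}), $Y'$ a polylogarithmic-size piece disjoint from the initial ball (\emref{sm2}), and $Z'$ the remainder of $W$, chosen so that the $W$-vertices comprising $Z'$ lie far enough from $A$ in $G$ to give $A$ the required $k$-limited contact with $Z'$ (\emref{sm3}). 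Lemma~\ref{limit-contact}(ii) applied with radius $m - \ell_0$ then gives $|B^{m-\ell_0}_{G-W}(A)| > n/2$, and since $A \subseteq B^{\ell_0}_{G-W}(v_0)$ this implies $|B^m_{G-W}(v_0)| > n/2 \geq n/25$. The main obstacle is constructing the partition $W = X' \cup Y' \cup Z'$ so that all three conditions of Lemma~\ref{limit-contact} hold simultaneously; the $k$-limited contact of $A$ with $Z'$ is the subtlest, needing the expansion of $G$ together with the smallness $|W| \leq \eps_1 n/(100 \log^2 n)$ to ensure the $W$-vertices far from $A$ do not accumulate on the successive BFS spheres around $A$.
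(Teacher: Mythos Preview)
Your first step (via Lemma~\ref{lem-diameter}) is fine, but it is also unnecessary: exhibiting any $v$ with $|B^m_{G-W}(v)|\ge n/25$ already gives the conclusion directly. The real work is entirely in your second step, and there is a genuine gap there.

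The partition $W=X'\cup Y'\cup Z'$ you describe cannot be built when $|W|$ is as large as $\eps_1 n/(100\log^2 n)$. In Lemma~\ref{limit-contact}, the set $X'$ is bounded by $|A|\eps(|A|)/4$; with $A$ a ball of radius $(\log\log n)^5$ around a single vertex, $|A|$ is at most polylogarithmic (indeed at most $m^{400k}$ by part~\emph{\ref{exp1}} of that lemma), so $|X'|$ is polylogarithmic. Likewise $|Y'|\le m^{300k}$ is polylogarithmic. Hence almost all of $W$ --- roughly $n/\log^2 n$ vertices --- must go into $Z'$. But the $k$-limited contact condition forces $|N_G(B^{i-1}_{G-Z'}(A))\cap Z'|\le ki$ for every $i\le m$, so altogether at most $O(km^2)=O(\log^6 n)$ vertices of $Z'$ can ever appear on the BFS spheres around $A$. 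For an arbitrary $W$ of density $\Theta(1/\log^2 n)$ spread through the graph, there is no reason the spheres around any particular $v_0$ should hit $W$ this rarely; in fact once the ball reaches size $\Theta(n)$ a single sphere will typically meet $\Theta(n/\log^4 n)$ vertices of $W$. Lemma~\ref{limit-contact} is designed for avoiding structured sets (short cycles, short paths) whose intersection with BFS spheres is controlled, not for arbitrary sets of size $n/\text{polylog}$.

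The paper sidesteps this entirely by working top-down rather than bottom-up. It never tries to expand a small ball through $W$. Instead it starts from the trivial fact that $V(G-W)$ itself is a set of size $\ge n/25$ with ball of radius $0$ equal to itself, and then iteratively shrinks: if a set $V$ of size at most $1+n/(10\cdot 4^r)$ has $|B^{\ell_0 r}_{G-W}(V)|\ge n/25$ (with $\ell_0=\frac{50}{\eps_1}\log^2 n$), one shows directly that $|B^{\ell_0(r+1)}_{G-W}(V)|\ge n/2$ and then averages to find $V'\subseteq V$ of size $\lceil |V|/12\rceil$ with $|B^{\ell_0(r+1)}_{G-W}(V')|\ge n/25$. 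After $O(\log n)$ iterations one is down to $|V|=1$. The point is that the expansion step is only ever applied to sets that already have size $\ge n/25$, and at that scale $|W|\le \eps_1|A|/(4\log^2 n)$ is small enough that the raw expander condition beats $|W|$ without any limited-contact hypothesis.
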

\begin{proof} Let $\ell_0=\frac{50}{\eps_1}\log^2n$.  Suppose that $G$ and $W\subseteq V(G)$ satisfy the conditions in the lemma and let $G'=G-W$. Take the largest integer $r\leq \log n$ such that there is a set of at most $1+n/(10\cdot 4^r)$ vertices $V\subseteq V(G')$ with $|B^{\ell_0 r}_{G'}(V)|\geq n/25$. Note that such a set of vertices exists for $r=0$, as $|G'|\geq n-\eps_1 n/100\log^2n\geq n/25$ and $n\geq d_0(\ep_1,\ep_2)$ is large. Suppose, for contradiction, that $|V|>1$, and hence, that $r\le \log  n-1$.

Let $A=B^{\ell_0 r}_{G'}(V)$. As $|W|\leq \eps_1 n/100\log^{2}n\leq \ep_1|A|/4\log^2n$,  for each $\ell$ with $|B^{\ell}_{G'}(A)|< n/2$, we have, by the expansion property of $G$, and as $\eps(|B^{\ell}_{G'}(A)|)\geq \eps(n)\ge \eps_1/\log^2n$,
$$
|N_{G'}(B_{G'}^{\ell}(A))|\ge |N_{G}(B_{G'}^{\ell}(A))|-|W|\geq \frac{\eps_1}{\log^2n}\cdot |B_{G'}^{\ell}(A)|-\frac{\ep_1|A|}{4\log^2n}\geq \frac{\eps_1}{2\log^2n}\cdot |B_{G'}^\ell(A)|,
$$
so that $|B^{\ell+1}_{G'}(A)|\geq (1+\eps_1/2\log^2n)|B^\ell_{G'}(A)|$. If $|B^{\ell_0}_{G'}(A)|< n/2$, then
$$
|B^{\ell_0}_{G'}(A)|\geq \left(1+\frac{\eps_1}{2\log^2n}\right)^{\ell_0}|A|\geq \frac{\eps_1\ell_0}{2\log^2n}\cdot\frac{n}{25}\geq n/2,
$$
a contradiction. Therefore,  we have
  $$
  |B^{\ell_0(r+1)}_{G'}(V)|=|B^{\ell_0}_{G'}(A)|\ge n/2.
  $$
  Consequently, by averaging, there exists a set of at most $\lceil|V|/12\rceil$
vertices $V'\subseteq V$ such that $|B^{\ell_0(r+1)}_{G'}(V')|\ge  |B^{\ell_0(r+1)}_{G'}(V)|/12\ge n/25$. Noting that, as $|V|\geq 2$,
$$
\lceil|V|/12\rceil \leq 1+(|V|-1)/12\leq 1+n/(10\cdot 4^{r+1}),
$$
this contradicts the maximality of $r$.

  Therefore, we have $|V|=1$. That is, there is some vertex $v\in V(G')$ with $|B_{G'}^{\ell_0r}(v)|\geq n/25$. Letting $B=B_{G'}^{\ell_0 r}(v)$, we have that $|B|\geq n/25$ and $B$ has diameter at most $2\ell_0r\leq \frac{100}{\ep_1}\log^3 n=2m$, as required, noting that $G[B]$ is a $(D,m)$-expansion around $v$ for $D=|B|$.
\end{proof}

The large vertex sets with small diameter found in Lemma~\ref{cl-egg} are large vertex expansions around some vertex. To find large vertex expansions around particular vertices, which themselves sit in smaller vertex expansions, we take disjointly many large vertex expansions, then expand and connect the smaller vertex expansions to the larger ones, for the following lemma.

\begin{lemma}\label{lem-large-exp}
For any $0<\eps_1, \eps_2<1$, there exists $d_0=d_0(\eps_1,\eps_2)$ such that the following holds for each $n\geq d\geq d_0$. Suppose that $G$ is an $n$-vertex bipartite $(\eps_1,\eps_2 d)$-expander with $\de(G)\ge d$.

Let $\log^{10}n\leq D\leq n/\log^{10}n$ and $m=\frac{100}{\eps_1}\log^3n$.
Let $A\subseteq V(G)$ satisfy $|A|\leq D/\log^3n$.
Let $F_1,\ldots,F_4\subseteq G-A$ be vertex disjoint subgraphs and $v_1,\ldots,v_4$ be vertices such that, for each $i\in [4]$,  $F_i$ is a $(D,m)$-expansion of $v_i$.
Then, $G-A$ contains vertex disjoint subgraphs $F'_1,\ldots,F'_4$ such that, for each $i\in [4]$, $F'_i$ is an $(n/m^2,3m)$-expansion of $v_i$.
\end{lemma}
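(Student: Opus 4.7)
The plan is to construct four pairwise disjoint large balls via iterated applications of Lemma~\ref{cl-egg}, connect each $F_i$ to its ball via a short path, and assemble each $F'_i$ from a short piece of $F_i$, the connecting path, and the ball, trimming to the required size with Proposition~\ref{prop-trimming}.

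To produce the four disjoint balls I would apply Lemma~\ref{cl-egg} four times in sequence. At the $i$-th application the avoidance set is $W_i=A\cup\bigcup_{j\in[4]}V(F_j)\cup\bigcup_{k<i}V(B_k)$, which has size at most $D/\log^3 n+4D+3n/m^2\le \eps_1 n/(100\log^2 n)$ for $n$ large. The lemma then produces a vertex set of size at least $n/25$ inducing a $(\,\cdot\,,m/2)$-expansion around some vertex $u_i$, and Proposition~\ref{prop-trimming} trims it to yield $B_i$, an $(n/m^2,m/2)$-expansion of $u_i$ inside $G-W_i$. In particular, since $B_i$ is a $(n/m^2,m/2)$-expansion of $u_i$, every two of its vertices are within distance $m$ in $B_i$, so $B_i$ is also an $(n/m^2,m)$-expansion around any of its vertices.

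For each $i\in[4]$ I would then seek a path $P_i$ of length at most $m$ from a vertex $w_i\in V(F_i)$ to a vertex $w'_i\in V(B_i)$, internally disjoint from $V(F_i)\cup V(B_i)\cup A\cup\bigcup_{j\ne i}(V(F_j)\cup V(B_j))\cup\bigcup_{k<i}V(P_k)$. Given such $P_i$, let $Q_i$ be the (at most $m$-)path in $F_i$ from $v_i$ to $w_i$ and set $F'_i=Q_i\cup P_i\cup B_i$. Every vertex of $F'_i$ then lies within distance $m+m+m=3m$ of $v_i$, so $F'_i$ is a $(|F'_i|,3m)$-expansion of $v_i$; since $|F'_i|\le D+O(m)+n/m^2\le 2n/m^2$, one last application of Proposition~\ref{prop-trimming} cuts $F'_i$ down to exactly $n/m^2$ vertices while keeping $v_i$ as the root. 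Disjointness of $F'_1,\ldots,F'_4$ is maintained throughout by the avoidance rules in the construction.

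The hard part will be the connection step. The avoidance set there has size $\Theta(n/\log^6 n)$, while $V(F_i)$ has size as small as $\log^{10}n$, so Lemma~\ref{new-connect}---which requires the avoidance to be at most $\approx 10 D/\log^3 n$---does not apply directly. To get around this I plan first to enlarge $V(F_i)$ inside $G-A-\bigcup_{j\ne i}V(F_j)$ using Lemma~\ref{limit-contact}, placing $A$ in the role of the small set $X$ and handling the other $V(F_j)$'s through the $Y$- or $Z$-condition of that lemma (using that each $F_j$ is a small $(D,m)$-expansion, so BFS from $V(F_i)$ meets it on few vertices per layer, in the spirit of the robust-construction technique of Section~\ref{sec:techhigh}). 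Once the enlarged starting set has grown to size $\Omega(n/\log^3 n)$---still within $O(m)$ of $v_i$---Lemma~\ref{new-connect} applies against the full avoidance to produce $P_i$, and the extra vertices accrued during this enlargement are absorbed into $F'_i$ without violating the $3m$ diameter bound.
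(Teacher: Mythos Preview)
Your approach has a genuine gap in the connection step, and it is not just a matter of filling in details---the strategy of pre-assigning a single ball $B_i$ to each $F_i$ cannot work as stated.

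The issue is this: once you have built the four disjoint balls $B_1,\ldots,B_4$ each of size $n/m^2$, you want a path from $F_i$ to $B_i$ avoiding, among other things, the other three balls $B_j$, $j\neq i$. Even if you first enlarge $V(F_i)$ to size $\Omega(n/\log^3 n)$ as you propose, you still need to invoke Lemma~\ref{new-connect} with target set $B_i$ of size $n/m^2$ and avoidance set $W$ containing $\bigcup_{j\neq i}V(B_j)$ of size $3n/m^2$. Lemma~\ref{new-connect} requires $|W|\log^3 n\le 10\cdot\min(|F_i|,|B_i|)=10n/m^2$, but $|W|\log^3 n\ge 3n\log^3 n/m^2$, which is far too large. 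There is no way around this with only four balls: any connection lemma based on the expansion in $G$ will be blocked by three obstacles each of the same order as the target. (Your plan to invoke the $Y$- or $Z$-conditions of Lemma~\ref{limit-contact} for the other $F_j$'s is also unjustified---there is no reason the $F_j$'s are far from $F_i$ or have limited contact with it---but this is a secondary problem compared to the balls.)

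The paper resolves this by not pre-assigning balls to expansions. Instead it builds $32m$ disjoint balls of size $n/m^2$ (so the total removed is still $O(n/m)\ll \eps_1 n/\log^2 n$ and Lemma~\ref{cl-egg} can be iterated), and then looks for short paths from $\bigcup_{i\in J}V(F_i)$ to \emph{any} unused ball, avoiding only $A$ and the previously found paths---a set of size $O(D/\log^3 n+m^2)\le 2D/\log^3 n$, so Lemma~\ref{new-connect} applies directly. A pigeonhole over the $8m|J|$ paths found forces $8m$ of them to land in a single $F_{i'}$, and a final greedy selection (each $i$ has $8m$ path options, only $3\cdot(2m+1)$ vertices to dodge) produces one good path per $i$. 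The surplus of balls and paths is exactly what lets the argument avoid ever putting a large ball into an avoidance set.
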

\begin{proof}
Let $W=V(F_1)\cup \ldots\cup V(F_4)\cup A$ so that $|W|\le 5D\leq 5n/\log^{10}n$. Applying Lemma~\ref{cl-egg} iteratively $32m$ times, we can find disjoint sets $B_1,\ldots,B_{32m}$ in $G-W$ such that each $B_i$ has size $n/m^2$ and diameter at most $m$. Note that this is possible, as after we have found $B_i$, where $i\in [32m]$,
we have $W\cup(\cup_{j\in [i]}B_j)\leq 5n/\log^{10}n+32m\cdot n/m^2\leq \eps_1n/100\log^2n$, as $n\geq d_0(\eps_1,\eps_2)$ is large. Lemma~\ref{cl-egg} and Proposition~\ref{prop-trimming} then shows that $G-W\cup(\cup_{j\in [i]}B_j)$ contains a set with $n/m^2$ vertices and diameter at most $m$.

Next, take a maximal set $I\subseteq [4]$ such that there are paths $P_{i,j}$, with $i\in I$ and $j\in[8m]$, and distinct $k_{i,j}\in [32m]$, satisfying the following.
\stepcounter{propcounter}
\begin{enumerate}[label = {\bfseries \Alph{propcounter}\arabic{enumi}}]
	\item $P_{i,j}$ is a path from $v_i$ to $B_{k_{i,j}}$ of length at most $2m$.\label{seal1}

	\item The sets $V(P_{i,j})\setminus V(F_i)$ are vertex disjoint across $i\in I$ and $j\in [8m]$.\label{seal2}

	\item There is an ordering $\sigma$ on $I$ such that $V(P_{i,j})$ is disjoint from $\cup_{i'\in [4]\setminus I'} V(F_{i'})$, where $I'=\{i'\in I:~\sigma(i')\le\sigma(i)\}$.\label{seal3}
\end{enumerate}

Suppose, for contradiction, that $J:=[4]\setminus I\neq \varnothing$, and let $\sigma:I\to [|I|]$ be an ordering for which \ref{seal3} holds.  Let $W'=A\cup (\cup_{i\in I,j\in [8m]}V(P_{i,j}))$. Take a maximal set $K\subseteq [32m]\setminus \{k_{i,j}:i\in I,j\in [8m]\}$ for which there are paths $P_{k}$, $k\in K$, with length at most $m$ from $\cup_{i\in J}V(F_i)$ to $B_{k}$, which avoid $W'$ and which are vertex disjoint.  Suppose, to contradict the maximality of $K$, that there is some $j'\in [32m]\setminus \{k_{i,j}:i\in I,j\in [8m]\}$. Note that $|\cup_{i\in J}V(F_i)|\geq D$ and $|B_{j'}|=n/m^2\geq D$.
Noting that $|W'\cup (\cup_{k\in K}V(P_k))|\leq D/\log^3n+32m(2m+1)\leq 2D/\log^3n$, by Lemma~\ref{new-connect}, there is a path between $\cup_{i\in J}V(F_i)$ and $B_{j'}$ with length at most $m$ which avoids $W'\cup (\cup_{k\in K}V(P_k))$, contradicting the maximality of $K$.

Therefore, we have $K=[32m]\setminus \{k_{i,j}:i\in I,j\in [8m]\}$, and hence $|K|=32m-8m|I|=8m|J|$. Consequently, for some $i'\in J$, there are at least $8m$ values of $k\in K$ for which $P_{k}$ has a vertex in $V(F_{i'})$. Taking $k_{i',1},\ldots,k_{i',8m}$ to be distinct such values of $k$, for each $j\in [8m]$, as $F_{i'}$ is a $(D,m)$-expansion of $v_{i'}$, we can find a path $P_{i',j}\subseteq P_{k_{i',j}}\cup F_{i'}$ from $v_{i'}$ to $B_{k_{i',j}}$ with length at most $2m$. The paths $P_{i,j}$, $i\in I\cup\{i'\}$ and $j\in [8m]$, satisfy \ref{seal1}--\ref{seal3} (the last with the ordering $\sigma$ extended by setting $\sigma(i')=|I|+1$), contradicting the maximality of $I$.

Thus, we have $I=[4]$. By relabelling if necessary, assume from \ref{seal3} that $V(P_{i,j})$ is disjoint from $V(F_{i'})$ for each $i'>i$ and $j\in [8m]$.
Now, for each $1\leq i\leq 4$, greedily select $r_i\in [8m]$ in turn such that $V(P_{i,r_i})\cup B_{k_{i,r_i}}$ has no vertices in $\cup_{i'<i}P_{i',r_{i'}}$. Note that this is possible as, for each $i\in [4]$, $\cup_{i'<i}V(P_{i',r_{i'}})$ contains at most $3(2m+1)$ vertices, none of which are in $V(F_i)$, and the sets $V(P_{i,j})\cup B_{k_{i,j}}$, $j\in [8m]$ are disjoint outside of $V(F_i)$.

For each $i\in [4]$, let $F''_i=P_{i,r_i}\cup G[B_{k_{i,r_i}}]$. Note that the subgraphs $F''_i$, $i\in [4]$, are vertex disjoint, and, as each set $B_i$, $i\in[32m]$, has diameter at most $m$, for each $i\in [4]$, $F''_i$ is a $(D_i,3m)$-expansion of $v_i$ for some $D_i\geq n/m^2$. For each $i\in [4]$, using Proposition~\ref{prop-trimming}, let $F'_i\subseteq F''_i$ be an $(n/m^2,3m)$-expansion of $v_i$, completing the proof.
\end{proof}

\subsection{Long paths between vertex expansions}\label{sec:lengthen}

Our goal now is, given some desired path length, to connect two vertices with an initial path with close to this desired length. Section~\ref{sec:mainthmpf} then makes the fine adjustment to this path so that it has exactly the desired length. Given vertex expansions around the vertices to be connected, we find such an initial path, as follows.

\begin{lemma}\label{lem-conexp}
	For any $0<\eps_1, \eps_2<1$, there exists $d_0=d_0(\eps_1,\eps_2)$ such that the following holds for each $n\geq d\geq d_0$. Suppose that $G$ is an $n$-vertex bipartite $(\eps_1,\eps_2 d)$-expander with $\de(G)\ge d$.

	Let $\log^3n \leq D\leq n/\log^{4}n$ and $\frac{300}{\ep_1}\log^3n\le m\le 3\log^4n$. Suppose $F_1,F_2$ are vertex disjoint $(D,m)$-expansions of vertices $v_1,v_2\in V(G)$ respectively. Suppose $W\subseteq V(G)\setminus (V(F_1)\cup V(F_2))$ satisfies $|W|\leq D/\log^3n$. Then, for any $\ell\leq D/\log^3n$, there is a $v_1,v_2$-path in $G-W$ with length between $\ell$ and $\ell+5m$.
\end{lemma}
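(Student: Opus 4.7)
I will construct the required $v_1,v_2$-path in three pieces: a walk inside $F_1$ from $v_1$ to a chosen vertex $a_1\in V(F_1)$, a ``middle'' $a_1,a_2$-path lying almost entirely in $G-W-V(F_1)-V(F_2)$ (except at its endpoints), and a walk inside $F_2$ from $a_2$ to $v_2$. Because $F_i$ is a $(D,m)$-expansion of $v_i$, each of the two endpoint walks has length at most $m$; combined with the approximation in the middle piece, the $5m$ slack in the conclusion is enough room to hit the target window $[\ell,\ell+5m]$.

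\textbf{Setting up the middle.} To prepare the middle, I first carve out a large low-diameter ``reservoir''. Applying Lemma~\ref{cl-egg} to $G-W-V(F_1)-V(F_2)$, with the internal $m$-parameter of that lemma chosen so the resulting diameter bound is at most $m/3$, is valid since $|W|+|V(F_1)|+|V(F_2)|\le D/\log^3 n+2D$ sits well below the threshold $\eps_1 n/(100\log^2 n)$ from Lemma~\ref{cl-egg}. This produces a set $B$ of size $|B|\ge n/25$ that is a $(|B|,m/6)$-expansion around some vertex $w\in B$. I then apply Lemma~\ref{new-connect} twice to obtain internally disjoint short paths $Q_1,Q_2$ from $V(F_1),V(F_2)$ into $B$ in $G-W$, each of length at most $m':=\frac{40}{\eps_1}\log^3 n\le m/7$; the lower bounds $|F_i|\ge D$ and $|B|\ge n/25$ comfortably dominate the required threshold $\log^3 n\cdot |W|/10$, so Lemma~\ref{new-connect} applies. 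Let $a_i\in V(F_i)$ and $b_i\in B$ denote the two endpoints of $Q_i$.

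\textbf{Length control in the middle.} The core step is to produce a $b_1,b_2$-path inside $B$ of length within an additive $m'$ of the target $\ell^\ast:=\ell-|Q_1|-|Q_2|-(\text{lengths of walks in }F_i)$. The BFS tree of $B$ rooted at $w$ immediately yields a $b_1,b_2$-path through $w$ of length at most $m/3$, handling the case of small $\ell^\ast$. For larger $\ell^\ast$, I increase the length by successive ``detour'' insertions: at each step I reroute a single edge of the current path through an unused portion of $B$, which adds roughly $m$ to the length. Lemma~\ref{limit-contact} (applied to the subset of $B$ not yet used, together with the growing avoid-set) ensures that expansion inside $B$ persists throughout, since the cumulative used set never exceeds $\ell\le D/\log^3 n\le |B|/\log^3 n$, well within the relevant expansion thresholds. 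Finally, by choosing the walks inside $F_1,F_2$ to have lengths in $[0,m]$ of appropriate parity, I can nudge the total length of the assembled path into the window $[\ell,\ell+5m]$.

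\textbf{Main obstacle.} The principal difficulty is the length control inside the reservoir $B$: the preceding tools (Lemmas~\ref{limit-contact},~\ref{new-connect},~\ref{cl-egg}) yield \emph{short} paths and \emph{dense low-diameter subsets}, not directly a \emph{long} path of \emph{prescribed approximate length}. The trick is to bootstrap from the BFS tree through $w$ via iterated detour insertions, each of bounded length, while verifying at every step that the remaining unused part of $B$ still expands well enough for the next detour to be found. The $5m$ slack in the conclusion is precisely what accommodates both the per-detour approximation error and the flexibility in the $F_i$-walks, and the smallness of $W$ relative to the expansion thresholds is what makes the iteration sustainable.
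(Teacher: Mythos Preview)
Your overall scaffolding (carve out a reservoir $B$ with Lemma~\ref{cl-egg}, connect $F_1,F_2$ into $B$ with Lemma~\ref{new-connect}, then build the bulk of the path there) is reasonable, but the ``detour insertion'' step is where the argument breaks down. Rerouting a single edge $uv$ of the current path through unused territory requires connecting the \emph{single vertices} $u$ and $v$ through that territory, and neither Lemma~\ref{new-connect} nor Lemma~\ref{limit-contact} gives you that: both need the sets being connected or expanded to already be moderately large. The length added by such a detour is also not ``roughly $m$'' but could be anything between $1$ and the diameter of $B$, so you have no granularity control. Your appeal to Lemma~\ref{limit-contact} for ``expansion inside $B$'' is off as well: that lemma concerns expansion in the ambient expander $G$, not in the induced subgraph $G[B]$, which is only known to have small diameter, not to be an expander in its own right. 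Finally, the nudging step via walks in $F_i$ of ``lengths in $[0,m]$ of appropriate parity'' overstates what a $(D,m)$-expansion provides: it guarantees one path of length at most $m$ from $v_i$ to each vertex, not paths of every length up to $m$.

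The paper avoids all of this by growing at the \emph{ends} rather than lengthening in the interior. It maintains two paths $P_1,P_2$ from $v_1,v_2$, each terminating in a fresh $(D,m)$-expansion $F_3,F_4$, and takes such a configuration with $\ell(P_1)+\ell(P_2)$ maximal subject to $\ell(P_1)+\ell(P_2)\le \ell+2m$. If the sum were below $\ell$, one calls Lemma~\ref{cl-egg} to find a \emph{new} reservoir disjoint from everything built so far, connects $F_3\cup F_4$ to it via Lemma~\ref{new-connect} (which applies precisely because both sides have size $\ge D$), extends the relevant $P_i$ by at most $2m$, and replaces that end's expansion by a $(D,m)$-expansion carved from the new reservoir via Proposition~\ref{prop-trimming}---contradicting maximality. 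A final short connection from $F_3$ to $F_4$ then yields a $v_1,v_2$-path of length in $[\ell,\ell+5m]$. The missing idea in your sketch is to keep large sets at the growing tips so that every connection step is between sets of size $\ge D$, rather than between single vertices.
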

\begin{proof}
Let $(P_1,v_3,F_3,P_2,v_4,F_4)$ be such that $\ell(P_1)+\ell(P_2)$ is maximised subject to the following properties.
\stepcounter{propcounter}
\begin{enumerate}[label = {\bfseries \Alph{propcounter}\arabic{enumi}}]
\item For each $i\in [2]$, $P_i$ is a $v_i,v_{i+2}$-path in $G-W$.\label{ch1}
\item $\ell(P_1)+\ell(P_2)\leq \ell+2m$.\label{ch2}
\item For each $i\in \{3,4\}$, $F_i$ is a $(D,m)$-expansion of $v_i$ in $G-W$ with $V(F_i)\cap V(P_{i-2})=\{v_i\}$.\label{ch3}
\item $V(P_1\cup F_3)$ and $V(P_2\cup F_4)$ are vertex disjoint.\label{ch4}
\end{enumerate}
Note that $P_1=G[\{v_3\}]$, $v_3=v_1$, $F_3=F_1$, $P_2=G[\{v_4\}]$, $v_4=v_2$, $F_4=F_2$  satisfy \ref{ch1}--\ref{ch4}, and therefore such a sextuple $(P_1,v_3,F_3,P_2,v_4,F_4)$ exists.

We claim that $\ell(P_1)+\ell(P_2)\geq \ell$. Suppose for contradiction that $\ell(P_1)+\ell(P_2)< \ell$. Note that $|W\cup V(F_3\cup F_4\cup P_1\cup P_2)|\leq 3D+\ell\leq n/\log^3n$. By Lemma~\ref{cl-egg}, then, $G-W-V(F_3\cup F_4\cup P_1\cup P_2)$ contains a set, $B$ say, with size at least $D$ and diameter at most $m$. Note furthermore that $|W\cup V(P_1)\cup V(P_2)|\leq D/\log^3 n+\ell+2\leq 10D/\log^3n$. By Lemma~\ref{new-connect}, there is a path, $Q'$ say, from $B$ to $V(F_3)\cup V(F_4)$ which avoids $(W\cup V(P_1)\cup V(P_2))\setminus\{v_3,v_4\}$ and has length at most $m$. Say without loss of generality that $Q'$ has endvertices $v_3''\in V(F_3)$ and $v_3'\in B$. By~\ref{ch3} and~\ref{ch4}, we can extend $Q'$ in $F_3$ to a $v_3,v_3'$-path $Q$ with length at most $2m$ which is  vertex disjoint from $P_2$ and $P_1-v_3$.

Using Proposition~\ref{prop-trimming}, let $F_3'\subseteq G[B]$ be a $(D,m)$-expansion around $v_3'$. Let $P_1'=P_1\cup Q$ and note that, as $v_3'\in B$ and $\ell(Q)\leq 2m$, this is a $v_1,v_3'$-path with length at least $\ell(P_1)+1$ and at most $\ell(P_1)+2m$. Then, $P_1',v_3',F_3',P_2,v_4,F_4$ satisfy \ref{ch1}--\ref{ch4} with $P_1',v_3',F_3'$ in place of $P_1,v_3,F_3$, and $\ell(P_1')+\ell(P_2)>\ell(P_1)+\ell(P_2)$, a contradiction. Therefore, $\ell(P_1)+\ell(P_2)\geq \ell$.

Now, as $|W\cup V(P_1)\cup V(P_2)|\leq 10D/\log^3 n$, by Lemma~\ref{new-connect} there is a path, $R$ say, from some $r_1\in V(F_3)$ to some $r_2\in V(F_4)$ avoiding $(W\cup V(P_1)\cup V(P_2))\setminus\{v_3,v_4\}$ with length at most $m$. For each $i\in [2]$, let $Q_i$ be a path from $v_{i+2}$ to $r_i$ in $F_{i+2}$ with length at most $m$.
 Then, $P_1\cup Q_1\cup R\cup Q_2\cup P_2$ is a $v_1,v_2$-path in $G-W$ with length at least $\ell(P_1)+\ell(P_2)\geq \ell$ and at most, by \ref{ch2}, $\ell+2m+3m\leq \ell+5m$.
\end{proof}

Combining Lemma~\ref{lem-conexp} with our result on extending vertex expansions, we now convert Lemma~\ref{lem-conexp} into the precise form we apply later. The following corollary finds not one but two paths, whose combined length is close to some desired length. We later apply it so that these two paths connect two vertices with a string of simple adjusters in the middle.

\begin{cor}\label{longconnect4}
For any $0<\eps_1, \eps_2<1$, there exists $d_0=d_0(\eps_1,\eps_2)$ such that the following holds for each $n\geq d\geq d_0$. Suppose that $G$ is an $n$-vertex bipartite $(\eps_1,\eps_2 d)$-expander with $\de(G)\ge d$.

Let $\log^{10}n\leq D\leq n/\log^{10}n$, $\frac{100}{\ep_1}\log^3n\le m\le \log^4n$ and $\ell\leq n/\log^{12}n$. Let $A\subseteq V(G)$ satisfy $|A|\leq D/\log^3n$. Let $F_1,\ldots,F_4\subseteq G-A$ be vertex disjoint subgraphs and $v_1,\ldots,v_4$ be vertices such that, for each $i\in [4]$, $F_i$ is a $(D,m)$-expansion of $v_i$.

Then, $G-A$ contains vertex disjoint paths $P$ and $Q$ with $\ell \leq \ell(P)+\ell(Q)\leq \ell+22m$ such that both $P$ and $Q$ connect $\{v_1,v_2\}$ to $\{v_3,v_4\}$.%a.nd either
\end{cor}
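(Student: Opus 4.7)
My plan is to find the two paths sequentially: a long $v_1, v_3$-path $P$ of length approximately $\ell$, and a short $v_2, v_4$-path $Q$ of length at most $7m$ vertex-disjoint from $P$. Splitting the slack budget as $15m + 7m = 22m$ fits the required range $[\ell, \ell + 22m]$.

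I first apply Lemma~\ref{lem-large-exp} to $F_1, \ldots, F_4$ with forbidden set $A$, obtaining pairwise vertex-disjoint $(n/m^2, 3m)$-expansions $F_1^\ast, \ldots, F_4^\ast$ of $v_1, \ldots, v_4$ in $G - A$. From the construction in that lemma, each $F_i^\ast$ decomposes as a short stem $P_i^\ast$ of length at most $2m$ from $v_i$ to a reservoir set $B_i$ of size $n/m^2$ and small diameter in $G$. I then apply Lemma~\ref{lem-conexp} to $F_1^\ast, F_3^\ast$ with forbidden set $W = A \cup V(P_2^\ast) \cup V(P_4^\ast)$ (of size at most $O(m) + |A| \le (n/m^2)/\log^3 n$) and target length $\ell$, producing a $v_1, v_3$-path $P$ in $G - W$ of length $\ell_P \in [\ell, \ell + 15m]$. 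The remaining parameter checks for Lemma~\ref{lem-conexp} --- $n/m^2 \in [\log^3 n, n/\log^4 n]$, $3m \in [\tfrac{300}{\eps_1}\log^3 n, 3\log^4 n]$, and $\ell \le (n/m^2)/\log^3 n$ --- are routine from the corollary's hypotheses $\tfrac{100}{\eps_1}\log^3 n \le m \le \log^4 n$, $D \le n/\log^{10} n$ and $\ell \le n/\log^{12} n$.

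Since $V(P)$ avoids the stems $P_2^\ast, P_4^\ast$, the vertices $v_2, v_4$ remain connected in $G - A - V(P)$ to their reservoirs $B_2, B_4$ via these intact stems. Using $|B_i \setminus V(P)| \ge n/m^2 - (\ell + 15m) \ge n/(2m^2)$ and $|A \cup V(P)| \log^3 n \le 2n/\log^9 n \le 5n/m^2$, Lemma~\ref{new-connect} produces a path $R$ between $B_2 \setminus V(P)$ and $B_4 \setminus V(P)$ in $G - A - V(P)$ of length at most $\tfrac{40}{\eps_1}\log^3 n \le 0.4 m$; extending $R$ through the reservoirs to the stem-arrival vertices and then along the stems to $v_2, v_4$ produces $Q$ of total length at most $2\cdot 2m + 2m + 0.4m \le 7m$. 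The main obstacle is the reservoir extension step: the stem-arrival vertex $b_i \in B_i$ and the endpoint $u_i \in B_i \setminus V(P)$ of $R$ may lie in different components of $G[B_i \setminus V(P)]$, so one must additionally route between them by a path of length at most $m$ avoiding $V(P)$; I would handle this with a further application of Lemma~\ref{new-connect} inside $G$, restricted to a neighbourhood of $B_i$, using that $|V(P) \cap B_i| \le \ell + 15m \ll |B_i|$ so that large pieces of $B_i$ remain reachable from $b_i$ and contain $u_i$ for a suitable initial choice of endpoint of $R$.
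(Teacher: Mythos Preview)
Your order of operations is the reverse of the paper's, and this creates the very obstacle you flag at the end. The paper first uses Lemma~\ref{new-connect} to find a \emph{short} path $P$ (length $\le 7m$) from $\{v_1,v_2\}$ to $\{v_3,v_4\}$ through two of the enlarged expansions, and only then applies Lemma~\ref{lem-conexp} to the other two expansions with forbidden set $W=A\cup V(P)$ to find the long path $Q$. Because $P$ is short, $|W|\le D/\log^3 n+7m+1\le (n/m^2)/\log^3 n$, and crucially $W$ is disjoint from the two surviving expansions $F'_2,F'_4$, so Lemma~\ref{lem-conexp} applies cleanly.

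Your plan builds the long path first. Once $P$ has length up to $n/\log^{12}n$, the set $V(P)$ can meet the reservoirs $B_2,B_4$ anywhere, and you must route from the stem-arrival vertex $b_i$ to the endpoint $u_i$ of $R$ inside $G-A-V(P)$. Your suggested fix does not close this: Lemma~\ref{new-connect} needs both endpoint-sets to have size comparable to $|W|\log^3 n$, but $b_i$ is a single vertex and the stem has only $O(m)$ vertices, far below $|A\cup V(P)|\log^3 n$, which can be as large as $n/\log^9 n$. Nor can you choose where $R$ lands in $B_i\setminus V(P)$, so ``for a suitable initial choice of $u_i$'' is not something the lemma gives you. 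Protecting a larger sub-expansion of $v_2,v_4$ (rather than just the stems) also fails the parameter check: to later absorb $V(P)$ you would need the protected piece to have size $\gtrsim \ell\log^3 n\approx n/\log^9 n$, but for it to fit inside the forbidden set of the first application of Lemma~\ref{lem-conexp} it must have size $\lesssim (n/m^2)/\log^3 n$, which can be as small as $n/\log^{11}n$. The fix is simply to swap the order: build the short connecting path first, then the long one.
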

\begin{proof}
By Lemma~\ref{lem-large-exp}, $G-A$ contains disjoint subgraphs $F'_1,\ldots,F'_4$ such that, for each $i\in [4]$, $F'_i$ is an $(n/m^2,3m)$-expansion of $v_i$. By Lemma~\ref{new-connect}, there is a path $P'\subseteq G-A$ from $V(F'_1)\cup V(F'_2)$ to $V(F'_3)\cup V(F'_4)$ with length at most $m$. Note that we can assume, without loss of generality, that $P'$ goes from $V(F'_1)$ to $V(F'_3)$. Using that $F'_1$ and $F'_3$ are $(n/m^2,3m)$-vertex expansions of $v_1$ and $v_3$, respectively, let $P$ be a $v_1,v_3$-path with length at most $7m$ in $F'_1\cup P'\cup F'_3$.

Let $W=A\cup V(P)$, noting that $|W|\leq D/\log^3n+7m+1\leq n/m^2\log^3n$. Note that $0\leq \ell-\ell(P)+7m\leq 2n/\log^{12}n\leq n/m^2\log^3n$. Therefore, by Lemma~\ref{lem-conexp} with $(F_1,F_2,D,m,W,\ell)_{\ref{lem-conexp}}=(F_2',F_4',n/m^2,3m,W,\ell-\ell(P)+7m)$, there is a path $Q$ in $G-W$ from $v_2$ to $v_4$ with length between $\ell-\ell(P)+7m$ and $\ell-\ell(P)+22m$. As $\ell\leq \ell(P)+\ell(Q)\leq \ell+22m$, the paths $P$ and $Q$ satisfy the property in the corollary.
\end{proof}

\subsection{Subdivisions in skewed bipartite graphs}\label{skewbi}
As commented on before Theorem~\ref{mainthm}, we often work in graphs without a subdivision of a certain size clique with each edge divided once, that is, in a $\tk_\ell^{(2)}$-free graph for some $\ell$. This is because we often construct structures in a graph $G$ while avoiding a vertex set, say $W$, for which we need many edges in $G-W$. For the sizes of $W$ and values of $\ell$ that we use, if the graph $G$ is $\tk_\ell^{(2)}$-free, then the following simple proposition shows that there cannot be too many edges between $W$ and $V(G)\setminus W$. In our implementation this will imply that $G-W$ contains many edges.

%\subsection{Large clique subdivision in skewed bipartite graphs}\label{sec:subinskewed}
\begin{prop}\label{prop-1subdivision} Let $d\in \N$ and let $G$ be a graph containing disjoint vertex sets $U$ and $W$ such that $|U|\geq |W|^2$ and every vertex in $U$ has at least $d$ neighbours in $W$.
Then, $G$ contains a $\tk_{d}^{(2)}$.
\end{prop}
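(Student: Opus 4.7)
The plan is to prove this by induction on $d$; the cases $d\le 2$ are immediate, since any $u\in U$ with at least two neighbours in $W$ already provides a path of length two (i.e., a $\tk_2^{(2)}$).

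For the inductive step with $d\ge 3$, I would first use a double-counting/pigeonhole to locate a rich pair. Writing $n=|W|$,
\[
\sum_{\{w,w'\}\subseteq W} |N(w)\cap N(w')\cap U| \;=\; \sum_{u\in U}\binom{|N(u)\cap W|}{2} \;\ge\; |U|\binom{d}{2}\;\ge\; n^2\binom{d}{2},
\]
so averaging yields a pair $\{w_1,w_2\}\subseteq W$ with at least $d(d-1)$ common neighbours in $U$; call this set $U_{12}$. Each $u\in U_{12}$ still has at least $d-2$ neighbours in $W\setminus\{w_1,w_2\}$, so one may hope to iterate the same averaging inside the smaller bipartite graph $(U_{12},W\setminus\{w_1,w_2\})$ to produce further branch vertices $w_3,\ldots,w_d$, each time retaining a large pool of common neighbours with all previous branches.

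Once branch vertices $w_1,\ldots,w_d\in W$ have been chosen so that every pair has many common neighbours in $U$, Hall's theorem (or a greedy argument) produces $\binom{d}{2}$ distinct internal vertices $u_{ij}\in N(w_i)\cap N(w_j)\cap U$, one per pair of branches, yielding the required $\tk_d^{(2)}$.

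The main obstacle is the iterative step producing $w_3,\ldots,w_d$: after restricting to $U_{12}$, its size (only $\ge d(d-1)$) may be far below $(n-2)^2$, so one cannot naively recurse on the same hypothesis $|U'|\ge |W'|^2$. A cleaner alternative, which I would probably adopt, is to induct on $|W|$ rather than $d$: if some $w\in W$ satisfies $|N(w)\cap U|\le |U|-(|W|-1)^2$, then $U\setminus N(w)$ and $W\setminus\{w\}$ still satisfy the hypothesis (note $|U\setminus N(w)|\ge (|W|-1)^2$ and each remaining $u$ keeps all its $\ge d$ neighbours in $W\setminus\{w\}$), and we apply induction; otherwise every $w\in W$ has $|N(w)\cap U|>|U|-(|W|-1)^2$, the dense case, which forces every pair in $W$ to have many common neighbours in $U$ by inclusion-exclusion and hence permits the Hall-type selection of distinct internal vertices directly. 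The base case $|W|=d$ is trivial, since then each $u\in U$ is adjacent to all of $W$ and $|U|\ge d^2\ge \binom{d}{2}$ supplies enough internal vertices.
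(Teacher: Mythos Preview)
Your induction on $|W|$ has a genuine gap in the ``dense case.'' If every $w\in W$ satisfies $|N(w)\cap U|>|U|-(|W|-1)^2$, inclusion--exclusion only gives
\[
|N(w)\cap N(w')\cap U|\ \ge\ |U|-2(|W|-1)^2,
\]
and with $|U|=|W|^2$ this bound is $|W|^2-2(|W|-1)^2=-|W|^2+4|W|-2$, which is negative once $|W|\ge 4$. So the dense case does \emph{not} force any pair of vertices in $W$ to have a common neighbour in $U$, let alone enough to run a Hall argument. The threshold $|U|-(|W|-1)^2$ is exactly what makes the removal step go through, but it is far too weak to yield anything in the complementary case; there is no single threshold that makes both halves of the dichotomy work simultaneously with only the hypothesis $|U|\ge |W|^2$.

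The paper's argument sidesteps all of this with a short maximality trick: take a maximal set $I\subseteq \binom{W}{2}$ for which one can assign distinct vertices $v_{\{x,y\}}\in U$ with $x,y\in N(v_{\{x,y\}})$. Since $|U|\ge |W|^2>\binom{|W|}{2}\ge |I|$, some $u\in U$ is unused; by maximality, every pair in $A:=N(u)\cap W$ (which has size at least $d$) already lies in $I$, and the assigned vertices $v_{\{x,y\}}$ for $\{x,y\}\in\binom{A}{2}$ give the internal vertices of a $\tk_d^{(2)}$ on branch set $A$. No induction, no Hall, no averaging---the maximal partial system of distinct representatives does all the work in two lines.
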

\begin{proof}
	Take a maximal set $I\subseteq W^{(2)}$ for which there is a set of distinct vertices $v_{\{x,y\}}$, $\{x,y\}\in I$, in $U$ such that $x,y\in N(v_{\{x,y\}})$ for each $\{x,y\}\in I$. Now, as $|U|\geq |W|^2>|W^{(2)}|$, there is some $u\in U\setminus \{v_{\{x,y\}}:\{x,y\}\in I\}$.
	Let $A=N(u,W)$, so that $|A|\geq d$. In the choice of $I$, the vertex $u$ is a good candidate for $v_{\{x,y\}}$ for each $\{x,y\}\in A^{(2)}$. Thus, by the maximality of $I$, we have $A^{(2)}\subseteq I$.

Taking the $\tk_d^{(2)}$ with vertex set $A\cup \{v_{\{x,y\}}:\{x,y\}\in A^{(2)}\}$  and edge set $\{xv_{\{x,y\}},yv_{\{x,y\}}:\{x,y\}\in A^{(2)}\}$, then gives a $\tk_d^{(2)}$ in $G$, as required.
\end{proof}

%%%%%%%%%%%%%%%%%%%%%%%%%%%%%%%%%%%%%%%%%%%%%%%%%%%%%%%%%%%%%%%%%%%%%%%%%%%%
%%%%%%%%%%%%%%%%%%%%%%%%%%%%%%%%%%%%%%%%%%%%%%%%%%%%%%%%%%%%%%%%%%%%%%%%%%%%%%%%%%%%%%%%%%%%%%%%%%%%%%%%%%%%%%%%%%%%%%%%%%%%%%%%%%%%%%%%%%%%%%%%%%%%%%%%%%%%%%
%%%%%%%%%%%%%%%%%%%%%%%%%%%%%%%%%%%%%%%%%%%%%%%%%%%%%%%%%%%%%%%%%%%%%%%%%%%%
%%%%%%%%%%%%%%%%%%%%%%%%%%%%%%%%%%%%%%%%%%%%%%%%%%%%%%%%%%%%%%%%%%%%%%%%%%%%%%%%%%%%%%%%%%%%%%%%%%%%%%%%%%%%%%%%%%%%%%%%%%%%%%%%%%%%%%%%%%%%%%%%%%%%%%%%%%%%%%
%%%%%%%%%%%%%%%%%%%%%%%%%%%%%%%%%%%%%%%%%%%%%%%%%%%%%%%%%%%%%%%%%%%%%%%%%%%%
%%%%%%%%%%%%%%%%%%%%%%%%%%%%%%%%%%%%%%%%%%%%%%%%%%%%%%%%%%%%%%%%%%%%%%%%%%%%%%%%%%%%%%%%%%%%%%%%%%%%%%%%%%%%%%%%%%%%%%%%%%%%%%%%%%%%%%%%%%%%%%%%%%%%%%%%%%%%%%
%%%%%%%%%%%%%%%%%%%%%%%%%%%%%%%%%%%%%%%%%%%%%%%%%%%%%%%%%%%%%%%%%%%%%%%%%%%%
%%%%%%%%%%%%%%%%%%%%%%%%%%%%%%%%%%%%%%%%%%%%%%%%%%%%%%%%%%%%%%%%%%%%%%%%%%%%%%%%%%%%%%%%%%%%%%%%%%%%%%%%%%%%%%%%%%%%%%%%%%%%%%%%%%%%%%%%%%%%%%%%%%%%%%%%%%%%%%
%%%%%%%%%%%%%%%%%%%%%%%%%%%%%%%%%%%%%%%%%%%%%%%%%%%%%%%%%%%%%%%%%%%%%%%%%%%%
%%%%%%%%%%%%%%%%%%%%%%%%%%%%%%%%%%%%%%%%%%%%%%%%%%%%%%%%%%%%%%%%%%%%%%%%%%%%%%%%%%%%%%%%%%%%%%%%%%%%%%%%%%%%%%%%%%%%%%%%%%%%%%%%%%%%%%%%%%%%%%%%%%%%%%%%%%%%%%
%%%%%%%%%%%%%%%%%%%%%%%%%%%%%%%%%%%%%%%%%%%%%%%%%%%%%%%%%%%%%%%%%%%%%%%%%%%%
%%%%%%%%%%%%%%%%%%%%%%%%%%%%%%%%%%%%%%%%%%%%%%%%%%%%%%%%%%%%%%%%%%%%%%%%%%%%%%%%%%%%%%%%%%%%%%%%%%%%%%%%%%%%%%%%%%%%%%%%%%%%%%%%%%%%%%%%%%%%%%%%%%%%%%%%%%%%%%
\section{Proof of Theorem~\ref{mainthm}}\label{sec:mainthmpf}
%%%%%%%%%%%%%%%%%%%%%%%%%%%%%%%%%%%%%%%%%%%%%%%%%%%%%%%%%%%%%%%%%%%%%%%%%%%%%%%%%%%%%%%%%%%%%%%%%%%%%%%%%%%%%%%%%%%%%%%%%%%%%%%%%%%%%%%%%%%%%%%%%%%%%%%%%%%%%%%%%%%%%%%%%%%%%%%%%%%%%%%%%%%%%%%%%%%%%%%%%%%%%%%%%%%%%%%%%%%%%%%%%%%%%%%%%%

In this section, we prove Theorem~\ref{mainthm}. As discussed in Section~\ref{sec:mainthmsketch}, the basic mechanism we use to adjust the length of a path is an adjuster, which we formally define as follows (see Figure~\ref{fig1}(b) for an illustration).

\begin{defn}\label{defn-adj}
	A \emph{$(D,m,k)$-adjuster} $\cA=(v_1,F_1,v_2,F_2,A)$ in a graph $G$ consists of vertices $v_1,v_2\in V(G)$, graphs $F_1,F_2\subseteq G$ and a vertex set $A\subseteq V(G)$ such that the following hold for some $\ell\in\N$.
  \stepcounter{propcounter}
  \begin{enumerate}[label = {\bfseries \Alph{propcounter}\arabic{enumi}}]
  \item $A$,  $V(F_1)$ and $V(F_2)$ are pairwise disjoint.\label{d-a-1}
  \item For each $i\in [2]$, $F_i$  is a $(D,m)$-expansion around $v_i$.\label{d-a-2}
  \item $|A|\leq 10mk$.\label{d-a-3}
  \item For each $i\in \{0,1,\ldots,k\}$, there is a $v_1,v_2$-path in $G[A\cup \{v_1,v_2\}]$ with length $\ell+2i$.\label{d-a-4}
\end{enumerate}
We call the smallest such $\ell$ for which these properties hold the \emph{length of the adjuster} and denote it $\ell(\cA)$. Note that it immediately follows that $\ell(\cA)\leq |A|+1\leq 10mk+1$. We call a $(D,m,1)$-adjuster a \emph{{simple adjuster}}. We refer to the subgraphs $F_1$ and $F_2$ of an adjuster $\cA=(v_1,F_1,v_2,F_2,A)$ as the \emph{ends} of the adjuster, and let $V(\cA)=V(F_1)\cup V(F_2)\cup A$.
\end{defn}

In this section, we start by finding one simple adjuster in an expander for Lemma~\ref{lem-twin-path} in Section~\ref{sec:oneadj}. We then find such an adjuster despite the removal of any medium-sized vertex set from the expander, giving Lemma~\ref{lem-robust-adj} in Section~\ref{sec:robadj}.
In Section~\ref{sec:adjpath}, we chain simple adjusters together for Lemma~\ref{lem-adj-to-adj-path}, before using this to join vertex expansions by paths with precise lengths for Lemma~\ref{lem-finalconnect}. Finally, we prove Theorem~\ref{thm:balancedsub} in Section~\ref{sec:balancedsub} and Theorem~\ref{mainthm} in Section~\ref{sec:mainthmfinal}.

%%%%%%%%%%%%%%%%%%%%%%%%%%%%%%%%%%%%%%%%%%%%%%%%%%%%%%%%%%%%%%%%%%%%%%%%%%%%%%%%%%%%%%%%%%%%%%%%%%%%%%%%%%%%%%%%%%%%%%%%%%%%%%%%%%%%%%%%%%%%%%%%%%%%%%%%%%%%%%%%%%%%%%%%%%%%%%%%%%%%%%%%%%%%%%%%%%%%%%%%%%%%%%%%%%%%%%%%%%%%%%%%%%%%%%%%%%
\subsection{Finding one simple adjuster}\label{sec:oneadj}
%%%%%%%%%%%%%%%%%%%%%%%%%%%%%%%%%%%%%%%%%%%%%%%%%%%%%%%%%%%%%%%%%%%%%%%%%%%%%%%%%%%%%%%%%%%%%%%%%%%%%%%%%%%%%%%%%%%%%%%%%%%%%%%%%%%%%%%%%%%%%%%%%%%%%%%%%%%%%%%%%%%%%%%%%%%%%%%%%%%%%%%%%%%%%%%%%%%%%%%%%%%%%%%%%%%%%%%%%%%%%%%%%%%%%%%%%%
Here, we find one simple adjuster, proving Lemma~\ref{lem-twin-path}. The adjuster $(v_1,F_1,v_2,F_2,A)$ is found for prespecified vertices $v_1$ and $v_2$, as required by one of the applications of Lemma~\ref{lem-twin-path}.

\begin{lemma}\label{lem-twin-path}
	For any $0<\eps_1<1$, $0<\eps_2<1/5$ and $k\in \N$, there exists $d_0=d_0(\eps_1,\eps_2,k)$ such that the following is true for each $n\geq d\geq d_0$. Suppose that $G$ is an $n$-vertex bipartite $(\eps_1,\eps_2 d)$-expander with $\de(G)\ge d-1$.

  Let $C$ be a shortest cycle in $G$ and let $x_1,x_2$ be distinct vertices in $V(G)\setminus V(C)$. Let $m=\frac{200}{\eps_1}\log^3n$ and $D\leq \log^{5k}n$.

  Then, $G$ contains a $(D,m,1)$-adjuster $(v_1,F_1,v_2,F_2,A)$ with $v_1=x_1$, $v_2=x_2$ and $V(C)\subseteq A$.
\end{lemma}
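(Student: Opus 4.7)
The plan combines the shortest cycle $C$ with four vertex expansions around $x_1, x_2$ supplied by Lemma~\ref{lem-expansion} (two for the ends $F_1, F_2$ of the adjuster and two for growing connecting paths). Since $G$ is bipartite, $|C| = 2\ell$ is even; I fix $y_1, y_2 \in V(C)$ at cyclic distance $\ell - 1$ on $C$, so that the two arcs of $C$ between them have lengths $\ell - 1$ and $\ell + 1$, differing by exactly $2$. The adjuster will have $v_i = x_i$, with $F_1, F_2$ as the ends and middle set $A \supseteq V(C)$ containing also the internal vertices of two vertex-disjoint paths $Q_1$ from $x_1$ to $y_1$ and $Q_2$ from $x_2$ to $y_2$; the two $v_1, v_2$-paths of lengths differing by $2$ are then obtained by combining $Q_1$ and $Q_2$ with each of the two arcs of $C$.

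I first apply Lemma~\ref{lem-expansion} with $k = 2$ at the vertices $x_1, x_2$ and with the $D_{i,j}$ chosen appropriately (all polylogarithmic in $n$, with $D_{1,2} = D_{2,2} = D$ for the adjuster ends and the remaining two taken large enough for Lemma~\ref{new-connect} to apply below). This yields four $(D_{i,j}, m)$-expansions $F_{1,1}, F_{1,2}, F_{2,1}, F_{2,2}$, pairwise vertex disjoint outside the roots and disjoint from $V(C)$. I designate $F_1 := F_{1,2}$ and $F_2 := F_{2,2}$.

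The main step is to construct the disjoint paths $Q_1, Q_2$ with internal vertices in $V(G) \setminus (V(C) \cup V(F_1) \cup V(F_2))$. Since $\{y_1\}$ is a single vertex, I first use Lemma~\ref{limit-contact} to expand a ball around $y_1$ in $G - (V(C) \setminus \{y_1\}) - V(F_1 \cup F_2 \cup F_{2,1})$: the shortest-cycle property gives $|B^{r}_G(y_1) \cap V(C)| \leq 2r + 1$, witnessing that $V(C) \setminus \{y_1\}$ has $2$-limited contact with $\{y_1\}$ in $G$, so Lemma~\ref{limit-contact}\emph{\ref{exp1}} applies and the ball reaches polylogarithmic size. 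I then use Lemma~\ref{new-connect} to connect $V(F_{1,1})$ to this ball while avoiding the other structures, and extend backward through $F_{1,1}$ to $x_1$ and forward through the ball to $y_1$ to produce $Q_1$. The construction of $Q_2$ is analogous, with $V(Q_1)$ added to the avoided set.

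Setting $A := V(C) \cup (V(Q_1) \setminus \{x_1\}) \cup (V(Q_2) \setminus \{x_2\})$, the tuple $(x_1, F_1, x_2, F_2, A)$ is the required simple adjuster: conditions \emref{d-a-1}--\emref{d-a-4} hold by construction, with each $\ell(Q_i) \leq m + \frac{40}{\eps_1}\log^3 n + o(m)$ so that $|A| \leq |C| + \ell(Q_1) + \ell(Q_2)$ stays comfortably within the bound $10m$ required of a simple adjuster (using $|C| \leq 2\log n / \log(d-1)$). The main obstacle is constant bookkeeping throughout: the parameters $D_{i,j}$ from Lemma~\ref{lem-expansion} and the target ball size around $y_i$ must be chosen so that in each application of Lemma~\ref{new-connect} the avoided set $W$ satisfies $|W|\log^3 n \leq 10\min(|A|,|B|)$ for the endpoint sets being linked, and Proposition~\ref{prop-trimming} must be used to trim $F_{1,2}, F_{2,2}$ down to exact size $D$ so that $F_1, F_2$ are $(D,m)$-expansions as required.
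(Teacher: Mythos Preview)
Your overall shape is the same as the paper's, but the decision to apply Lemma~\ref{lem-expansion} only at $x_1,x_2$ (with $k=2$) and then grow balls around $y_1,y_2$ by hand via Lemma~\ref{limit-contact} creates a genuine gap.  The problem is that once $F_1,F_2,F_{2,1}$ are fixed, you must start the expansion around $y_1$ inside $G-(V(C)\setminus\{y_1\})-V(F_1\cup F_2\cup F_{2,1})$.  Lemma~\ref{limit-contact} requires $|A|\ge\eps_2 d/2$, so you must take $A$ to be (roughly) $N_G(y_1)$ minus the forbidden set; but there is nothing preventing all of $y_1$'s neighbours from lying in $V(F_1\cup F_2\cup F_{2,1})$.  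Concretely, in the regime where $n$ is much larger than $d$ and $D=\log^{5k}n\gg d$, the sets $F_1,F_2$ (each of size $D$) can be larger than the entire degree of $y_1$, and neither the $X$-slot of Lemma~\ref{limit-contact} (which needs $|X|\le|A|\eps(|A|)/4=O(d)$) nor the $Y$-slot (which needs $Y$ to be $\ell_0$-far from $A$) can accommodate them.  The $2$-limited contact you cite only controls $V(C)$, not the $F_i$'s.

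The paper handles this by applying Lemma~\ref{lem-expansion} to \emph{four} vertices $x_1,x_2,x_3,x_4$ simultaneously, where $x_3,x_4$ are the two cycle vertices (your $y_1,y_2$).  That lemma is precisely engineered to return expansions around vertices of $C$ that avoid $V(C)\setminus\{x_i\}$ and are pairwise disjoint from all the other expansions, so the clash you face never arises.  With a nested choice $D_{1,1}=D_{2,1}=D$, $D_{2,2}=D_{4,1}=m^2D$, $D_{1,2}=D_{3,1}=m^3D$, two applications of Lemma~\ref{new-connect} then link $F_{1,2}$ to $F_{3,1}$ and $F_{2,2}$ to $F_{4,1}$ directly.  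Your plan becomes correct as soon as you include $y_1,y_2$ among the inputs to Lemma~\ref{lem-expansion}; but as written, the ball-growing step around $y_1$ does not go through.
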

\begin{proof}
Noting that, as $G$ is bipartite, $C$ has even length, let $\ell_0$ be such that $2\ell_0$ is the length of $C$. Since $\de(G)\ge d-1$, we must have $\ell_0\leq \log n/\log (d-1)\leq m$, as $n\geq d_0(\eps_1,\eps_2,k)$ is large. Pick vertices $x_3,x_4\in V(C)$  which are distance $\ell_0-1$ apart on $C$ and let the paths separating them in $C$ be $R_1$ and $R_2$, where $R_1$ is the shorter path.

Let $D_{1,1}=D_{2,1}=D$,  $D_{1,2}=D_{3,1}=m^3D$,  $D_{2,2}=D_{4,1}=m^2D$, and note that $m^3D\leq \log^{15k}n$.
Using any arbitrary vertices $x_5, x_6,\ldots,x_{15k}$ and $D_{i,j}=D$ for any $i,j\in [15k]$ not already chosen, apply Lemma~\ref{lem-expansion} to $x_1, x_2,\ldots,x_{15k}$ and $C$ with $(k,m)_{\ref{lem-expansion}}=(15k,m/5)$ to get graphs $F_{i,j}$, $i,j\in [2]$ and $F_{3,1}, F_{4,1}$, for which the following hold.
\begin{itemize}
  \item For each $i,j\in [2]$ or $i\in\{3,4\}$ and $j=1$, $F_{i,j}$ is a $(D_{i,j},m)$-expansion around $x_i$ in $G$ which contains no vertices other than $x_i$ in $\{x_1,\ldots,x_4\} \cup V(C)$.
  \item The sets $V(F_{i,j})\setminus\{x_i\}$, $i,j\in [2]$ or $i\in\{3,4\}$ and $j=1$, are pairwise disjoint.
\end{itemize}

Now, we have $|V(C)\cup V(F_{1,1}\cup F_{2,1}\cup F_{2,2}\cup F_{4,1})|\leq m+2D+2m^2D\leq 10m^3D/\log^3n$. Therefore, as $|F_{1,2}|=|F_{3,1}|=m^3D$, by Lemma~\ref{new-connect}, we can find a path $P'$ with length at most $m$ from $V(F_{1,2})$ to $V(F_{3,1})$ with no vertices in $(V(C)\cup V(F_{1,1}\cup F_{2,1}\cup F_{2,2}\cup F_{4,1}))\setminus \{x_1,x_3\}$. As, $F_{1,2}$ is a $(D_{1,2},m)$-expansion of $x_1$, and $F_{3,1}$ is a $(D_{3,1},m)$-expansion of $x_3$, we can extend $P'$ using vertices from $V(F_{1,2}\cup F_{1,3})$ to get an $x_1,x_3$-path, say $P$, with length at most $3m$, which has no vertices in $(V(C)\cup V(F_{1,1}\cup F_{2,1}\cup F_{2,2}\cup F_{4,1}))\setminus \{x_1,x_3\}$.

 Next, observe that $|V(C)\cup V(P)\cup V(F_{1,1})\cup V(F_{2,1})|\leq m+3m+1+2D\leq 10m^2D/\log^3n$. Therefore, as $|F_{2,2}|=|F_{4,1}|=m^2D$, by Lemma~\ref{new-connect}, we can find a path $Q'$ with length at most $m$ from $V(F_{2,2})$ and $V(F_{4,1})$ which has no vertices in $(V(C)\cup V(P)\cup V(F_{1,1})\cup V(F_{2,1}))\setminus \{x_2,x_4\}$.
As, $F_{2,2}$ is a $(D_{2,2},m)$-expansion of $x_2$, and $F_{4,1}$ is a $(D_{4,1},m)$-expansion of $x_4$, we can extend $Q'$  using vertices from $V(F_{2,2}\cup F_{4,1})$ to get an $x_2,x_4$-path, say $Q$, with length at most $3m$ and no vertices in $(V(C)\cup V(P)\cup V(F_{1,1})\cup V(F_{2,1}))\setminus \{x_2,x_4\}$.

Set now $v_1=x_1$, $v_2=x_2$, $F_1=F_{1,1}$, $F_2=F_{2,1}$ and $A=V(P\cup Q\cup R_1\cup R_2)\setminus \{v_1,v_2\}$.
Then, $|A|\leq 2(3m+1)+2\ell_0\leq 10m$ and note that $A$ is disjoint from $V(F_1)\cup V(F_2)$. Letting $\ell=\ell(P\cup R_1\cup Q)$, note that $P\cup R_1\cup Q$ and $P\cup R_2\cup Q$ are $v_1,v_2$-paths in $G[A\cup\{v_1,v_2\}]$ with length $\ell$ and $\ell+2$ respectively. Thus, $(v_1,F_1,v_2,F_2,A)$ is a $(D, m, 1)$-adjuster, as desired.
\end{proof}

\subsection{Finding simple adjusters robustly}\label{sec:robadj}
In this section, we prove Lemma~\ref{lem-robust-adj}, a key component of our proof. This finds a simple adjuster robustly in an expander $G$ -- that is, given any subset $U\subseteq V(G)$ with moderate size, we construct an adjuster in $G-U$. %After stating the lemma, we will give a proof sketch.
\begin{lemma}\label{lem-robust-adj}
There exists some $\eps_1>0$ such that, for every $0<\eps_2<1$ and $k\in \N$, there exists $d_0=d_0(\eps_1,\eps_2,k)$ such that the following is true for each $n\geq d\geq d_0$. Suppose that $G$ is a $\tk_{d/2}^{(2)}$-free $n$-vertex bipartite $(\eps_1,\eps_2 d)$-expander with $\de(G)\ge d$.
Let $m=\frac{200}{\eps_1}\log^3n$ and $D\leq \log^k n$.
Let $U\subseteq V(G)$ satisfy $|U|\leq 10D$.

Then, $G-U$ contains a $(D,2m,1)$-adjuster.
\end{lemma}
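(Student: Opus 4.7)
My plan is to mimic the construction in the proof of Lemma~\ref{lem-twin-path} inside $G - U$, using Lemma~\ref{lem-newbit-other} as a black box to provide the required local expansions whenever the arguments of Lemmas~\ref{limit-contact} and~\ref{new-connect} in the ambient expander $G$ would fail because some set in play is small compared to $U$.

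First I bound the set of vertices whose degree in $G - U$ drops substantially: let $B = \{v \in V(G) \setminus U : d_G(v, U) > d/2\}$. By Proposition~\ref{prop-1subdivision} applied with $(U,W,d)_{\ref{prop-1subdivision}} = (B, U, d/2)$, the $\tk_{d/2}^{(2)}$-freeness of $G$ forces $|B| < |U|^2 \leq 100 D^2$. Setting $W := U \cup B$, we have $|W| \leq \exp((\log \log n)^2)$ for $n$ large, and every vertex of $V(G) \setminus W$ retains at least $d/2$ neighbours in $G - U$.

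Next I locate a vertex $v^* \in V(G) \setminus W$ around which $G - U$ expands well. By a selection argument I pick $r = n^{1/8}$ vertices $y_1, \ldots, y_r \in V(G) \setminus W$ that are pairwise at distance $> 2\ell_0$ in $G - W$; for each $i$ set $A_i := \{y_i\} \cup N_{G-W}(y_i)$ (so $|A_i| \geq d/2 \geq d_0$) and $B_i = C_i = \varnothing$. The hypotheses \emref{mouse0}--\emref{mouse4} of Lemma~\ref{lem-newbit-other} hold immediately, so the lemma furnishes an $i^*$ with $|B^{\ell_0}_{G-U}(A_{i^*})| \geq \log^{k'} n$ for any $k'$ we wish to prescribe; set $v^* := y_{i^*}$. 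Within this ball the minimum degree on $V(G) \setminus W$ is at least $d/2$, so $(G-U)[B^{\ell_0}_{G-U}(v^*)]$ contains a cycle $C$ of length at most $2\log n/\log(d/2) \leq m$; take $C$ to be a shortest such.

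Now choose $x_1, x_2$ in the ball but outside $V(C) \cup W$, and execute the construction from the proof of Lemma~\ref{lem-twin-path} inside $G - U$: build the six vertex expansions $F_{i,j}$ around $x_1, x_2$ and two chosen vertices $x_3, x_4 \in V(C)$ by the analogue of Lemma~\ref{lem-expansion}, then join them via short paths by the analogue of Lemma~\ref{new-connect}. Every such expansion or connection step in which the starting set is too small to survive the removal of $U$ directly is certified by a fresh application of Lemma~\ref{lem-newbit-other}, with $B_i$ absorbing the partially built structure and $C_i$ absorbing $V(C)$; the 4-limited contact of $A_i$ with $V(C)$ needed in \emref{mouse2} holds exactly because $C$ is a shortest cycle in $G - U$. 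The two arcs of $C$ between $x_3$ and $x_4$ then supply the length-$\ell$ and length-$(\ell+2)$ paths required to complete the $(D, 2m, 1)$-adjuster. The principal obstacle is the bookkeeping in these repeated applications of Lemma~\ref{lem-newbit-other}: at each stage one must simultaneously verify \emref{mouse1} ($|B_i| \leq |A_i|/\log^{10}|A_i|$, constraining the already-built structure absorbed into $B_i$), \emref{mouse2} (4-limited contact with $C_i$, relying on the shortest-cycle property of $C$), and \emref{mouse4} (pairwise-separation of the $A_i$ after all removals), and handling these constraints coherently is the technical core of the proof.
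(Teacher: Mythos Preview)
Your proposal has a genuine conceptual gap in how you invoke Lemma~\ref{lem-newbit-other}. That lemma takes $r = n^{1/8}$ candidate triples $(A_i, B_i, C_i)$ and guarantees only that \emph{some} index $j$ has a large ball $B^{\ell_0}_{G-U-B_j-C_j}(A_j)$. It does \emph{not} let you expand a single prescribed set. Once you have fixed a cycle $C$ and four specific vertices $x_1,x_2,x_3,x_4$, you need expansions around \emph{those} four vertices; a ``fresh application of Lemma~\ref{lem-newbit-other}'' cannot supply them, because you have no $n^{1/8}$ parallel candidates all attached to the same four vertices. This is not bookkeeping --- it is a structural mismatch between what the lemma provides and what your construction needs. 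The paper's proof is organised around exactly this limitation: it first produces $n^{1/4}$ many \emph{complete} small adjusters (by passing to genuine expander subgraphs of $G-U-W'$ and invoking Lemma~\ref{lem-twin-path} there, where it applies directly), and only afterwards uses Lemma~\ref{lem-newbit-other} to select one whose ends can be enlarged to size $D$.

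There is also a secondary problem with your first application of Lemma~\ref{lem-newbit-other}. With $B_i=C_i=\varnothing$, condition~\emref{mouse3} requires every vertex of $B^{\ell_0}_{G-U}(A_i)$ to have at most $d/2$ neighbours in $U$. You arranged this only for the starting vertices $y_i$, via $y_i\notin B$; nothing prevents the ball in $G-U$ from meeting $B$, so \emref{mouse3} is unverified. In the paper this is handled by working in $G'=G-L$ and by explicitly tracking, via the sets $\mathbf{A}_1\subseteq\mathbf{A}_0$, which adjusters have no short path to $L\setminus U$ --- precisely so that the relevant balls stay inside $G'$ and avoid $U_0$. Finally, your claim that the induced subgraph on the ball around $v^*$ has minimum degree $\ge d/2$ is false for boundary vertices of the ball, so the existence of a short cycle inside that ball is not immediate either.
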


The following proof sketch is illustrated in Figure~\ref{fig:rob}. Essentially, we find an expander subgraph in $H\subseteq G-U$ and apply Lemma~\ref{lem-twin-path} to find a simple adjuster in $H$. However, $H$ may be much smaller than $G$, so this simple adjuster may be far too small to satisfy Lemma~\ref{lem-robust-adj}. We thus find many of these simple adjusters and use Lemma~\ref{lem-newbit-other} to expand the ends of one of them to make them large enough to satisfy Lemma~\ref{lem-robust-adj}.

More precisely, to prove Lemma~\ref{lem-robust-adj}, we assume no such adjuster exists, before collecting the high degree vertices in a set $L$.
We take a maximal set of adjusters $\mathbf{A}_0$ in $G-U$ so that their ends (the sets $V(F_1)$ and $V(F_2)$ in an adjuster $(v_1,F_1,v_2,F_2,A)$) are in $G-L$, and furthermore the ends of different adjusters in $\mathbf{A}_0$ are far apart in $G-L$. The adjusters in $\mathbf{A}_0$ will each not have large enough ends to be a $(D,m,1)$-adjuster, so we wish to expand the ends of some adjuster to make them larger. The challenge is to do this while avoiding $U$.

We first show that $\mathbf{A}_0$ contains many adjusters (see Claim~\ref{ping}). If this is not the case, then, collecting together $U$ with the adjusters in $\mathbf{A}_0$ and any vertices near their ends in $G-L$, we remove them and show that there must be an expander subgraph $H$ in what remains. Applying Lemma~\ref{lem-twin-path} to (essentially) $H$, we get an adjuster that either satisfies the lemma (a contradiction) or should have belonged in $\mathbf{A}_0$ (another contradiction).

If many adjusters in $\mathbf{A}_0$ have an end with a short path to $L\setminus U$, then applying Lemma~\ref{lem-newbit-other} shows that in one of these adjusters the other end must expand while avoiding the short path to $L\setminus U$. Larger ends can then be chosen for this adjuster respectively from the expansion and from the short path to $L\setminus U$  and the neighbourhood of its endvertex in $L\setminus U$. This gives an adjuster satisfying the lemma (a contradiction). Thus, many adjusters in $\mathbf{A}_0$ have no short path to $L\setminus U$ (see Claim~\ref{ping2}) -- we collect such adjusters in $\mathbf{A}_1\subseteq \mathbf{A}_0$.

We then find a large set $Z$ in $G-L$ with small diameter using Lemma~\ref{cl-egg} -- destined to provide a large expansion for the end of an adjuster.
If many adjusters in $\mathbf{A}_1$ have an end with a short path to $Z$, then applying Lemma~\ref{lem-newbit-other} shows that, for one of these adjusters, the other end must expand while avoiding the short path to $Z$. Larger ends can then be chosen for this adjuster respectively from the expansion and from the short path to $Z$  and $Z$ itself. This gives an adjuster satisfying the lemma (a contradiction). Thus, many adjusters in $\mathbf{A}_1$ have no short path to $Z$ (see Claim~\ref{ping3}) -- we collect such adjusters in $\mathbf{A}_2\subseteq \mathbf{A}_1$.

However, by Lemma~\ref{lem-newbit-other}, the ends of the adjusters in $\mathbf{A}_2$ must expand while avoiding $U$. Therefore, by Lemma~\ref{new-connect}, one of them must connect to $Z$, giving the final contradiction  which completes the proof.

%%%%%%%%%%%%%%%%%%%%%%%%%%%%%%%%%%%%%%%%%%%%%%%%%%%%%%%%%%%%%%%%%%%%%%%%%%%%%%%%%%%%%%%%%%%%%%%%%%%%%%%%%%%%%%%%%%%%%%%%
%%%%%%%%%%%%%%%%%%%%%%%%%%%%%%%%%%%%%%%%%%%%%%%%%%%%%%%%%%%%%%%%%%%%%%%%%%%%%%%%%%%%%%%%%%%%%%%%%%%%%%%%%%%%%%%%%%%%%%%%
%%%%%%%%%%%%%%%%%%%%%%%%%%%%%%%%%%%%%%%%%%%%%%%%%%%%%%%%%%%%%%%%%%%%%%%%%%%%%%%%%%%%%%%%%%%%%%%%%%%%%%%%%%%%%%%%%%%%%%%%
%%%%%%%%%%%%%%%%%%%%%%%%%%%%%%%%%%%%%%%%%%%%%%%%%%%%%%%%%%%%%%%%%%%%%%%%%%%%%%%%%%%%%%%%%%%%%%%%%%%%%%%%%%%%%%%%%%%%%%%%

%\inpput{robustadjpic}
\begin{figure}[h]
\centering

\begin{tikzpicture}
\draw (0,0) -- (0,4) -- (8,4) -- (8,0) -- (0,0);
%\draw (6,3) ellipse (2cm and 1cm);

\def\Zh{3}
\def\Zw{1.5}
\draw (\Zw,\Zh) circle [radius=0.75cm];
\draw (\Zw,\Zh) node {$Z$};
\coordinate (Z2) at ($(\Zw,\Zh)+(0:0.75)$);
\coordinate (Z1) at ($(\Zw,\Zh)+(-60:0.75)$);

\def\Lw{4}
\def\Lh{1}
\def\Lsw{3.5}
\def\Lsh{2.75}
\def\Uw{1.25}
\def\Uh{3}
\def\Usw{6.5}
\def\Ush{0.5}
\draw [rounded corners] ($(\Lsw,\Lsh)+0.5*(\Lw,0)$) -- ($(\Lsw,\Lsh)+(\Lw,0)$) -- ($(\Lsw,\Lsh)+(\Lw,\Lh)$)-- ($(\Lsw,\Lsh)+(0,\Lh)$) -- ($(\Lsw,\Lsh)$) --  ($(\Lsw,\Lsh)+0.5*(\Lw,0)$);
\draw [rounded corners]  ($(\Usw,\Ush)+0.5*(\Uw,0)$) -- ($(\Usw,\Ush)+(\Uw,0)$) -- ($(\Usw,\Ush)+(\Uw,\Uh)$)-- ($(\Usw,\Ush)+(0,\Uh)$) -- ($(\Usw,\Ush)$) -- ($(\Usw,\Ush)+0.5*(\Uw,0)$);

\def\sep{0.7}

\foreach \x in {1,2,3}
{
\coordinate (L\x) at ($(\Lsw,\Lsh)+(0.2+\x*\sep,0)$);
%\draw (L\x) node {$\x$};
}

\draw ($(\Lsw,\Lsh)+0.5*(\Lw,\Lh)$) node {$L$};
\draw ($(\Usw,\Ush)+0.5*(\Uw,\Uh)$) node {$U$};
% -- ($\Lanch+\Lw + \Lh$) -- ($\Lanch+ \Lh$) -- \Lanch;

\draw [gray] (3,1.2) ellipse (2.85cm and 1cm);
\draw [gray] (2.15,1.2) ellipse (2cm and 0.8cm);
\draw [gray] (1.35,1.2) ellipse (1.2cm and 0.6cm);

%%%%%%%%%%%%%%%%%%%%%%%%%%%%%%%%%%%%%%%%%%%%%%%%%%%%%%%%%%%%%

\def\radi{0.6}
\def\xyradi{1.5}
\def\adjj{0.3}
\def\upp{0.5}
\def\scaler{0.2}
\def\lwidth{0.02cm}

\draw (4.6,0.75) node {$\mathbf{A}_0$};
\draw (3.2,0.85) node {$\mathbf{A}_1$};
\draw (1.8,0.95) node {$\mathbf{A}_2$};

\foreach \y/\z/\k in {1.8/1.4/15,0.6/1.3/5,1.1/0.9/-20}
{
\begin{scope}[xshift=\y cm,yshift=\z cm,rotate=\k]
\draw (0,0) [line width=\lwidth] circle [radius={\radi*\scaler}];
\foreach \x in {1,...,10}
{
\coordinate (A\x) at ({36*\x+18}:{\radi*\scaler});
}
\coordinate (X1) at ($(180:{\xyradi*\scaler})+(0,{\upp*\scaler})$);
\coordinate (X2) at ($(180:{\xyradi*\scaler})-(0,{\upp*\scaler})$);
\coordinate (Y1) at ($(0:{\xyradi*\scaler})+(0,{\upp*\scaler})$);
\coordinate (Y2) at ($(0:{\xyradi*\scaler})-(0,{\upp*\scaler})$);
\draw [line width=\lwidth] (A5) -- (X1) -- (X2) -- (A5);
\draw [line width=\lwidth] (A9) -- (Y1) -- (Y2) -- (A9);
\end{scope}
}

\foreach \y/\z/\k/\xx in {2.9/1.5/-45/1,3.75/1.25/-15/2}
{
\begin{scope}[xshift=\y cm,yshift=\z cm,rotate=\k]
\draw (0,0) [line width=\lwidth] circle [radius={\radi*\scaler}];
\foreach \x in {1,...,10}
{
\coordinate (A\x) at ({36*\x+18}:{\radi*\scaler});
}
\coordinate (X1) at ($(180:{\xyradi*\scaler})+(0,{\upp*\scaler})$);
\coordinate (X2) at ($(180:{\xyradi*\scaler})-(0,{\upp*\scaler})$);
\coordinate (Y1) at ($(0:{\xyradi*\scaler})+(0,{\upp*\scaler})$);
\coordinate (Y2) at ($(0:{\xyradi*\scaler})-(0,{\upp*\scaler})$);
\draw [line width=\lwidth] (A5) -- (X1) -- (X2) -- (A5);
\draw [line width=\lwidth] (A9) -- (Y1) -- (Y2) -- (A9);
\draw [snake it] (X1) -- (Z\xx);
\end{scope}
}

\foreach \y/\z/\k/\xx in {4.5/1.75/10/1, 5.35/1/0/3, 5/1.4/-15/2}
{
\begin{scope}[xshift=\y cm,yshift=\z cm,rotate=\k]
\draw (0,0) [line width=\lwidth] circle [radius={\radi*\scaler}];
\foreach \x in {1,...,10}
{
\coordinate (A\x) at ({36*\x+18}:{\radi*\scaler});
}
\coordinate (X1) at ($(180:{\xyradi*\scaler})+(0,{\upp*\scaler})$);
\coordinate (X2) at ($(180:{\xyradi*\scaler})-(0,{\upp*\scaler})$);
\coordinate (Y1) at ($(0:{\xyradi*\scaler})+(0,{\upp*\scaler})$);
\coordinate (Y2) at ($(0:{\xyradi*\scaler})-(0,{\upp*\scaler})$);
\draw [line width=\lwidth] (A5) -- (X1) -- (X2) -- (A5);
\draw [line width=\lwidth] (A9) -- (Y1) -- (Y2) -- (A9);
\draw [snake it] (Y1) -- (L\xx);
\end{scope}
}

\end{tikzpicture}
\caption{An illustration of the proof of Lemma~\ref{lem-robust-adj}. We find $\mathbf{A}_0$, a large set of adjusters in $G-U$, before discarding those with a short path to $L\setminus U$, and then those with a short path to $Z$. Showing that many adjusters still remain, in the set $\mathbf{A}_2$, leads to a contradiction.\label{fig:rob}}
\end{figure}
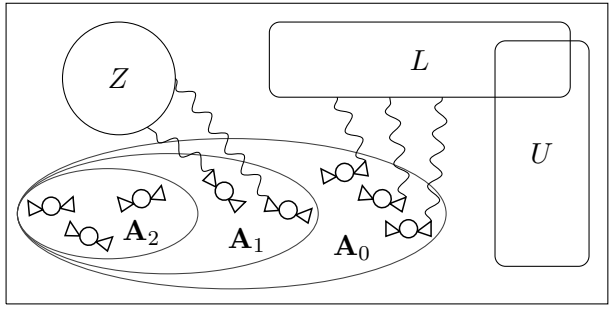

%%%%%%%%%%%%%%%%%%%%%%%%%%%%%%%%%%%%%%%%%%%%%%%%%%%%%%%%%%%%%%%%%%%%%%%%%%%%%%%%%%%%%%%%%%%%%%%%%%%%%%%%%%%%%%%%%%%%%%%%
%%%%%%%%%%%%%%%%%%%%%%%%%%%%%%%%%%%%%%%%%%%%%%%%%%%%%%%%%%%%%%%%%%%%%%%%%%%%%%%%%%%%%%%%%%%%%%%%%%%%%%%%%%%%%%%%%%%%%%%%
%%%%%%%%%%%%%%%%%%%%%%%%%%%%%%%%%%%%%%%%%%%%%%%%%%%%%%%%%%%%%%%%%%%%%%%%%%%%%%%%%%%%%%%%%%%%%%%%%%%%%%%%%%%%%%%%%%%%%%%%
%%%%%%%%%%%%%%%%%%%%%%%%%%%%%%%%%%%%%%%%%%%%%%%%%%%%%%%%%%%%%%%%%%%%%%%%%%%%%%%%%%%%%%%%%%%%%%%%%%%%%%%%%%%%%%%%%%%%%%%%

\begin{proof}[Proof of Lemma~\ref{lem-robust-adj}] Let $0<\eps_1<1$ be small enough that the property in Corollary~\ref{cor-expander} holds. Suppose, for contradiction, that $G-U$ contains no $(D,2m,1)$-adjuster. Let $\Delta=200mD$, $L=\{v\in V(G):d_G(v)\geq \Delta\}$ and $G'=G-L$, so that $\Delta(G')\leq \Delta$.

Set $\ell_0=(\log\log n)^{20}$. Let $U_0=\{v\in V(G)\setminus U:d_G(v,U)\geq d/2\}$.
Note that if $|U_0|\geq 100D^2\ge |U|^2$, then by Proposition~\ref{prop-1subdivision} with $(U,W)_{\ref{prop-1subdivision}}=(U_0,U)$, $G$ contains a $\tk_{d/2}^{(2)}$, a contradiction. Therefore, we can assume that $|U_0|\leq 100D^2$, and hence, as $\delta(G)\geq d$ and $n\geq d_0(\eps_1,\eps_2,k)$ is large, $G-U$ contains at least $(n-|U|-|U_0|)\cdot (d/2)/2\geq nd/8$ edges. Let $U_1=U\cup U_0$, so that $|U_1|\le 200D^2\leq 200\log^{2k}n$.

Take a maximal collection $\mathbf{A}_0$ of adjusters in $G-U$, such that the following hold.
\stepcounter{propcounter}
\begin{enumerate}[label = {\bfseries \Alph{propcounter}\arabic{enumi}}]
\item The sets $V(F_{1}\cup F_2)$, $(v_1,F_1,v_2,F_2,A)\in \mathbf{A}_0$, are subsets of $V(G')$ and are all at least a distance $10\ell_0$ apart from each other and from $U_1\setminus L$ in $G'$.\label{hot1}
\item For each $\cA\in \mathbf{A}_0$, for some $m_{\cA}$ with $\log^3d_0\leq m_{\cA}\leq m$, $\cA$ is an $(m_\cA^{2},m_\cA,1)$-adjuster.\label{hot2}
\end{enumerate}

  \begin{claim}\label{ping}
$|\mathbf{A}_0|\geq n^{1/4}$.
  \end{claim}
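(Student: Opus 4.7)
The plan is a proof by contradiction. Suppose $|\mathbf{A}_0|<n^{1/4}$; I will locate a further simple adjuster satisfying \ref{hot1}--\ref{hot2} which could be added to $\mathbf{A}_0$, violating maximality.

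First I would bound the forbidden region around the existing adjusters. Let
\[
W^*:=U_1\cup\bigcup_{\cA\in\mathbf{A}_0}V(\cA)\cup B^{10\ell_0}_{G'}\Bigl(\bigcup_{\cA\in\mathbf{A}_0}\bigl(V(F_1^{\cA})\cup V(F_2^{\cA})\bigr)\cup(U_1\setminus L)\Bigr).
\]
Because $B^{10\ell_0}_{G'}(\cdot)\subseteq V(G')$ where $\Delta(G')\le\Delta=200mD$ and $\ell_0=(\log\log n)^{20}$, every $G'$-ball of radius $10\ell_0$ around a single vertex has at most $\Delta^{10\ell_0+1}\le\exp((\log\log n)^{22})$ vertices. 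Combining $|\mathbf{A}_0|<n^{1/4}$, $\sum_{\cA\in\mathbf{A}_0}|V(\cA)|\le 3|\mathbf{A}_0|m^2$, and $|U_1|\le200D^2$, one obtains $|W^*\cap V(G')|\le n^{1/3}$ for $n$ sufficiently large.

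Next, I would locate a bipartite expander $H\subseteq G-U-L-W^*$ with $\delta(H)\ge d_H$ and $(\eps_1,\eps_2 d_H)$-expansion, for some $d_H=\Omega(d)$. Starting from the already established $e(G-U)\ge nd/8$, removing $W^*\cap V(G')$ destroys at most $|W^*\cap V(G')|\cdot\Delta\le n^{1/2}$ edges, since every vertex of $W^*\cap V(G')$ has degree at most $\Delta$ in $G$. To control what is lost when removing $L$, I apply Proposition~\ref{prop-1subdivision} with $W=L$ to conclude that $L^+:=\{v\in V(G)\setminus L:d_G(v,L)\ge d/2\}$ has size at most $|L|^2$; combined with an analogous bound for the set of vertices with $\ge d/2$ neighbours in $L\cup U\cup W^*$ jointly and with $\delta(G)\ge d$, this shows that all but a small exceptional set of vertices of $V(G)\setminus(L\cup U\cup W^*)$ retain degree $\ge d/4$ in $G-U-L-W^*$. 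Therefore $e(G-U-L-W^*)\ge nd/C$ for an absolute constant $C$, so Proposition~\ref{bipsub} and Corollary~\ref{cor-expander} deliver the required expander $H$ inside $V(G')\setminus(U\cup W^*)$.

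Finally, pick distinct $x_1,x_2\in V(H)\setminus V(C_H)$, where $C_H$ is a shortest cycle of $H$, and apply Lemma~\ref{lem-twin-path} to $H$ with internal parameter $k=2$ and target expansion size $D_{\ref{lem-twin-path}}=m_H^2$, where $m_H=(200/\eps_1)\log^3|H|$. This returns a $(m_H^2,m_H,1)$-adjuster $\cA_{\text{new}}=(x_1,F_1,x_2,F_2,A)$ in $H$. Since $V(H)\subseteq V(G')\setminus W^*$, the ends $V(F_1\cup F_2)\subseteq V(G')$ lie at $G'$-distance exceeding $10\ell_0$ from the ends of every existing adjuster and from $U_1\setminus L$, verifying \ref{hot1}. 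As $|H|\in[d_0,n]$, we have $m_H\in[\log^3 d_0,m]$, verifying \ref{hot2}. Thus $\cA_{\text{new}}$ could be added to $\mathbf{A}_0$, contradicting its maximality. The main obstacle is the middle step: establishing $e(G-U-L-W^*)=\Omega(nd)$ requires careful use of the $\tk_{d/2}^{(2)}$-freeness of $G$ (via Proposition~\ref{prop-1subdivision}) to control the degree loss caused by deleting the potentially large set $L$.
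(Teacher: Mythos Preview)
Your argument has a genuine gap in the middle step: you cannot justify $e(G-U-L-W^*)=\Omega(nd)$ because there is no bound on $|L|$ available from the hypotheses. The application of Proposition~\ref{prop-1subdivision} gives $|L^+|<|L|^2$, but this is vacuous unless $|L|$ is already small. In fact $|L|$ can be enormous: if $d>\Delta=200mD$ (which happens whenever $d$ exceeds a polylogarithmic threshold), then every vertex has degree at least $d>\Delta$, so $L=V(G)$ and $G-L$ is empty. Even when $d\le\Delta$, every vertex of $G'=G-L$ has degree less than $\Delta$ in $G$, so $e(G')<n\Delta/2$, and you cannot hope to find a subgraph of $G'$ with average degree $\Omega(d)$ unless $d=O(\Delta)$, which is not guaranteed.

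The paper resolves this by \emph{not} removing $L$ when passing to the expander. One takes $W'=B_{G'}^{10\ell_0}(W)$ with $W=(U_1\cup\bigcup_{\cA}V(\cA))\setminus L$, bounds $|W'|\le n^{1/2}$ (since $W'\subseteq V(G')$ where degrees are at most $\Delta$), and finds the expander $H$ inside $G-U-W'$, which may still meet $L$. One then splits into two cases according to $|(V(H)\setminus V(C))\cap L|$. If this is at most $1$, deleting that single vertex keeps $H$ an expander, and Lemma~\ref{lem-twin-path} produces an adjuster whose ends avoid $L$ (since $V(C)\subseteq A$), contradicting the maximality of $\mathbf{A}_0$ as you intended. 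If instead there are two vertices $x_1,x_2\in(V(H)\setminus V(C))\cap L$, one applies Lemma~\ref{lem-twin-path} with $D_{\ref{lem-twin-path}}=1$ and $v_1=x_1$, $v_2=x_2$, then uses $d_G(x_i)\ge\Delta=200mD$ to replace the trivial ends by $(D,2m)$-expansions directly in $G-U$. This yields a $(D,2m,1)$-adjuster in $G-U$, contradicting not the maximality of $\mathbf{A}_0$ but the outer assumption of the whole lemma. This second case is precisely what handles the situation your approach cannot.
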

  \begin{poc} Suppose, for contradiction, that $|\mathbf{A}_0|< n^{1/4}$. Let $W=(U_1\cup (\cup_{\cA\in \mathbf{A}_0}V(\cA))\setminus L$. For each $\cA=(v_1,F_1,v_2,F_2,A)\in \mathbf{A}_0$, $|V(\cA)|=|F_1|+|F_2|+|A|\leq 2m_\cA^2+10m_\cA\leq 3m^2$, and therefore $|W|\leq n^{1/4}\cdot 3m^3+200\log^{2k}n\leq n^{1/3}$. Let $W'=B_{G'}^{10\ell_0}(W)$, so, as $\Delta(G')\le \Delta$, we have that $|W'|\leq 2|W|\cdot\Delta^{10\ell_0}\leq n^{1/2}$.

Now, there are at most $|W'|\Delta\leq \Delta n^{1/2}\leq nd/16$ edges in $G$ with some vertex in $W'$. Let $\bar{d}=d/64$. As  $G-U$ contains at least $nd/8$ edges, $G-U-W'$ contains at least $nd/16$ edges, so that $d(G-U-W')\geq d/8=8\bar{d}$. Then, by Corollary~\ref{cor-expander}, $G-U-W'$ contains an $(\eps_1,\eps_2\bar{d})$-expander $H$ with $\delta(H)\geq \bar{d}$.
Let $C$ be a shortest cycle in $H$. We will consider two cases, depending on how many vertices of $L$ there are in $V(H)\setminus V(C)$.

\medskip

\noindent\textbf{Case I: $|(V(H)\setminus V(C))\cap L|\leq 1$.} Let $H'=H-(V(H)\setminus V(C))\cap L$, so that $\delta(H')\geq \bar{d}-1$. Note that, for each $X\subseteq V(H')$ with $\eps_2\bar{d}/2\leq |X|\leq |H'|/2<|H|/2$, we have
\begin{align*}
|N_{H'}(X)|&\geq |N_H(X)|-1\geq |X|\cdot \eps(|X|,\eps_1,\eps_2\bar{d})-1 \\
&\geq \frac{1}{2}|X|\cdot \eps(|X|,\eps_1,\eps_2\bar{d})+\frac{\eps_2\bar{d}}{4}\cdot \eps(\eps_2\bar{d}/2,\eps_1,\eps_2\bar{d})-1
\\
&\geq |X|\cdot \eps(|X|,\eps_1/2,\eps_2\bar{d})+\frac{\eps_2\bar{d}}{4}\cdot \frac{\eps_1}{\log^2(15/2)}-1\geq |X|\cdot \eps(|X|,\eps_1/2,\eps_2\bar{d}),
\end{align*}
where the last inequality follows as $\bar{d}\geq d_0(\eps_1,\eps_2,k)/64$ is large. Therefore, $H'$ is a $(\eps_1/2,\eps_2\bar{d})$-expander with $\delta(H')\geq \bar{d}-1$. Note that $C$ is a shortest cycle in $H'$.

Let $m_{H'}=200\log^3|H'|/\eps_1\le m$, and note that, as $|H'|\geq \delta(H')+1\geq \bar{d}\geq d_0/64$, and $d_0=d_0(\ep_1,\ep_2,k)$ is large, $m_{H'}\geq \log^3d_0$. Picking arbitrary vertices $x_1,x_2\in V(H')\setminus V(C)$ and noting that $\bar{d}\geq d_0(\eps_1,\eps_2,k)/64$ is large, by Lemma~\ref{lem-twin-path} with $(k,D)_{\ref{lem-twin-path}}=(10,m_{H'}^2)$,
 $H'$ contains an $(m_{H'}^{2},m_{H'},1)$-adjuster  $(v_1,F_1,v_2,F_2,A)$ with $V(C)\subseteq A$. As $A$ is disjoint from $V(F_1\cup F_2)$, $V(C)\subseteq A$ and $(V(H')\setminus V(C))\cap L=\varnothing$, we have that $V(F_1\cup F_2)$ is disjoint from $L$, and hence lies in $V(G')$.
 Together with $V(F_1\cup F_2)\subseteq V(H')$ being disjoint from $W'$ and so $10\ell_0$-far in $G'$ from the ends of the adjusters in $\mathbf{A}_0$ and from $U_1\setminus L$, this violates the maximality of $\mathbf{A}_0$, a contradiction.

\medskip

\noindent\textbf{Case II: $|(V(H)\setminus V(C))\cap L|\geq 2$.} Let $x_1,x_2\in (V(H)\setminus V(C))\cap L$ be distinct and let $m_{H'}=200\log^3|H'|/\eps_1\le m$. By Lemma~\ref{lem-twin-path} with $(k,D)_{\ref{lem-twin-path}}=(1,1)$, $H$ contains a $(1,m_{H'},1)$-adjuster $(v_1,F_1,v_2,F_2,A)$ with $v_1=x_1$ and $v_2=x_2$. Using that $|A|\leq 10m_{H'}\leq 10m$, $|U|\leq 10D$, and $d_G(x_1),d_G(x_2)\geq \Delta=200mD$,
pick disjointly sets $X_1\subseteq N_G(x_1)\setminus (U\cup A\cup \{x_2\})$ and $X_2\subseteq N_G(x_2)\setminus (U\cup A\cup \{x_1\})$ with $|X_1|=|X_2|=D-1$. Letting $F_i'=G[\{x_i\}\cup X_i]$ for each $i\in [2]$, and noting $|A|\leq 20m$, we have that $(x_1,F'_1,x_2,F'_2,A)$ is a $(D,2m,1)$-adjuster in $G-U$, a contradiction.
  \end{poc}

Now, let $\mathbf{A}_1\subseteq \mathbf{A}_0$ be the set of adjusters $(v_1,F_1,v_2,F_2,A)\in \mathbf{A}_0$ for which there is no path with length at most $\ell_0$ from $V(F_1)\cup V(F_2)$ to $L\setminus U$ in $G-U-A$.

  \begin{claim}\label{ping2}
$|\mathbf{A}_1|\geq n^{1/4}/2$.
  \end{claim}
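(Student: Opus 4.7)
The plan is a contradiction argument: I would assume $|\mathbf{A}_0\setminus\mathbf{A}_1|>n^{1/4}/2$ and produce a $(D,2m,1)$-adjuster in $G-U$, contradicting the standing assumption of the lemma. For each adjuster $\cA=(v_1^\cA,F_1^\cA,v_2^\cA,F_2^\cA,A^\cA)\in\mathbf{A}_0\setminus\mathbf{A}_1$, the failure of $\cA\in\mathbf{A}_1$ gives a path $P_\cA$ of length at most $\ell_0$ in $G-U-A^\cA$ from $V(F_1^\cA)\cup V(F_2^\cA)$ to some $w_\cA\in L\setminus U$. After relabelling the two ends of $\cA$ so that $P_\cA$ starts in $V(F_1^\cA)$, a pigeonhole step yields $r:=n^{1/8}$ such adjusters $\cA_1,\ldots,\cA_r$.

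The key step is to apply Lemma~\ref{lem-newbit-other} with $A_i:=V(F_2^{\cA_i})$ for each $i\in[r]$, with $B_i$ and $C_i$ chosen to package up every obstruction the expansion of $A_i$ must avoid. I would let $B_i$ collect the internal vertices of $P_{\cA_i}$ together with the portion of $V(F_1^{\cA_i})$ not on a shortest in-$F_1^{\cA_i}$ path from $v_1^{\cA_i}$ to the start of $P_{\cA_i}$, and let $C_i$ contain $A^{\cA_i}$ along with the set $U_0=\{v\in V(G)\setminus U:d_G(v,U)\ge d/2\}$ of \emph{dangerous} vertices. Size condition~\ref{mouse1} then holds since $|A_i|=m_{\cA_i}^2\ge\log^6 d_0$ while $|B_i|=O(m^2)$, the separation condition~\ref{mouse4} is inherited from property~\ref{hot1} of $\mathbf{A}_0$ once one notes that $B_i\cup C_i$ blocks short detours through $L$, and condition~\ref{mouse3} is secured by placing $U_0$ in $C_i$.

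The main obstacle is verifying the $4$-limited contact condition~\ref{mouse2}: as the ball around $V(F_2^{\cA_i})$ grows in $G-U-B_i-C_i$, it must pick up only $O(j)$ new neighbours of $C_i$ at radius $j$. For the adjustment piece $A^{\cA_i}$, which has size $O(m)$, the bound comes from the shortest-cycle-like combinatorics built into a simple adjuster. For $U_0$, the $\tk_{d/2}^{(2)}$-freeness together with Proposition~\ref{prop-1subdivision} is crucial: already $|U_0|\le 100D^2$ by an application of that proposition, and a further ball-by-ball application of the same proposition is what caps the per-step contact between the growing ball and $U_0$ at $O(j)$. This is the technical heart of the claim.

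Assuming~\ref{mouse2} is verified, Lemma~\ref{lem-newbit-other} produces some $i^*\in[r]$ with $|B^{\ell_0}_{G-U-B_{i^*}-C_{i^*}}(V(F_2^{\cA_{i^*}}))|\ge\log^k n\ge D$. Proposition~\ref{prop-trimming} trims this to a $(D,m+\ell_0)\subseteq(D,2m)$-expansion $F_2'$ around $v_2^{\cA_{i^*}}$, disjoint from $U\cup V(P_{\cA_{i^*}})\cup V(F_1^{\cA_{i^*}})\cup A^{\cA_{i^*}}$. A companion $(D,2m)$-expansion $F_1'$ around $v_1^{\cA_{i^*}}$ would then be assembled by concatenating a shortest in-$F_1^{\cA_{i^*}}$ path from $v_1^{\cA_{i^*}}$ to the start of $P_{\cA_{i^*}}$, the path $P_{\cA_{i^*}}$ itself, and $D-O(m)$ neighbours of $w_{i^*}\in L$ chosen outside $U\cup V(F_2')\cup V(P_{\cA_{i^*}})\cup A^{\cA_{i^*}}$; enough such neighbours exist because $d_G(w_{i^*})\ge\Delta=200mD$ while the forbidden set has size $O(D+m^2)\ll\Delta$. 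The quintuple $(v_1^{\cA_{i^*}},F_1',v_2^{\cA_{i^*}},F_2',A^{\cA_{i^*}})$ is then a $(D,2m,1)$-adjuster in $G-U$, contradicting the standing assumption of the lemma and so establishing $|\mathbf{A}_1|\ge n^{1/4}/2$.
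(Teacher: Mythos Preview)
Your overall architecture matches the paper's: take $r=n^{1/8}$ adjusters $\cA_i=(v_{i,1},F_{i,1},v_{i,2},F_{i,2},\bar A_i)$ from $\mathbf{A}_0\setminus\mathbf{A}_1$, each with a shortest path $P_i'$ of length at most $\ell_0$ in $G-U-\bar A_i$ from $V(F_{i,1})$ to some $x_i\in L\setminus U$, apply Lemma~\ref{lem-newbit-other} with $A_i=V(F_{i,2})$, and once some $A_j$ expands to size $D$ hang the other end off $x_j\in L$. But your allocation of $B_i,C_i$ is essentially reversed relative to what the hypotheses allow. Including $V(F_{i,1})\setminus V(Q_i)$ in $B_i$ gives $|B_i|=\Theta(m_{\cA_i}^2)=\Theta(|A_i|)$, which violates the bound $|B_i|\le |A_i|/\log^{10}|A_i|$ in~\ref{mouse1}. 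Placing $\bar A_i$ and $U_0$ in $C_i$ fails~\ref{mouse2}: an adjuster carries no shortest-cycle structure that would force the ball around $V(F_{i,2})$ to meet $\bar A_i$ in only $O(j)$ vertices at radius $j$, and Proposition~\ref{prop-1subdivision} yields only the global bound $|U_0|\le 100D^2$, not per-step limited contact. The paper instead sets $B_i=\bar A_i\cup V(Q_i)\cup\{x_i\}$ (size $O(m_{\cA_i})$, so~\ref{mouse1} is immediate) and $C_i=V(P_i'-x_i)$, whose $4$-limited contact with $A_i$ comes precisely from $P_i'$ being a \emph{shortest} path from $V(F_{i,1})\cup V(F_{i,2})$ to $L\setminus U$.

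The deeper gap is in~\ref{mouse3} and~\ref{mouse4}. Your $B_i\cup C_i$ does not contain $L$, so nothing prevents $B^{\ell_0}_{G-U-B_i-C_i}(A_i)$ from passing through $L$; once it does, property~\ref{hot1} (which is a statement in $G'=G-L$) gives neither separation from other $A_j$ nor avoidance of $U_1$. The paper closes this with an extra dichotomy you are missing: if the ball around $A_i$ reaches some $y_i\in L\setminus(U\cup\{x_i\})$ inside $G-U-B_i-C_i$, then $v_{i,1}$ has a short path to $x_i\in L$ via $Q_i\cup P_i'$ and $v_{i,2}$ has a short path to $y_i\in L$, and one builds the forbidden $(D,2m,1)$-adjuster directly by padding each of these paths with neighbours of its $L$-endpoint. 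This contradiction forces the ball to stay in $G'$, after which~\ref{hot1} supplies both the pairwise separation for~\ref{mouse4} and the avoidance of $U_1\supseteq U_0$ needed for~\ref{mouse3}.
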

  \begin{poc} Let $r= n^{1/8}$.
Suppose, for contradiction, that we can label distinct $\cA_1,\ldots,\cA_r\in \mathbf{A}_0\setminus \mathbf{A}_1$. Say, for each $i\in [r]$, that $\cA_i=(v_{i,1},F_{i,1},v_{i,2},F_{i,2},\bar{A}_i)$ and let $P_i'$ be a shortest path with length at most $\ell_0$ from $V(F_{i,1})\cup V(F_{i,2})$ to $L\setminus U$ in $G-U-\bar{A}_i$. Relabelling, if necessary, for each $i\in [r]$ suppose the endvertex of $P_i'$ in $V(F_{i,1}\cup F_{i,2})$ is in $V(F_{i,1})$, and let $Q_i$ be a path from this endvertex of $P_i'$ to $v_{i,1}$ in $F_{i,1}$ with length at most $m_{\cA_i}$.

For each $i\in [r]$, let $x_i$ be the endpoint of $P_i'$ in $L\setminus U$, and let $P_i=P_i'-x_i$. We shall apply Lemma~\ref{lem-newbit-other} by setting, for each $i\in [r]$,  $A_i=V(F_{i,2})$, $B_i=\bar{A}_i\cup V(Q_i)\cup \{x_i\}$ and $C_i=V(P_i)$. Firstly, as $|A_i|=m_{\cA_i}^2\geq \log^6d_0$ by \ref{hot2} and $d_0=d_0(\ep_1,\ep_2,k)$ is large, we have that $|A_i|\geq d_0^{\ref{lem-newbit-other}}$, where $d_0^{\ref{lem-newbit-other}}$ is the function in Lemma~\ref{lem-newbit-other}, so that \emref{mouse0} holds.

 As $V(F_{i,2})\subseteq V(G')=V(G)\setminus L$ by \ref{hot1}, and $V(F_{i,2})$ is disjoint from $V(F_{i,1})$ and $\bar{A}_i$ by \ref{d-a-1}, we have that $A_i$ and $B_i\cup C_i$ are disjoint. Furthermore, $|B_i|\leq |\bar{A}_i|+|Q_i|+1\leq 20m_{\cA_i}\leq m_{\cA_i}^2/\log^{10}(m_{\cA_i}^2)$ as $m_{\cA_i}\geq \log^3(d_0(\ep_1,\ep_2,k))$ is large, and thus \emref{mouse1} holds.

Now, as $P_i'$ is a shortest path from $V(F_{i,1})\cup V(F_{i,2})$ to $L\setminus U$ in $G-U-\bar{A}_i$, which has an endvertex in $V(F_{i,1})$, and $A_i=V(F_{i,2})$, we have, for each $\ell\in \N$, that $B^\ell_{G-U-\bar{A_i}}(A_i)$
has at most $\ell+1$ vertices in $P_i'$, and hence $P_i$. Therefore, $A_i$ has 4-limited contact with $C_i$ in $G-U-\bar{A_i}$, and hence in $G-U-B_i$, and thus \emref{mouse2} holds.

Suppose there is a path, $R_i$ say, with length at most $10\ell_0$ from $A_i$ to $L\setminus (U\cup\{x_i\})$ in $G-U-B_i-C_i$. Then, there is a path $R'_i\subseteq R_i\cup F_{i,2}$ from $v_{i,2}$ to some vertex $y_i\in L\setminus (U\cup\{x_i\})$ with length at most $10\ell_0+m_{\cA_i}\leq 2m-1$, and the path $Q_i\cup P'_i$ is a path from $v_{i,1}$ to $x_i$ with length at most $m_{\cA_i}+\ell_0\leq 2m-1$ in $G-U-\bar{A}_i$ with vertices in $B_i\cup C_i$.
Then, as $|U\cup A_i\cup V(R_i')\cup V(Q_i\cup P_i')|\leq 10D+10m_{\cA_i}+4m\leq 10D+15m$, as $x_i,y_i\in L$ both have degree at least $\Delta=200mD$, we can comfortably choose $X_i\subseteq N_G(x_i)$ and $Y_i\subseteq N_G(y_i)$ which are disjoint from each other and from $U\cup A_i\cup V(R_i')\cup V(Q_i\cup P_i')$ and have size $D-|P_i'\cup Q_i|$ and $D-|R'_i|$ respectively. Then, $(v_{i,1}, G[X_i\cup V(P_i')\cup V(Q_i)],v_{i,2},G[Y_i\cup V(R_i')],A_i)$
is a $(D,2m,1)$-adjuster in $G-U$, a contradiction.
Therefore, there is no such path $R_i$. Consequently, recalling that $A_i=V(F_{i,2})$, we have
\[
B_{G-U-B_i-C_i}^{\ell_0}(A_i)=B_{G'-U-B_i-C_i}^{\ell_0}(A_i),
\]
which, by~\ref{hot1}, is disjoint from $U_1$. By the choice of $U_0\subseteq U_1$, we have that \emref{mouse3} holds.

Now, similarly, for any $j\in [r]\setminus\{i\}$, we have that $B_{G-U-B_j-C_j}^{\ell_0}(A_j)=B_{G'-U-B_j-C_j}^{\ell_0}(A_j)$, so that, by \ref{hot1}, $B_{G-U-B_j-C_j}^{\ell_0}(A_j)$ and $B_{G-U-B_i-C_i}^{\ell_0}(A_i)$ are disjoint. In particular, $A_i$ and $A_j$ are a distance at least $2\ell_0$ apart in $G-U-B_i-C_i-B_j-C_j$, and therefore $\emref{mouse4}$ holds.

Thus, by Lemma~\ref{lem-newbit-other}, there is some $j\in [r]$ for which $|B^{\ell_0}_{G-U-B_j-C_j}(A_j)|\geq \log^kn\geq D$. As $F_{j,2}$ is an $(m_{\cA_j}^{2},m_{\cA_j})$-expansion of $v_{j,2}$  in $G'-U-B_j-C_j$, $m_{\cA_j}\leq m$ and $A_j=V(F_{j,2})$, we have that $|B^{2m}_{G-U-B_j-C_j}(v_{j,2})|\geq D$ as $\ell_0\ll m$. Therefore, by Proposition~\ref{prop-trimming}, we can pick a $(D,2m)$-expansion, $F'_{j,2}$ say, of $v_{j,2}$ in $G-U-B_i-C_j$.

As $x_j\in L$, we can then pick a set $U'$ of neighbours of $x_{j}$ disjoint from $U\cup V(F'_{j,2})\cup \bar{A}_{j}\cup  V(Q_j)\cup V(P_j')$ with $|U'|=D-|V(P_j'\cup Q_j)|$. Let $F'_{j,1}=G[U'\cup V(P_j')\cup V(Q_j)]$. Note that $F'_{j,1}$ is then a $(D,2m)$-expansion of $v_{j,1}$ as $Q_j\cup P'_j$ is a $v_{j,1},x_j$-path with length at most $m_{\cA_j}+\ell_0\leq 2m-1$. Finally, note that $(v_{j,1},F'_{j,1},v_{j,2},F'_{j,2},\bar{A}_j)$ is a $(D,2m,1)$-adjuster in $G-U$, a contradiction. Therefore, $ |\mathbf{A}_0 \setminus \mathbf{A}_1| <  r=n^{1/8}$, and so by Claim~\ref{ping}, we have $|\mathbf{A}_1|> n^{1/4}-r\geq n^{1/4}/2$.
\end{poc}

%\textbf{Finding vertex expansions.}
Let $\mathbf{A}_1'\subseteq \mathbf{A}_1$ satisfy $|\mathbf{A}_1'|=n^{1/4}/2$. Then, $|\cup_{\cA\in \mathbf{A}_1'}V(\cA)|\leq n^{1/4}\cdot 3m^2\leq n^{1/3}$ by \ref{hot2}. Therefore,
\[
|U\cup B_{G'}^{\ell_0}(\cup_{\cA\in \mathbf{A}_1'}(V(\cA)\setminus L))|\leq 10D+n^{1/3}\cdot 2\Delta^{\ell_0}\leq n^{1/2}.
\]
Thus, by Lemma~\ref{cl-egg}, there is a set $Z\subseteq V(G)\setminus U$ which has diameter at most $m/2$ and size $10m^2D$, and is a distance at least $\ell_0$ in $G'$ from $V(\cA)\setminus L$ for each $\cA\in \mathbf{A}_1'$.

Let $\mathbf{A}_2\subseteq \mathbf{A}_1'$ be the set of adjusters $(v_1,F_1,v_2,F_2,A)\in \mathbf{A}_1'$ for which there is no path with length at most $m/2$ from $V(F_1)\cup V(F_2)$ to $Z$ in $G-U-A$.

  \begin{claim}\label{ping3}
$|\mathbf{A}_2|\geq n^{1/4}/4$.
  \end{claim}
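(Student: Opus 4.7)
The plan is to mimic the structure of the proof of Claim~\ref{ping2}, replacing $L\setminus U$ by the set $Z$ in the role of the short-path target and using the small diameter of $G[Z]$ (instead of the large degree of a vertex of $L$) to construct one end-expansion of the final adjuster. Suppose for contradiction that $|\mathbf{A}_1'\setminus \mathbf{A}_2|\ge r:=n^{1/8}$, and fix distinct $\cA_1,\dots,\cA_r\in \mathbf{A}_1'\setminus \mathbf{A}_2$ with $\cA_i=(v_{i,1},F_{i,1},v_{i,2},F_{i,2},\bar{A}_i)$. By definition of $\mathbf{A}_2$, for each $i$ there is a shortest path $P_i'$ of length at most $m/2$ from $V(F_{i,1})\cup V(F_{i,2})$ to $Z$ in $G-U-\bar{A}_i$; relabel so that its endpoint on the adjuster lies in $V(F_{i,1})$, choose a path $Q_i\subseteq F_{i,1}$ of length at most $m_{\cA_i}$ from that endpoint to $v_{i,1}$, and write $x_i\in Z$ for the other endpoint of $P_i'$ and $P_i=P_i'-x_i$.

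Apply Lemma~\ref{lem-newbit-other} with $A_i=V(F_{i,2})$, $B_i=\bar{A}_i\cup V(Q_i)\cup\{x_i\}$ and $C_i=V(P_i)$. Conditions \emref{mouse0}, \emref{mouse1}, \emref{mouse2} and \emref{mouse4} are verified exactly as in Claim~\ref{ping2}; for \emref{mouse3} one now uses $\cA_i\in \mathbf{A}_1$ to forbid any path of length at most $\ell_0$ from $A_i$ to $L\setminus U$ in $G-U-\bar{A}_i\supseteq G-U-B_i-C_i$, which places $B^{\ell_0}_{G-U-B_i-C_i}(A_i)$ inside $V(G')$, where \ref{hot1} guarantees disjointness from $U_1\supseteq U_0$. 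The lemma then produces some $j\in[r]$ with $|B^{\ell_0}_{G-U-B_j-C_j}(V(F_{j,2}))|\ge \log^k n\ge D$.

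Since $F_{j,2}\subseteq G-U-B_j-C_j$ remains an $(m_{\cA_j}^2,m_{\cA_j})$-expansion of $v_{j,2}$, Proposition~\ref{prop-trimming} yields a $(D,2m)$-expansion $F_{j,2}'$ of $v_{j,2}$ in $G-U-B_j-C_j$, which in particular avoids $\bar{A}_j$. For $F_{j,1}'$, I would take the $v_{j,1},x_j$-path $Q_j\cup P_j'$ (of length at most $3m/2$) together with a subset $Z'\subseteq Z$ containing $x_j$, of size $D-|V(Q_j\cup P_j')|+1$, disjoint from $V(F_{j,2}')\cup \bar{A}_j\cup (V(Q_j\cup P_j')\setminus\{x_j\})$; then $F_{j,1}':=G[Z'\cup V(Q_j\cup P_j')]$ is a $(D,2m)$-expansion of $v_{j,1}$ that is vertex disjoint from $F_{j,2}'$ and $\bar{A}_j$. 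The tuple $(v_{j,1},F_{j,1}',v_{j,2},F_{j,2}',\bar{A}_j)$ is then a $(D,2m,1)$-adjuster in $G-U$, contradicting the standing assumption that no such adjuster exists. Hence $|\mathbf{A}_1'\setminus \mathbf{A}_2|<r$, and combined with $|\mathbf{A}_1'|=n^{1/4}/2$ this yields $|\mathbf{A}_2|\ge n^{1/4}/2-n^{1/8}\ge n^{1/4}/4$.

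The main obstacle I expect is the extraction of the set $Z'$: I need that, after removing the $O(D)$ vertices of $V(F_{j,2}')\cup \bar{A}_j\cup V(Q_j\cup P_j')$ from $Z$ (which has size $10m^2D$), a subset of roughly $D$ vertices of $Z$ containing $x_j$ remains within distance $m/2$ of $x_j$ in its own induced subgraph. This is where the strong structural output of Lemma~\ref{cl-egg}, namely that $G[Z]$ is an $(|Z|,m/4)$-expansion around some centre $z\in Z$ (so that every vertex is within distance $m/4$ of $z$), becomes essential: combined with the huge size slack between $|Z|$ and $D$, one can delete the forbidden vertices while still retaining a $(|Z'|,m/2)$-expansion structure around $x_j$ passing through $z$, and this is what underwrites the $(D,2m)$-expansion property of $F_{j,1}'$ around $v_{j,1}$.
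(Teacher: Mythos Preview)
Your overall strategy matches the paper's: set up the same triples $(A_i,B_i,C_i)$, verify \emph{\ref{mouse0}}--\emph{\ref{mouse4}} (with the crucial use of $\cA_i\in\mathbf{A}_1$ for \emph{\ref{mouse3}}), invoke Lemma~\ref{lem-newbit-other}, and build a $(D,2m,1)$-adjuster in $G-U$ for a contradiction. The divergence, and the gap, is exactly where you flag it.

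Your construction of $F_{j,1}'$ requires extracting $Z'\subseteq Z$ of size roughly $D$, containing $x_j$, disjoint from $V(F_{j,2}')\cup\bar{A}_j$, and with every vertex of $Z'$ within distance $m/2$ of $x_j$ \emph{inside $G[Z']$}. Knowing only that $G[Z]$ is a $(|Z|,m/4)$-expansion around a centre $z$ does not give this: the $(|Z|,m/4)$-expansion property is just a radius bound from $z$, and deleting even a handful of vertices (say, those on the unique short $z,x_j$-path in a BFS tree) can disconnect $x_j$ from the bulk of $Z$. The ``huge size slack'' between $|Z|=10m^2D$ and $D$ does not help, because the structure of $G[Z]$ may funnel through bottlenecks of size $O(1)$. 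So the last paragraph does not close the gap.

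The paper sidesteps this completely by observing that no deletion from $Z$ is needed. The point is that $V(F_{j,2}')$ is automatically disjoint from $Z$: since $\cA_j\in\mathbf{A}_1$, the ball $B^{\ell_0}_{G-U-B_j-C_j}(V(F_{j,2}))$ lies entirely in $G'$, while $Z$ was chosen at distance at least $\ell_0$ in $G'$ from $V(\cA_j)\setminus L\supseteq V(F_{j,2})$. Hence one may take the \emph{entire} graph $Q_j\cup P_j\cup G[Z]$ as a $(|Q_j\cup P_j\cup G[Z]|,2m)$-expansion of $v_{j,1}$ (radius at most $\ell(Q_j)+\ell(P_j)+m/2\le 2m$, size at least $|Z|\ge D$), and then trim with Proposition~\ref{prop-trimming} to a $(D,2m)$-expansion $F_{j,1}'$. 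Disjointness of $F_{j,1}'$ from $F_{j,2}'$ and from $\bar{A}_j$ then follows because $V(Q_j\cup P_j)\cup Z$ already avoids both. This is the missing idea: use the distance-in-$G'$ property baked into the choice of $Z$ to avoid any surgery on $Z$ at all.
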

  \begin{poc} Let $r=n^{1/8}$.
Suppose, for contradiction, we can label distinct $\cA_1,\ldots,\cA_r\in \mathbf{A}_1'\setminus \mathbf{A}_2$. Say, for each $i\in [r]$, that $\cA_i=(v_{i,1},F_{i,1},v_{i,2},F_{i,2},\bar{A}_i)$ and let $P_i$ be a shortest path with length at most $m/2$ from $V(F_{i,1})\cup V(F_{i,2})$ to $Z$ in $G-U-\bar{A}_i$. Relabelling, if necessary, for each $i\in [r]$ suppose the endvertex of $P_i$ in $V(F_{i,1}\cup F_{i,2})$ is in $V(F_{i,1})$, and let $Q_i$ be a path from this endvertex of $V(P_i)$ to $v_{i,1}$ in $F_{i,1}$ with length at most $m_{\cA_i}$.

We will apply Lemma~\ref{lem-newbit-other} to $A_i=V(F_{i,2})$, $B_i=\bar{A}_i\cup V(Q_i)$ and $C_i=V(P_i)$, for each $i\in [r]$. For each $i\in [r]$, similarly to the proof of Claim~\ref{ping2}, we have that \emref{mouse0}--\emref{mouse2} hold.
By the choice of $\mathbf{A}_1$, for each $i\in[r]$, there is no path of length at most $\ell_0$ from $A_i$ to $L\setminus U$ in $G-U-B_i-C_i$.
Therefore, the sets $B_{G-U-B_i-C_i}^{\ell_0}(A_i)$ and $B_{G'-U-B_i-C_i}^{\ell_0}(A_i)$ are the same set, and thus, by \ref{hot1}, this set is disjoint from $U_1$. Thus, \emref{mouse3} holds by the definition of $U_1$.
It similarly follows that $B_{G-U-B_i-C_i}^{\ell_0}(A_i)$ and $B_{G-U-B_j-C_j}^{\ell_0}(A_j)$ are vertex disjoint for each $j\in [r]\setminus \{i\}$, and thus \emref{mouse4} holds.

Thus, by Lemma~\ref{lem-newbit-other}, there is some $j\in [r]$ for which $|B^{\ell_0}_{G'-U-B_j-C_j}(A_j)|=|B^{\ell_0}_{G-U-B_j-C_j}(A_j)|\geq D$. Thus, as $F_{j,2}$ is an $(m_{\cA_j}^2,m_{\cA_j})$-expansion of $v_{j,2}$ in $G'-U-B_j-C_j$ by \ref{hot1} and \ref{hot2}, and $A_j=V(F_{j,2})$, by Proposition~\ref{prop-trimming}, there is a $(D,2m)$-expansion, $F'_{j,2}$ say, of $v_{j,2}$ in $B^{\ell_0}_{G'-U-B_j-C_j}(V(F_{j,2}))$. As $Z$ was chosen to have a distance at least $\ell_0$ in $G'$ from $V(\cA_j)\setminus L$, we have that $V(F'_{j,2})$ is disjoint from $Z$.

Now, as $Z$ has diameter at most $m/2$ in $G$, $Q_j\cup P_j\cup G[Z]$ is an expansion of $v_{j,1}$ with radius at most $\ell(Q_j)+\ell(P_j)+m/2\leq 2m$ and size at least $D$. Therefore, by Proposition~\ref{prop-trimming}, we can find within $Q_j\cup P_j\cup G[Z]$ a $(D,2m)$-expansion, $F'_{j,1}$ say, of $v_{j,1}$,
which then must be vertex-disjoint from $\bar{A}_j$ and from $V(F'_{j,2})\subseteq B^{\ell_0}_{G'-U-B_j-C_j}(V(F_{j,2}))$.
Thus,  we have that $(v_{j,1},F'_{j,1},v_{j,2},F'_{j,2},\bar{A}_j)$
is a $(D,2m,1)$-adjuster in $G-U$, a contradiction. Thus, $|\mathbf{A}_2|\geq |\mathbf{A}_1|-r\geq n^{1/4}/4$, by Claim~\ref{ping2}.
\end{poc}

Let $r=n^{1/8}$. Using Claim~\ref{ping3}, label distinct $\cA_1,\ldots,\cA_r\in \mathbf{A}_2$, and say, for each $i\in [r]$, that $\cA_i=(v_{i,1},F_{i,1},v_{i,2},F_{i,2},\bar{A}_i)$.
We shall apply Lemma~\ref{lem-newbit-other}
to $A_i=V(F_{i,1}\cup F_{i,2})$, $B_i=\bar{A}_i$ and $C_i=\varnothing$.
Similarly as in the proof of Claim~\ref{ping3}, the only difference being that \emref{mouse2} holds trivially as $C_i=\varnothing$ and $A_i$ is slightly larger, we have that~\emref{mouse0}--\emref{mouse4} hold.

Thus, by Lemma~\ref{lem-newbit-other}, there is some $j\in [r]$ with $|B^{\ell_0}_{G-U-B_j-C_j}(A_j)|=|B^{\ell_0}_{G-U-B_j}(A_j)|\geq 10m^2D\ge 10\log^3n |U\cup B_j|$. Therefore, by Lemma~\ref{new-connect}, there is a path in $G-U-B_j$
 from $B^{\ell_0}_{G-U-B_j}(A_j)$ to $Z$ with length at most $m/4$. Then, as $A_j=V(F_{j,1}\cup F_{j,2})$ and $B_j=\bar{A}_j$, there is a path in $G-U-\bar{A}_j$ from $V(F_{j,1}\cup F_{j,2})$ to $Z$ with length at most $m/2$, contradicting $\cA_j\in \mathbf{A}_2$, and completing the proof.
\end{proof}

\subsection{Connecting simple adjusters for paths with specific lengths}\label{sec:adjpath}
Using Lemma~\ref{lem-robust-adj}, we can find many vertex disjoint simple adjusters. We now connect them together into a larger adjuster, for Lemma~\ref{lem-adj-to-adj-path}, before using these to construct paths with specific lengths for Lemma~\ref{lem-finalconnect}.

\begin{lemma}\label{lem-adj-to-adj-path}
	There exists some $\eps_1>0$ such that, for any $0<\eps_2<1/5$ and $k\geq 10$, there exists $d_0=d_0(\eps_1,\eps_2,k)$ such that the following holds for each $n\geq d\geq d_0$. Suppose that $G$ is an $n$-vertex $\tk_{d/2}^{(2)}$-free bipartite $(\eps_1,\eps_2 d)$-expander with $\de(G)\ge d$.

	Let $m=\frac{800}{\ep_1}\log^3n$.
	Suppose $\log^{10} n\leq D\leq \log^kn$, $1\le r\le 30m$ and $U\subseteq V(G)$ with $|U|\leq D$.

Then, there is a $(D,m,r)$-adjuster in $G-U$.
\end{lemma}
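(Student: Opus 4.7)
My plan is to prove this lemma by induction on $r$, working throughout with the inflated end size $D^* := D\log^4 n$ and only trimming down to size $D$ via Proposition~\ref{prop-trimming} at the very end. The choice of $D^*$ balances two competing needs: $D^* \leq \log^{k+4} n$ so that Lemma~\ref{lem-robust-adj} applies with parameter $k+4$ (which I absorb into $d_0$), and $D^*/\log^3 n$ comfortably exceeds all $O(D + m^2)$-sized avoid sets that arise. Thus it suffices to show by induction on $r' \in [r]$ that $G-U$ contains a $(D^*, m, r')$-adjuster.

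The base case $r'=1$ is immediate from Lemma~\ref{lem-robust-adj} applied with $D$ replaced by $D^*$ (since $|U|\leq D \leq D^*$); noting $2m_0 = m/2 \leq m$, the resulting $(D^*, 2m_0, 1)$-adjuster is also a $(D^*, m, 1)$-adjuster. For the inductive step $r' \to r'+1$, given $\cA^{(r')} = (v_1, F_1, v_2^{(r')}, F_2^{(r')}, A^{(r')})$, I set $U^\dagger := U \cup V(F_1) \cup A^{(r')} \cup V(F_2^{(r')})$ (of size at most $3D^* + O(m^2) \leq 10 D^*$) and apply Lemma~\ref{lem-robust-adj} to $G - U^\dagger$ to obtain a simple adjuster $\cB = (u_1, G_1, u_2, G_2, B)$ whose vertex set is automatically disjoint from $V(F_1), V(F_2^{(r')}), A^{(r')}$ and $U$.

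The core task is then to find a $v_2^{(r')}, u_1$-path $P$ of length at most $2m$ which avoids $W := U \cup V(F_1) \cup A^{(r')} \cup V(G_2) \cup B$, so that setting $F_2^{(r'+1)} := G_2$ and $A^{(r'+1)} := A^{(r')} \cup V(P) \cup B$ produces the desired $(D^*, m, r'+1)$-adjuster $(v_1, F_1, u_2, G_2, A^{(r'+1)})$. The hard part here is that $|W|$ is of order $D^*$ (since both $V(F_1)$ and $V(G_2)$ lie in $W$), so Lemma~\ref{new-connect} cannot directly connect the size-$D^*$ sets $V(F_2^{(r')})$ and $V(G_1)$ while avoiding $W$. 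I overcome this by first enlarging the four (mutually disjoint) ends $F_1, F_2^{(r')}, G_1, G_2$ via Lemma~\ref{lem-large-exp}, applied with avoid set $U \cup A^{(r')} \cup B \cup \{u_2\}$ of size $O(D+m^2) \leq D^*/\log^3 n$; this produces pairwise disjoint $(n/m^2, O(m))$-expansions $\tilde F_1, \tilde F_2^{(r')}, \tilde G_1, \tilde G_2$. Now Lemma~\ref{new-connect} applied with sources $\tilde F_2^{(r')}, \tilde G_1$ comfortably gives a length-at-most-$m/20$ path in $G-W$ (since $|W|\log^3 n \ll 10(n/m^2)$ for $n$ large), and extending it within $\tilde F_2^{(r')}$ and $\tilde G_1$ to reach $v_2^{(r')}$ and $u_1$ produces the required $P$.

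The critical disjointness $V(P) \cap (V(F_1) \cup V(G_2)) = \emptyset$ follows since the middle of $P$ lies in $G-W \subseteq G - V(F_1) - V(G_2)$, while the extensions lie in $\tilde F_2^{(r')} \cup \tilde G_1$, which Lemma~\ref{lem-large-exp}'s construction guarantees to be disjoint from every input expansion other than their own (and in particular from $F_1$ and $G_2$). The routine remaining verifications are that $|A^{(r'+1)}| \leq 10mr' + 2m + 5m \leq 10m(r'+1)$, and that the $r'+2$ required path lengths are obtained from the bijection $\{0,\ldots,r'\}\times\{0,1\} \to \{0,\ldots,r'+1\}$, $(i_1,i_2)\mapsto i_1+i_2$, by concatenating a length-$(\ell_{r'}+2i_1)$ path through $A^{(r')}$ with $P$ and a length-$(\ell_B+2i_2)$ path through $B$. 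The one technical subtlety is that the $(D^*, 2m_0)$-expansion radius $m/2$ output by Lemma~\ref{lem-robust-adj} exceeds the $m/8$ input radius stipulated in Lemma~\ref{lem-large-exp}; this is handled by a straightforward adaptation of the latter's proof, yielding enlarged expansions of radius at most $3m/4 \leq m$, still well within what is needed for the output adjuster.
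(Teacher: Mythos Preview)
Your induction on $r$ and the inflation to $D^*=D\log^4 n$ are sound (indeed, working with ends larger than $D$ is what allows the connecting path to also avoid $U$; the paper is terse here). But the connection step has a real gap. You assert that the outputs $\tilde F_2^{(r')}, \tilde G_1$ of Lemma~\ref{lem-large-exp} are ``disjoint from every input expansion other than their own''; however, the lemma only guarantees that the four \emph{output} expansions are pairwise disjoint in $G-A$. In its proof, each enlarged expansion $F'_i$ is built from a path that is only required to avoid those input $F_{i'}$ appearing \emph{later} in the greedy ordering $\sigma$; for earlier $i'$ there is no such restriction, and since you do not control $\sigma$, your extension inside $\tilde F_2^{(r')}\cup\tilde G_1$ may well pass through $V(F_1)$ or $V(G_2)$, breaking the disjointness you need for the new adjuster.

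The detour through Lemma~\ref{lem-large-exp} is in any case unnecessary, because the paper does not commit in advance to which two ends survive. It applies Lemma~\ref{new-connect} to connect $V(F_1\cup F_2^{(r')})$ to $V(G_1\cup G_2)$, avoiding only $U\cup A^{(r')}\cup B$. With your $D^*$-inflation each source has size $2D^*$, while the avoid set has size at most $D+O(m^2)\le 2D$, so the hypothesis $|W|\log^3 n\le 10\cdot 2D^*$ of Lemma~\ref{new-connect} holds directly. Whichever pair of ends the resulting path hits (after relabelling, say $F_1$ and $G_1$) is absorbed into the new centre together with the path, and the untouched pair $F_2^{(r')},G_2$ becomes the new pair of ends. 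Since the path runs \emph{between} the two unions, its interior automatically misses both, so disjointness from the surviving ends comes for free---no enlargement, and no ad hoc radius adaptation, is needed.
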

\begin{proof} Let $\eps_1>0$ be such that the property in Lemma~\ref{lem-robust-adj} holds. By this property, as $d\geq d_0(\ep_1,\ep_2,k)$ is large, for every set $V\subseteq V(G)$ with $|V|\leq \log^{2k}n$, $G-V$ contains a $(D,m/2,1)$-adjuster. By Lemma~\ref{new-connect}, as $d\geq d_0(\ep_1,\ep_2,k)$ is large, for any sets $X$ and $Y$ with size at least $2D$, and any set $V\subseteq V(G)\setminus (X\cup Y)$ with size at most $20D/\log^3n$, there is a path from $X$ to $Y$ in $G-V$ with length at most $m$.

We now prove the property in the lemma by induction on $r$. Note that, we already have this property for $r=1$ as $|U|\leq D\leq \log^{2k}n$, and a $(D,m/2,1)$-adjuster is also a $(D,m,1)$-adjuster. Suppose then, for some $r$ with $1\leq r< 30m$, $G-U$ contains a $(D,m,r)$-adjuster, $(v_1,F_1,v_2,F_2,A_1)$ say. Let $U'=U\cup A_1\cup V(F_1)\cup V(F_2)$, so that $|U'|\leq 4D\leq \log^{2k}n$. Therefore, $G-U'$ contains a $(D,m/2,1)$-adjuster, $(v_3,F_3,v_4,F_4,A_2)$ say. As $|F_1\cup F_2|=|F_3\cup F_4|=2D$, and $|A_1\cup A_2|\leq 20rm\leq 600m^2\leq D/\log^3n$, there is a path, $P$ say, with length at most $m$, from $V(F_1)\cup V(F_2)$ to $V(F_3)\cup V(F_4)$ avoiding $A_1\cup A_2$.

Note that, without loss of generality, we can assume that $P$ is a path from $V(F_1)$ to $V(F_3)$. Using that $F_1$ and $F_3$ are $(D,m)$-expansions of $v_1$ and $v_3$ respectively, take a $v_1,v_3$-path $Q\subseteq F_1\cup P\cup F_3$ with length at most $5m$. Then, we claim $(v_2,F_2,v_4,F_4,A_1\cup A_2\cup V(Q))$ is a $(D,m,r+1)$-adjuster. Indeed, we easily have that \ref{d-a-1} and \ref{d-a-2} hold, and $|A_1\cup A_2\cup V(Q)|\leq 5m+10\cdot (m/2)+10mr=10(r+1)m$, so that \ref{d-a-3} holds.

Finally, let $\ell_1=\ell((v_1,F_1,v_2,F_2,A_1))$, $\ell_2=\ell((v_3,F_3,v_4,F_4,A_2))$ and $\ell=\ell_1+\ell_2+\ell(Q)$. If $i\in \{0,1,\ldots,r+1\}$, then there is some $i_1\in \{0,1,\ldots,r\}$ and $i_2\in \{0,1\}$ with $i=i_1+i_2$. Let $P_1$ be a $v_2,v_1$-path in $G[A_1\cup\{v_1,v_2\}]$ with length $\ell_1+2i_1$ and let $P_2$ be a $v_3,v_4$-path with length $\ell_2+2i_2$ in $G[A_2\cup\{v_3,v_4\}]$. Then, $P_1\cup Q\cup P_2$ is a $v_2,v_4$-path in $G[A_1\cup A_2\cup V(Q)]$ with length $\ell+2i$, and thus $\ell$ satisfies \ref{d-a-4}.
\end{proof}

Combining Lemma~\ref{lem-adj-to-adj-path} with Corollary~\ref{longconnect4}, we can finally find paths with exactly some desired length, as follows.

\begin{lemma}\label{lem-finalconnect}
There exists some $\eps_1>0$ such that, for any $0<\eps_2<1/5$ and $k\geq 10$, there exists $d_0=d_0(\eps_1,\eps_2,k)$ such that the following holds for each $n\geq d\geq d_0$. Suppose that $G$ is an $n$-vertex $\tk_{d/2}^{(2)}$-free bipartite $(\eps_1,\eps_2 d)$-expander with $\de(G)\ge d$.

Suppose $\log^{10} n\leq D\leq \log^kn$, and $U\subseteq V(G)$ with $|U|\leq D/2\log^3n$, and let $m=\frac{800}{\eps_1}\log^3n$. Suppose $F_1,F_2\subseteq G-U$ are vertex disjoint such that $F_i$ is a $(D,m)$-expansion of $v_i$, for each $i\in[2]$. Let $\log^{7}n\leq \ell\leq n/\log^{12}n$ be such that $\ell=\pi(v_1,v_2,G)\mod 2$.

Then, there is a $v_1,v_2$-path with length $\ell$ in $G-U$.
\end{lemma}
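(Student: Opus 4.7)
The plan is to combine the adjuster machinery of Lemma~\ref{lem-adj-to-adj-path} with the approximate-length connection of Corollary~\ref{longconnect4}: I would build a long adjuster whose ends are fresh vertex expansions, use the corollary to join $\{v_1,v_2\}$ to those ends by two paths whose combined length falls within $22m$ of the correct shortfall, and then pick the unique adjuster path that makes the total length exactly~$\ell$.

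More precisely, first apply Lemma~\ref{lem-adj-to-adj-path} inside $G$ with forbidden set $U\cup V(F_1)\cup V(F_2)$ (of size at most $3D$), expansion parameter $3D$, and $r=12m$; this returns a $(3D,m,12m)$-adjuster $\cA=(u_1,\tilde F_1,u_2,\tilde F_2,A)$ disjoint from $U\cup V(F_1)\cup V(F_2)$. Using Proposition~\ref{prop-trimming}, I would trim $\tilde F_1,\tilde F_2$ down to $(D,m)$-expansions $F'_1,F'_2$ of $u_1,u_2$, and note that $\ell_0:=\ell(\cA)\leq 10m\cdot 12m+1$. Next, apply Corollary~\ref{longconnect4} to the four pairwise disjoint $(D,m)$-expansions $F_1,F_2,F'_1,F'_2$ (around $v_1,v_2,u_1,u_2$) with forbidden set $U\cup A$ and target length $\ell^*=\ell-\ell_0-22m$, obtaining vertex-disjoint paths $P$ and $Q$ between $\{v_1,v_2\}$ and $\{u_1,u_2\}$ with $\ell^*\leq \ell(P)+\ell(Q)\leq \ell^*+22m$. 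The hypotheses are all met: $|U\cup A|\leq D/2\log^3n+O(m^2)\leq D/\log^3n$ since $D\geq \log^{10}n$ and $m=\Theta(\log^3n)$, while $0\leq \ell^*\leq n/\log^{12}n$ since $\ell\geq \log^7n$.

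After relabelling $u_1,u_2$ if necessary, take $P$ to be the $v_1,u_1$-path and $Q$ the $v_2,u_2$-path, and set $\delta=\ell-\ell(P)-\ell(Q)-\ell_0\in[0,22m]\subseteq[0,2r]$. For the parity check, since $G$ is bipartite, every $v_1,v_2$-path has length $\equiv \pi(v_1,v_2,G)\pmod 2$, which by hypothesis equals $\ell\pmod 2$; applied to $P$ concatenated with a length-$\ell_0$ adjuster path and $Q$, this forces $\ell(P)+\ell_0+\ell(Q)\equiv \ell\pmod 2$, so $\delta$ is even. Property~\ref{d-a-4} then supplies a $u_1,u_2$-path $R\subseteq G[A\cup\{u_1,u_2\}]$ of length $\ell_0+\delta$, and $P\cup R\cup Q$ is the desired $v_1,v_2$-path of length exactly $\ell$, lying in $G-U$ because $P,Q$ avoid $U\cup A$ and $V(R)\setminus\{u_1,u_2\}\subseteq A\subseteq V(G)\setminus U$.

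The main potential obstacle is verifying the parameter hierarchy aligns, namely that $|U\cup A|$ stays within the $D/\log^3n$ budget of Corollary~\ref{longconnect4} and that the $22m$ slack of that corollary is absorbed by the $2r=24m$ adjustable range of the adjuster. Both are comfortable given $D\geq \log^{10}n$ and $m=O(\log^3n)$, so no substantive difficulty should arise; the real work has already been packaged into Lemma~\ref{lem-adj-to-adj-path} and Corollary~\ref{longconnect4}.
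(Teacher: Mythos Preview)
The proposal is correct and follows essentially the same approach as the paper: obtain a large adjuster via Lemma~\ref{lem-adj-to-adj-path}, join $v_1,v_2$ to the adjuster's ends with approximately the right combined length via Corollary~\ref{longconnect4}, then tune the adjuster path to hit $\ell$ exactly. Your version is in fact slightly more careful than the paper's, since by building the adjuster in $G-U-V(F_1)-V(F_2)$ you explicitly guarantee that the four expansions fed into Corollary~\ref{longconnect4} are pairwise disjoint, a point the paper leaves implicit; the only cosmetic wrinkle is that applying Lemma~\ref{lem-adj-to-adj-path} with expansion parameter $3D$ formally requires its $k$ to be one larger, which is harmless.
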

\begin{proof}
By Lemma~\ref{lem-adj-to-adj-path}, there is a $(D,m,22m)$-adjuster, $\cA=(v_3,F_3,v_4,F_4,A)$ say, in $G-U$ with length $\ell(\cA)\le |A|+1\leq 500m^2$. Let $\bar{\ell}=\ell-22m-\ell(\cA)$,
so that $0\leq \bar{\ell}\le n/\log^{12}n$. As $|A\cup U|\leq 500m^2+D/2\log^3n\leq D/\log^3n$, by Corollary~\ref{longconnect4}, there are paths $P$ and $Q$ in $G-U-A$ which are vertex disjoint, both connect $\{v_1,v_2\}$ to $\{v_3,v_4\}$ and so that $\bar{\ell}\leq \ell(P)+\ell(Q)\leq \bar{\ell}+22m$.
Note that we can assume, without loss of generality, that $P$ is a $v_1,v_3$-path and $Q$ is a $v_2,v_4$-path.

Now, $0\leq \ell-\ell(P)-\ell(Q)-\ell(\cA)\leq 22m$. As $\cA$ is a $(D,m,22m)$-adjuster there is a $v_3,v_4$-path in $G[A\cup \{v_3,v_4\}]$ with length $\ell(\cA)$, and therefore $\ell(\cA)= \pi(v_3,v_4,G)\mod 2$. Then, as $\ell(P)=\pi(v_1,v_3,G)\mod 2$, $\ell(Q)=\pi(v_2,v_4,G)\mod 2$, $\ell=\pi(v_1,v_2,G)\mod 2$ and $\pi(v_1,v_2,G)=\pi(v_1,v_3,G)+\pi(v_3,v_4,G)+\pi(v_4,v_2,G) \mod 2$, we have $\ell-\ell(P)-\ell(Q)-\ell(\cA)=0\mod 2$. That is, there is some $i\in \N$ with $2i=\ell-\ell(P)-\ell(Q)-\ell(\cA)$, where $i\leq 11m$.

Therefore, by the property of the adjuster, there is a $v_3,v_4$-path, $R$ say, with length $\ell(\cA)+2i=\ell-\ell(P)-\ell(Q)$ in $G[A\cup\{v_3,v_4\}]$. Then, $P\cup R\cup Q$ is a $v_1,v_2$-path with length $\ell$ in $G-U$.
\end{proof}

\subsection{Proof of Theorem~\ref{thm:balancedsub}}\label{sec:balancedsub}
We can now prove Theorem~\ref{thm:balancedsub}. We take some core vertices $v_1,\ldots,v_k$ in an expander, find expansions around them using Lemma~\ref{lem-expansion} and then connect each pair of core vertices using Lemma~\ref{lem-finalconnect}.

\begin{proof}[Proof of Theorem~\ref{thm:balancedsub}] Let $\eps_1>0$ be such that the properties in Corollary~\ref{cor-expander} and Lemma~\ref{lem-finalconnect} hold. Let $\eps_2=1/10$. Let $d_0=d_0(\ep_1,\ep_2,k)$ be large, and let $d=8d_0$. Let $G$ be a graph with $d(G)\geq d$.

By Corollary~\ref{cor-expander}, we can find a bipartite $(\eps_1,\eps_2d)$-expander $H\subseteq G$ with $\de(H)\ge d_0$.
Let $K=\binom{k}{2}$, $n=|H|\geq d_0$, $m=\frac{800}{\eps_1}\log^3n$ and $\ell=\log^7n$. Take $k$ distinct vertices in the same partition in $H$, say $v_1,\ldots,v_k$. As $d_0(\ep_1,\eps_2,k)$ is large and $m^{10K}\leq \log^{30k^2}n$, by Lemma~\ref{lem-expansion} with $k_{\ref{lem-expansion}}=30k^2$ (and $C_{\ref{lem-expansion}}$ an arbitrary shortest cycle in $
H$, which will not play a role here), we can find, for each $i,j\in [k]$ an $(m^{10K},m)$-expansion $F_{i,j}\subseteq H$ of $v_i$  so that the sets $V(F_{i,j})\setminus \{v_i\}$ are pairwise disjoint over $i,j\in [k]$.

Let $f:[K]\to [k]^{(2)}$ be a bijection and let $g,h:[K]\to [k]$ be such that $f(i)=\{g(i),h(i)\}$ for each $i\in [K]$.
For each $i\in [K]$, using Proposition~\ref{prop-trimming}, let $H_{i,1}\subseteq F_{g(i),h(i)}$ be such that  $H_{i,1}$ is an $(m^{10(K+1-i)},m)$-expansion of $v_{g(i)}$ and let $H_{i,2}\subseteq F_{h(i),g(i)}$ be such that $H_{i,2}$ is an $(m^{10(K+1-i)},m)$-expansion of $v_{h(i)}$.

We shall connect pairs of core vertices, in the order given by $f$. For each $i\in[K]$, the expansions $H_{i,1}$ and $H_{i,2}$ will be used to connect $v_{g(i)}$ and $v_{h(i)}$. We will make sure that the expansions that are not yet used are protected. More precisely, we will find paths $P_1,\ldots,P_K$, each with length $\ell$, so that the following hold.
\stepcounter{propcounter}
\begin{enumerate}[label = {\bfseries \Alph{propcounter}\arabic{enumi}}]
  \item For each $i\in [K]$, $P_i$ is a $v_{g(i)},v_{h(i)}$-path with length $\ell$.\label{this1}
  \item For each $i\in [K]$, $V(P_i)$ is disjoint from\label{this2}
  $$
  U_i:=(\{v_j:j\in [k]\}\cup (\cup_{j>i}(V(H_{j,1})\cup V(H_{j,2})))\cup (\cup_{j<i}V(P_j))) \setminus \{v_{g(i)},v_{h(i)}\}.
  $$
\end{enumerate}

This is sufficient to prove the theorem. Indeed, by \ref{this2}, for each $1\leq i< j\leq K$, as $V(P_i)\setminus (\{v_{g(i)},v_{h(i)}\}\cap \{v_{g(j)},v_{h(j)}\}) \subseteq U_j$, we have that $P_i$ and $P_j$ are internally disjoint. Therefore, $\cup_{i\in [K]}P_i$ is a copy of $\tk_{k}^{(\ell)}$.

Suppose then that $1\leq i\leq K$, and we have found paths $P_1,\ldots,P_{i-1}$ satisfying \ref{this1} and \ref{this2}. Note that
$$
|U_i|\leq k+\sum_{j>i}2m^{10(K+1-j)}+\sum_{j<i}\ell\leq k+4m^{10(K-i)}+k^2m^{3}\leq \frac{m^{10(K+1-i)}}{m^3}= \frac{|H_{i,1}|}{m^3}=\frac{|H_{i,2}|}{m^3}.
$$
Therefore, by Lemma~\ref{lem-finalconnect} with $(v_1,F_1,v_2,F_2,U)_{\ref{lem-finalconnect}}=(v_{g(i)},H_{i,1},v_{h(i)},H_{i,2}, U_i)$, there is a path $P_{i}$ with length $\ell$ between $v_{g(i)}$ and $v_{h(i)}$ which does not intersect $U_i$. This completes the proof.
\end{proof}

\subsection{Proof of Theorem~\ref{mainthm}}\label{sec:mainthmfinal}
Finally, we combine Lemmas~\ref{lem-expansion} and~\ref{lem-finalconnect} to prove Theorem~\ref{mainthm}.

\begin{proof}[Proof of Theorem~\ref{mainthm}]
Let $\eps_1>0$ be such that the property in Lemma~\ref{lem-finalconnect} holds. Let $k=10$, let $d_0=d_0(\ep_1,\ep_2)$ be large and let $n\geq d\geq d_0$. Suppose then that $H$ is a $\tk_{d/2}^{(2)}$-free bipartite $n$-vertex $(\eps_1,\eps_2d)$-expander
with $\delta(H)\geq d$ and let $x,y\in V(H)$ be distinct. Let $\ell\in [\log^7n,n/\log^{12}n]$ satisfy $\ell=\pi(x,y,H)\mod 2$. We will show that $H$ contains an $x,y$ path with length $\ell$.

Let $m=\frac{800}{\eps_1}\log^3n$ and $D=\log^{10}n$. Then, by Lemma~\ref{lem-expansion} (applied with $C$ taken to be an arbitrary shortest cycle in $H$), there are vertex disjoint graphs $F_x,F_y\subseteq H$ so that $F_x$ is a $(D,m)$-expansion of $x$ and $F_y$ is a $(D,m)$-expansion of $y$. Then, by Lemma~\ref{lem-finalconnect} with $U=\varnothing$, there is a $x,y$-path with length $\ell$ in $H$, as required.
\end{proof}

\section{Proof of Theorem~\ref{thm:chrom}}\label{sec:chrompf}
We will now prove Theorem~\ref{thm:chrom} using Theorem~\ref{mainthm}. For convenience, before we discuss the proof further, we will prove the following corollary of Theorem~\ref{mainthm}.

\begin{cor}\label{mainforchrom}
For each $\eps>0$, there is some $d_0$ such that the following holds for each $d\geq d_0$. If a graph $G$ has $d(G)\geq 8d$, then it contains a connected bipartite subgraph $H$ for which there is some positive integer $\ell$ such that the following holds.
\stepcounter{propcounter}
\begin{enumerate}[label = {\bfseries \Alph{propcounter}}]
  \item For any $u,v\in V(H)$ with $u\neq v$ and $t\in [\ell,\ell\cdot d^{1-\eps}]$ with $t=\pi(u,v,H)\mod 2$, there is a $u,v$-path in $H$ with length $t$.\label{callback}
\end{enumerate}
\end{cor}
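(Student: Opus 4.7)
The plan is to reduce to Theorem~\ref{mainthm}, following the pattern of the proof of Theorem~\ref{thm:mindeg}, and to split the argument according to whether a good expander subgraph of $G$ is $\tk_{d/2}^{(2)}$-free. First I would apply Corollary~\ref{cor-expander} with a fixed small constant (say $\eps_2 = 1/100$) to extract a bipartite $(\eps_1,\eps_2 d)$-expander $H_0 \subseteq G$ with $\delta(H_0)\ge d$; the minimum-degree condition makes $H_0$ connected, so $\pi(\cdot,\cdot,H_0)$ is defined.

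If $H_0$ is $\tk_{d/2}^{(2)}$-free, I would take $H = H_0$, set $n=|H|$ and $\ell = \log^7 n$, and invoke Theorem~\ref{mainthm} directly: for every pair of distinct vertices $u,v$ and every $t\in[\log^7 n,\,n/\log^{12} n]$ with $t\equiv \pi(u,v,H)\pmod 2$, a $u,v$-path of length $t$ in $H$ is produced. It then remains only to verify that $\ell\cdot d^{1-\eps}\le n/\log^{12} n$, i.e.\ $n/\log^{19} n\ge d^{1-\eps}$. Since $x\mapsto x/\log^{19} x$ is increasing on $x\ge e^{19}$ and $n\ge\delta(H_0)+1\ge d$, this reduces to $d^{\eps}\ge\log^{19} d$, which holds for $d\ge d_0(\eps)$ large.

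If instead $H_0$ contains a copy $H'$ of $\tk_{d/2}^{(2)}$, I would take $H = H'$ (it is connected and bipartite) and fix the absolute constant $\ell = 6$. Labelling the core vertices of $H'$ as $c_1,\ldots,c_{d/2}$ and the midpoints as $m_{ij}$ with $i<j$, a direct case analysis on whether each of $u,v$ is a core vertex or a midpoint shows that for every $t$ with $6\le t\le d-5$ and $t\equiv\pi(u,v,H')\pmod 2$ there is a $u,v$-path of length $t$ in $H'$; these paths follow the template $u\to\cdots\to c_{a_0}\to m_{a_0 a_1}\to c_{a_1}\to m_{a_1 a_2}\to\cdots\to c_{a_r}\to\cdots\to v$ through pairwise distinct core vertices, glued to $u$ and $v$ by a short prefix/suffix chosen according to their types. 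Since $6 d^{1-\eps}\le d-5$ for $d$ large, the interval $[\ell,\ell d^{1-\eps}]$ is contained in $[6,d-5]$, and property~\ref{callback} follows.

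The only real obstacle is the case-analysis bookkeeping in the second case: a handful of short path lengths genuinely fail between particular pairs inside $\tk_{d/2}^{(2)}$ (for instance, there is no path of length $3$ between $c_a$ and $m_{ab}$, and two midpoints sharing a core admit no $4$-path through that shared core), and the lower bound $\ell = 6$ is chosen precisely to rule out every such exceptional pair and length uniformly.
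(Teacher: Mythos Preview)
Your proposal is correct and follows essentially the same approach as the paper: pass to a bipartite expander via Corollary~\ref{cor-expander}, split on whether a $\tk_{d/2}^{(2)}$ is present, and in the free case apply Theorem~\ref{mainthm} with $\ell=\log^7 n$, while in the other case take $H$ to be the $\tk_{d/2}^{(2)}$ itself with $\ell=6$. The only cosmetic differences are that the paper checks for $\tk_{d/2}^{(2)}$ in $G$ before extracting the expander (rather than in $H_0$ after), uses $\eps_2=1/10$ and the slightly smaller range $[6,d-8]$, and dispatches the $\tk_{d/2}^{(2)}$ case with ``it is easy to see'' rather than your more explicit sketch; one small imprecision is that connectedness of $H_0$ follows from expansion together with the minimum-degree bound (via Lemma~\ref{lem-diameter}), not from minimum degree alone.
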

\begin{proof} Let $\eps_1>0$ be such that the properties in Theorem~\ref{mainthm} and Corollary~\ref{cor-expander} hold, and let $\ep_2=1/10$. Let $d_0=d_0(\eps,\ep_1,\eps_2)$ be large.
Suppose then that the graph $G$ has $d(G)\geq 8d$.

If possible, let $H\subseteq G$ be a copy of $\tk^{(2)}_{d/2}$, and let $\ell=6$. Then, as $d\geq d_0(\eps,\eps_1,\eps_2)$ is large, for any two distinct vertices $u,v$ in $H$ and any integer  $t\in [\ell,\ell\cdot d^{1-\eps}]\subseteq [6, d-8]$ with $t=\pi(u,v,H) \mod 2$, it is easy to see that there is a $u,v$-path in $H$ with length $t$.

Assume then that $G$ is $\tk^{(2)}_{d/2}$-free. By Corollary~\ref{cor-expander}, $G$ contains a bipartite $(\eps_1,\eps_2d)$-expander $H$ with $\delta(H)\geq d(G)/8\geq d$. Let $\ell=\log^7 |H|$. Note that $|H|\ge \de(H)+1\ge d+1$, and $d\geq d_0=d_0(\eps,\eps_1,\eps_2)$ is large, so that $|H|/\log^{12} |H|\geq \ell\cdot d^{1-\eps}$.
As $n\geq d\geq d_0(\eps,\eps_1,\eps_2)$ is large, by Theorem~\ref{mainthm}, for any two distinct vertices $u,v$ in $H$ and any integer $t\in [\ell,\ell\cdot d^{1-\eps}]\subseteq [\log^7 |H|, |H|/\log^{12} |H|]$ with $t=\pi(u,v,H) \mod 2$, there is a $u,v$-path in $H$ with length $t$.
\end{proof}

The following sketch is illustrated in Figure~\ref{fig:chrom}. For Theorem~\ref{thm:chrom}, we have a graph $G$ with $\chi(G)= k$ and wish to find a long interval in the set of odd cycle lengths in $G$. Letting $d\approx k/30$, we first find a maximal collection $H_i$, $i\in [s]$, of edge disjoint bipartite graphs and corresponding integers $\ell_i$, $i\in [s]$, which satisfy \ref{callback}. As $\chi(G)=k\approx 30d$ is large, it will follow from the maximality of this collection and Corollary~\ref{mainforchrom} that $\cup_{i\in [s]}H_i$ has high enough chromatic number that it must contain some odd cycle.

Now, say each bipartite $H_i$ has vertex classes $A_i$ and $B_i$, and consider the auxilliary graph $K$ formed from $\cup_{i\in [s]}H_i$ by including any missing edges between $A_i$ and $B_i$ for each $i\in [s]$. As $\cup_{i\in [s]}H_i$ has an odd cycle, so does $K$. Consider a shortest odd cycle $C$ in $K$. Each edge in $C$, say the edge $e$ between $A_{i(e)}$ and $B_{i(e)}$, can be replaced with a path with any odd length in $[\ell_{i(e)},\ell_{i(e)}\cdot k^{1-o(1)}]$ by \ref{callback}. Roughly speaking, doing this for each edge in $C$ creates cycles with all possible odd lengths in $[\ell,\ell\cdot k^{1-o(1)}]$, with $\ell=\sum_{e\in E(C)}\ell_{i(e)}$.

\begin{figure}[h]
\centering

%%%%%%%%%%%%%%%%%%%%%%%%%%%%%%%%%%%%%%%%%%%%%%%%
%%%%%%%%%%%%%%%%%%%%%%%%%%%%%%%%%%%%%%%%%%%%%%%%
%%%%%%%%%%%%%%%%%%%%%%%%%%%%%%%%%%%%%%%%%%%%%%%%

%\inpput{newpic21}\hspace{1cm}
%\inpput{newpic22}

\begin{tikzpicture}

\def\vxrad{0.05cm}

%%%%%%%%%%%%%%%%%%%%%%%%%%%%%%%%%%%%%%%%%%%%%%%%%%%%%%%
%%%%%%%%%%%%%%%%%%%%%%%%%%%%%%%%%%%%%%%%%%%%%%%%%%%

\begin{scope}[xshift=0cm,yshift=0cm,rotate=90]
\def \eh{0.75}
\def \ew{0.25}
\def \spacer{\ew}

\draw [fill = gray] ($(0*\ew,-0.1*\eh)$) rectangle ($(0*\ew,\eh)+(\spacer+2*\ew,\eh-0.1*\eh)$);
\draw [fill=white] ($(0*\ew,\eh-0.1*\eh)$) ellipse (\ew cm and \eh cm);
\draw [fill=white] ($(0*\ew,\eh-0.1*\eh)+(\spacer+2*\ew,0)$) ellipse (\ew cm and \eh cm);

\draw [white] ($(1.5*\ew,1.5*\eh)$) node {\footnotesize $H_1$};

\coordinate (A) at ($(0*\ew,0)$);
\coordinate (B) at ($(0*\ew,2*\eh-0.1*\eh-0.1*\eh)$);
\coordinate (C) at ($(0*\ew+\spacer+2*\ew,0.1*\eh-0.1*\eh)$);
\coordinate (D) at ($(0*\ew+\spacer+2*\ew,2*\eh-0.1*\eh-0.1*\eh)$);
\draw [fill] (A) circle [radius=\vxrad];
%\draw [fill] (B) circle [radius=\vxrad];
\draw [fill] (C) circle [radius=\vxrad];
%\draw [fill] (D) circle [radius=\vxrad];

\end{scope}

%%%%%%%%%%%%%%%%%%%%%%%%%%%%%%%%%%%%%%%%%%%%%%%%%%%%%%%
%%%%%%%%%%%%%%%%%%%%%%%%%%%%%%%%%%%%%%%%%%%%%%%%%%%

\begin{scope}[xshift=0cm,yshift=0.75cm,rotate=45]
\def \eh{0.5}
\def \ew{0.25}
\def \spacer{\ew}

\draw [fill = gray] ($(0*\ew,-0.1*\eh)$) rectangle ($(0*\ew,\eh)+(\spacer+2*\ew,\eh-0.1*\eh)$);
\draw [fill=white] ($(0*\ew,\eh-0.1*\eh)$) ellipse (\ew cm and \eh cm);
\draw [fill=white] ($(0*\ew,\eh-0.1*\eh)+(\spacer+2*\ew,0)$) ellipse (\ew cm and \eh cm);

\draw [white] ($(1.5*\ew,1.5*\eh)$) node {\footnotesize $H_2$};

\coordinate (A) at ($(0*\ew,0)$);
\coordinate (B) at ($(0*\ew,2*\eh-0.1*\eh-0.1*\eh)$);
\coordinate (C) at ($(0*\ew+\spacer+2*\ew,0.1*\eh-0.1*\eh)$);
\coordinate (D) at ($(0*\ew+\spacer+2*\ew,2*\eh-0.1*\eh-0.1*\eh)$);
\draw [fill] (A) circle [radius=\vxrad];
%\draw [fill] (B) circle [radius=\vxrad];
\draw [fill] (C) circle [radius=\vxrad];
%\draw [fill] (D) circle [radius=\vxrad];

\end{scope}

%%%%%%%%%%%%%%%%%%%%%%%%%%%%%%%%%%%%%%%%%%%%%%%%%%%%%%%
%%%%%%%%%%%%%%%%%%%%%%%%%%%%%%%%%%%%%%%%%%%%%%%%%%%

\begin{scope}[xshift=0cm,yshift=0cm,rotate=270]
\def \eh{0.95}
\def \ew{0.25}
\def \spacer{\ew}

\draw [fill = gray] ($(0*\ew,-0.1*\eh)$) rectangle ($(0*\ew,\eh)+(\spacer+2*\ew,\eh-0.1*\eh)$);
\draw [fill=white] ($(0*\ew,\eh-0.1*\eh)$) ellipse (\ew cm and \eh cm);
\draw [fill=white] ($(0*\ew,\eh-0.1*\eh)+(\spacer+2*\ew,0)$) ellipse (\ew cm and \eh cm);

\coordinate (A) at ($(0*\ew,0)$);
\coordinate (B) at ($(0*\ew,2*\eh-0.1*\eh-0.1*\eh)$);
\coordinate (C) at ($(0*\ew+\spacer+2*\ew,0.1*\eh-0.1*\eh)$);
\coordinate (D) at ($(0*\ew+\spacer+2*\ew,2*\eh-0.1*\eh-0.1*\eh)$);
\draw [fill] (A) circle [radius=\vxrad];
\draw [fill] (B) circle [radius=\vxrad];
%\draw [fill] (C) circle [radius=\vxrad];
%\draw [fill] (D) circle [radius=\vxrad];

\draw [white] ($(1.5*\ew,0.2*\eh)$) node {\footnotesize $H_5$};

\end{scope}

%%%%%%%%%%%%%%%%%%%%%%%%%%%%%%%%%%%%%%%%%%%%%%%%%%%%%%%
%%%%%%%%%%%%%%%%%%%%%%%%%%%%%%%%%%%%%%%%%%%%%%%%%%%

\begin{scope}[xshift=1.7cm,yshift=0cm,rotate=0]
\def \eh{0.7}
\def \ew{0.25}
\def \spacer{\ew}

\draw [fill = gray] ($(0*\ew,-0.1*\eh)$) rectangle ($(0*\ew,\eh)+(\spacer+2*\ew,\eh-0.1*\eh)$);
\draw [fill=white] ($(0*\ew,\eh-0.1*\eh)$) ellipse (\ew cm and \eh cm);
\draw [fill=white] ($(0*\ew,\eh-0.1*\eh)+(\spacer+2*\ew,0)$) ellipse (\ew cm and \eh cm);

\coordinate (A) at ($(0*\ew,0)$);
\coordinate (B) at ($(0*\ew,2*\eh-0.1*\eh-0.1*\eh)$);
\coordinate (C) at ($(0*\ew+\spacer+2*\ew,0.1*\eh-0.1*\eh)$);
\coordinate (D) at ($(0*\ew+\spacer+2*\ew,2*\eh-0.1*\eh-0.1*\eh)$);
\draw [fill] (A) circle [radius=\vxrad];
\draw [fill] (B) circle [radius=\vxrad];
%\draw [fill] (C) circle [radius=\vxrad];
%\draw [fill] (D) circle [radius=\vxrad];

\draw [white] ($(1.5*\ew,0.2*\eh)$) node {\footnotesize $H_4$};

\end{scope}

%%%%%%%%%%%%%%%%%%%%%%%%%%%%%%%%%%%%%%%%%%%%%%%%%%%%%%%
%%%%%%%%%%%%%%%%%%%%%%%%%%%%%%%%%%%%%%%%%%%%%%%%%%%

\begin{scope}[xshift=0.525cm,yshift=1.28cm,rotate=0]
\def \eh{0.75}
\def \ew{0.395}
\def \spacer{\ew}

\draw [fill = gray] ($(0*\ew,-0.1*\eh)$) rectangle ($(0*\ew,\eh)+(\spacer+2*\ew,\eh-0.1*\eh)$);
\draw [fill=white] ($(0*\ew,\eh-0.1*\eh)$) ellipse (\ew cm and \eh cm);
\draw [fill=white] ($(0*\ew,\eh-0.1*\eh)+(\spacer+2*\ew,0)$) ellipse (\ew cm and \eh cm);

\coordinate (A) at ($(0*\ew,0)$);
\coordinate (B) at ($(0*\ew,2*\eh-0.1*\eh-0.1*\eh)$);
\coordinate (C) at ($(0*\ew+\spacer+2*\ew,0.1*\eh-0.1*\eh)$);
\coordinate (D) at ($(0*\ew+\spacer+2*\ew,2*\eh-0.1*\eh-0.1*\eh)$);
\draw [fill] (A) circle [radius=\vxrad];
%\draw [fill] (B) circle [radius=\vxrad];
\draw [fill] (C) circle [radius=\vxrad];
%\draw [fill] (D) circle [radius=\vxrad];

\draw [white] ($(1.5*\ew,1.5*\eh)$) node {\footnotesize $H_3$};

\end{scope}

\begin{scope}[xshift=0cm,yshift=0cm,rotate=90]
\def \eh{0.75}
\def \ew{0.25}
\def \spacer{\ew}

\draw  ($(0*\ew,\eh-0.1*\eh)$) ellipse (\ew cm and \eh cm);
\draw  ($(0*\ew,\eh-0.1*\eh)+(\spacer+2*\ew,0)$) ellipse (\ew cm and \eh cm);
\end{scope}

%%%%%%%%%%%%%%%%%%%%%%%%%%%%%%%%%%%%%%%%%%%%%%%%%%%%%%%
%%%%%%%%%%%%%%%%%%%%%%%%%%%%%%%%%%%%%%%%%%%%%%%%%%%

\begin{scope}[xshift=0cm,yshift=0.75cm,rotate=45]
\def \eh{0.5}
\def \ew{0.25}
\def \spacer{\ew}

\draw  ($(0*\ew,\eh-0.1*\eh)$) ellipse (\ew cm and \eh cm);
\draw  ($(0*\ew,\eh-0.1*\eh)+(\spacer+2*\ew,0)$) ellipse (\ew cm and \eh cm);
\end{scope}

%%%%%%%%%%%%%%%%%%%%%%%%%%%%%%%%%%%%%%%%%%%%%%%%%%%%%%%
%%%%%%%%%%%%%%%%%%%%%%%%%%%%%%%%%%%%%%%%%%%%%%%%%%%

\begin{scope}[xshift=0cm,yshift=0cm,rotate=270]
\def \eh{0.95}
\def \ew{0.25}
\def \spacer{\ew}

\draw  ($(0*\ew,\eh-0.1*\eh)$) ellipse (\ew cm and \eh cm);
\draw  ($(0*\ew,\eh-0.1*\eh)+(\spacer+2*\ew,0)$) ellipse (\ew cm and \eh cm);
\end{scope}

%%%%%%%%%%%%%%%%%%%%%%%%%%%%%%%%%%%%%%%%%%%%%%%%%%%%%%%
%%%%%%%%%%%%%%%%%%%%%%%%%%%%%%%%%%%%%%%%%%%%%%%%%%%

\begin{scope}[xshift=1.7cm,yshift=0cm,rotate=0]
\def \eh{0.7}
\def \ew{0.25}
\def \spacer{\ew}

\draw  ($(0*\ew,\eh-0.1*\eh)$) ellipse (\ew cm and \eh cm);
\draw  ($(0*\ew,\eh-0.1*\eh)+(\spacer+2*\ew,0)$) ellipse (\ew cm and \eh cm);
\end{scope}

%%%%%%%%%%%%%%%%%%%%%%%%%%%%%%%%%%%%%%%%%%%%%%%%%%%%%%%
%%%%%%%%%%%%%%%%%%%%%%%%%%%%%%%%%%%%%%%%%%%%%%%%%%%

\begin{scope}[xshift=0.525cm,yshift=1.28cm,rotate=0]
\def \eh{0.75}
\def \ew{0.395}
\def \spacer{\ew}

\draw  ($(0*\ew,\eh-0.1*\eh)$) ellipse (\ew cm and \eh cm);
\draw  ($(0*\ew,\eh-0.1*\eh)+(\spacer+2*\ew,0)$) ellipse (\ew cm and \eh cm);
\end{scope}

\end{tikzpicture}\hspace{1cm}\begin{tikzpicture}

\def\vxrad{0.05cm}
\def\ellcol{lightgray}

%%%%%%%%%%%%%%%%%%%%%%%%%%%%%%%%%%%%%%%%%%%%%%%%%%%%%%%
%%%%%%%%%%%%%%%%%%%%%%%%%%%%%%%%%%%%%%%%%%%%%%%%%%%

\begin{scope}[xshift=0cm,yshift=0cm,rotate=90]
\def \eh{0.75}
\def \ew{0.25}
\def \spacer{\ew}

\draw [lightgray, fill = lightgray] ($(0*\ew,-0.1*\eh)$) rectangle ($(0*\ew,\eh)+(\spacer+2*\ew,\eh-0.1*\eh)$);
\draw [lightgray, fill=white] ($(0*\ew,\eh-0.1*\eh)$) ellipse (\ew cm and \eh cm);
\draw [lightgray, fill=white] ($(0*\ew,\eh-0.1*\eh)+(\spacer+2*\ew,0)$) ellipse (\ew cm and \eh cm);

\coordinate (A) at ($(0*\ew,0)$);
\coordinate (B) at ($(0*\ew,2*\eh-0.1*\eh-0.1*\eh)$);
\coordinate (C) at ($(0*\ew+\spacer+2*\ew,0.1*\eh-0.1*\eh)$);
\coordinate (D) at ($(0*\ew+\spacer+2*\ew,2*\eh-0.1*\eh-0.1*\eh)$);
\draw [fill] (A) circle [radius=\vxrad];
%\draw [fill] (B) circle [radius=\vxrad];
\draw [fill] (C) circle [radius=\vxrad];
%\draw [fill] (D) circle [radius=\vxrad];

\end{scope}

%%%%%%%%%%%%%%%%%%%%%%%%%%%%%%%%%%%%%%%%%%%%%%%%%%%%%%%
%%%%%%%%%%%%%%%%%%%%%%%%%%%%%%%%%%%%%%%%%%%%%%%%%%%

\begin{scope}[xshift=0cm,yshift=0.75cm,rotate=45]
\def \eh{0.5}
\def \ew{0.25}
\def \spacer{\ew}

\draw [fill = lightgray,lightgray] ($(0*\ew,-0.1*\eh)$) rectangle ($(0*\ew,\eh)+(\spacer+2*\ew,\eh-0.1*\eh)$);
\draw [\ellcol,fill=white] ($(0*\ew,\eh-0.1*\eh)$) ellipse (\ew cm and \eh cm);
\draw [\ellcol,fill=white] ($(0*\ew,\eh-0.1*\eh)+(\spacer+2*\ew,0)$) ellipse (\ew cm and \eh cm);

\coordinate (A) at ($(0*\ew,0)$);
\coordinate (B) at ($(0*\ew,2*\eh-0.1*\eh-0.1*\eh)$);
\coordinate (C) at ($(0*\ew+\spacer+2*\ew,0.1*\eh-0.1*\eh)$);
\coordinate (D) at ($(0*\ew+\spacer+2*\ew,2*\eh-0.1*\eh-0.1*\eh)$);
\draw [fill] (A) circle [radius=\vxrad];
%\draw [fill] (B) circle [radius=\vxrad];
\draw [fill] (C) circle [radius=\vxrad];
%\draw [fill] (D) circle [radius=\vxrad];

\end{scope}

%%%%%%%%%%%%%%%%%%%%%%%%%%%%%%%%%%%%%%%%%%%%%%%%%%%%%%%
%%%%%%%%%%%%%%%%%%%%%%%%%%%%%%%%%%%%%%%%%%%%%%%%%%%

\begin{scope}[xshift=0cm,yshift=0cm,rotate=270]
\def \eh{0.95}
\def \ew{0.25}
\def \spacer{\ew}

\draw [lightgray,fill = lightgray] ($(0*\ew,-0.1*\eh)$) rectangle ($(0*\ew,\eh)+(\spacer+2*\ew,\eh-0.1*\eh)$);
\draw [\ellcol,fill=white] ($(0*\ew,\eh-0.1*\eh)$) ellipse (\ew cm and \eh cm);
\draw [\ellcol,fill=white] ($(0*\ew,\eh-0.1*\eh)+(\spacer+2*\ew,0)$) ellipse (\ew cm and \eh cm);

\coordinate (A) at ($(0*\ew,0)$);
\coordinate (B) at ($(0*\ew,2*\eh-0.1*\eh-0.1*\eh)$);
\coordinate (C) at ($(0*\ew+\spacer+2*\ew,0.1*\eh-0.1*\eh)$);
\coordinate (D) at ($(0*\ew+\spacer+2*\ew,2*\eh-0.1*\eh-0.1*\eh)$);
\draw [fill] (A) circle [radius=\vxrad];
\draw [fill] (B) circle [radius=\vxrad];
%\draw [fill] (C) circle [radius=\vxrad];
%\draw [fill] (D) circle [radius=\vxrad];

\end{scope}

,\ellcol

%%%%%%%%%%%%%%%%%%%%%%%%%%%%%%%%%%%%%%%%%%%%%%%%%%%%%%%
%%%%%%%%%%%%%%%%%%%%%%%%%%%%%%%%%%%%%%%%%%%%%%%%%%%

\begin{scope}[xshift=1.7cm,yshift=0cm,rotate=0]
\def \eh{0.7}
\def \ew{0.25}
\def \spacer{\ew}

\draw [lightgray,fill = lightgray] ($(0*\ew,-0.1*\eh)$) rectangle ($(0*\ew,\eh)+(\spacer+2*\ew,\eh-0.1*\eh)$);
\draw [\ellcol,fill=white] ($(0*\ew,\eh-0.1*\eh)$) ellipse (\ew cm and \eh cm);
\draw [\ellcol,fill=white] ($(0*\ew,\eh-0.1*\eh)+(\spacer+2*\ew,0)$) ellipse (\ew cm and \eh cm);

\coordinate (A) at ($(0*\ew,0)$);
\coordinate (B) at ($(0*\ew,2*\eh-0.1*\eh-0.1*\eh)$);
\coordinate (C) at ($(0*\ew+\spacer+2*\ew,0.1*\eh-0.1*\eh)$);
\coordinate (D) at ($(0*\ew+\spacer+2*\ew,2*\eh-0.1*\eh-0.1*\eh)$);
\draw [fill] (A) circle [radius=\vxrad];
\draw [fill] (B) circle [radius=\vxrad];
%\draw [fill] (C) circle [radius=\vxrad];
%\draw [fill] (D) circle [radius=\vxrad];

\end{scope}

%%%%%%%%%%%%%%%%%%%%%%%%%%%%%%%%%%%%%%%%%%%%%%%%%%%%%%%
%%%%%%%%%%%%%%%%%%%%%%%%%%%%%%%%%%%%%%%%%%%%%%%%%%%

\begin{scope}[xshift=0.525cm,yshift=1.28cm,rotate=0]
\def \eh{0.75}
\def \ew{0.395}
\def \spacer{\ew}

\draw [lightgray,fill = lightgray] ($(0*\ew,-0.1*\eh)$) rectangle ($(0*\ew,\eh-0.1*\eh)+(\spacer+2*\ew,\eh)$);
\draw [\ellcol,fill=white] ($(0*\ew,\eh-0.1*\eh)$) ellipse (\ew cm and \eh cm);
\draw [\ellcol,fill=white] ($(0*\ew,\eh-0.1*\eh)+(\spacer+2*\ew,0)$) ellipse (\ew cm and \eh cm);

\coordinate (A) at ($(0*\ew,0)$);
\coordinate (B) at ($(0*\ew,2*\eh-0.1*\eh-0.1*\eh)$);
\coordinate (C) at ($(0*\ew+\spacer+2*\ew,0.1*\eh-0.1*\eh)$);
\coordinate (D) at ($(0*\ew+\spacer+2*\ew,2*\eh-0.1*\eh-0.1*\eh)$);
\draw [fill] (A) circle [radius=\vxrad];
%\draw [fill] (B) circle [radius=\vxrad];
\draw [fill] (C) circle [radius=\vxrad];
%\draw [fill] (D) circle [radius=\vxrad];

\end{scope}

\begin{scope}[xshift=0cm,yshift=0cm,rotate=90]
\def \eh{0.75}
\def \ew{0.25}
\def \spacer{\ew}

\draw [\ellcol] ($(0*\ew,\eh-0.1*\eh)$) ellipse (\ew cm and \eh cm);
\draw [\ellcol] ($(0*\ew,\eh-0.1*\eh)+(\spacer+2*\ew,0)$) ellipse (\ew cm and \eh cm);
\end{scope}

%%%%%%%%%%%%%%%%%%%%%%%%%%%%%%%%%%%%%%%%%%%%%%%%%%%%%%%
%%%%%%%%%%%%%%%%%%%%%%%%%%%%%%%%%%%%%%%%%%%%%%%%%%%

\begin{scope}[xshift=0cm,yshift=0.75cm,rotate=45]
\def \eh{0.5}
\def \ew{0.25}
\def \spacer{\ew}

\draw [\ellcol] ($(0*\ew,\eh-0.1*\eh)$) ellipse (\ew cm and \eh cm);
\draw [\ellcol] ($(0*\ew,\eh-0.1*\eh)+(\spacer+2*\ew,0)$) ellipse (\ew cm and \eh cm);
\end{scope}

%%%%%%%%%%%%%%%%%%%%%%%%%%%%%%%%%%%%%%%%%%%%%%%%%%%%%%%
%%%%%%%%%%%%%%%%%%%%%%%%%%%%%%%%%%%%%%%%%%%%%%%%%%%

\begin{scope}[xshift=0cm,yshift=0cm,rotate=270]
\def \eh{0.95}
\def \ew{0.25}
\def \spacer{\ew}

\draw [\ellcol] ($(0*\ew,\eh-0.1*\eh)$) ellipse (\ew cm and \eh cm);
\draw [\ellcol] ($(0*\ew,\eh-0.1*\eh)+(\spacer+2*\ew,0)$) ellipse (\ew cm and \eh cm);
\end{scope}

%%%%%%%%%%%%%%%%%%%%%%%%%%%%%%%%%%%%%%%%%%%%%%%%%%%%%%%
%%%%%%%%%%%%%%%%%%%%%%%%%%%%%%%%%%%%%%%%%%%%%%%%%%%

\begin{scope}[xshift=1.7cm,yshift=0cm,rotate=0]
\def \eh{0.7}
\def \ew{0.25}
\def \spacer{\ew}

\draw [\ellcol] ($(0*\ew,\eh-0.1*\eh)$) ellipse (\ew cm and \eh cm);
\draw [\ellcol] ($(0*\ew,\eh-0.1*\eh)+(\spacer+2*\ew,0)$) ellipse (\ew cm and \eh cm);
\end{scope}

%%%%%%%%%%%%%%%%%%%%%%%%%%%%%%%%%%%%%%%%%%%%%%%%%%%%%%%
%%%%%%%%%%%%%%%%%%%%%%%%%%%%%%%%%%%%%%%%%%%%%%%%%%%

\begin{scope}[xshift=0.525cm,yshift=1.28cm,rotate=0]
\def \eh{0.75}
\def \ew{0.395}
\def \spacer{\ew}

\draw [\ellcol] ($(0*\ew,\eh-0.1*\eh)$) ellipse (\ew cm and \eh cm);
\draw [\ellcol] ($(0*\ew,\eh-0.1*\eh)+(\spacer+2*\ew,0)$) ellipse (\ew cm and \eh cm);
\end{scope}

%%%%%%%%%%%%%%%%%%%%%%%%%%%%%%%%%%%%%%%%%%%%%%%%%%%%%%%
%%%%%%%%%%%%%%%%%%%%%%%%%%%%%%%%%%%%%%%%%%%%%%%%%%%

\begin{scope}[xshift=0cm,yshift=0cm,rotate=90]
\def \eh{0.75}
\def \ew{0.25}
\def \spacer{\ew}

\coordinate (A) at ($(0*\ew,0)$);
\coordinate (B) at ($(0*\ew,2*\eh-0.1*\eh-0.1*\eh)$);
\coordinate (C) at ($(0*\ew+\spacer+2*\ew,0.1*\eh-0.1*\eh)$);
\coordinate (D) at ($(0*\ew+\spacer+2*\ew,2*\eh-0.1*\eh-0.1*\eh)$);

\def\num{2}
\foreach \x in {1,...,\num}
{
\draw ($(A)+(0,1*\eh*\x/\num)$) -- ($(C)+(0,1*\eh*\x/\num)$);
}
\foreach \x in {1,...,\num}
{
\draw ($(A)+(0,1*\eh*\x/\num-1*\eh/\num)$) -- ($(C)+(0,1*\eh*\x/\num)$);
}
\draw (C) -- ($(A)+(0,1*\eh)$);
\end{scope}

%%%%%%%%%%%%%%%%%%%%%%%%%%%%%%%%%%%%%%%%%%%%%%%%%%%%%%%
%%%%%%%%%%%%%%%%%%%%%%%%%%%%%%%%%%%%%%%%%%%%%%%%%%%

\begin{scope}[xshift=0cm,yshift=0.75cm,rotate=45]
\def \eh{0.5}
\def \ew{0.25}
\def \spacer{\ew}

\coordinate (A) at ($(0*\ew,0)$);
\coordinate (B) at ($(0*\ew,2*\eh-0.1*\eh-0.1*\eh)$);
\coordinate (C) at ($(0*\ew+\spacer+2*\ew,0.1*\eh-0.1*\eh)$);
\coordinate (D) at ($(0*\ew+\spacer+2*\ew,2*\eh-0.1*\eh-0.1*\eh)$);

\def\num{1}
\foreach \x in {1,...,\num}
{
\draw ($(A)+(0,0.8*\eh*\x/\num)$) -- ($(C)+(0,0.8*\eh*\x/\num)$);
}
\foreach \x in {1,...,\num}
{
\draw ($(A)+(0,0.8*\eh*\x/\num-0.8*\eh/\num)$) -- ($(C)+(0,0.8*\eh*\x/\num)$);
}
\draw ($(A)+(0,0.8*\eh)$) -- (C);
\end{scope}

%%%%%%%%%%%%%%%%%%%%%%%%%%%%%%%%%%%%%%%%%%%%%%%%%%%%%%%
%%%%%%%%%%%%%%%%%%%%%%%%%%%%%%%%%%%%%%%%%%%%%%%%%%%

\begin{scope}[xshift=0cm,yshift=0cm,rotate=270]
\def \eh{0.95}
\def \ew{0.25}
\def \spacer{\ew}

\coordinate (A) at ($(0*\ew,0)$);
\coordinate (B) at ($(0*\ew,2*\eh-0.1*\eh-0.1*\eh)$);
\coordinate (C) at ($(0*\ew+\spacer+2*\ew,0.1*\eh-0.1*\eh)$);
\coordinate (D) at ($(0*\ew+\spacer+2*\ew,2*\eh-0.1*\eh-0.1*\eh)$);

%\draw [fill] (C) circle [radius=\vxrad];
%\draw [fill] (D) circle [radius=\vxrad];

\def\num{10}
\foreach \x in {1,...,\num}
{
\draw ($(A)+(0,1.8*\eh*\x/\num)$) -- ($(C)+(0,1.8*\eh*\x/\num)$);
}
\foreach \x in {1,...,\num}
{
\draw ($(A)+(0,1.8*\eh*\x/\num-1.8*\eh/\num)$) -- ($(C)+(0,1.8*\eh*\x/\num)$);
}
\end{scope}

%%%%%%%%%%%%%%%%%%%%%%%%%%%%%%%%%%%%%%%%%%%%%%%%%%%%%%%
%%%%%%%%%%%%%%%%%%%%%%%%%%%%%%%%%%%%%%%%%%%%%%%%%%%

\begin{scope}[xshift=1.7cm,yshift=0cm,rotate=0]
\def \eh{0.7}
\def \ew{0.25}
\def \spacer{\ew}

\coordinate (A) at ($(0*\ew,0)$);
\coordinate (B) at ($(0*\ew,2*\eh-0.1*\eh-0.1*\eh)$);
\coordinate (C) at ($(0*\ew+\spacer+2*\ew,0.1*\eh-0.1*\eh)$);
\coordinate (D) at ($(0*\ew+\spacer+2*\ew,2*\eh-0.1*\eh-0.1*\eh)$);
%\draw [fill] (C) circle [radius=\vxrad];
%\draw [fill] (D) circle [radius=\vxrad];

\def\num{2}
\foreach \x in {1,...,\num}
{
\draw ($(A)+(0,1.8*\eh*\x/\num)$) -- ($(C)+(0,1.1*\eh*\x/\num)$);
}
\foreach \x in {1,...,\num}
{
\draw ($(A)+(0,1.8*\eh*\x/\num-1.8*\eh/\num)$) -- ($(C)+(0,1.1*\eh*\x/\num)$);
}
\end{scope}

%%%%%%%%%%%%%%%%%%%%%%%%%%%%%%%%%%%%%%%%%%%%%%%%%%%%%%%
%%%%%%%%%%%%%%%%%%%%%%%%%%%%%%%%%%%%%%%%%%%%%%%%%%%

\begin{scope}[xshift=0.525cm,yshift=1.28cm,rotate=0]
\def \eh{0.75}
\def \ew{0.395}
\def \spacer{\ew}

\coordinate (A) at ($(0*\ew,0)$);
\coordinate (B) at ($(0*\ew,2*\eh-0.1*\eh-0.1*\eh)$);
\coordinate (C) at ($(0*\ew+\spacer+2*\ew,0.1*\eh-0.1*\eh)$);
\coordinate (D) at ($(0*\ew+\spacer+2*\ew,2*\eh-0.1*\eh-0.1*\eh)$);

\def\num{10}
\foreach \x in {1,...,\num}
{
\draw ($(A)+(0,1.8*\eh*\x/\num)$) -- ($(C)+(0,1.8*\eh*\x/\num)$);
}
\foreach \x in {1,...,\num}
{
\draw ($(A)+(0,1.8*\eh*\x/\num-1.8*\eh/\num)$) -- ($(C)+(0,1.8*\eh*\x/\num)$);
}
\draw (B) -- (C);
\end{scope}

\end{tikzpicture}

%%%%%%%%%%%%%%%%%%%%%%%%%%%%%%%%%%%%%%%%%%%%%%%%
%%%%%%%%%%%%%%%%%%%%%%%%%%%%%%%%%%%%%%%%%%%%%%%%
%%%%%%%%%%%%%%%%%%%%%%%%%%%%%%%%%%%%%%%%%%%%%%%%

\caption{An illustration of the proof of Theorem~\ref{thm:chrom}. As seen on the left, we find a collection of edge disjoint bipartite expander graphs, here $H_1,\ldots,H_5$, so that $H_i$ intersects with $H_{i-1}$ and $H_{i+1}$ on at least one vertex each (working mod $5$ in the indices), and any cycle around the `cycle of subgraphs' is odd.
\newline
We then form different length odd cycles by choosing short paths between the intersecting vertices in some of the expanders $H_i$, while varying the length of the paths between a vertex disjoint collection of the expanders (here, $H_3$ and $H_5$).\label{fig:chrom}}
\end{figure}
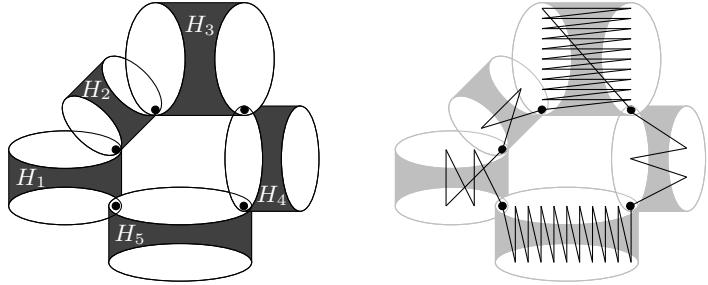

The main complication omitted in this sketch is that the paths replacing the edges in $C$ need to be internally vertex disjoint from each other and $V(C)$. To ensure this, we only replace some of the edges with paths of varying length, only doing so for a suitable large collection of the edges on $C$. If two edges on $C$ are far apart, yet their containing bipartite graphs $H_i$ intersect, then we can find a shorter odd cycle in $K$ than $C$, a contradiction. Therefore, we choose a subset $E\subseteq E(C)$ of edges which are pairwise nicely separated on $C$ (and which maximises $\sum_{e\in E}\ell_{i(e)}$ subject to this). We replace each edge in $E$ by an odd length path with length from $[\ell_{i(e)},\ell_{i(e)}k^{1-o(1)}]$, while we (essentially) replace each edge in $E(C)\setminus E$ with some minimal path between the same vertices in the corresponding graph $H_i$, in order to connect the paths corresponding to $E$ into a cycle.

\medskip

Recalling Definition~\ref{pidefn}, we use the following simple proposition.
\begin{prop}\label{simpleclass}
Given any connected bipartite graph $H$ containing three distinct vertices $a_1,a_2$ and $a_3$, we have
\begin{equation}\label{gen0}
\pi(a_1,a_3,H)+\pi(a_2,a_3,H)-\pi(a_1,a_2,H)\in \{0,2\}.
\end{equation}
Furthermore, for any (not necessarily distinct) vertices $a_1,a_2$ and $a_3$, we have
\begin{equation}\label{gen}
\pi(a_1,a_3,H)+\pi(a_2,a_3,H)=\pi(a_1,a_2,H)\mod 2
\end{equation}
\end{prop}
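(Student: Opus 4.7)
The plan is to reduce both statements to a single parity identity coming from the bipartition of $H$. Since $H$ is connected and bipartite, its bipartition $V(H) = X \sqcup Y$ is unique, so we can attach a class label $c(v) \in \{0,1\}$ to each vertex by setting $c(v) = 0$ for $v \in X$ and $c(v) = 1$ for $v \in Y$. A direct check of the three cases in Definition~\ref{pidefn} then establishes that
\[
\pi(u,v,H) \equiv c(u) + c(v) \pmod 2
\]
for all $u, v \in V(H)$: if $u = v$ both sides are even; if $u \ne v$ lie in the same class then $\pi(u,v,H) = 2$ and $c(u)+c(v)$ is even; if $u \ne v$ lie in opposite classes then $\pi(u,v,H) = 1$ and $c(u)+c(v)$ is odd.

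Given this identity, equation \eqref{gen} is immediate: summing yields $\pi(a_1,a_3,H) + \pi(a_2,a_3,H) \equiv c(a_1) + c(a_2) + 2c(a_3) \equiv \pi(a_1,a_2,H) \pmod 2$, regardless of whether $a_1, a_2, a_3$ are distinct or equal.

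For equation \eqref{gen0}, the hypothesis that $a_1, a_2, a_3$ are all distinct forces each of the three $\pi$-values to lie in $\{1, 2\}$. Hence the expression $\pi(a_1,a_3,H) + \pi(a_2,a_3,H) - \pi(a_1,a_2,H)$ is automatically sandwiched between $1+1-2 = 0$ and $2+2-1 = 3$; combining this with \eqref{gen}, which forces it to be even, one concludes it lies in $\{0, 2\}$. There is no real obstacle here, as the whole proposition amounts to unpacking Definition~\ref{pidefn}; the only small choice is to prove \eqref{gen} first (which exhibits the underlying algebra most cleanly) and then derive \eqref{gen0} from it by the elementary interval bound above.
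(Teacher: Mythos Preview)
Your proof is correct and rests on the same underlying observation as the paper's---namely, that the bipartition of $H$ determines each $\pi$-value---so the two arguments are essentially the same. The only difference is organizational: the paper first establishes \eqref{gen0} by an explicit three-case check on the class memberships of $a_1,a_2,a_3$ (computing $2+2-2$, $1+1-2$, $2+1-1$) and then handles the coincident-vertex cases of \eqref{gen} separately, whereas you encode the bipartition via a function $c:V(H)\to\{0,1\}$, prove the parity identity \eqref{gen} uniformly, and recover \eqref{gen0} from it via the range bound $0\le \pi(a_1,a_3,H)+\pi(a_2,a_3,H)-\pi(a_1,a_2,H)\le 3$. Your ordering is arguably cleaner, but there is no substantive difference in content.
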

\begin{proof} First, suppose that $a_1,a_2$ and $a_3$ are distinct. Let the vertex classes of $H$ be $A$ and $B$, and assume without loss of generality that $a_3\in A$. If $a_1,a_2\in A$, then $\pi(a_1,a_3,H)+\pi(a_2,a_3,H)-\pi(a_1,a_2,H)=2+2-2=2$. If $a_1,a_2\in B$, then $\pi(a_1,a_3,H)+\pi(a_2,a_3,H)-\pi(a_1,a_2,H)=1+1-2=0$.
 We may then assume that $a_1\in A$ and $a_2\in B$, and so $\pi(a_1,a_3,H)+\pi(a_2,a_3,H)-\pi(a_1,a_2,H)=2+1-1=2$. Therefore, whenever $a_1,a_2$ and $a_3$ are distinct, \eqref{gen0} holds, and furthermore \eqref{gen} holds in this case as well.

  Now, note that if $a_1=a_2=a$, then \eqref{gen} holds as $2\pi(a,a_3,H)=0=\pi(a,a,H)\mod 2$, while if $a_1=a_3=a$, then $0+\pi(a_2,a,H)=\pi(a,a_2,H)\mod 2$, and, similarly, \eqref{gen} holds if $a_2=a_3$. This completes the remaining cases when $a_1,a_2$ and $a_3$ are not necessarily distinct.
\end{proof}

We are now ready to prove Theorem~\ref{thm:chrom}, which we do throughout Section~\ref{subsec:chrompf}.

\subsection{Proof of Theorem~\ref{thm:chrom}}\label{subsec:chrompf}
Let $\eps>0$. To prove Theorem~\ref{thm:chrom}, we will show that there is some $k_0\in \N$ such that the following holds for each $k\geq k_0$. If $G$ is a graph with chromatic number $k$, then, for some $\ell\in \mathbb{N}$, $\mathcal{C}(G)$ contains every odd integer in $[\ell,\ell\cdot k^{1-\eps}]$.

Let then $d_0$ be large enough that $d^{1-\eps/4}\geq (30(d+1))^{1-\eps/2}$ holds for each $d\geq d_0$ and the property in Corollary~\ref{mainforchrom} holds  for $d_0$ with $\eps_{\ref{mainforchrom}}=\eps/4$. Let $k_0=30 d_0$. Suppose $k\geq k_0$ and that the graph $G$ has $\chi(G)= k$.
Let $d=\lfloor k/30\rfloor$.

As outlined at the beginning of this section, using the high chromatic number of $G$, we first find a minimal `cycle of subgraphs' that could potentially offer many distinct odd cycle lengths (in Section~\ref{sec:min}). We then prove that non-consecutive subgraphs in this cycle are vertex disjoint (in Section~\ref{sec:nonadj}). We then choose some of these non-consecutive subgraphs in which to vary the path lengths (in Section~\ref{sec:tovary}). Finally, we take different path lengths in the chosen subgraphs and connect them up into a cycle, getting many odd cycles (in Section~\ref{sec:vary}).

%%%%%%%%%%%%%%%%%%%%%%%%%%%%%%%%%%%
%%%%%%%%%%%%%%%%%%%%%%%%%%%%%%%%%%%
%%%%%%%%%%%%%%%%%%%%%%%%%%%%%%%%%%%
\subsubsection{A minimal `cycle of subgraphs'}\label{sec:min}
Let $H_1,\ldots,H_s$ be a maximal collection of edge disjoint connected bipartite subgraphs of $G$ such that, for each $i\in [s]$, there is a positive integer $\ell_i$ for which the following holds.

\stepcounter{propcounter}
\begin{enumerate}[label = {\bfseries \Alph{propcounter}}]
  \item For any two distinct vertices $u,v$ in $H_i$ and any integer $t\in [\ell_i, \ell_i\cdot k^{1-\eps/2}]$ with $t=\pi(u,v,H_i) \mod 2$, there is a $u,v$-path in $H_i$ with length $t$.\label{bigprop}
\end{enumerate}
 Let $H=\cup_{i\in [s]}H_i$. Let $G'=G\setminus H$. As $d^{1-\eps/4}\geq (30(d+1))^{1-\eps/2}\geq k^{1-\eps/2}$, by the maximality of the collection $H_1,\ldots, H_s$ and Corollary~\ref{mainforchrom}, $G'$ has no subgraph with average degree at least $8d$. Therefore, every subgraph of $G'$ has a vertex with degree in that subgraph less than $8d$, and hence, as is well known, chromatic number less than $8d$.

 We will now show that $\chi(H)\geq 3$. Indeed, suppose to the contrary that $\chi(H)< 3$. Let $c_1:V(G')\to [8d]$ be a proper colouring of $G'$ and let $c_2:V(H)\to [2]$ be a proper colouring of $H$. Then, $c:V(G)\to [8d]\times [2]$ defined by $c(v)=(c_1(v),c_2(v))$ is easily seen to be a proper colouring of $G$. Therefore, $\chi(G)\leq 16d<k$, a contradiction. Thus, we have that $\chi(H)\geq 3$.

Therefore, we can choose an odd cycle $C$ in $H$. Say that $C$ has length $r'$ and vertices $a_1\ldots a_{r'+1}$ where $a_1=a_{r'+1}$. We now take a certain minimal sequence, where $C$ will demonstrate that such a sequence exists. That is, we take a sequence
$$
\mathcal{S}=b_1F_1 b_2F_2 b_3\ldots  b_r F_r b_{r+1}
$$
such that, setting $r(\mathcal{S})=r$, we have
\stepcounter{propcounter}
\begin{enumerate}[label = {\bfseries \Alph{propcounter}\arabic{enumi}}]
	\item $b_1,\ldots,b_r$ are distinct vertices and $b_1=b_{r+1}$,\label{S1}

	\item for each $i\in [r]$, $F_i\in \{H_1,\ldots,H_s\}$ and $b_{i},b_{i+1}\in V(F_{i})$,\label{S2}

	\item $\pi(\mathcal{S}):=\sum_{i=1}^{r}\pi(b_i,b_{i+1},F_i)$ is odd, and\label{S3}

    \item subject to~\ref{S1}--\ref{S3}, $\pi(\mathcal{S})+r(\mathcal{S})$ is minimised.\label{S4}
\end{enumerate}
   Indeed, such a sequence exists as the sequence $\cS'=a_1G_1a_2G_2a_3\ldots a_{r'} G_{r'}a_{r'+1}$ satisfies~\ref{S1}--\ref{S3}, where for each $i\in[r']$, $G_i=H_j$ for the $j\in [s]$  with $a_ia_{i+1}\in E(H_j)$, and we have that $\pi(\cS')=\sum_{i=1}^{r'}\pi(a_i,a_{i+1},G_i)=r'$ is odd.

Note that we must have $r\geq 2$. Indeed,
if $r=1$, then \ref{S1} implies that $\pi(b_1,b_2,F_1)=\pi(b_1,b_1,F_1)=0$, violating \ref{S3}.

%%%%%%%%%%%%%%%%%%%%%%%%%%%%%%%%%%%
%%%%%%%%%%%%%%%%%%%%%%%%%%%%%%%%%%%
%%%%%%%%%%%%%%%%%%%%%%%%%%%%%%%%%%%
\subsubsection{Non-consecutive subgraphs are vertex disjoint}\label{sec:nonadj}

We will now use the minimality of $\mathcal{S}$ (that is, \ref{S4}) to infer two key properties. These are (roughly) that non-consecutive graphs in $F_1,\ldots,F_{r}$ are vertex disjoint (Claim~\ref{oddtwo}) and that each graph is a different graph in $\{H_1,\ldots,H_s\}$ (Claim~\ref{norep}).

\begin{claim}\label{oddtwo}
For each $i,j\in [r]$ with $i\neq j$ and $i\neq j\pm 1\mod r$, $F_i$ and $F_j$ are vertex disjoint.
\end{claim}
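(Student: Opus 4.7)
The plan is to derive a contradiction with the minimality condition \ref{S4} of $\cS$. Suppose, for contradiction, that some $v\in V(F_i)\cap V(F_j)$ exists with $i,j\in[r]$ satisfying $i\neq j$ and $i\not\equiv j\pm 1\pmod r$; after cyclic relabelling I may assume $1\le i<j\le r$ with $j-i\ge 2$ and $r-j+i\ge 2$. The idea is to shortcut $\cS$ through $v$, producing two candidate closed sequences
\[
\cS_A=v\,F_i\,b_{i+1}\,F_{i+1}\,\cdots\,b_j\,F_j\,v
\quad\text{and}\quad
\cS_B=v\,F_j\,b_{j+1}\,\cdots\,b_r\,F_r\,b_1\,F_1\,\cdots\,b_i\,F_i\,v,
\]
both of which trivially satisfy \ref{S2}. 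Direct counting gives $r(\cS_A)+r(\cS_B)=r(\cS)+2$, and applying Proposition~\ref{simpleclass} at the two modification points yields $\pi(\cS_A)+\pi(\cS_B)=\pi(\cS)+\eps_i+\eps_j$ with $\eps_i,\eps_j\in\{0,2\}$. In particular $(\pi+r)_{\cS_A}+(\pi+r)_{\cS_B}\le(\pi+r)_\cS+6$, and since $\pi(\cS)$ is odd, exactly one of $\cS_A,\cS_B$ has odd~$\pi$.

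If $v\notin\{b_1,\ldots,b_r\}$ then both $\cS_A$ and $\cS_B$ have all vertices distinct, so they satisfy \ref{S1}. Let $X$ be the one with odd $\pi$ (thus satisfying \ref{S1}--\ref{S3}) and let $Y$ be the other. Non-adjacency forces $r(X),r(Y)\ge 3$, and since $Y$ has distinct vertices every edge in $Y$ contributes $\pi\ge 1$, so $\pi(Y)\ge r(Y)$. Combined with $\pi(Y)$ being even, a case check on $r(Y)\ge 3$ gives $\pi(Y)+r(Y)\ge 7$. Substituting into $X+Y\le(\pi+r)_\cS+6$ then gives $X\le(\pi+r)_\cS-1$, contradicting \ref{S4}.

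If instead $v=b_m$ for some $m\in[r]$, by rotating the cyclic sequence I may assume $m\in\{i+1,\ldots,j\}$, so $\cS_B$ still has distinct vertices and satisfies \ref{S1}. When $\pi(\cS_B)$ is odd, $\cS_B$ is a valid competitor. Bounding by $\pi(b_k,b_{k+1},F_k)\ge 1$ for the indices $i\le k\le j$ removed in $\cS_B$ (of which there are $j-i+1$), together with $\pi(b_i,v,F_i),\pi(v,b_{j+1},F_j)\le 2$, gives $(\pi+r)_\cS-(\pi+r)_{\cS_B}\ge 2(j-i)-4$, which is positive for $j-i\ge 3$. For the remaining boundary case $j-i=2$, the fact that $v=b_m$ is itself one of the $b_k$'s means one of the new $\pi$-terms coincides with an original $\pi(b_k,b_{k+1},F_k)$, which tightens the estimate enough to force a strict decrease.

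The final subcase has $\pi(\cS_A)$ odd but $\cS_A$ invalid (the vertex $v=b_m$ now appears twice in its vertex list). I split $\cS_A$ at $b_m$ into two closed sub-sequences using $F_i,\ldots,F_{m-1}$ and $F_m,\ldots,F_j$ respectively, both of which satisfy \ref{S1}--\ref{S2} and whose $\pi$-values sum to $\pi(\cS_A)$; hence one of them has odd $\pi$. Summing $\pi(b_k,b_{k+1},F_k)\ge 1$ over the original indices not covered by this sub-sequence then yields $(\pi+r)_\cS-(\pi+r)_{\mathrm{sub}}\ge 2(r-j+i)-1\ge 1$, the final contradiction (with the degenerate sub-cycles at $m=i+1$ or $m=j$ simply discarded). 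The main obstacle throughout is the parity bookkeeping: tracking which of $\cS_A,\cS_B$ satisfies \ref{S1} and which carries the odd $\pi$, and then extracting strict rather than weak decreases at the boundary configurations $j-i=2$ or $m\in\{i+1,j\}$.
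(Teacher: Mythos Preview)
Your approach is correct and follows the same core idea as the paper: cut the cyclic sequence $\cS$ at the shared vertex into two sub-sequences, use Proposition~\ref{simpleclass} to control the change in $\pi$, and invoke parity to locate an odd sub-sequence contradicting~\ref{S4}. Your Case~I (shared vertex $v\notin\{b_1,\ldots,b_r\}$) matches the paper's Case~I almost exactly, just phrased in terms of the combined quantity $\pi+r$ rather than bounding $\pi$ and $r$ separately; your bound $\pi(Y)+r(Y)\ge 7$ is the analogue of the paper's observation that the even sub-sequence has $\pi\ge 4$.

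The genuine difference is in Case~II ($v=b_m$). The paper invests effort in a relabelling reduction to arrange $v=b_j$, after which both candidate sequences $\cS_3,\cS_4$ automatically have distinct vertices and the argument runs parallel to Case~I. You instead keep $\cS_A,\cS_B$ as before, observe that $\cS_B$ is valid, and when the parity lands on the invalid $\cS_A$ you perform a \emph{second} split of $\cS_A$ at $b_m$ into $\cS_C,\cS_D$. This works, and your final inequality $(\pi+r)_\cS-(\pi+r)_{\mathrm{sub}}\ge 2(r-j+i)-1\ge 3$ is correct for either sub-sequence (for $\cS_C$ one gets $\ge 2(r-m+i)-1$ and for $\cS_D$ one gets $\ge 2(r-j+m)-3$, and both dominate $2(r-j+i)-1$ since $i+1\le m\le j$). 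The paper's relabelling buys a cleaner, single-split argument; your route trades that elegance for avoiding the somewhat fiddly index manipulation the paper performs to reach $v=b_j$.

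Two places in your write-up are too compressed to stand as written. The boundary case $j-i=2$ with $\pi(\cS_B)$ odd needs the explicit computation: since $m\in\{i+1,j\}$, one of $\pi(b_i,v,F_i),\pi(v,b_{j+1},F_j)$ equals an original term $\pi(b_k,b_{k+1},F_k)$, giving $\pi(\cS)-\pi(\cS_B)\ge 0$ rather than $\ge -1$, hence $(\pi+r)_\cS-(\pi+r)_{\cS_B}\ge 1$. Similarly, ``simply discarded'' for the degenerate sub-cycles at $m\in\{i+1,j\}$ should say that the degenerate one has $\pi=0$, forcing the odd parity onto the non-degenerate sub-cycle, which still has $r\ge j-i\ge 2$.
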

\begin{poc}
Suppose, for contradiction, we have some distinct $i,j\in [r]$ with $i\neq j\pm 1\mod r$ (and thus $r\geq 4$) and that $F_i$ and $F_j$ are not vertex disjoint. We consider separately the case when $F_i$ and $F_j$  share some vertex not in $\{b_1,\ldots,b_r\}$ (Case I) and when they share some vertex in $\{b_1,\ldots,b_r\}$ (Case II).

\medskip

\noindent\textbf{Case I.} Suppose then that $F_i$ and $F_j$ share some vertex $a\notin\{b_1,\ldots,b_r\}$. Assume, without loss of generality, that $i<j$.
As depicted in Figure~\ref{fig:SSS}, consider the two sequences
$$
\mathcal{S}_1=b_1F_1b_2\ldots b_iF_iaF_{j}b_{j+1}\ldots b_rF_{r}b_{r+1}\quad\text{ and }\quad \mathcal{S}_2=b_{i+1}F_{i+1}b_{i+2}F_{i+2}\ldots b_jF_jaF_ib_{i+1}.
$$
We will show that one of these sequences satisfies \ref{S1}--\ref{S3} in place of $\mathcal{S}$ and contradicts the minimality of $\mathcal{S}$ in \ref{S4}. That each sequence $\mathcal{S}_1$ and $\mathcal{S}_2$ satisfies the corresponding version of \ref{S1} and \ref{S2} follows immediately from \ref{S1} and \ref{S2} and as $a\notin \{b_1,\ldots,b_r\}$ is in both $V(F_i)$ and $V(F_j)$.

\begin{figure}[b]
\centering

%%%%%%%%%%%%%%%%%%%%%%%%%%%%%%%%%%%%%%%%%%%%%%%%%%%%%%%%%%%%%%%%%%%%%%%%%%%%%%%%%%%%%%%%%%%%%%%%%%%%%%%%%%%%%
%%%%%%%%%%%%%%%%%%%%%%%%%%%%%%%%%%%%%%%%%%%%%%%%%%%%%%%%%%%%%%%%%%%%%%%%%%%%%%%%%%%%%%%%%%%%%%%%%%%%%%%%%%%%%
%%%%%%%%%%%%%%%%%%%%%%%%%%%%%%%%%%%%%%%%%%%%%%%%%%%%%%%%%%%%%%%%%%%%%%%%%%%%%%%%%%%%%%%%%%%%%%%%%%%%%%%%%%%%%

\begin{tikzpicture}[scale=0.7]

\def\radout{3}
\def\radin{2}
\def\rotang{45}
\def\offset{35}
\foreach \rotno in {0,1,...,10}
{
\begin{scope}[rotate={\rotang*\rotno}]

\draw (0:\radout) arc (0:\offset:\radout);
\draw (0:\radin) arc (0:\offset:\radin);

\draw [fill] (-5:2.5) circle [radius=0.05cm];
\coordinate (b\rotno) at (-5:2.5);
\coordinate (F\rotno) at (17.5:2.5);
\coordinate (A\rotno) at (10:\radin);
\coordinate (B\rotno) at (25:\radin);

\draw (0:\radin) arc (-180:0:0.5);
\draw (\offset:\radin) arc (215:\offset:0.5);

\end{scope}
}

\def\bratio{1.325}
\def\Fratio{1.35}
\def\FFratio{1}
\def\bsize{\footnotesize}
\def\Fsize{\footnotesize}
\draw ($\bratio*(b1)$) node {\bsize $b_j$};
%\draw ($\bratio*(b2)$) node {$b_2$};
\draw ($1.05*\bratio*(b3)$) node {\bsize $b_{i+1}$};
\draw ($\bratio*(b4)$) node {\bsize $b_i$};
\draw ($\bratio*(b5)$) node {\bsize $b_2$};
\draw ($\bratio*(b6)$) node {\bsize $b_1$};
\draw ($\bratio*(b7)$) node {\bsize $b_{r}$};
\draw ($1.1*\bratio*(b8)$) node {\bsize $b_{j+1}$};

%\draw ($\FFratio*(F1)$) node {\Fsize $F_{j-1}$};
\draw ($\FFratio*(F2)$) node {\Fsize $F_{i+1}$};
\draw ($\FFratio*(F3)$) node {\Fsize $F_i$};
%\draw ($\FFratio*(F4)$) node {\Fsize $F_{i-1}$};
\draw ($\FFratio*(F5)$) node {\Fsize $F_{1}$};
\draw ($\FFratio*(F6)$) node {\Fsize $F_r$};
%\draw ($\FFratio*(F7)$) node {\Fsize $F_{j+1}$};
\draw ($\Fratio*(F8)$) node {\Fsize $F_{j}$};

\draw [white,thick] (145:\radin) arc (145:160:\radin);

%\draw [fill] () circle [radius=0.05cm];
%\draw [fill] (A8) circle [radius=0.05cm];
%\draw [fill] (B3) circle [radius=0.05cm];
%\draw [fill] (A3) circle [radius=0.05cm];

\coordinate (A) at ($0.5*(B8)+0.5*(A3)+0.5*(A8)-0.5*(B3)+0.1*(B8)-0.1*(A3)$);
\coordinate (B) at ($(A8)+0.1*(B8)-0.1*(A3)$);
\coordinate (C) at ($0.5*(A)+0.5*(B)-0.05*(B8)+0.05*(A3)$);
\coordinate (Clab) at ($0.5*(A)+0.5*(B)-0.175*(B8)+0.175*(A3)$);

\def\labelh{1.45}
\coordinate (SS) at ($\labelh*0.5*(B8)+\labelh*0.5*(A3)$);
\draw (SS) node {$\mathcal{S}_2$};

\def\labelh{-0.6}
\coordinate (S) at ($\labelh*0.5*(B8)+\labelh*0.5*(A3)$);
\draw (S) node {$\mathcal{S}_1$};

\draw (B) -- (B3);
\draw (A) -- (A3);

\draw (A)  to [out=-5,in=-15]  (B);
\draw [fill] (C) circle [radius=0.05cm];
\draw (Clab) node {\footnotesize $a$};
%\draw [fill] ($0.5*(B8)+0.5*(A3)+0.5*(A8)-0.5*(B3)$) circle [radius=0.05cm];

%+0.25*(B8)+0.25*(A3)

\end{tikzpicture}\hspace{1cm}\begin{tikzpicture}[scale=0.7]

\def\radout{3}
\def\radin{2}
\def\rotang{45}
\def\offset{35}
\foreach \rotno in {0,1,...,10}
{
\begin{scope}[rotate={\rotang*\rotno}]

\draw (0:\radout) arc (0:\offset:\radout);
\draw (0:\radin) arc (0:\offset:\radin);

\draw [fill] (-5:2.5) circle [radius=0.05cm];
\coordinate (b\rotno) at (-5:2.5);
\coordinate (F\rotno) at (17.5:2.5);
\coordinate (A\rotno) at (10:\radin);
\coordinate (B\rotno) at (25:\radin);

\draw (0:\radin) arc (-180:0:0.5);
\draw (\offset:\radin) arc (215:\offset:0.5);

\end{scope}
}

\def\bratio{1.325}
\def\Fratio{1.35}
\def\FFratio{1}
\def\bsize{\footnotesize}
\def\Fsize{\footnotesize}
\draw ($\bratio*(b1)$) node {\bsize $b_j$};
%\draw ($\bratio*(b2)$) node {$b_2$};
\draw ($1.05*\bratio*(b3)$) node {\bsize $b_{i+1}$};
\draw ($\bratio*(b4)$) node {\bsize $b_i$};
\draw ($\bratio*(b5)$) node {\bsize $b_2$};
\draw ($\bratio*(b6)$) node {\bsize $b_1$};
\draw ($\bratio*(b7)$) node {\bsize $b_{r}$};
\draw ($1.1*\bratio*(b8)$) node {\bsize $b_{j+1}$};

%\draw ($\FFratio*(F1)$) node {\Fsize $F_{j-1}$};
\draw ($\FFratio*(F2)$) node {\Fsize $F_{i+1}$};
\draw ($\FFratio*(F3)$) node {\Fsize $F_i$};
%\draw ($\FFratio*(F4)$) node {\Fsize $F_{i-1}$};
\draw ($\FFratio*(F5)$) node {\Fsize $F_{1}$};
\draw ($\FFratio*(F6)$) node {\Fsize $F_r$};
%\draw ($\FFratio*(F7)$) node {\Fsize $F_{j+1}$};
\draw ($\Fratio*(F8)$) node {\Fsize $F_{j}$};

\draw [white,thick] (145:\radin) arc (145:160:\radin);

%\draw [fill] () circle [radius=0.05cm];
%\draw [fill] (A8) circle [radius=0.05cm];
%\draw [fill] (B3) circle [radius=0.05cm];
%\draw [fill] (A3) circle [radius=0.05cm];

\coordinate (A) at ($0.5*(B8)+0.5*(A3)+0.5*(A8)-0.5*(B3)+0.1*(B8)-0.1*(A3)$);
\coordinate (B) at ($(A8)+0.1*(B8)-0.1*(A3)$);
\coordinate (C) at ($0.5*(A)+0.5*(B)-0.05*(B8)+0.05*(A3)$);
\coordinate (Clab) at ($0.5*(A)+0.5*(B)-0.175*(B8)+0.175*(A3)$);

\def\labelh{1.45}
\coordinate (SS) at ($\labelh*0.5*(B8)+\labelh*0.5*(A3)$);
\draw (SS) node {$\mathcal{S}_4$};

\coordinate (Azz) at ($0.5*(A8)+0.5*(B3)$);
\coordinate (Bzz) at ($0.5*(B8)+0.5*(A3)$);
\coordinate (COR) at ($0.25*(B8)+0.25*(A3)+0.25*(A8)+0.25*(B3)$);
%\coordinate (torotate) at ($(C)-(COR)$);
%\draw [fill] (COR) circle [radius=0.05cm];

\begin{scope}[rotate around={31:(COR)}]
\foreach \rotno in {0,1,...,10}
{
\begin{scope}[rotate={\rotang*\rotno}]
\coordinate (bz\rotno) at (-5:2.5);
\coordinate (Fz\rotno) at (17.5:2.5);
\coordinate (Az\rotno) at (10:\radin);
\coordinate (Bz\rotno) at (25:\radin);
\end{scope}
}
\coordinate (Az) at ($0.5*(Bz8)+0.5*(Az3)+0.5*(Az8)-0.5*(Bz3)+0.05*(B8)-0.05*(Az3)$);
\coordinate (Bz) at ($(Az8)+0.05*(Bz8)-0.05*(Az3)$);
\coordinate (Cz) at ($0.5*(Az)+0.5*(Bz)-0.05*(B8)+0.05*(Az3)$);
\coordinate (Czlab) at ($0.5*(Az)+0.5*(Bz)-0.175*(Bz8)+0.175*(Az3)$);

\end{scope}

\coordinate (Bzzz) at ($2*(b1)-(Az)$);

%\draw [fill] (Az) circle [radius=0.05cm];
%\draw [fill] (Bzzz) circle [radius=0.05cm];
%\draw [fill,red] ($2*(Bz)-(Bzzz)$) circle [radius=0.05cm];

\coordinate (By) at ($2*(Bz)-(Bzzz)$);

\draw (Az)  to [out=30,in=60]  (By);
\draw (Azz) to [out=-7,in=225] (By);
\draw (Bzz) to [out=-7,in=225] (Az);

\def\labelh{-0.6}
\coordinate (S) at ($\labelh*0.5*(B8)+\labelh*0.5*(A3)$);
\draw (S) node {$\mathcal{S}_3$};

\draw (Azz) -- (B3);
\draw (Bzz) -- (A3);

%\draw [fill] ($0.5*(B8)+0.5*(A3)+0.5*(A8)-0.5*(B3)$) circle [radius=0.05cm];

%+0.25*(B8)+0.25*(A3)

\end{tikzpicture}

%%%%%%%%%%%%%%%%%%%%%%%%%%%%%%%%%%%%%%%%%%%%%%%%%%%%%%%%%%%%%%%%%%%%%%%%%%%%%%%%%%%%%%%%%%%%%%%%%%%%%%%%%%%%%
%%%%%%%%%%%%%%%%%%%%%%%%%%%%%%%%%%%%%%%%%%%%%%%%%%%%%%%%%%%%%%%%%%%%%%%%%%%%%%%%%%%%%%%%%%%%%%%%%%%%%%%%%%%%%
%%%%%%%%%%%%%%%%%%%%%%%%%%%%%%%%%%%%%%%%%%%%%%%%%%%%%%%%%%%%%%%%%%%%%%%%%%%%%%%%%%%%%%%%%%%%%%%%%%%%%%%%%%%%%

%\inpput{newfig4}\hspace{1cm}
%\inpput{newfig5}

\caption{The sequence $\cS$ split into $\cS_1$ and $\cS_2$ on the left and $\cS_3$ and $\cS_4$ on the right.\label{fig:SSS}}
\end{figure}
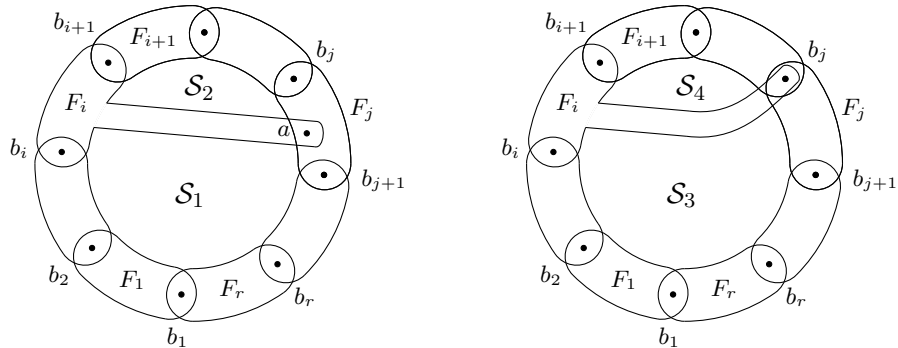

Define $\pi(\mathcal{S}_1)$ and $\pi(\mathcal{S}_2)$ similarly to $\pi(\mathcal{S})$ in \ref{S3}, that is, let
$$
\pi (\mathcal{S}_1)= \sum_{1\leq i'<i}\pi(b_{i'},b_{i'+1},F_{i'})+\pi(b_i,a,F_i)+\pi(a,b_{j+1},F_j)+\sum_{j< i'\leq r}\pi(b_{i'},b_{i'+1},F_{i'}),
$$
and
$$
\pi(\mathcal{S}_2)=\sum_{i< i'<j}\pi(b_{i'},b_{i'+1},F_{i'})+\pi(b_{j},a,F_j)+\pi(a,b_{i+1},F_i).
$$
Then,
\begin{align}
\pi (\mathcal{S}_1)&+\pi(\mathcal{S}_2)-\pi(\mathcal{S})\label{moredetail} \\
&=\pi(b_i,a,F_i)+\pi(a,b_{j+1},F_j)+\pi(b_{j},a,F_j)+\pi(a,b_{i+1},F_i)
-\pi(b_i,b_{i+1},F_i)-\pi(b_i,b_{j+1},F_j).\nonumber
\end{align}
By \ref{S1}, and as $a\notin\{b_1,\ldots,b_r\}$, $a$, $b_i$ and $b_{i+1}$ are distinct. Thus, by Proposition~\ref{simpleclass}, $\pi(b_i,a,F_i)+\pi(b_{i+1},a,F_i)-\pi(b_i,b_{i+1},F_i)\in \{0,2\}$. Similarly, $\pi(b_j,a,F_j)+\pi(b_{j+1},a,F_j)-\pi(b_j,b_{j+1},F_j)\in \{0,2\}$. Therefore, \eqref{moredetail} implies that $\pi(\mathcal{S}_1)+\pi(\mathcal{S}_2)-\pi(\mathcal{S})\in \{0,2,4\}$.

Now, as $i\neq j\pm 1\mod r$, both $\pi(\mathcal{S}_1)$ and $\pi(\mathcal{S}_2)$ are sums of at least three numbers from $\{1,2\}$. Therefore, if $\pi(\mathcal{S}_1)$ or $\pi(\mathcal{S}_2)$ is even, they must be at least $4$. As $\pi(\mathcal{S})$ is odd, if $\pi(\mathcal{S}_1)$ is even, then $\pi(\mathcal{S}_2)\leq 4+\pi(\mathcal{S})-\pi(\mathcal{S}_1)\leq \pi(\mathcal{S})$ and $\pi(\mathcal{S}_2)$ is odd. Similarly, if $\pi(\mathcal{S}_2)$ is even, then $\pi(\mathcal{S}_1)\leq \pi(\mathcal{S})$ and $\pi(\mathcal{S}_1)$ is odd. Therefore, one of $\pi(\mathcal{S}_i)$, $i\in[2]$, is odd.

Including the repetition of $b_1=b_{r+1}$ and $b_{i+1}$ respectively, as $i\neq j\pm 1 \mod r$, the sequence $\mathcal{S}_1$ contains $i+1+(r+1)-j<r+1$ vertices and the sequence $\mathcal{S}_2$ contains $j-i+2<r+1$ vertices. Thus, $r(\cS_1),r(\cS_2)<r=r(\cS)$. Therefore, taking the sequence $\mathcal{S}_{i'}$ with $i'\in [2]$ such that $\pi(\mathcal{S}_{i'})\leq \pi(\mathcal{S})$ is odd (so that the corresponding versions of \ref{S1}--\ref{S3} hold for $\mathcal{S}_{i'}$) contradicts~\ref{S4}.

\medskip

\noindent\textbf{Case II.} Suppose then that $F_i$ and $F_j$ share some vertex, say $b_{i'}$, in $\{b_1,\ldots,b_r\}$. We first show that we can assume that $b_j\in V(F_i)$. If $i'=j$, then this already holds. If $i'=i$ then switch $i$ and $j$ to get the previous case of $i'=j$. If $i'\neq i-1,i,i+1\mod r$, then keep $i$ unchanged and relabel $j=i'$. If $i'\neq j-1,j,j+1\mod r$, then relabel $j=i$ and $i=i'$. In each case we then get $i\neq j\pm 1\mod r$, $i\neq j$ and $b_j\in V(F_i)$.

This leaves only the case that $i'\in \{i-1,i+1\}\mod r$ and $i'\in \{j-1,j+1\}\mod r$. Therefore, switching $i$ and $j$ if necessary, we have that $i+1=i'=j-1\mod r$. We have that $b_{i'}=b_{i+1}$ is in $V(F_j)$, and now reverse the sequence $\cS$ so that, roughly speaking, $b_{i+1}$ and $F_i$ are assigned the same index. That is, consider the sequence $\cS'$ created by reversing $\mathcal{S}$, taking that
\[
\mathcal{S}'=b'_1F_1'b_2'F_2'b_3'\ldots b'_rF_r'b'_{r+1}=b_{r+1}F_rb_rF_{r-1}b_{r-1}\ldots b_2F_1b_1,
\]
where $b'_{j'}=b_{r+2-j'}$ for each $j'\in [r+1]$ and $F'_{j'}=F_{r+1-j'}$ for each $j'\in [r]$.
Then, $b'_{r+1-i}=b_{i+1}\in V(F_j)=V(F'_{r+1-j})$. Thus, relabelling $i=r+1-j$ and $j=r+1-i$ we have $i\neq j\pm 1\mod r$, $i\neq j$ and $b'_j\in V(F'_i)$. Using the corresponding definition for $\cS'$ in \ref{S3}, we have $\pi(\cS')=\pi(\cS)$ and $r(\cS')=r=r(\cS)$, so that $\mathcal{S}'$ satisfies its corresponding versions of \ref{S1}--\ref{S4}.
Thus, taking $\cS'$ instead of $\cS$ if necessary, we can always assume that $i\neq j\pm 1\mod r$, $i\neq j$ and $b_j\in V(F_i)$.

Now we show that we can assume that $j>i$. If $j<i$, then consider the rotated sequence of $\cS$ in which the index $i$ is shifted to 1, that is,
\[
\mathcal{S}''=b''_1F_1''b_2''F_2''b_3''\ldots b''_rF_r''b''_{r+1}=b_{i}F_ib_{i+1}\ldots b_rF_{r}b_{r+1}F_1b_2\ldots b_{i-1}F_{i-1}b_i,
\]
where $b''_{j'}=b_{j'+i-1}$ and $F''_{j'}=b_{j'+i-1}$ for each $j'\in [r]$ with addition modulo $r$ in the indices and $b''_{r+1}=b_i$. %$'_k=F'_{r+1-k}$ for each $k\in [r]$
Let $i''=1$ and $j''=r+j-i+1$, and note that $i''< j''$ and $i''\neq j''\pm 1\mod r$. Furthermore, $b''_{j''}=b_{j}\in V(F_i)=V(F''_{i''})$. Similarly to $\cS'$, $\mathcal{S}''$ satisfies its corresponding versions of \ref{S1}--\ref{S4}, and therefore,  taking $\cS''$ instead of $\cS$ if necessary, we can assume that $i<j$.

Thus, we have that $i<j$, $i\neq j\pm 1\mod r$ and $b_j\in V(F_i)$.
As depicted in Figure~\ref{fig:SSS}, consider  now the sequences
$$
\mathcal{S}_3=b_1F_1b_2F_2\ldots b_iF_ib_jF_{j}b_{j+1}\ldots F_{r}b_{r+1}\quad\text{ and }\quad \mathcal{S}_4=b_jF_ib_{i+1}F_{i+1}b_{i+2}\ldots F_{j-1}b_j.
$$
We will show that one of these sequences satisfies \ref{S1}--\ref{S3} in place of $\mathcal{S}$ and contradicts the minimality of $\mathcal{S}$ in \ref{S4}. That each sequence $\mathcal{S}_3$ and $\mathcal{S}_4$ satisfies the corresponding versions of \ref{S1} and \ref{S2} follows immediately from \ref{S1} and \ref{S2} as $b_j\in V(F_j)$.

Writing
$$
\pi (\mathcal{S}_3)= \sum_{1\leq i'<i}\pi(b_{i'},b_{i'+1},F_{i'})+\pi(b_i,b_j,F_i)+\sum_{j\leq  i'\leq r}\pi(b_{i'},b_{i'+1},F_{i'}),
$$
and
$$
\pi(\mathcal{S}_4)=\pi(b_j,b_{i+1},F_i)+\sum_{i< i'<j}\pi(b_{i'},b_{i'+1},F_{i'}),
$$
we have
\begin{align}
\pi (\mathcal{S}_3)+\pi(\mathcal{S}_4)-\pi(\mathcal{S})=\pi(b_i,b_j,F_i)+\pi(b_j,b_{i+1},F_i)-\pi(b_i,b_{i+1},F_i)\label{moredetail2}.
\end{align}
By \ref{S1} and Proposition~\ref{simpleclass}, we have $\pi(b_i,b_j,F_i)+\pi(b_j,b_{i+1},F_i)-\pi(b_i,b_{i+1},F_i)\in \{0,2\}$. Thus, by \eqref{moredetail2}, we have $\pi(\mathcal{S}_3)+\pi(\mathcal{S}_4)-\pi(\mathcal{S})\in \{0,2\}$.

 Now, as $i\neq j\pm 1\mod r$, both $\pi(\mathcal{S}_3)$ and $\pi(\mathcal{S}_4)$ are sums of at least two numbers from $\{1,2\}$, and are therefore at least 2.  As $\pi(\mathcal{S})$ is odd, if $\pi(\mathcal{S}_3)$ is even then $\pi(\mathcal{S}_4)\leq 2+\pi(\mathcal{S})-\pi(\mathcal{S}_3)\leq \pi(\mathcal{S})$ and $\pi(\mathcal{S}_4)$ is odd. Similarly, if $\pi(\mathcal{S}_4)$ is even, then $\pi(\mathcal{S}_3)\leq \pi(\mathcal{S})$ and $\pi(\mathcal{S}_3)$ is odd. Therefore, one of $\pi(\mathcal{S}_i)$, $i\in\{3,4\}$ is odd.

Including the repetition of $b_1=b_{r+1}$ and $b_j$ respectively, the sequence $\mathcal{S}_3$ contains $i+(r+1)-j+1<r+1$ vertices and the sequence $\mathcal{S}_4$ contains $j-i+1<r+1$ vertices. Thus, $r(\cS_3),r(\cS_4)<r=r(\cS)$. Therefore, taking the sequence $\mathcal{S}_{i'}$ with $i'\in \{3,4\}$ such that $\pi(\mathcal{S}_{i'})$ is odd (so that the corresponding versions of \ref{S1}--\ref{S3} hold for $\mathcal{S}_{i'}$) contradicts~\ref{S4}, completing the proof of the claim.
\end{poc}

Using Claim~\ref{oddtwo}, we can show that each graph $F_i$ in the minimal sequence $\mathcal{S}$ is distinct.

\begin{claim}\label{norep}
$F_1,\ldots,F_r$ are distinct graphs in $\{H_1,\ldots,H_s\}$, and thus are pairwise edge disjoint.
\end{claim}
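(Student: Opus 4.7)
The plan is to combine Claim~\ref{oddtwo} with a merging argument that exploits the minimality condition \ref{S4}. Since the graphs $H_1,\ldots,H_s$ were chosen pairwise edge disjoint, once the $F_i$ are shown to be pairwise distinct graphs from $\{H_1,\ldots,H_s\}$, the edge-disjointness is automatic. So the task reduces to ruling out $F_i=F_j$ for any distinct $i,j\in[r]$.

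If $F_i=F_j$ for some $i\neq j$, then $V(F_i)\cap V(F_j)\neq\varnothing$, and Claim~\ref{oddtwo} forces $j=i\pm 1\mod r$. I would first dispatch the case $r=2$ separately: if $F_1=F_2=F$, then $\pi(\mathcal{S})=\pi(b_1,b_2,F)+\pi(b_2,b_1,F)=2\pi(b_1,b_2,F)$, which is even, contradicting \ref{S3}. So I may assume $r\geq 3$, and, by cyclically rotating $\mathcal{S}$ (using the same rotation trick as in Case II of Claim~\ref{oddtwo}, which shifts any index to position $1$), reduce to the case $F_i=F_{i+1}=:F$ with $1\leq i\leq r-1$. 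In this reduced setup, $b_i,b_{i+1},b_{i+2}$ are three distinct vertices of $F$, since $b_1,\ldots,b_r$ are distinct by \ref{S1} and, when $i+2=r+1$, $b_{i+2}=b_1\neq b_{r-1}=b_i$ as $r\geq 3$.

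The main step is to merge the two consecutive occurrences of $F$ by deleting $b_{i+1}$, producing the shorter sequence
\[
\mathcal{S}'=b_1F_1b_2\ldots b_i\,F\,b_{i+2}F_{i+2}\ldots b_rF_rb_{r+1}.
\]
Conditions \ref{S1} and \ref{S2} for $\mathcal{S}'$ are immediate: removing $b_{i+1}$ preserves distinctness of the $b$'s, and $b_i,b_{i+2}\in V(F)$ by \ref{S2} applied to $\mathcal{S}$. For \ref{S3}, Proposition~\ref{simpleclass}, equation~\eqref{gen}, applied to the distinct triple $b_i,b_{i+1},b_{i+2}$ in the connected bipartite graph $F$, gives $\pi(b_i,b_{i+2},F)\equiv\pi(b_i,b_{i+1},F)+\pi(b_{i+1},b_{i+2},F)\pmod 2$, so $\pi(\mathcal{S}')$ has the same parity as $\pi(\mathcal{S})$, and hence is odd.

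Finally, \eqref{gen0} applied to the same triple yields $\pi(b_i,b_{i+1},F)+\pi(b_{i+1},b_{i+2},F)-\pi(b_i,b_{i+2},F)\in\{0,2\}$, so $\pi(\mathcal{S}')\leq \pi(\mathcal{S})$, while $r(\mathcal{S}')=r-1<r(\mathcal{S})$. Combined, these give $\pi(\mathcal{S}')+r(\mathcal{S}')<\pi(\mathcal{S})+r(\mathcal{S})$, contradicting the minimality condition~\ref{S4}. The only real obstacle is bookkeeping: verifying that the cyclic rotation legitimately places the coincidence at positions $(1,2)$ so the merging step does not wrap around, and confirming that the triple $b_i,b_{i+1},b_{i+2}$ is genuinely distinct; both points rest squarely on the reduction $r\geq 3$.
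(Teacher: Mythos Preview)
Your proof is correct and follows essentially the same approach as the paper: dispatch $r=2$ by the parity contradiction, then for $r\geq 3$ use Claim~\ref{oddtwo} to force consecutive indices, rotate so the coincidence sits at positions $(i,i+1)$ with $i<r$, merge to form $\mathcal{S}'$, and invoke Proposition~\ref{simpleclass} to get $\pi(\mathcal{S}')\in\{\pi(\mathcal{S}),\pi(\mathcal{S})-2\}$ while $r(\mathcal{S}')=r-1$, contradicting \ref{S4}. Your explicit verification that $b_i,b_{i+1},b_{i+2}$ are distinct (needed for \eqref{gen0}) is a nice touch that the paper handles more tersely by citing \ref{S1}.
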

\begin{poc}
If $r=2$ and $F_1=F_2$, then \ref{S1} implies that $\sum_{i=1}^{r}\pi(b_i,b_{i+1},F_i)=\pi(b_1,b_2,F_1)+\pi(b_2,b_1,F_2)=2\pi(b_1,b_2,F_1)=0\mod 2$, violating \ref{S3}. Thus, the claim holds for $r=2$ by \ref{S2}.

Now suppose $r\ge 3$ and that, for distinct $i,j\in [r]$, $F_i=F_j$. Then, by Claim~\ref{oddtwo}, it must be that $i=j\pm 1\mod r$. Say then, without loss of generality, that $i=j-1 \mod r$. Furthermore, by relabelling the indices in the sequence $\cS$ cyclically (as for $\cS''$ above), we can assume that $i< r$, and hence $j=i+1$, so that $F_i=F_{i+1}$. Consider the sequence $\cS'$ formed from $\cS$ by replacing $F_ib_{i+1}F_{i+1}$ by $F_i$ in $\cS$, so that
$$
\cS'= b_1F_1 b_2F_2 b_3\ldots b_{i-1}F_{i-1}b_iF_ib_{i+2}F_{i+2}\ldots  b_r F_r b_{r+1},
$$
and note that $\cS'$ satisfies its corresponding versions of \ref{S1} and \ref{S2}.
By Proposition~\ref{simpleclass} and \ref{S1}, as $F_i=F_{i+1}$, we have  that $\pi(b_i,b_{i+1},F_i)+\pi(b_{i+1},b_{i+2},F_{i+1})-\pi(b_i,b_{i+2},F_i)\in \{0,2\}$. Then, defining $\pi(\mathcal{S}')$ as in \ref{S3},
$$
\pi(\mathcal{S}')=\pi(\cS)-\pi(b_i,b_{i+1},F_i)-\pi(b_{i+1},b_{i+2},F_{i+1})+\pi(b_i,b_{i+2},F_i)\in \{\pi(\cS),\pi(S)-2\}.
$$
As $\pi(\cS)$ is odd, $\pi(\mathcal{S}')\leq \pi(\cS)$ and $\pi(\mathcal{S}')$ is odd. Therefore, as $r(\cS')=r-1$, this contradicts~\ref{S4}.
\end{poc}

%%%%%%%%%%%%%%%%%%%%%%%%%%%%%%%%%%%
%%%%%%%%%%%%%%%%%%%%%%%%%%%%%%%%%%%
\subsubsection{The right subgraphs in which to vary path lengths}\label{sec:tovary}

Using the previous claims in this section, we can now choose a large subcollection of graphs $F_i$ in $\cS$, varying the paths in which leads to many odd cycles of distinct lengths. Firstly, by Claim~\ref{norep} and relabelling, we can assume that $F_i=H_i$ for each $i\in [r]$. Recall then that we have positive integers $\ell_i$, $i\in [r]$, such that~\ref{bigprop} holds with $H_i=F_i$.

Now, partition $[r]$ as $I_1\cup I_2\cup I_3$ and find distinct vertices $u_1=u_{r+1},u_2,\ldots,u_r$ with $u_i,u_{i+1}\in V(F_i)$ for each $i\in [r]$, and paths $P_i$, $i\in I_2\cup I_3$, such that the following hold.

\stepcounter{propcounter}
\begin{enumerate}[label = {\bfseries \Alph{propcounter}\arabic{enumi}}]
  \item $3\sum_{i\in I_1}(\ell_i+1)\geq \sum_{i\in [r]}(\ell_i+1)$.\label{bang0}
  \item Any collection of paths containing exactly one $u_i,u_{i+1}$-path in $F_i$, for each $i\in I_1$, and $\{P_i\}_{i\in I_2\cup I_3}$ form a cycle.\label{bang1}
  \item The paths $P_i$, $i\in I_2\cup I_3$, have total length $\ell_0\leq \sum_{i\in I_2\cup I_3}(\ell_i+1)$, where $\ell_0$ is such that $\ell_0+\sum_{i\in I_1}\pi(u_i,u_{i+1},F_i)$ is odd. \label{bang2}
\end{enumerate}

%------------------------
%\smallskip

We find the partition, vertices and paths differently depending on the length of the sequence $\cS$. If $r\geq 4$, then we are in \textbf{Case 1}, if $r=2$, then we are in \textbf{Case 2}, and if $r=3$ we are in \textbf{Case 3}.

\medskip

\noindent\textbf{Case 1.} Suppose first that $r\geq 4$.
 Partition $[r]=I_1\cup I_2 \cup I_3$ so that, for each $j\in [3]$, there is no solution to $x=y+1\mod r$ with $x,y\in I_j$. Thus, by Claim~\ref{oddtwo}, for every $j\in [3]$, graphs in $\{F_i:i\in I_j\}$ are pairwise vertex disjoint.
By averaging, there exists some $j\in [3]$ such that $\sum_{i\in I_j}(\ell_i+1)\geq \sum_{i\in [r]}(\ell_i+1)/3$. By relabelling, we may assume that $j=1$, and hence \ref{bang0} holds.

 Now, for each $i\in I_2$, find a shortest path $P_i$ between $V(F_{i-1})$ and $V(F_{i+1})$ in $F_i$, and label vertices so that this is a $u_{i},u_{i+1}$-path with $u_i\in V(F_{i-1})$ and $u_{i+1}\in V(F_{i+1})$. Note that, by minimality of $P_i$, all internal vertices of $P_i$ lie in $V(F_i)\setminus (V(F_{i-1})\cup V(F_{i+1}))$. Furthermore, as $r\geq 4$, by Claim~\ref{oddtwo}, we have that $u_i\neq u_{i+1}$.

 For each $i\in I_3$, if $i-1,i+1\in I_2$, let $P_i$ be a shortest $u_i,u_{i+1}$-path in $F_i$. If $i-1\in I_2$ and $i+1\notin I_2$, let $P_i$ be a shortest path from $u_i$ to $V(F_{i+1})$ in $F_i$ and label its endpoint in $V(F_{i+1})$ by $u_{i+1}$.  If $i-1\notin I_2$ and $i+1\in I_2$, let $P_i$ be a shortest path from $V(F_{i-1})$ to $u_{i+1}$ in $F_i$ and label its endpoint in $V(F_{i-1})$ by $u_{i}$. If $i-1,i+1\notin I_2$, let  $P_i$ be a shortest path between $V(F_{i-1})$ and $V(F_{i+1})$ in $F_i$, and label vertices so that this is a $u_{i},u_{i+1}$-path with $u_i\in V(F_{i-1})$ and $u_{i+1}\in V(F_{i+1})$.

Note that we have chosen a vertex $u_i$ for each $i\in [r]$ with $i$ or $i-1$ in $I_2\cup I_3$, and, for each $i\in I_2\cup I_3$, a path $P_i$. As there is no solution to $x=y+1\mod r$ in $I_1$, $\{i,i+1:i\in I_2\cup I_3\}=[r]$. Thus, we have in fact chosen vertices $u_i$ for all $i\in[r]$. To see that vertices $u_i$, $i\in [r]$, are distinct, note that, for each $i\in [r]$, we chose $u_i\in V(F_{i-1})\cap V(F_{i})$.  Therefore, if there is some $i\neq j$ with $u_i=u_j$, then as $u_j\in V(F_{i-1})\cap V(F_{i})$ and $r\geq 4$, by Claim~\ref{oddtwo}, we must have $j=i-1\mod r$. However, similarly, as $u_i\in V(F_{j-1})\cap V(F_{j})$, we must have $i=j-1\mod r$, a contradiction.

 Let $u_{r+1}=u_1$. For each $i\in I_2\cup I_3$, $P_i$ was a shortest path between two vertices in $F_i$, and thus by \ref{bigprop}, has length at most $\ell_i+1$.
 Observe crucially that, by our careful selection of shortest paths, the paths in $\{P_i:{i\in I_2\cup I_3}\}$ are pairwise internally disjoint, and furthermore internally disjoint from any $u_i,u_{i+1}$-path in $F_i$ for any $i\in I_1$. In particular, this, together with the graphs $F_i$, $i\in I_1$, being pairwise vertex disjoint, implies that \ref{bang1} holds. It is left to show that \ref{bang2} holds. Let $\ell_0$ be the total length of the paths $P_i$, $i\in I_2\cup I_3$. As, for each $i\in I_2\cup I_3$, $P_i$ has length at most $\ell_i+1$, we have $\ell_0\le \sum_{i\in I_2\cup I_3}(\ell_i+1)$, as required in \ref{bang2}.

For the rest of \ref{bang2}, we need to show that $\ell_0+\sum_{i\in I_1}\pi(u_i,u_{i+1},F_i)$ is odd. Now, as $r\geq 4$, we have $\pi(\cS)\geq 5$ by \ref{S3}, and hence $\pi(\cS)+r(\cS)\geq 9$. Now, suppose for some $i,j\in [r]$ with $j=i-1 \mod r$, $\pi(u_i,b_i,F_i)+\pi(u_i,b_i,F_{j})$ is odd. Then, $u_i\neq b_i$, and therefore $\pi(u_i,b_i,F_i)+\pi(u_i,b_i,F_{j})=3$. Thus, the sequence $\cS'=u_iF_ib_iF_ju_i$ has $\pi(\cS')+r(\cS')=5$ and satisfies its corresponding version of \ref{S1}--\ref{S3}, contradicting~\ref{S4}. Therefore, working mod $r$ in the indices, for each $i\in [r]$, we have
 \begin{equation}
 \pi(u_i,b_i,F_i)+\pi(u_i,b_i,F_{i-1})=0\mod 2.\label{evenly}
\end{equation}
 Futhermore, for each $i\in [r]$, we have by two applications of (the second part of) Proposition~\ref{simpleclass}, that
\begin{equation*}
 \pi(u_i,u_{i+1},F_i)=\pi(u_i,b_i,F_i)+\pi(b_i,u_{i+1},F_i)=\pi(u_i,b_i,F_i)+\pi(b_i,b_{i+1},F_i)+\pi(b_{i+1},u_{i+1},F_i)\!\!\!\! \mod 2.
\end{equation*}
Thus, working mod $r$ in the indices, we have
\begin{align*}
\sum_{i\in [r]} \pi(u_i,u_{i+1},F_i)&=\sum_{i\in [r]}\pi(u_i,b_i,F_i)+\sum_{i\in [r]}\pi(b_i,b_{i+1},F_i)+\sum_{i\in [r]}\pi(b_{i+1},u_{i+1},F_i)\mod 2\\
&=\sum_{i\in [r]}\pi(b_i,b_{i+1},F_i)+\sum_{i\in [r]}(\pi(u_i,b_i,F_i)+\pi(u_i,b_i,F_{i-1}))\mod 2\\
&\overset{\eqref{evenly}}{=} \sum_{i\in [r]}\pi(b_i,b_{i+1},F_i)\mod 2.
\end{align*}

For each $i\in I_2\cup I_3$, $P_i$ is a $u_i,u_{i+1}$-path in $F_i$, and therefore has length equal to $\pi(u_i,u_{i+1},F_i)\mod 2$. Thus, $\ell_0$ is equal to $\sum_{i\in I_2\cup I_3}\pi(u_i,u_{i+1},F_i) \mod 2$. Therefore,
$$
\ell_0+\sum_{i\in I_1}\pi(u_i,u_{i+1},F_i)=\sum_{i\in [r]} \pi(u_i,u_{i+1},F_i)=\sum_{i\in [r]}\pi(b_i,b_{i+1},F_i)\mod 2.
$$
Combined with \ref{S3}, we have that $\ell_0+\sum_{i\in I_1}\pi(u_i,u_{i+1},F_i)$ is odd, completing the proof of \ref{bang2}.

\medskip

\noindent\textbf{Case 2.} Suppose then that $r=2$. Assume, by relabelling that $\ell_1\geq \ell_2$ and let $I_1=\{1\}$, $I_2=\{2\}$ and $I_3=\varnothing$. Note that \ref{bang0} holds. By \ref{S3}, we have that $\pi(b_1,b_2,F_1)+\pi(b_1,b_2,F_2)$ is odd. Find distinct vertices $u_1,u_2\in V(F_1)\cap V(F_2)$ and a $u_1,u_2$-path $P_2$ in $F_2$ so that $\ell(P_2)+\pi(u_1,u_2,F_1)$ is odd, and subject to this $P_2$ has the shortest possible length. Note that this is possible as taking $u_1=b_1$, $u_2=b_2$ and letting $P_2$ be any $u_1,u_2$-path in $F_2$ (which exists due to \ref{bigprop}) satisfies these conditions, and, furthermore, by \ref{bigprop}, $P_2$ has length at most $\ell_2+1$. Therefore, \ref{bang2} holds for $P_2$.

Now, suppose $P_2$ has some internal vertex $u$ in $F_1$. Then, note $u\in V(P_2)\subseteq V(F_2)$, and split $P_2$ as a $u_1,u$-path $Q_1$ and a $u,u_2$-path $Q_2$. By Proposition~\ref{simpleclass}, we have, working mod $2$, that
$$
1=\ell(P_2)+\pi(u_1,u_2,F_1)=\ell(Q_1)+\pi(u_1,u,F_1)+\ell(Q_2)+\pi(u,u_2,F_1) \mod 2.
$$
Therefore, one of $\ell(Q_1)+\pi(u_1,u,F_1)$ or $\ell(Q_2)+\pi(u,u_2,F_1)$ must be odd, contradicting the minimality of $P_2$. Therefore, $P_2$ has no internal vertices in $F_1$, and hence \ref{bang1} holds.

\medskip

\noindent\textbf{Case 3.} Suppose finally that $r=3$. Assume, by relabelling, that $\ell_1=\max_{i\in [r]}\ell_i$ and let $I_1=\{1\}$, $I_2=\{2\}$ and $I_3=\{3\}$. Note that \ref{bang0} holds.

Let us first show that $F_1\cup F_2$, $F_2\cup F_3$, and $F_1\cup F_3$ are bipartite. Suppose, for contradiction, that $F_2\cup F_3$ is not bipartite, and let $A,B$ and $A',B'$ be the bipartitions of $F_2$ and $F_3$, labelled so that $b_3\in A\cap A'$. As $A\cup A', B\cup B'$ is not a bipartition of $F_2\cup F_3$, there must be some vertex, $u$ say, in $A\cap B'$ or $A'\cap B$. Then, $\pi(b_3,u,F_2)+\pi(u,b_3,F_3)=3$. Thus, $\cS'=b_3F_2uF_3b_3$ satisfies the corresponding version of \ref{S1}--\ref{S3} with $r(\cS')+\pi(\cS')=5$. By \ref{S1} and \ref{S3}, $\pi(S)\geq 3$, and therefore $\pi(\cS)+r(\cS)\geq 6$, contradicting \ref{S4}. Therefore, $F_2\cup F_3$ must be bipartite. Similarly, $F_1\cup F_2$ and $F_1\cup F_3$ are bipartite.

As $F_2\cup F_3$ is bipartite, we have, by Proposition~\ref{simpleclass}, that
\[
 \pi(b_2,b_4,F_2\cup F_3)=\pi(b_2,b_3,F_2\cup F_3)+\pi(b_3,b_4,F_2\cup F_3)=\pi(b_2,b_3,F_2)+\pi(b_3,b_4,F_3)\mod 2.
\]

Let $Q$ be a shortest $b_2,b_4$-path in $F_2\cup F_3$, so that $\ell(Q)= \pi(b_2,b_4,F_2\cup F_3) \mod 2$. As $F_2$ and $F_3$ have diameter at most $\ell_2+1$ and $\ell_3+1$ respectively by \ref{bigprop}, we have $\ell(Q)\leq \ell_2+\ell_3+2$. As $b_4=b_1$, $Q$ is a $b_1,b_2$-path. Now, find distinct vertices $u_1,u_2$ with $u_1\in V(F_1)\cap V(F_3)$ and $u_2\in V(F_1)\cap V(F_2)$, and a $u_1,u_2$-path $P_2$ in $F_2\cup F_3$ so that $\ell(P_2)+\pi(u_1,u_2,F_1)$ is odd, and subject to this $\ell(P_2)$ has the shortest possible length. Note that such a shortest path $P_2$ indeed exists as the path $Q$ satisfies the other requirements with $u_1=b_1$ and $u_2=b_2$ as $\ell(Q)+\pi(b_1,b_2,F_1)$ is odd due to~\ref{S3}. Thus, $\ell(P_2)\le\ell(Q)\le \ell_2+\ell_3+2$. Let $P_3$ be the path with only the vertex $u_2$. Therefore, \ref{bang2} holds for $P_2$ and $P_3$.

Suppose to the contrary that $P_2$ has some internal vertex $u$ in $F_1$. Then, note $u\in V(P_2)\subseteq V(F_2)$, and split $P_2$ as a $u_1,u$-path $Q_1$ and a $u,u_2$-path $Q_2$. By Proposition~\ref{simpleclass}, we have
$$
1=\ell(P_2)+\pi(u_1,u_2,F_1)=\ell(Q_1)+\pi(u_1,u,F_1)+\ell(Q_2)+\pi(u,u_2,F_1) \mod 2.
$$
Therefore, one of $\ell(Q_1)+\pi(u_1,u,F_1)$ or $\ell(Q_2)+\pi(u,u_2,F_1)$ must be odd, contradicting the minimality of $P_2$. Therefore, $P_2$ has no internal vertices in $F_1$, and hence \ref{bang1} holds.

Now, combining $P_2$ with any $u_1,u_2$-path in $F_1$ with length equivalent to $\pi(u_1,u_2,F_1)\mod 2$ gives an odd cycle. Thus, as both $F_1\cup F_2$ and $F_1\cup F_3$ are bipartite, $P_2$ must have some edge from $F_2$ and some edge from $F_3$, and hence we can pick $u_3$ as an arbitrary internal vertex of $P_2$ in $V(F_2)\cap V(F_3)$. This completes the partition $[3]=I_1\cup I_2\cup I_3$, distinct vertices $u_1,u_2$ and $u_3$, with $u_1,u_2\in V(F_1)$, $u_2,u_3\in V(F_2)$ and $u_3,u_1\in V(F_3)$, and paths $P_2$ and $P_3$ such that \ref{bang0}--\ref{bang2} hold.

%%%%%%%%%%%%%%%%%%%%%%%%%%%%%%%%%%%
%%%%%%%%%%%%%%%%%%%%%%%%%%%%%%%%%%%
\subsubsection{Varying paths in the chosen subgraphs}\label{sec:vary}

Thus, in each of the three cases, we have a partition $[r]=I_1\cup I_2\cup I_3$, distinct vertices $u_1=u_{r+1},u_2,\ldots,u_r$  with $u_i,u_{i+1}\in V(F_i)$ for each $i\in [r]$, and paths $P_i$, $i\in I_2\cup I_3$, for which \ref{bang0}--\ref{bang2} hold. Let
\begin{align}\label{eq-arewethereyet}
\ell&=\ell_0+\sum_{i\in I_1}(\ell_i+1)
\overset{\text{\ref{bang2}}}{\leq}\sum_{i\in [r]}(\ell_i+1)
\overset{\text{\ref{bang0}}}{\leq} 3\sum_{i\in I_1}(\ell_i+1).
\end{align}

Now, if  $\ell_i'$, $i\in I_1$, is a collection of integers satisfying $\ell_i'\in [\ell_i,\ell_i\cdot k^{1-\eps/2}]$ and $\ell_i'=\pi(u_i,u_{i+1},F_i)\mod 2$, for each $i\in I_1$, then, by taking a $u_i,u_{i+1}$-path in $F_i$ with length $\ell_i'$ by \ref{bigprop}, for each $i\in I_1$, and combining these paths with the paths $P_i$, $i\in I_2\cup I_3$, by \ref{bang1} we get a cycle with length $\ell_0+\sum_{i\in I_1}\ell_i'$.

As, by \ref{bang2}, $\ell_0+\sum_{i\in I_1}\pi(u_i,u_{i+1},F_i)$ is odd, there are sets of such numbers $\ell_i'$, $i\in I_1$, with $\ell_0+\sum_{i\in I_1}\ell_i'=t$ for any odd number $t$ such that
\[
\ell_0+\sum_{i\in I_1}(\ell_i+1)\leq t\leq \ell_0+\sum_{i\in I_1}(\lfloor \ell_i\cdot k^{1-\eps/2}\rfloor-1).
\]
Now, we have $\ell=\ell_0+\sum_{i\in I_1}(\ell_i+1)$, so, as $k^{\eps/2}\geq d_0^{\eps/2}\geq 8$, we have
\[
\ell_0+\sum_{i\in I_1}(\lfloor \ell_i\cdot k^{1-\eps/2}\rfloor-1)
\geq \sum_{i\in I_1}(\ell_i\cdot k^{1-\eps/2}-2)\geq \sum_{i\in I_1}(8\ell_i\cdot k^{1-\eps}-2)\geq \sum_{i\in I_1}(3\ell_i+3)\cdot k^{1-\eps}\overset{\eqref{eq-arewethereyet}}{\geq } \ell\cdot k^{1-\eps}.
\]
Thus, for each odd integer $t\in [\ell,\ell\cdot k^{1-\eps}]$, $G$ contains a cycle with length $t$.
This completes the proof of Theorem~\ref{thm:chrom}.

%%%%%%%%%%%%%%%%%%%%%%%%%%%%%%%%%%%%%%%%%%%%%%%%%%%%%%%%%%%%%%%%%%%%%%%%%%%%%%%%%%%%%%%%%%%%%%%%%%%%%%%%%%%%%%%

\section*{Acknowledgements}
We are very grateful to Benny Sudakov, and the referee, for suggestions that improved the presentation of this paper.

\end{document}